\documentclass[a4paper,11pt,reqno]{customart}

\usepackage[utf8x]{inputenc}
\usepackage[margin=1in]{geometry}
\usepackage{verbatim}
\usepackage{color}
\usepackage[table]{xcolor}
\usepackage{listings}
\usepackage{amssymb,amsfonts,amsthm,amsmath}
\usepackage{url}
\usepackage{enumerate}
\usepackage{todonotes}
\usepackage[all]{xy}
\usepackage{multirow}
\usepackage{changepage}
\usepackage{attachfile}
\usepackage{stmaryrd}
\usepackage{footnote}
\makesavenoteenv{tabular}
\usepackage{hyperref}

\makeatletter
\newcommand\footnoteref[1]{\protected@xdef\@thefnmark{\ref{#1}}\@footnotemark}
\makeatother

\hypersetup{
	pdfborder={0 0 0}
}

\definecolor{grey}{rgb}{0.95,0.95,0.95}
\definecolor{green}{rgb}{0.2,0.6,0.4}

\newcommand{\imp}{\rightarrow}

\newcommand{\biimp}{\leftrightarrow}
\newcommand{\dbf}{\mathbf{d}}
\newcommand{\Ab}{\mathbb{A}}

\newcommand{\Pb}{\mathbb{P}}
\newcommand{\Tb}{\mathbb{T}}
\newcommand{\TPb}{\mathbb{U}}

\newcommand{\Ccal}{\mathcal{C}}
\newcommand{\Dcal}{\mathcal{D}}
\newcommand{\Fcal}{\mathcal{F}}
\newcommand{\Gcal}{\mathcal{G}}

\newcommand{\uh}{{\upharpoonright}}

\renewcommand{\setminus}{\smallsetminus}

\newcommand{\set}{\mathrm{set}}
\newcommand{\parts}{\mathrm{parts}}

\newcommand{\dom}{\mathrm{dom}}

% Enclosing

%\newcommand{\set}[1]{\left\{ #1 \right\}}

% Conditional definition

% Structural shortcuts

% Systems
\newcommand{\s}[1]{\ensuremath{\sf{#1}}}

\DeclareMathOperator{\rca}{\s{RCA}_0}
\DeclareMathOperator{\aca}{\s{ACA}_0}
\DeclareMathOperator{\wkl}{\s{WKL}_0}

\DeclareMathOperator{\bst}{\s{B}\Sigma^0_2}

\DeclareMathOperator{\rt}{\s{RT}}
\DeclareMathOperator{\srt}{\s{SRT}}

\DeclareMathOperator{\rrt}{\s{RRT}}

\DeclareMathOperator{\ads}{\s{ADS}}
\DeclareMathOperator{\sads}{\s{SADS}}

\DeclareMathOperator{\coh}{\s{COH}}

\DeclareMathOperator{\sts}{\s{STS}}

\DeclareMathOperator{\emo}{\s{EM}}
\DeclareMathOperator{\semo}{\s{SEM}}

\DeclareMathOperator{\amt}{\s{AMT}}

\newcommand{\Apx}{\operatorname{Apx}}
\newcommand{\Ext}{\operatorname{Ext}}

\newtheoremstyle{custom}% name of the style to be used
  {10pt}% measure of space to leave above the theorem. E.g.: 3pt
  {10pt}% measure of space to leave below the theorem. E.g.: 3pt
  {\normalfont}% name of font to use in the body of the theorem
  {}% measure of space to indent
  {\bfseries}% name of head font
  {}% punctuation between head and body
  { }% space after theorem head; " " = normal interword space
  {}% Manually specify

\theoremstyle{custom}

% Define Author note colors:
\usepackage{xcolor}	
\usepackage{soul}

% End.

\newtheorem{theorem}{Theorem}[section]
\newtheorem{lemma}[theorem]{Lemma}
\newtheorem{definition}[theorem]{Definition}

\newtheorem{corollary}[theorem]{Corollary}

\title{Controlling iterated jumps of solutions to combinatorial problems}
\author{
  Ludovic Patey
}

\begin{document}

\begin{abstract}
Among the Ramsey-type hierarchies, namely, Ramsey's theorem,
the free set, the thin set and the rainbow Ramsey theorem,
only Ramsey's theorem is known to collapse in reverse mathematics.
A promising approach to show the strictness of the hierarchies
would be to prove that every computable instance at level~$n$
has a low$_n$ solution. In particular, this requires
effective control of iterations of the Turing jump.

In this paper, we design some variants of Mathias forcing
to construct solutions to cohesiveness, the Erd\H{o}s-Moser theorem
and stable Ramsey's theorem for pairs, while controlling their iterated jumps.
For this, we define forcing relations which, unlike Mathias forcing,
have the same definitional complexity as the formulas they force.
This analysis enables us to answer two questions of Wei Wang,
namely, whether cohesiveness and the Erd\H{o}s-Moser theorem
admit preservation of the arithmetic hierarchy,
and can be seen as a step towards the resolution
of the strictness of the Ramsey-type hierarchies. 
\end{abstract}

\maketitle

\section{Introduction}

Effective forcing is a very powerful tool in the computational analysis
of mathematical statements. In this framework, lowness is achieved by
deciding formulas during the forcing argument, while ensuring that the whole construction
remains effective. Thus, the definitional strength of the forcing relation
is very sensitive in effective forcing.
We present a new forcing argument enabling one to control iterated jumps
of solutions to Ramsey-type theorems. Our main motivation is reverse mathematics.

\subsection{Reverse mathematics}

Reverse mathematics is a vast mathematical program
whose goal is to classify ordinary theorems in terms of their provability strength.
It uses the framework of subsystems of second order arithmetic,
which is sufficiently rich to express in a natural way many theorems.
The base system, $\rca$ standing for Recursive Comprehension Axiom,
contains the basic first order Peano arithmetic together with the~$\Delta^0_1$
comprehension scheme and the~$\Sigma^0_1$ induction scheme.
Thanks to the equivalence between~$\Delta^0_1$-definable sets
and computable sets, $\rca$ can be considered as capturing
``computable mathematics''. The proof-theoretic
analysis of the theorems in reverse mathematics is therefore
closely related to their computability-theoretic content. See Simpson~\cite{Simpson2009Subsystems}
for a formal introduction to reverse mathematics.

Early reverse mathematics results support two main empirical observations:
First, many ordinary (i.e.\ non set-theoretic) theorems require very weak set existence axioms.
Second, most of those theorems are in fact \emph{equivalent} to one of four
main subsystems, which together with $\rca$ are known as the ``Big Five''.
However, among the theorems studied in reverse mathematics, a notable class of theorems
fails to support those observations, namely, Ramsey-type theorems.
This article focuses on consequences of Ramsey's theorem
below the arithmetic comprehension axiom ($\aca$). 
See Hirschfeldt~\cite{Hirschfeldt2015Slicing} for a gentle introduction to
the reverse mathematics below~$\aca$.

\subsection{Controlling iterated jumps}

Among the hierarchies of combinatorial principles, namely, Ramsey's theorem~\cite{Jockusch1972Ramseys,Seetapun1995strength,Cholak2001strength}, 
the rainbow Ramsey theorem~\cite{Csima2009strength,Wang2014Cohesive,Patey2015Somewhere}, 
and the free set and thin set theorems~\cite{Cholak2001Free,Wang2014Some} --
only Ramsey's theorem is known to collapse within the framework of reverse mathematics.
The above mentioned hierarchies satisfy the lower bounds of Jockusch~\cite{Jockusch1972Ramseys}, 
that is, there exists a computable instance at every level~$n \geq 2$ with no $\Sigma^0_n$ solution.
Thus, a possible strategy for proving that a hierarchy is strict consists
of showing the existence, for every computable instance at level~$n$, of a low${}_n$ solution.

The solutions to combinatorial principles are often built by Mathias forcing, whose forcing
relation is known to be of higher definitional strength than the formula it forces~\cite{Cholak2014Generics}.
Therefore there is a need for new notions of forcing with a better-behaving forcing relation.
In this paper, we design three notions of forcing to construct solutions
to cohesiveness, the Erd\H{o}s-Moser theorem and stable Ramsey's theorem for pairs, respectively.
We define a forcing relation with the expected properties, and which formalises
the first and the second jump control of Cholak, Jockusch and Slaman~\cite{Cholak2001strength}.
This can be seen as a step toward the resolution the strictness of the Ramsey-type hierarchies.
We take advantage of this new analysis of Ramsey-type statements
to prove two conjectures of Wang about the preservation
of the arithmetic hierarchy. 

\subsection{Preservation of the arithmetic hierarchy}

The notion of preservation of the arithmetic hierarchy has
been introduced by Wang in~\cite{Wang2014Definability}, in the context of a new analysis
of principles in reverse mathematics in terms of their definitional strength.

\begin{definition}[Preservation of definitions]\ 
\begin{itemize}
	\item[1.]
A set $Y$ \emph{preserves $\Xi$-definitions} (relative to $X$) for $\Xi$ among $\Delta^0_{n+1}, \Pi^0_n, \Sigma^0_n$ where $n > 0$, if every properly $\Xi$ (relative to $X$) set is properly $\Xi$ relative to $Y$ ($X \oplus Y$). $Y$ \emph{preserves the arithmetic hierarchy} (relative to $X$) if $Y$ preserves $\Xi$-definitions (relative to $X$) for all $\Xi$ among $\Delta^0_{n+1}, \Pi^0_n, \Sigma^0_n$ where $n > 0$.

	\item[2.]
Suppose that $\Phi = (\forall X) (\exists Y) \varphi(X,Y)$ and $\varphi$ is arithmetic. $\Phi$ \emph{admits preservation of $\Xi$-definitions} if for each $Z$ and $X \leq_T Z$ there exists $Y$ such that $Y$ preserves $\Xi$-definitions relative to $Z$ and $\varphi(X,Y)$ holds. $\Phi$ \emph{admits preservation of the arithmetic hierarchy} if for each $Z$ and $X \leq_T Z$ there exists $Y$ such that $Y$ preserves the arithmetic hierarchy relative to $Z$ and $\varphi(X,Y)$ holds.
\end{itemize}
\end{definition}

The preservation of the arithmetic hierarchy seems closely related
to the problem of controlling iterated jumps of solutions to combinatorial problems.
Indeed, a proof of such a preservation
usually consists of noticing that the forcing relation has the same strength
as the formula it forces, and then deriving a diagonalization from it. 
See Lemma~\ref{lem:diagonalization} for a case-in-point.
Wang proved in~\cite{Wang2014Definability} that weak König's lemma ($\wkl$),
the rainbow Ramsey theorem for pairs ($\rrt^2_2$) and the atomic model theorem ($\amt$)
admit preservation of the arithmetic hierarchy. He conjectured
that this is also the case for cohesiveness and the Erd\H{o}s-Moser theorem.
We prove the two conjectures via the following concatenation of Theorems~\ref{thm:coh-preservation-arithmetic-hierarchy}
and~\ref{thm:em-preserves-arithmetic}, where~$\coh$ stands for cohesiveness and~$\emo$ for the Erd\H{o}s-Moser theorem.

\begin{theorem}
$\coh$ and~$\emo$ admit preservation of the arithmetic hierarchy.
\end{theorem}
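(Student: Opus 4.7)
The stated theorem packages two independent preservation results, and I would attack them in parallel via the same high-level recipe but with two distinct forcing notions, eventually invoking them at the end as Theorems~\ref{thm:coh-preservation-arithmetic-hierarchy} and~\ref{thm:em-preserves-arithmetic}. The common strategy is the one flagged in the introduction: design a Mathias-like forcing whose forcing relation $\Vdash$ has \emph{the same} arithmetical complexity as the formula it forces, then apply the diagonalization lemma (Lemma~\ref{lem:diagonalization}) to obtain a generic solution that preserves every properly $\Sigma^0_n$, $\Pi^0_n$, and $\Delta^0_{n+1}$ set relative to the parameter~$Z$.

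For $\coh$, fix $Z$ and $X\leq_T Z$ coding a uniformly computable sequence $\vec R = (R_i)_{i\in\omega}$. I would use conditions $(F, \vec X)$ where $F$ is a finite set and $\vec X$ is an infinite set selected from a fixed class of reservoirs whose membership problem is controllable (for instance, reservoirs coded by low approximations of the iterated jumps of~$Z$), with $\max F<\min \vec X$ and $F$ already cohesive for $R_0,\dots,R_{|F|}$. The pivotal computation is to verify that, for this forcing, $(F,\vec X)\Vdash\varphi$ is equivalent to the natural scheme ``every sufficiently generic extension of $(F,\vec X)$ satisfies $\varphi$'' while remaining of the same quantifier complexity as $\varphi$. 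Once this is established, standard density arguments provide extensions meeting all requirements of the form ``$G$ is cohesive with respect to $R_i$'' and ``$G$ preserves $A$ being properly $\Xi$ relative to $Z$'' for each arithmetic $A$, and a generic filter yields the desired $Y$.

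For $\emo$, given a tournament $T\leq_T Z$, the conditions are Mathias conditions $(F,\vec X)$ in which $F$ is $T$-transitive, $F\cup\{x\}$ is $T$-transitive for every $x\in\vec X$, and $\vec X$ lies in an analogously controlled class of reservoirs. The forcing relation is designed along the same lines as for $\coh$, so that forcing a $\Sigma^0_n$ formula is itself $\Sigma^0_n$; this reflects, in the language of forcing, the first and second jump control of Cholak, Jockusch, and Slaman. With the complexity-preserving $\Vdash$ in hand, the diagonalization lemma again delivers conditions that preserve any prescribed properly $\Xi$ set, and the usual Mathias extraction of an infinite $Y$ from a sufficiently generic filter produces an infinite $T$-transitive subtournament that preserves the arithmetic hierarchy relative to $Z$.

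The main obstacle in both halves is calibrating the reservoir class so that three competing demands are met simultaneously: (i) it is closed under the extensions used in the density arguments, (ii) quantification over reservoirs stays inside the intended complexity class so that $\Vdash$ really does match the complexity of the forced formula at \emph{every} level $n$, and (iii) extensions witnessing each density requirement genuinely exist. The $\emo$ case is the more delicate of the two, since maintaining $T$-transitivity across extensions forces extra care in pruning reservoirs without inflating their definitional complexity; this is precisely where the new forcing relation, rather than the classical Mathias one, is indispensable.
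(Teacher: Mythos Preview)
Your high-level strategy is right and matches the paper's: build a forcing whose relation $\Vdash\varphi$ has the same complexity as $\varphi$, then diagonalize as in Lemma~\ref{lem:diagonalization}. But the proposal stops exactly at the hard part. You correctly name the obstacle---calibrating the reservoir class so that quantifying over extensions does not inflate complexity---yet ``reservoirs coded by low approximations of the iterated jumps of $Z$'' is not a solution, only a restatement of the problem. With ordinary Mathias conditions $(F,X)$, the predicate ``$X$ is an infinite computable subset of the previous reservoir'' is already $\Pi^0_2$, so quantifying over extensions when forcing a $\Pi^0_2$ formula pushes you to $\Pi^0_3$; nothing in your description explains how to avoid this.

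The paper's actual mechanism is quite specific and is what your sketch is missing. For $\coh$, reservoirs are not arbitrary infinite sets but sets of the form $R_\sigma=\bigcap_{\sigma(i)=1}R_i\cap\bigcap_{\sigma(i)=0}\overline{R}_i$, and the $\sigma$'s with $R_\sigma$ infinite are the extendible nodes of a $\Pi^{0,\emptyset'}_1$ class $\Ccal(\vec R)$. A condition is $(F,\sigma,T)$ with $T$ an infinite $\emptyset'$-p.r.\ subtree of $\Ccal(\vec R)$; the tree $T$ gives a \emph{compactness} handle so that forcing $\Pi^0_2$ formulas quantifies over nodes of $T$ rather than over arbitrary infinite reservoirs, keeping the relation $\Pi^0_2$. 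For $\emo$, the paper does not force directly on EM conditions as you describe; it reduces to the stable case via $\coh+\semo\vdash\emo$, and for $\semo$ uses \emph{partition trees} together with $\emptyset'$-p.r.\ \emph{promises}---a two-layer structure in which the forcing relation at the first level is parameterized by parts $\nu$ of the tree, and the promise restricts which parts survive refinement. None of this machinery is visible in your outline, and plain EM Mathias conditions $(F,X)$ with $X$ in some unspecified ``controlled class'' will hit the same $\Pi^0_2$ blowup as for $\coh$.
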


\subsection{Definitions and notation}

Fix an integer $k \in \omega$.
A \emph{string} (over $k$) is an ordered tuple of integers $a_0, \dots, a_{n-1}$
(such that $a_i < k$ for every $i < n$). The empty string is written $\epsilon$. A \emph{sequence}  (over $k$)
is an infinite listing of integers $a_0, a_1, \dots$ (such that $a_i < k$ for every $i \in \omega$).
Given $s \in \omega$,
$k^s$ is the set of strings of length $s$ over~$k$ and
$k^{<s}$ is the set of strings of length $<s$ over~$k$. As well,
$k^{<\omega}$ is the set of finite strings over~$k$
and $k^{\omega}$ is the set of sequences (i.e. infinite strings)
over~$k$. 
Given a string $\sigma \in k^{<\omega}$, we use $|\sigma|$ to denote its length.
Given two strings $\sigma, \tau \in k^{<\omega}$, $\sigma$ is a \emph{prefix}
of $\tau$ (written $\sigma \preceq \tau$) if there exists a string $\rho \in k^{<\omega}$
such that $\sigma \rho = \tau$. Given a sequence $X$, we write $\sigma \prec X$ if
$\sigma = X \uh n$ for some $n \in \omega$.
A \emph{binary string} (resp. real) is a \emph{string} (resp. sequence) over $2$.
We may identify a real with a set of integers by considering that the real is its characteristic function.

A tree $T \subseteq k^{<\omega}$ is a set downward-closed under the prefix relation.
A \emph{binary} tree is a set $T \subseteq 2^{<\omega}$.
A set $P \subseteq \omega$ is a \emph{path} through~$T$ if for every $\sigma \prec P$,
$\sigma \in T$. A string $\sigma \in k^{<\omega}$ is a \emph{stem} of a tree $T$
if every $\tau \in T$ is comparable with~$\sigma$.
Given a tree $T$ and a string $\sigma \in T$,
we denote by $T^{[\sigma]}$ the subtree $\{\tau \in T : \tau \preceq \sigma \vee \tau \succeq \sigma\}$.

Given two sets $A$ and $B$, we denote by $A < B$ the formula
$(\forall x \in A)(\forall y \in B)[x < y]$.
We write $A \subseteq^{*} B$ to mean that $A - B$ is finite, that is, 
$(\exists n)(\forall a \in A)(a \not \in B \imp a < n)$.
A \emph{Mathias condition} is a pair $(F, X)$
where $F$ is a finite set, $X$ is an infinite set
and $F < X$. A condition $(F_1, X_1)$ \emph{extends } $(F, X)$ (written $(F_1, X_1) \leq (F, X)$)
if $F \subseteq F_1$, $X_1 \subseteq X$ and $F_1 \setminus F \subset X$.
A set $G$ \emph{satisfies} a Mathias condition $(F, X)$
if $F \subset G$ and $G \setminus F \subseteq X$.

\section{Cohesiveness preserves the arithmetic hierarchy}

Cohesiveness plays a central role in reverse mathematics.
It appears naturally in the standard proof of Ramsey's theorem, as a preliminary
step to reduce an instance of Ramsey's theorem over $(n+1)$-tuples into a non-effective instance over $n$-tuples.

\begin{definition}[Cohesiveness]
An infinite set $C$ is $\vec{R}$-cohesive for a sequence of sets $R_0, R_1, \dots$
if for each $i \in \omega$, $C \subseteq^{*} R_i$ or $C \subseteq^{*} \overline{R_i}$.
A set $C$ is \emph{cohesive} (resp. \emph{p-cohesive}, \emph{r-cohesive}) if it is $\vec{R}$-cohesive where
$\vec{R}$ is the sequence of all the c.e. sets (resp. primitive recursive sets, computable sets).
$\coh$ is the statement ``Every uniform sequence of sets $\vec{R}$
admits an infinite $\vec{R}$-cohesive set.''
\end{definition}

Mileti~\cite{Mileti2004Partition} and Jockusch and Lempp [unpublished]
proved that $\coh$ is a consequence of Ramsey's theorem for pairs over $\rca$.
The computational power of $\coh$ is relatively well understood.
A Turing degree $\dbf$ \emph{bounds} $\coh$ if every computable sequence of sets~$R_0, R_1, \dots$,
has an $\vec{R}$-cohesive set bounded by $\dbf$.
Jockusch and Stephan characterized in~\cite{Jockusch1993cohesive}
the degrees bounding $\coh$ as the degrees whose jump is PA relative to $\emptyset'$.
The author~\cite{Patey2015weakness} extended this characterization to an instance-wise correspondance
between cohesiveness and the statement ``For every~$\Delta^0_2$ tree~$T$, there is a set whose jump computes a path through~$T$''.
Wang~\cite{Wang2014Definability} conjectured that $\coh$ admits preservation of the arithmetic hierarchy.
We prove his conjecture by using a new forcing argument.

\begin{theorem}\label{thm:coh-preservation-arithmetic-hierarchy}
$\coh$ admits preservation of the arithmetic hierarchy.
\end{theorem}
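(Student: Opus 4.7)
The plan is to design a Mathias-style forcing for $\vec{R}$-cohesiveness whose forcing relation has the same definitional complexity as the formula it forces, and then to apply the standard diagonalization sketched in Lemma~\ref{lem:diagonalization}. Fix $Z$ and let $X \leq_T Z$ code a sequence $\vec{R} = R_0, R_1, \dots$. For $\sigma \in 2^{<\omega}$, write $R_i^0 = R_i$, $R_i^1 = \overline{R_i}$, and $X_\sigma = \bigcap_{i < |\sigma|} R_i^{\sigma(i)}$; this family is uniformly $Z$-computable. I take conditions to be pairs $c = (F, \sigma)$ with $F$ finite, $\sigma \in 2^{<\omega}$, $X_\sigma$ infinite, and $F \subset X_\sigma$, with extension $(F', \sigma') \leq (F, \sigma)$ iff $\sigma \preceq \sigma'$, $F \subseteq F'$, and $F' \setminus F \subset X_\sigma$. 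A sufficiently generic filter produces a set $G = \bigcup F$ that is $\vec{R}$-cohesive, since at each stage one can extend $\sigma$ by one bit along the direction keeping $X_{\sigma \smf b}$ infinite.

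Next I would define a forcing relation $c \Vdash \varphi$ by induction on $\varphi$, with $(F, \sigma) \Vdash (\exists x)\psi(x, G)$ iff there exist $x$ and a finite $F' \subset X_\sigma$ with $F' \supseteq F$ such that $(F', \sigma) \Vdash \psi(x, G)$, and negations treated dually. The main technical lemma to establish is that the set of conditions forcing a $\Sigma^0_n$ (resp.\ $\Pi^0_n$) formula with parameter $Z \oplus G$ is itself $\Sigma^0_n(Z)$ (resp.\ $\Pi^0_n(Z)$). The inductive verification uses that quantifying over extensions $(F', \sigma')$ is arithmetic of a single level and that membership in $X_\sigma$ is $Z$-computable uniformly in $\sigma$; the complexity contributed by extension is absorbed into the outer quantifier block of the forced formula.

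Given this main lemma, the preservation follows by a standard diagonalization. For a properly $\Sigma^0_n(Z)$ set $A$ and an index $e$ of the $e$-th $\Pi^0_n(Z \oplus G)$ set $P_e$, the set $B^c_e = \{x : c \Vdash x \in P_e\}$ is $\Pi^0_n(Z)$ uniformly in $c$. Either some extension $c' \leq c$ and some $x$ realize $x \in A \leftrightarrow x \notin P_e$, in which case this requirement is forced; or no such extension exists, in which case $A = B^c_e$, contradicting that $A$ is not $\Pi^0_n(Z)$. The dual handles properly $\Pi^0_n(Z)$ sets, and $\Delta^0_{n+1}(Z)$-preservation follows by simultaneously diagonalizing against $\Sigma^0_n(Z \oplus G)$ and $\Pi^0_n(Z \oplus G)$ definitions. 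Enumerating all these requirements in a standard sequence and extending at each step produces the desired generic $G$.

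The main obstacle I anticipate is the handling of the predicate ``$X_\sigma$ is infinite'', which is $\Pi^0_2(Z)$ and would otherwise contaminate the forcing relation at the lowest levels, particularly $n = 1$. The natural remedy is either to not build infiniteness into conditions and instead guarantee it through density, or to index conditions along a $\emptyset'$-effectively chosen subtree and absorb the bookkeeping uniformly. Arranging matters so that the forcing relation genuinely collapses to the expected complexity at \emph{every} level, including the base case, and verifying the required commutation with Boolean connectives and quantifiers, is the crux of the argument.
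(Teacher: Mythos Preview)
Your overall architecture is right, and you have correctly isolated the obstacle: the predicate ``$X_\sigma$ is infinite'' is $\Pi^0_2(Z)$ and contaminates the complexity of the forcing relation. But your proposal does not actually overcome this obstacle; it only names it. The claim that ``the complexity contributed by extension is absorbed into the outer quantifier block of the forced formula'' is precisely what fails with conditions $(F,\sigma)$. Consider forcing a $\Pi^0_2$ formula $(\forall x)\psi(x,G)$ with $\psi\in\Sigma^0_1$: your dual treatment of negation amounts to ``no extension $(F',\sigma')$ forces $\neg\psi(w,G)$ for any $w$''. The universal quantifier over extensions must range only over $\sigma'$ with $X_{\sigma'}$ infinite (otherwise density fails and the generic need not satisfy the forced formula), and this is a genuine $\Pi^0_2(Z)$ side-condition that cannot be absorbed into a single $\Pi^0_2$ block. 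Your remedy (a), dropping infiniteness from conditions, trades this for a failure of density at dead ends; remedy (b) is the correct direction but is left undeveloped.

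What the paper actually does is carry an infinite $\emptyset'$-primitive recursive tree $T$ with $[T]\subseteq\Ccal(\vec R)$ as part of each condition $(F,\sigma,T)$, with $\sigma$ a stem of $T$. The tree packages all admissible future choices of $\sigma'$ into a single $\emptyset'$-p.r.\ object. Forcing a $\Pi^0_2$ formula then quantifies over \emph{all nodes} $\tau\in T$ (and over finite $E$ and $w<|\tau|$), asking that the \emph{precondition} $(E,\tau,T^{[\tau]})$ not force $\neg\psi(G,w)$; crucially, one does \emph{not} check that $T^{[\tau]}$ is infinite here, so this is a $\Pi^0_1$ quantifier over a $\Sigma^0_1$ matrix, i.e.\ genuinely $\Pi^0_2$. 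The infiniteness check reappears only in the predicate ``$d\in\Ext(c)$'', which is $\Pi^0_2$ and is invoked only from level $\Pi^0_3$ upward, where it is harmless. The density argument at level $2$ (Lemma~\ref{lem:forcing-dense}) then uses a compactness step on $T$: one prunes $T$ to the $\emptyset'$-p.r.\ subtree of nodes where no witness has yet appeared, and either this subtree is infinite (forcing the $\Pi^0_2$ side) or some extendible $\tau$ lies outside it (forcing the $\Sigma^0_2$ side). This tree-plus-precondition mechanism is the missing idea in your sketch; without it the main lemma you state is simply false for your notion of condition.
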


Before proving Theorem~\ref{thm:coh-preservation-arithmetic-hierarchy},
we state an immediate corollary.

\begin{corollary}
There exists a cohesive set preserving the arithmetic hierarchy.
\end{corollary}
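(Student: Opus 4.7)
The plan is to apply Theorem~\ref{thm:coh-preservation-arithmetic-hierarchy} to the canonical uniformly $\emptyset'$-computable enumeration $\vec{R}$ of all c.e.\ sets. Setting $X = \vec{R}$ and $Z = \emptyset'$ in the definition of ``admits preservation of the arithmetic hierarchy'' --- so that $X \leq_T Z$ and the matrix $\varphi(X,Y)$ asserts that $Y$ is infinite and $\vec{R}$-cohesive --- one obtains a set $Y$ which is cohesive in the classical sense and which preserves the arithmetic hierarchy \emph{relative to} $\emptyset'$.

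The second step is to upgrade ``relative to $\emptyset'$'' to the absolute preservation claimed in the corollary. For every $n \geq 2$ and every $\Xi \in \{\Sigma^0_n, \Pi^0_n, \Delta^0_{n+1}\}$, the class identity $\Xi = \Xi'(\emptyset')$, where $\Xi'$ is the analogous class shifted one level below, makes ``properly $\Xi$'' coincide with ``properly $\Xi'(\emptyset')$''. Thus the relative preservation supplied by the theorem translates into absolute preservation at levels $\geq 2$, provided that $Y$ is low over $\emptyset'$ (that is, $Y' \equiv_T Y \oplus \emptyset'$), so that $\Sigma^0_{n-1}(Y \oplus \emptyset') = \Sigma^0_n(Y)$ and dually for $\Pi^0_n$. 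This lowness is naturally delivered by the forcing construction that underlies Theorem~\ref{thm:coh-preservation-arithmetic-hierarchy}, whose reservoirs live in $\emptyset'$-effective strata and whose jump-control machinery computes $Y'$ from $Y \oplus \emptyset'$.

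The main obstacle is the $n = 1$ case, which is not addressed by the transfer above: a properly $\Sigma^0_1$ set $A$ (c.e.\ but not co-c.e.) stays properly $\Sigma^0_1(Y)$ iff $A \not\leq_T Y$, and similarly for properly $\Pi^0_1$ and properly $\Delta^0_2$ sets. I would resolve this by invoking the genericity of the Mathias-style forcing used in the proof of Theorem~\ref{thm:coh-preservation-arithmetic-hierarchy}: that forcing can be taken sufficiently generic to guarantee that $Y$ computes no non-computable $\Delta^0_2$ set, which yields all three level-$1$ preservation statements at once and completes the argument.
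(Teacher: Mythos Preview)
Your approach diverges from the paper's and carries two real gaps.

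The paper never relativizes to $\emptyset'$. Instead it uses Wang's result that $\wkl$ preserves the arithmetic hierarchy, together with Jockusch's theorem that every PA degree computes a uniform sequence containing all computable sets, to obtain a sequence $\vec{R}$ that (i) contains all computable sets and (ii) preserves the arithmetic hierarchy \emph{absolutely}. Theorem~\ref{thm:coh-preservation-arithmetic-hierarchy} relativized to this $\vec{R}$ then yields an $\vec{R}$-cohesive $C$ preserving the hierarchy relative to $\vec{R}$; a one-line composition (if $\vec{R}$ preserves absolutely and $C$ preserves relative to $\vec{R}$, then $C$ preserves absolutely, since $\Pi^0_n(C)\subseteq\Pi^0_n(\vec{R}\oplus C)$) finishes the definitional part. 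Finally $C$ is r-cohesive, and by Jockusch--Stephan the r-cohesive and cohesive degrees coincide, so $C$ computes a cohesive set, which inherits preservation.

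Your route via $Z=\emptyset'$ forces you to transfer ``relative to $\emptyset'$'' to absolute preservation, and this is where the argument breaks. For $n\geq 2$ you need $\Pi^0_n(Y)=\Pi^0_{n-1}(\emptyset'\oplus Y)$, i.e.\ $Y'\equiv_T Y\oplus\emptyset'$. You assert this is ``naturally delivered'' by the forcing, but once you relativize the construction to $\emptyset'$ the trees in the conditions become $\emptyset''$-p.r., not $\emptyset'$-p.r., so the jump-control machinery sits one level higher than you claim; nothing in Theorem~\ref{thm:coh-preservation-arithmetic-hierarchy} or its proof gives GL$_1$ for the resulting generic. Without GL$_1$ you only get $\Pi^0_n(Y)\supseteq\Pi^0_{n-1}(\emptyset'\oplus Y)$, and the implication goes the wrong way. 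Your level-$1$ fix is likewise an appeal to an unproved cone-avoidance property of the relativized forcing rather than to the theorem itself. The paper's detour through $\wkl$ and r-cohesiveness is precisely what lets it invoke Theorem~\ref{thm:coh-preservation-arithmetic-hierarchy} as a black box and avoid both of these obligations.
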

\begin{proof}
Jockusch~\cite{Jockusch1972Degreesa} proved that every PA degree computes a sequence of sets
containing, among others, all the computable sets.
Wang proved in~\cite{Wang2014Definability} that $\wkl$ preserves the arithmetic hierarchy. 
Therefore there exists a uniform sequence of sets $\vec{R}$ containing all the computable sets 
and preserving the arithmetic hierarchy. By Theorem~\ref{thm:coh-preservation-arithmetic-hierarchy}
relativized to $\vec{R}$, there exists an infinite $\vec{R}$-cohesive set $C$ preserving the arithmetic hierarchy relative to~$\vec{R}$.
In particular $C$ is r-cohesive and preserves the arithmetic hierarchy. By~\cite{Jockusch1993cohesive},
the degrees of r-cohesive and cohesive sets coincide. Therefore $C$ computes a cohesive set which preserves the arithmetic hierarchy.
\end{proof}

Given a uniformly computable sequence of sets~$R_0, R_1, \dots$,
the construction of an $\vec{R}$-cohesive set is usually done
with computable Mathias forcing, that is, using conditions~$(F, X)$
in which~$X$ is computable.
The construction starts with~$(\emptyset, \omega)$ and interleaves two kinds of steps.
Given some condition~$(F,X)$,
\begin{itemize}
	\item[(S1)] the \emph{extension} step consists of taking an element $x$ from $X$ and adding it to~$F$,
	therefore forming the extension $(F \cup \{x\}, X \setminus [0,x])$;
	\item[(S2)] the \emph{cohesiveness} step consists of deciding which one of $X \cap R_i$
	and $X \cap \overline{R}_i$ is infinite, and taking the chosen one as the new reservoir.
\end{itemize} 

Cholak, Dzhafarov, Hirst and Slaman~\cite{Cholak2014Generics} studied the definitional complexity 
of the forcing relation for computable Mathias forcing.
They proved that it has good definitional properties for the first jump, but not for iterated jumps.
Indeed, given a computable Mathias condition~$c = (F, X)$ and a $\Sigma^0_1$ formula~$(\exists x)\varphi(G, x)$,
one can $\emptyset'$-effectively decide whether there is an extension~$d$ forcing~$(\exists x)\varphi(G, x)$ by asking the following question:
\begin{quote}
Is there an extension~$d = (E, Y) \leq c$ and some~$n \in \omega$ such that~$\varphi(E, n)$ holds?
\end{quote}
If there is such an extension, then we can choose it to be a \emph{finite extension}, that is, such that~$Y =^* X$.
Therefore, the question is $\Sigma^{0,X}_1$.
Consider now a $\Pi^0_2$ formula~$(\forall x)(\exists y)\varphi(G, x, y)$. The question becomes
\begin{quote}
For every extension~$d \leq c$ and every~$m \in \omega$, is there some extension~$e = (E, Y) \leq d$
and some~$n \in \omega$ such that~$\varphi(E, m, n)$ holds?
\end{quote}
In this case, the extension~$d$ is not usually a finite extension and therefore the question cannot
be presented in a $\Pi^0_2$ way. In particular, the formula ``$Y$ is an infinite subset of~$X$'' is definitionally complex. 
In general, deciding iterated jumps of a generic set requires to be able to talk about the future
of a given condition, and in particular to describe by simple means the formula ``$d$ is a valid condition''
and the formula ``$d$ is an extension of~$c$''.

Thankfully, in the case of cohesiveness, we do not need the full generality of 
the computable Mathias forcing. Indeed, the reservoirs
have a very special shape. After the first application of stage~(S2), 
the set $X$ is, up to finite changes, of the form $\omega \cap R_0$
or $\omega \cap \overline{R_0}$. After the second application of (S2), it is in one of the following forms: $\omega \cap R_0 \cap R_1$,
$\omega \cap R_0 \cap \overline{R}_1$, $\omega \cap \overline{R}_0 \cap R_1$,
$\omega \cap \overline{R}_0 \cap \overline{R}_1$, and so on.
More generally, after $n$ applications of (S2), a condition~$c = (F, X)$ is characterized
by a pair~$(F, \sigma)$ where $\sigma$ is a string of length~$n$ representing the choices made during (S2).
Given a string~$\sigma \in 2^{<\omega}$, let~$R_\sigma = \bigcap_{\sigma(i) = 0} \overline{R}_i \bigcap_{\sigma(i) = 1} R_i$.
In particular, $R_\varepsilon = \omega$, where~$\varepsilon$ is the empty string.

Even within this restricted partial order, the decision of the~$\Pi^0_2$ formula remains too complicated 
sinces it requires deciding if $R_\sigma$ is infinite.
However, notice that the~$\sigma$'s such that~$R_\sigma$ is infinite are exactly the initial
segments of the $\Pi^{0, \emptyset'}_1$ class~$\Ccal(\vec{R})$ defined as the collection of the reals~$X$
such that~$R_\sigma$ has more than~$|\sigma|$ elements for every~$\sigma \prec X$.
We can therefore use a compactness argument at the second level to decrease the definitional strength of the forcing relation,
as Wang~\cite{Wang2014Definability} did for weak K\"onig's lemma.

\subsection{The forcing notion}

We let~$\Tb$ denote the collection of all the infinite $\emptyset'$-primitive recursive trees~$T$
such that~$[T] \subseteq \Ccal(\vec{R})$. By $\emptyset'$-primitive recursive, we mean the class of functions
Add a comment to this line
obtained by adding the characteristic function of $\emptyset'$ to the basic primitive recursive functions,
and closing under the standard primitive recursive operations.
Note that~$\Tb$ is a computable set. 
Given two finite sets~$E, F$ and some string~$\sigma \in 2^{<\omega}$,
we write~$E \leq_\sigma F$ to say that $F \subseteq E \subseteq F \cup (R_\sigma \cap [\max F, \infty))$.
In other words, $E \leq_\sigma F$ if and only if $(E, R_\sigma)$ is a valid Mathias extension
of $(F, R_\sigma)$, where~$R_\sigma$ might be finite.
We are now ready to defined our partial order.

\begin{definition}
Let $\Pb$ be the partial order whose conditions are tuples $(F, \sigma, T)$
where~$F \subseteq \omega$ is a finite set, $\sigma \in 2^{<\omega}$ and~$T \in \Tb$ with stem~$\sigma$.
A condition~$d = (E, \tau, S)$ \emph{extends} $c = (F, \sigma, T)$ (written~$d \leq c$)
if~$E \leq_\sigma F$, $\tau \succeq \sigma$ and $S \subseteq T$.
\end{definition}

Given a condition~$c = (F, \sigma, T)$, the string~$\sigma$ imposes a finite restriction on the possible extensions of the set~$F$.
The condition $c$ intuitively denotes the Mathias condition $(F, R_\sigma \cap (\max F, \infty))$
with some additional constraints on the extensions of~$\sigma$ represented by the tree~$T$.
Accordingly, set~$G$ \emph{satisfies} $(F, \sigma, T)$ if it satisfies
the induced Mathias condition, that is, if~$F \subseteq G \subseteq F \cup (R_\sigma \cap (\max F, \infty))$. 
We let~$\Ext(c)$ be the collection of all the extensions of~$c$.

Note that although we did not explicitely require $R_\sigma$ to be infinite,
this property holds for every condition~$(F, \sigma, T) \in \Pb$.
Indeed, since $[T] \subseteq \Ccal(\vec{R})$, then $R_\tau$ is infinite
for every extensible node~$\tau \in T$. Since $\sigma$ is a stem of~$T$,
it is extensible and therefore $R_\sigma$ is infinite.

\subsection{Preconditions and forcing $\Sigma^0_1$ ($\Pi^0_1$) formulas}

When forcing complex formulas, we need to be able to consider all possible extensions of some condition~$c$.
Checking that some $d = (E, \tau, S)$ is a valid condition extending $c$ 
requires to decide whether the $\emptyset'$-p.r.\ tree $S$ is infinite,
which is a $\Pi^0_2$ question.
At some point, we will need to decide a $\Sigma^0_1$ formula without having enough computational power
to check that the tree part is infinite (see clause~(ii) of Definition~\ref{def:forcing-condition}).
As the tree part of a condition is not accurate for such formulas,
we may define the corresponding forcing relation over 
a weaker notion of condition where the tree is not required to be infinite.

\begin{definition}[Precondition]
A \emph{precondition} is a condition~$(F, \sigma, T)$ without the assumption that $T$ is infinite.
\end{definition}

In particular, $R_\sigma$ may be a finite set. The notion of condition extension can be generalized
to the preconditions. The set of all preconditions is computable, contrary to the set~$\Pb$.
Given a precondition~$c = (F, \sigma, T)$, we denote by~$\Ext_1(c)$ the set of all preconditions~$(E, \tau, S)$
extending~$c$ such that~$\tau = \sigma$ and~$T = S$. Here, $T = S$ in a strong sense,
that is, the Turing indices of~$T$ and~$S$ are the same. This fact is used in clause~a) of Lemma~\ref{lem:extension-complexity}.
We let~$\Ab$ denote the collection of all the finite sets of integers. 
The set $\Ab$ can be thought of as representing the set of finite approximations of the generic set~$G$. 
We also fix a uniformly computable enumeration $\Ab_0 \subseteq \Ab_1 \subseteq \dots$
of finite subsets of~$\Ab$ such that~$\bigcup_s \Ab_s = \Ab$.
We denote by~$\Apx(c)$ the set $\{ E \in \Ab : (E, \sigma, T) \in \Ext_1(c) \}$.
In particular, $\Apx(c)$ is collection of all finite sets~$E$ satisfying~$c$, that is, 
$\Apx(c) = \{ E \in \Ab : E \leq_\sigma F \}$.
Last, we let~$\Apx_s(c) = \Apx(c) \cap \Ab_s$. We start by proving a few trivial statements.

\begin{lemma}\label{lem:basic-statements}
Fix a precondition $c = (F, \sigma, T)$.
\begin{itemize}
	\item[1)] If $c$ is a condition then $\Ext_1(c) \subseteq \Ext(c)$.
	\item[2)] If $c$ is a condition then $\Apx(c) = \{ E : (E, \tau, S) \in \Ext(c) \}$.
	\item[3)] If $d$ is a precondition extending $c$ then 
	$\Apx(d) \subseteq \Apx(c)$ and~$\Apx_s(d) \subseteq \Apx_s(c)$.
\end{itemize}
\end{lemma}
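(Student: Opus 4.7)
The plan is to unfold definitions for each of the three items in turn; none of them require more than tracing what the symbols mean, but item~(3) hides a small transitivity calculation that is the only substantive content.

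For item~(1), I would start from the definition of $\Ext_1(c)$: its elements are preconditions $(E,\sigma,T)$ sharing the same $\sigma$ and the same tree $T$ as $c=(F,\sigma,T)$, with $E\leq_\sigma F$. Assuming $c$ is a condition, $T$ is infinite with stem~$\sigma$, so each such tuple is automatically a condition, not merely a precondition. It then suffices to verify the three clauses in the definition of the extension relation on $\Pb$: $E\leq_\sigma F$ holds by assumption, $\sigma\succeq\sigma$ is trivial, and $T\subseteq T$ is trivial. So $(E,\sigma,T)\in\Ext(c)$.

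For item~(2), the inclusion $\subseteq$ follows at once from item~(1): every $E\in\Apx(c)$ is the first component of $(E,\sigma,T)\in\Ext_1(c)\subseteq\Ext(c)$. For the reverse inclusion, I would take an arbitrary $(E,\tau,S)\in\Ext(c)$ and read off from the definition of extension that $E\leq_\sigma F$, which is exactly the condition defining $\Apx(c)$.

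For item~(3), the main step is the following transitivity claim: if $d=(E,\tau,S)$ extends $c=(F,\sigma,T)$ and $E'\leq_\tau E$, then $E'\leq_\sigma F$. I would verify this by spelling out $\leq_\sigma$ and $\leq_\tau$: from $E\leq_\sigma F$ we get $F\subseteq E$ and $E\subseteq F\cup(R_\sigma\cap[\max F,\infty))$; from $E'\leq_\tau E$ we get $E\subseteq E'$ and $E'\subseteq E\cup(R_\tau\cap[\max E,\infty))$. Since $\tau\succeq\sigma$ implies $R_\tau\subseteq R_\sigma$, and since $\max E\geq\max F$, the set $E'$ lies in $F\cup(R_\sigma\cap[\max F,\infty))$ and contains $F$, so $E'\leq_\sigma F$. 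This gives $\Apx(d)\subseteq\Apx(c)$, and intersecting both sides with $\Ab_s$ gives $\Apx_s(d)\subseteq\Apx_s(c)$. The only potential snag is remembering that the definition of $E\leq_\sigma F$ requires $E\subseteq F\cup(R_\sigma\cap[\max F,\infty))$ rather than $(\max F,\infty)$, so one must check carefully that the new elements contributed by $E'\setminus E$, which lie in $[\max E,\infty)\subseteq[\max F,\infty)$, fit into the right half-line; this is immediate but is the one place where the computation is not purely formal.
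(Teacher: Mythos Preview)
Your proof is correct and follows essentially the same approach as the paper, unfolding the definitions item by item. Your treatment of item~(3) is in fact more careful than the paper's: the paper compresses the transitivity step into the single assertion ``$E\leq_\tau F$, so $E\leq_\sigma F$'', whereas you spell out explicitly why $R_\tau\subseteq R_\sigma$ and $\max E\geq\max F$ combine to give $E'\leq_\sigma F$.
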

\begin{proof}\ 
\begin{itemize}
	\item[1)]
By definition, if $c$ is a condition, then $T$ is infinite.
If $d \in \Ext_1(c)$ then $d = (E, \sigma, T)$ for some $E \in \Apx(c)$.
As $d$ is a precondition and $T$ is infinite, $d$ is a condition.

	\item[2)] By definition,
	$\Apx(c) = \{ E : (E, \sigma, T) \in \Ext_1(c) \} \subseteq \{ E : (E, \tau, S) \in \Ext(c) \}$.
	In the other direction, fix an extension $(E, \tau, S) \in \Ext(c)$.
	By definition of an extension, $E \leq_\tau F$, so $E \leq_\sigma F$.
	Therefore $(E, \sigma, T) \in \Ext_1(c)$ and by definition of $\Apx(c)$, $E \in \Apx(c)$.

	\item[3)]
	Fix some $(E, \tau, S) \in \Ext_1(d)$. As $d$ extends $c$, $\tau \succeq \sigma$.
	By definition of an extension, $E \leq_\tau F$, so $E \leq_\sigma F$,
	hence $(E, \sigma, T) \in \Ext_1(c)$.
	Therefore $\Apx(d) = \{ E : (E, \tau, S) \in \Ext_1(d) \} \subseteq
\{ E : (E, \sigma, T) \in \Ext_1(c) \} = \Apx(c)$. For any~$s \in \omega$,
	$\Apx_s(d) = \Apx(d) \cap \Ab_s \subseteq \Apx(c) \cap \Ab_s = \Apx_s(c)$.
\end{itemize}
\end{proof}

Note that although the extension relation has been generalized to preconditions, 
$\Ext(c)$ is defined to be the set of all the \emph{conditions} extending $c$. 
In particular, if $c$ is a precondition which is not a condition, $\Ext(c) = \emptyset$,
whereas at least $c \in \Ext_1(c)$.
This is why clause 1 of Lemma~\ref{lem:basic-statements} gives the useful information
that whenever~$c$ is a true condition, so are the members of $\Ext_1(c)$.

\begin{definition}\label{def:forcing-precondition}
Fix a precondition~$c = (F, \sigma, T)$ and a $\Sigma^0_0$ formula $\varphi(G, x)$.
\begin{itemize}
	\item[(i)] $c \Vdash (\exists x)\varphi(G, x)$ iff $\varphi(F, w)$ holds for some~$w \in \omega$
	\item[(ii)] $c \Vdash (\forall x)\varphi(G, x)$ iff $\varphi(E, w)$ holds for every~$w \in \omega$
	and every set~$E \in \Apx(c)$.
\end{itemize}
\end{definition}

As explained, $\sigma$ restricts the possible extensions of the set~$F$ (see clause 3 of Lemma~\ref{lem:basic-statements}), 
so this forcing notion is stable by condition extension. The tree $T$ itself restricts the possible extensions of $\sigma$,
but has no effect in deciding a $\Sigma^0_1$ formula (Lemma~\ref{lem:tree-no-effect-first-level}).

The following trivial lemma expresses the fact that the tree part of a precondition
has no effect in the forcing relation for a $\Sigma^0_1$ or $\Pi^0_1$ formula.

\begin{lemma}\label{lem:tree-no-effect-first-level}
Fix two preconditions $c = (F, \sigma, T)$ and $d = (F, \sigma, S)$, and some $\Sigma^0_1$ or $\Pi^0_1$
formula~$\varphi(G)$.
$$
c \Vdash \varphi(G) \hspace{10pt} \mbox{ if and only if } \hspace{10pt} d \Vdash \varphi(G)
$$
\end{lemma}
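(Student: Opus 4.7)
The plan is to simply unfold Definition~\ref{def:forcing-precondition} and observe that neither clause mentions the tree component of a precondition. The statement is really just a notational verification.

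First I would handle the $\Sigma^0_1$ case. If $\varphi(G) = (\exists x)\psi(G,x)$ with $\psi$ a $\Sigma^0_0$ formula, then by clause~(i) of Definition~\ref{def:forcing-precondition} we have $c \Vdash \varphi(G)$ iff $\psi(F,w)$ holds for some $w \in \omega$. Since $c$ and $d$ share the same finite set $F$, the right-hand side does not depend on the tree part, so $c \Vdash \varphi(G)$ iff $d \Vdash \varphi(G)$.

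Next I would handle the $\Pi^0_1$ case. If $\varphi(G) = (\forall x)\psi(G,x)$, then by clause~(ii) we have $c \Vdash \varphi(G)$ iff $\psi(E,w)$ holds for every $w \in \omega$ and every $E \in \Apx(c)$. Recall from the paragraph preceding Lemma~\ref{lem:basic-statements} that $\Apx(c) = \{E \in \Ab : E \leq_\sigma F\}$, which depends only on $F$ and $\sigma$. Since $c$ and $d$ agree on both these components, $\Apx(c) = \Apx(d)$, and the right-hand side is again invariant when passing from $c$ to $d$.

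There is no real obstacle here; the lemma is essentially a sanity check recording that the definitions of forcing at the $\Sigma^0_1$ and $\Pi^0_1$ levels were deliberately arranged so that the tree component of a precondition plays no role. The only thing worth being careful about is making explicit the reduction of $\Pi^0_1$ forcing to the set $\Apx(c)$, and noting that the invariance fails once one moves to higher levels of the arithmetic hierarchy (which is why the tree component becomes necessary later on in the forcing analysis).
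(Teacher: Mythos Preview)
Your proposal is correct and takes essentially the same approach as the paper: observe that the tree component does not appear in either clause of Definition~\ref{def:forcing-precondition}, and that $\Apx(c) = \Apx(d)$ since this set depends only on $F$ and $\sigma$. The paper's proof is simply a more terse version of what you wrote.
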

\begin{proof}
Simply notice that the tree part of the condition does not occur in the definition
of the forcing relation, and that $\Apx(c) = \Apx(d)$.
\end{proof}

As one may expect, the forcing relation
for a precondition is closed under extension.

\begin{lemma}\label{lem:forcing-extension-precondition}
Fix a precondition $c$ and a $\Sigma^0_1$ or $\Pi^0_1$ formula $\varphi(G)$.
If $c \Vdash \varphi(G)$ then for every precondition $d \leq c$, $d \Vdash \varphi(G)$.
\end{lemma}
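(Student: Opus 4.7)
The plan is to split on the syntactic shape of $\varphi(G)$ and unpack Definition~\ref{def:forcing-precondition} in each case, since the $\Sigma^0_1$ and $\Pi^0_1$ clauses of that definition are asymmetric. In both cases the tree parts of $c$ and $d$ should play no role at this level, mirroring Lemma~\ref{lem:tree-no-effect-first-level}, so no compactness or tree-pruning argument will enter.

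For the $\Pi^0_1$ case I would write $\varphi(G) = (\forall x)\psi(G,x)$ with $\psi \in \Sigma^0_0$ and translate the hypothesis $c \Vdash \varphi(G)$ into ``$\psi(E,w)$ holds for every $w \in \omega$ and every $E \in \Apx(c)$.'' The key input is clause~3 of Lemma~\ref{lem:basic-statements}, which gives $\Apx(d) \subseteq \Apx(c)$ for any precondition $d \leq c$. The universal clause over $\Apx(c)$ then automatically restricts to one over $\Apx(d)$, so $d \Vdash \varphi(G)$.

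For the $\Sigma^0_1$ case I would write $\varphi(G) = (\exists x)\psi(G,x)$ and pick a witness $w$ with $\psi(F,w)$, where $c = (F,\sigma,T)$. For an extension $d = (E,\tau,S) \leq c$, the definition of $\leq_\sigma$ forces $F \subseteq E$, and the standard positivity/monotonicity normal form for $\Sigma^0_0$ kernels of $\Sigma^0_1$ formulas lets the same $w$ witness $\psi(E,w)$, yielding $d \Vdash \varphi(G)$.

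The hardest step, such as it is, will be this monotonicity remark in the $\Sigma^0_1$ case, but it is really a bookkeeping matter rather than a genuine obstacle, and does not require any of the new machinery (trees, preconditions, $\Ccal(\vec R)$) introduced in the paper. The asymmetry between the two clauses of Definition~\ref{def:forcing-precondition} is precisely set up so that the $\Pi^0_1$ case is absorbed into the $\Apx$-inclusion of Lemma~\ref{lem:basic-statements} and the $\Sigma^0_1$ case is absorbed into the set-monotonicity of the kernel.
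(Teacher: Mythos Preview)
Your proof follows the paper's approach: split on $\Sigma^0_1$ versus $\Pi^0_1$, invoke $\Apx(d)\subseteq\Apx(c)$ from Lemma~\ref{lem:basic-statements} for the universal case, and carry the same witness $w$ across in the existential case.

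One correction in the $\Sigma^0_1$ case: the step from $\psi(F,w)$ to $\psi(E,w)$ is not justified by a ``positivity/monotonicity normal form'' of the kernel---there is no such normal form for arbitrary $\Sigma^0_0$ kernels. Take $\psi(G,w)\equiv w\notin G$: this is $\Sigma^0_0$, $\psi(F,w)$ holds for any $w>\max F$, yet fails once $w$ is added to $E$. What you actually need is the full content of $E\leq_\sigma F$, namely $E\setminus F\subseteq(\max F,\infty)$, so that $E$ and $F$ agree as characteristic functions on $[0,\max F]$. The paper's proof uses exactly this, writing ``$E\leq_\sigma F$, so $\psi(E,w)$ holds'', and in Lemma~\ref{lem:coh-holds-filter} names the underlying principle ``continuity''. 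Citing only $F\subseteq E$ is not enough.
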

\begin{proof}
Fix a precondition $c = (F, \sigma, T)$ such that $c \Vdash \varphi(G)$ and an extension $d = (E, \tau, S) \leq c$.
\begin{itemize}
  \item
If $\varphi \in \Sigma^0_1$ then $\varphi(G)$ can be expressed
as $(\exists x)\psi(G, x)$ where $\psi \in \Sigma^0_0$.
As $c \Vdash \varphi(G)$, then by clause (i) of Definition~\ref{def:forcing-precondition},
there exists a $w \in \omega$ such that $\psi(F, w)$ holds. 
By definition of $d \leq c$, $E \leq_\sigma F$, so $\psi(E, w)$ holds,
hence $d \Vdash \varphi(G)$.

\item
If $\varphi \in \Pi^0_1$ then $\varphi(G)$ can be expressed
as $(\forall x)\psi(G, x)$ where $\psi \in \Sigma^0_0$.
As $c \Vdash \varphi(G)$, then by clause (ii) of Definition~\ref{def:forcing-precondition},
for every $w \in \omega$ and every $H \in \Apx(c)$, $\varphi(H, w)$ holds.
By clause 3 of Lemma~\ref{lem:basic-statements}, $\Apx(d) \subseteq \Apx(c)$ so $d \Vdash \varphi(G)$.
\end{itemize}
\end{proof}

\subsection{Forcing higher formulas}

We are now able to define the forcing relation for any arithmetic formula.
The forcing relation for arbitrary arithmetic formulas is induced by the forcing 
relation for~$\Sigma^0_1$ formulas. However, the definitional strength of the resulting
relation is too high with respect to the formula it forces. We therefore
design a custom relation with better definitional properties,
and which still preserve the expected properties of a forcing relation,
that is, the density of the set of conditions forcing a formula or its negation,
and the preservation of the forced formulas under condition extension.

\begin{definition}\label{def:forcing-condition}
Let~$c = (F, \sigma, T)$ be a condition and~$\varphi(G)$ be an arithmetic formula.
\begin{itemize}
	\item[(i)] If $\varphi(G) = (\exists x)\psi(G, x)$ where~$\psi \in \Pi^0_{n+1}$ then~$c \Vdash \varphi(G)$
	iff there is a $w < |\sigma|$ such that~$c \Vdash \psi(G, w)$
	\item[(ii)] If $\varphi(G) = (\forall x)\psi(G, x)$ where~$\psi \in \Sigma^0_1$ then $c \Vdash \varphi(G)$
	iff for every~$\tau \in T$, every $E \in \Apx_{|\tau|}(c)$
	and every~$w < |\tau|$, $(E, \tau, T^{[\tau]}) \not \Vdash \neg \psi(G, w)$
	\item[(iii)] If~$\varphi(G) = \neg \psi(G, x)$ where~$\psi \in \Sigma^0_{n+3}$ then $c \Vdash \varphi(G)$
	iff $d \not \Vdash \psi(G)$ for every~$d \leq c$.
\end{itemize}
\end{definition}

Note that in clause (ii) of Definition~\ref{def:forcing-condition},
there may be some $\tau \in T$ such that $T^{[\tau]}$ is finite, hence $(E, \tau, T^{[\tau]})$
is not necessarily a condition. This is where we use the generalization of forcing of $\Sigma^0_1$ and $\Pi^0_1$
formulas to preconditions. We now prove that this relation enjoys the main properties
of a forcing relation.

\begin{lemma}\label{lem:forcing-extension}
Fix a condition $c$ and an arithmetic formula $\varphi(G)$.
If $c \Vdash \varphi(G)$ then for every condition $d \leq c$, $d \Vdash \varphi(G)$.
\end{lemma}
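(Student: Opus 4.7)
The plan is to argue by induction on the logical complexity of $\varphi(G)$, following the three-way case split of Definition~\ref{def:forcing-condition}. The base cases, namely $\Sigma^0_1$ and $\Pi^0_1$ formulas, are handled directly by Lemma~\ref{lem:forcing-extension-precondition}, since any condition extension $d \leq c$ is in particular a precondition extension of $c$, and both forcing relations for these low-complexity formulas are defined by Definition~\ref{def:forcing-precondition}.

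The inductive step for clause (i), where $\varphi = (\exists x)\psi(G,x)$ with $\psi \in \Pi^0_{n+1}$, should reduce immediately to the induction hypothesis. If $w < |\sigma|$ is a witness such that $c \Vdash \psi(G, w)$, then for any extension $d = (E, \tau, S) \leq c$ the inequality $\tau \succeq \sigma$ yields $w < |\tau|$, and the induction hypothesis applied to $\psi$ gives $d \Vdash \psi(G,w)$. Clause (iii), where $\varphi$ is a negation of a $\Sigma^0_{n+3}$ formula, follows formally from the transitivity of $\leq$: any extension of $d$ is already an extension of $c$, and by hypothesis none of these forces $\psi$, so $d \Vdash \neg \psi$. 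These two cases should be essentially bookkeeping.

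The main obstacle is clause (ii), with $\varphi = (\forall x)\psi(G,x)$ and $\psi \in \Sigma^0_1$. The plan there is to fix an extension $d = (E', \tau', S) \leq c$ and, for arbitrary $\tau \in S$, $E \in \Apx_{|\tau|}(d)$, and $w < |\tau|$, to establish that $(E, \tau, S^{[\tau]}) \not\Vdash \neg \psi(G, w)$. Two ingredients should do the job. First, since $S \subseteq T$ we have $\tau \in T$, and clause~3 of Lemma~\ref{lem:basic-statements} gives $\Apx_{|\tau|}(d) \subseteq \Apx_{|\tau|}(c)$; the hypothesis that $c \Vdash \varphi$ therefore supplies $(E, \tau, T^{[\tau]}) \not\Vdash \neg \psi(G, w)$. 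Second, since $\neg \psi(G, w)$ is a $\Pi^0_1$ formula, Lemma~\ref{lem:tree-no-effect-first-level} lets us replace the tree $T^{[\tau]}$ with $S^{[\tau]}$ without affecting the forcing relation, yielding the required conclusion. This tree-swap is the crucial point of the argument, and it is precisely what justifies the earlier setup separating preconditions (for $\Sigma^0_1$ and $\Pi^0_1$ forcing) from conditions (for higher formulas).
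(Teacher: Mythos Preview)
Your proposal is correct and follows essentially the same approach as the paper's own proof: induction on complexity, with the base cases delegated to Lemma~\ref{lem:forcing-extension-precondition}, the $\Pi^0_2$ case handled via $S \subseteq T$, $\Apx_{|\tau|}(d) \subseteq \Apx_{|\tau|}(c)$, and the tree-swap Lemma~\ref{lem:tree-no-effect-first-level}, and the remaining cases by straightforward induction or transitivity of $\leq$. If anything, your clause~(i) treatment is slightly more explicit than the paper's in noting that the witness bound $w < |\sigma|$ carries over to $w < |\tau|$ via $\tau \succeq \sigma$.
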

\begin{proof}
We prove by induction over the complexity of the formula $\varphi(G)$
that for every condition $c$, if $c \Vdash \varphi(G)$
then for every condition $d \leq c$, $d \Vdash \varphi(G)$.
Fix a condition $c = (F, \sigma, T)$ such that $c \Vdash \varphi(G)$ and an extension $d = (E, \tau, S)$.
\begin{itemize}
  \item If $\varphi \in \Sigma^0_1 \cup \Pi^0_1$ then it follows from Lemma~\ref{lem:forcing-extension-precondition}.
  
	\item If $\varphi \in \Sigma^0_{n+2}$ then $\varphi(G)$ can be expressed as $(\exists x)\psi(G, x)$ 
  where $\psi \in \Pi^0_{n+1}$.
  By clause~(i) of Definition~\ref{def:forcing-condition}, there exists a $w \in \omega$
  such that $c \Vdash \psi(G, w)$. By induction hypothesis, $d \Vdash \psi(G, w)$
  so by clause~(i) of Definition~\ref{def:forcing-condition}, $d \Vdash \varphi(G)$.

  \item If $\varphi \in \Pi^0_2$ then $\varphi(G)$ can be expressed as $(\forall x)\psi(G, x)$ where $\psi \in \Sigma^0_1$.
	By clause~(ii) of Definition~\ref{def:forcing-condition}, for every $\rho \in T$, every $w < |\rho|$,
	and every $H \in \Apx_{|\rho|}(c)$, $(H, \rho, T^{[\rho]}) \not \Vdash \neg \psi(G, w)$.
  As $S \subseteq T$ and $\Apx(d) \subseteq \Apx(c)$, for every $\rho \in S$, every $w < |\rho|$,
  and every $H \in \Apx_{|\rho|}(d)$, $(H, \rho, T^{[\rho]}) \not \Vdash \neg \psi(G, w)$.
  By Lemma~\ref{lem:tree-no-effect-first-level}, $(H, \rho, S^{[\rho]}) \not \Vdash \neg \psi(G, w)$
  hence by clause~(ii) of Definition~\ref{def:forcing-condition}, $d \Vdash \varphi(G)$.

  \item If $\varphi \in \Pi^0_{n+3}$ then $\varphi(G)$ can be expressed as $\neg \psi(G)$ where $\psi \in \Sigma^0_{n+3}$.
  By clause~(iii) of Definition~\ref{def:forcing-condition}, for every $e \in \Ext(c)$, $e \not \Vdash \psi(G)$.
  As $\Ext(d) \subseteq \Ext(c)$, for every $e \in \Ext(d)$, $e \not \Vdash \psi(G)$,
  so by clause~(iii) of Definition~\ref{def:forcing-condition}, $d \Vdash \varphi(G)$.
\end{itemize}
\end{proof}

\begin{lemma}\label{lem:forcing-dense}
For every arithmetic formula $\varphi$, the following set is dense
$$
\{c \in \Pb : c \Vdash \varphi(G) \mbox{ or } c \Vdash \neg \varphi(G) \}
$$
\end{lemma}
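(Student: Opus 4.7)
The plan is to prove density by induction on the complexity of $\varphi$, mirroring the three clauses of Definitions~\ref{def:forcing-precondition} and~\ref{def:forcing-condition}. The base cases $\varphi \in \Sigma^0_1 \cup \Pi^0_1$ are essentially immediate: given $c = (F,\sigma,T)$, either some $E \in \Apx(c)$ supplies a witness to the matrix, in which case $d = (E,\sigma,T) \leq c$ forces $\varphi$ by precondition clause~(i) (or dually~(ii)), or no such $E$ exists and $c$ itself already forces $\neg\varphi$. For $\varphi \in \Sigma^0_n$ with $n \geq 3$, of the form $\exists x\,\psi$ with $\psi \in \Pi^0_{n-1}$, I would argue by excluded middle: if there is some $d \leq c$ and some $w$ with $d \Vdash \psi(G,w)$, I extend $d$ along its infinite tree component to push the stem past $w$; by Lemma~\ref{lem:forcing-extension} this extension still forces $\psi(G,w)$, so Definition~\ref{def:forcing-condition}(i) produces an extension forcing $\varphi$. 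Otherwise no extension of $c$ forces any $\psi(G,w)$, so by clause~(i) no extension forces $\varphi$, and clause~(iii) delivers $c \Vdash \neg\varphi$. The $\Pi^0_n$ case for $n \geq 3$ is symmetric.

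The real work is at level~$2$. For $\varphi = \exists x\,\psi \in \Sigma^0_2$ with $\psi \in \Pi^0_1$, I would first search for an \emph{extensible} $\tau \in T$ (i.e.\ $T^{[\tau]}$ infinite), some $E \in \Apx_{|\tau|}(c)$ and some $w < |\tau|$ satisfying $(E,\tau,T^{[\tau]}) \Vdash \psi(G,w)$. If such data is found, then $d = (E,\tau,T^{[\tau]}) \in \Pb$ extends $c$ and forces $\varphi$ via Definition~\ref{def:forcing-condition}(i). Otherwise, define $\operatorname{Good}(\tau)$ to mean ``no $E \in \Apx_{|\tau|}(c)$ and $w < |\tau|$ witness $\tau$''. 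Because precondition-forcing of a $\Pi^0_1$ formula is itself $\Pi^0_1$ and the outer bounded quantifiers range over finite computable sets, $\operatorname{Good}$ is $\Sigma^0_1$, hence $\emptyset'$-primitive recursive. The set $S = \{\tau \in T : \operatorname{Good}(\tau') \text{ for every } \tau' \preceq \tau\}$ is then a $\emptyset'$-p.r.\ subtree of $T$, and every initial segment of an infinite path through $T$ is extensible in $T$ and thus, by the case hypothesis, Good; so $S$ is infinite and $S \in \Tb$. The condition $d = (F,\sigma,S) \leq c$ then forces $\neg\varphi$ via clause~(ii), invoking Lemma~\ref{lem:tree-no-effect-first-level} to identify the inner precondition-forcing between the trees $T^{[\tau]}$ and $S^{[\tau]}$. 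The $\Pi^0_2$ case is treated symmetrically.

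The main obstacle is precisely this level-$2$ tree-shrinking step: showing that the Good subtree $S$ genuinely lies in $\Tb$, i.e.\ is $\emptyset'$-primitive recursive rather than merely $\emptyset'$-computable. It works only because Definition~\ref{def:forcing-condition}(ii) was set up to quantify over \emph{all} $\tau \in T$, including non-extensible ones, and because Lemma~\ref{lem:tree-no-effect-first-level} makes precondition-forcing of $\Pi^0_1$ formulas insensitive to the tree parameter. Together, these design choices pin $\operatorname{Good}$ at $\Sigma^0_1$, which is simultaneously decidable by $\emptyset'$ (so $S \in \Tb$) and automatically satisfied by every extensible node of $T$ (so $S$ remains infinite). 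Once those observations are in hand, every higher-complexity step of the induction reduces to a law-of-excluded-middle bookkeeping argument using clauses~(i) and~(iii) of Definition~\ref{def:forcing-condition}.
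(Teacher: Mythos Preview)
Your proposal is correct and follows essentially the same strategy as the paper. The only organizational difference is at level~$2$: you first search for an extensible $\tau$ carrying a witness and only afterwards build the Good-subtree $S$, whereas the paper defines $S$ up front and then case-splits on whether $S$ is finite; since the paper proves Good is already downward closed (via Lemma~\ref{lem:forcing-extension-precondition}) its $S$ coincides with your explicit closure, and the two case-splits are equivalent.
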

\begin{proof}
We prove by induction over $n > 0$ that if $\varphi$ is a $\Sigma^0_n$ ($\Pi^0_n$)
formula then the following set is dense
$$
\{c \in \Pb : c \Vdash \varphi(G) \mbox{ or } c \Vdash \neg \varphi(G) \}
$$
It suffices to prove it for the case where $\varphi$ is a $\Sigma^0_n$ formula,
as the case where $\varphi$ is a $\Pi^0_n$ formula is symmetric. Fix a condition $c = (F, \sigma, T)$.
\begin{itemize}
	\item In case $n = 1$, the formula $\varphi$ is of the form $(\exists x)\psi(G, x)$ where $\psi \in \Sigma^0_0$.
	Suppose there exist a $w \in \omega$ and a set $E \in \Apx(c)$ such that $\psi(E, w)$ holds.
	The precondition $d = (E, \sigma, T)$ is a condition extending $c$ by clause~1 of Lemma~\ref{lem:basic-statements}
	and by definition of $\Apx(c)$.
  Moreover $d \Vdash (\exists x)\psi(G, x)$ by clause~(i) of Definition~\ref{def:forcing-precondition} hence $d \Vdash \varphi(G)$.
	Suppose now that for every $w \in \omega$ and every $E \in \Apx(c)$, $\psi(E, w)$ does not hold.
	By clause~(ii) of Definition~\ref{def:forcing-precondition}, 
	$c \Vdash (\forall x)\neg \psi(G, x)$, hence $c \Vdash \neg \varphi(G)$.

	\item In case $n = 2$, the formula $\varphi$ is of the form $(\exists x)\psi(G, x)$ where
	$\psi \in \Pi^0_1$. Let
	$$
	S = \{ \tau \in T : (\forall w < |\tau|)(\forall E \in \Apx_{|\tau|}(c))(E, \tau, T^{[\tau]}) \not \Vdash \psi(G, w) \}
	$$
	The set $S$ is obviously $\emptyset'$-p.r. We prove that it is a subtree of $T$.
	Suppose that $\tau \in S$ and $\rho \preceq \tau$. 
	Fix a $w < |\rho|$ and $E \in \Apx_{|\rho|}(c)$. In particular $w < |\tau|$ and $E \in \Apx_{|\tau|}(c)$
	so $(E, \tau, T^{[\tau]}) \not \Vdash \psi(G, w)$. 
	Note that~$(E, \tau,T^{[\tau]})$ is a precondition extending~$(E, \rho, T^{[\rho]})$,
	so by the contrapositive of Lemma~\ref{lem:forcing-extension-precondition}, $(E, \rho, T^{[\rho]}) \not \Vdash \psi(G, w)$.
	Therefore $\rho \in S$. Hence $S$ is a tree, and as $S \subseteq T$, it is a subtree of $T$.
	
	If $S$ is infinite, then $d = (F, \sigma, S)$
	is an extension of $c$ such that for every $\tau \in S$, every $w < |\tau|$
	and every $E \in \Apx_{|\tau|}(c)$, $(E, \tau, T^{[\tau]}) \not \Vdash \psi(G, w)$.
	By Lemma~\ref{lem:tree-no-effect-first-level}, for every $E \in \Apx_{|\tau|}(c)$,
	$(E, \tau, S^{[\tau]}) \not \Vdash \psi(G, w)$ and by clause 3 of Lemma~\ref{lem:basic-statements}, 
	$\Apx_{|\tau|}(d) \subseteq \Apx_{|\tau|}(c)$.
	Therefore, by clause~(ii) of Definition~\ref{def:forcing-condition}, $d \Vdash (\forall x)\neg \psi(G, x)$
	so $d \Vdash \neg \varphi(G)$. If $S$ is finite, then pick some $\tau \in T \setminus S$
	such that $T^{[\tau]}$ is infinite. By choice of $\tau \in T \setminus S$, there exist a $w < |\tau|$
	and an $E \in \Apx_{|\tau|}(c)$ such that $(E, \tau, T^{[\tau]}) \Vdash \psi(G, w)$.
	$d = (E, \tau, T^{[\tau]})$ is a valid condition extending $c$
	and by clause~(i) of Definition~\ref{def:forcing-condition} $d \Vdash \varphi(G)$.

	\item In case $n > 2$, density follows from clause~(iii) of Definition~\ref{def:forcing-condition}.
\end{itemize}
\end{proof}

Any sufficiently generic filter~$\Fcal$ induces a unique generic real~$G$
defined by
$$
G = \bigcup \{ F \in \Ab : (F, \sigma, T) \in \Fcal \}
$$
The following lemma informally asserts that the forcing relation is \emph{sound} and \emph{complete}.
Sound because whenever a property is forced at some point, then this property actually holds over the generic real~$G$.
The forcing is also complete in that every property which holds over~$G$ is forced at some point
whenever the filter is sufficiently generic.

\begin{lemma}\label{lem:coh-holds-filter}
Suppose that $\Fcal$ is a sufficiently generic filter and let~$G$ be the corresponding generic real.
Then for each arithmetic formula $\varphi(G)$,
$\varphi(G)$ holds iff $c \Vdash \varphi(G)$ for some $c \in \Fcal$. 
\end{lemma}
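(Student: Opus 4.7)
The plan is to prove both directions of the biconditional simultaneously by induction on the complexity of $\varphi$, treating $\Sigma^0_n$ and $\Pi^0_n$ cases separately because the forcing relation is defined by separate clauses for existential and universal quantifiers, and because clauses (ii) and (iii) of Definition~\ref{def:forcing-condition} have a fundamentally different shape. The meta-structure in every case is the same: given $\varphi(G)$, apply Lemma~\ref{lem:forcing-dense} to obtain some $c \in \Fcal$ with $c \Vdash \varphi(G)$ or $c \Vdash \neg\varphi(G)$, then use Lemma~\ref{lem:forcing-extension} together with the induction hypothesis to rule out the wrong alternative.

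For the base cases I would handle $\Sigma^0_1$ and $\Pi^0_1$ formulas directly. Here the standard convention is that any $\Sigma^0_0$ subformula $\psi(G,w)$ is determined by $G \cap [0,b]$ for some bound $b$ computable from $w$, and that $\psi(F,w)$ means evaluating $\psi$ with the set variable interpreted as $F$. By density of $\{c \in \Pb : \max F > b\}$, any sufficiently generic filter contains a condition whose finite part extends past $b$; since any further extension $(E,\tau,S) \leq (F,\sigma,T)$ satisfies $E \supseteq F$ with $E \setminus F > \max F$, the truth value of $\psi$ stabilises. Soundness of clause (i) of Definition~\ref{def:forcing-precondition} follows because the witness $F$ sits inside $G$ and already decides $\psi$; completeness follows because when $(\exists x)\psi(G,x)$ holds, genericity forces the finite part of some $c \in \Fcal$ to include a witness. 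The $\Pi^0_1$ case is symmetric, using clause (ii) of Definition~\ref{def:forcing-precondition} and that each $E \in \Apx(c)$ corresponds to a legal finite extension.

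For the inductive step at $\Sigma^0_{n+2}$ and at $\Pi^0_{n+3}$ the argument is routine: clause (i) of Definition~\ref{def:forcing-condition} directly reduces an existential statement to its $\Pi^0_{n+1}$ instance, handled by the inductive hypothesis; clause (iii) reduces $\neg\psi$ to the non-existence of any $d \in \Fcal$ forcing $\psi$, and any two filter elements have a common extension, so this matches ``$\psi(G)$ fails'' under the inductive hypothesis together with Lemma~\ref{lem:forcing-dense}. The main obstacle is the $\Pi^0_2$ case governed by clause (ii). For soundness, suppose $c = (F,\sigma,T) \in \Fcal$ with $c \Vdash (\forall x)\psi(G,x)$ and fix $w$; by Lemma~\ref{lem:forcing-dense} some $d \in \Fcal$ forces $\psi(G,w)$ or $\neg\psi(G,w)$, and if the latter, pick a common extension $c' = (F',\sigma',T') \in \Fcal$ of $c$ and $d$ with $|\sigma'| > w$ (using density again to extend the stem). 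Then by Lemma~\ref{lem:forcing-extension} applied to the $\Pi^0_1$ formula $\neg\psi(G,w)$, we get $c' \Vdash \neg\psi(G,w)$; since $F' \in \Apx_{|\sigma'|}(c)$, $\sigma' \in T$, and Lemma~\ref{lem:tree-no-effect-first-level} lets us swap the tree $T'$ for $T^{[\sigma']}$ without changing the forced $\Pi^0_1$ fact, we obtain $(F',\sigma',T^{[\sigma']}) \Vdash \neg\psi(G,w)$, contradicting clause (ii) of Definition~\ref{def:forcing-condition} at $c$. The inductive hypothesis on $\psi \in \Sigma^0_1$ then gives $\psi(G,w)$, as desired. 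Completeness at $\Pi^0_2$ is handled by density plus the already-treated $\Sigma^0_2$ soundness case. The hardest part is threading the tree-swap via Lemma~\ref{lem:tree-no-effect-first-level} so that clause (ii), which quantifies over $(E,\tau,T^{[\tau]})$ rather than over arbitrary extensions of $c$, can be confronted with an arbitrary filter-compatible extension.
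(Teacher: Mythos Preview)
Your proposal is correct and follows essentially the same approach as the paper's own proof: reduce via Lemma~\ref{lem:forcing-dense} to showing that forcing implies truth, handle the $\Sigma^0_1$/$\Pi^0_1$ base cases directly from Definition~\ref{def:forcing-precondition}, and in the key $\Pi^0_2$ case derive a contradiction by taking a common extension in~$\Fcal$ with long enough stem and confronting it with clause~(ii) of Definition~\ref{def:forcing-condition}. Your explicit use of Lemma~\ref{lem:tree-no-effect-first-level} to swap $T'$ for $T^{[\sigma']}$ makes visible a step the paper leaves implicit; just be sure when you write it up that the density argument producing $c'$ also arranges $F' \in \Ab_{|\sigma'|}$ (which is easy since the $\Ab_s$ are increasing and you may lengthen $\sigma'$ without changing $F'$).
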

\begin{proof}
We prove by induction over the complexity of the arithmetic formula $\varphi(G)$ that 
$\varphi(G)$ holds iff $c \Vdash \varphi(G)$ for some $c \in \Fcal$.
Note that thanks to Lemma~\ref{lem:forcing-dense}, it suffices to prove that if $c \Vdash \varphi(G)$ for some
$c \in \Fcal$ then $\varphi(G)$ holds. Indeed, conversely if $\varphi(G)$ holds,
then by genericity of $G$ either $c \Vdash \varphi(G)$ or $c \Vdash \neg \varphi(G)$ for some~$c \in \Fcal$,
but if $c \Vdash \neg \varphi(G)$ then $\neg \varphi(G)$ holds, contradicting 
the hypothesis. So $c \Vdash \varphi(G)$.

We proceed by case analysis on the formula~$\varphi$.
Note that in the above argument, the converse of the~$\Sigma$ case is proved assuming
the~$\Pi$ case. However, in our proof, we use the converse of the~$\Sigma^0_{n+3}$
case to prove the $\Pi^0_{n+3}$ case. We need therefore to prove the converse of the~$\Sigma^0_{n+3}$
case without Lemma~\ref{lem:forcing-dense}.
Fix a condition $c = (F, \sigma, T) \in \Fcal$ such that $c \Vdash \varphi(G)$. 
\begin{itemize}
	\item If $\varphi \in \Sigma^0_1$ then $\varphi(G)$ can be expressed as $(\exists x)\psi(G, x)$ where $\psi \in \Sigma^0_0$.
	By clause~(i) of Definition~\ref{def:forcing-precondition}, there exists a $w \in \omega$ such that
	$\psi(F, w)$ holds. As $F \subseteq G$ and~$G \setminus F \subseteq (\max F, \infty)$, then by continuity $\psi(G, w)$ holds, hence $\varphi(G)$ holds.
	
	\item If $\varphi \in \Pi^0_1$ then $\varphi(G)$ can be expressed as $(\forall x)\psi(G, x)$ where $\psi \in \Sigma^0_0$.
	By clause~(ii) of Definition~\ref{def:forcing-precondition}, for every $w \in \omega$
	and every $E \in \Apx(c)$, $\psi(E, w)$ holds. As $\{E \subset_{fin} G : E \supseteq F \} \subseteq \Apx(c)$,
	then for every $w \in \omega$, $\psi(G, w)$ holds, so $\varphi(G)$ holds.
 
	\item If $\varphi \in \Sigma^0_{n+2}$ then $\varphi(G)$ can be expressed as $(\exists x)\psi(G, x)$ 
  where $\psi \in \Pi^0_{n+1}$.
  By clause~(i) of Definition~\ref{def:forcing-condition}, there exists a $w \in \omega$
  such that $c \Vdash \psi(G, w)$. By induction hypothesis, $\psi(G, w)$ holds, hence $\varphi(G)$ holds.
  
  Conversely, suppose that $\varphi(G)$ holds. Then there exists a $w \in \omega$ such that $\psi(G, w)$ holds,
  so by induction hypothesis $c \Vdash \psi(G, w)$ for some $c \in \Fcal$,
  so by clause~(i) of Definition~\ref{def:forcing-condition}, $c \Vdash \varphi(G)$.

	\item If $\varphi \in \Pi^0_2$ then $\varphi(G)$ can be expressed as $(\forall x)\psi(G, x)$ where $\psi \in \Sigma^0_1$.
	By clause~(ii) of Definition~\ref{def:forcing-condition}, for every $\tau \in T$, every $w < |\tau|$,
	and every $E \in \Apx_{|\tau|}(c)$, $(E, \tau, T^{[\tau]}) \not \Vdash \neg \psi(G, w)$. 
  Suppose by way of contradiction that $\psi(G, w)$ does not hold for some $w \in \omega$.
  Then by induction hypothesis, there exists a $d \in \Fcal$ such that $d \Vdash \neg \psi(G, w)$.
  Let $e = (E, \tau, S) \in \Fcal$ be such that $e \Vdash \neg \psi(G, w)$, $|\tau| > w$
  and $e$ extends both $c$ and $d$. The condition $e$ exists by Lemma~\ref{lem:forcing-extension-precondition}.
  We can furthermore require that $E \in \Apx_{|\tau|}(c)$,
  so $e \not \Vdash \neg \psi(G, w)$ and $e \Vdash \neg \psi(G, w)$. Contradiction.
  Hence for every $w \in \omega$, $\psi(G, w)$ holds, so $\varphi(G)$ holds.

	\item If $\varphi \in \Pi^0_{n+3}$ then $\varphi(G)$ can be expressed as $\neg \psi(G)$ where $\psi \in \Sigma^0_{n+3}$.
  By clause~(iii) of Definition~\ref{def:forcing-condition}, for every $d \in \Ext(c)$, $d \not \Vdash \psi(G)$.
	By Lemma~\ref{lem:forcing-extension}, $d \not \Vdash \psi(G)$ for every~$d \in \Fcal$, and by 
  a previous case, $\psi(G)$ does not hold, so $\varphi(G)$ holds.
\end{itemize}
\end{proof}

We now prove that the forcing relation enjoys the desired definitional properties,
that is, the complexity of the forcing relation is the same as the complexity
of the formula forced. We start by analysing the complexity of some components
of this notion of forcing.

\begin{lemma}\label{lem:extension-complexity}\ 
\begin{itemize}
	\item[a)] For every precondition $c$, $\Apx(c)$ and $\Ext_1(c)$ are $\Delta^0_1$ uniformly in~$c$.
	\item[b)] For every condition $c$, $\Ext(c)$ is $\Pi^0_2$ uniformly in~$c$.
\end{itemize}
\end{lemma}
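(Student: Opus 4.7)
The plan is a direct complexity analysis by unfolding the definitions and tracking quantifier complexity, noting that the tree component of a precondition contributes nothing to the first two objects and only a single universal quantifier to the third.

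For part (a), I would start by recalling the explicit description $\Apx(c) = \{E \in \Ab : F \subseteq E \subseteq F \cup (R_\sigma \cap [\max F, \infty))\}$. Since $\sigma$ is a finite string and each $R_i$ is computable, $R_\sigma$ is uniformly computable in $\sigma$ as a finite Boolean combination, so the defining property is a bounded $\Delta^0_1$ predicate on the finite set $E$. The tree part $T$ never enters the computation. For $\Ext_1(c)$, by definition its elements have the form $(E, \sigma, T)$ with $\sigma$ and $T$ copied verbatim from $c$ (the latter in the strong sense of identical Turing index, as stressed in the preamble to the lemma), so $\Ext_1(c)$ is in a trivial uniformly $\Delta^0_1$ bijection with $\Apx(c)$.

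For part (b), the key observation is the fact established earlier in the paper that $\Tb$ is a computable set of indices for $\emptyset'$-primitive recursive trees. Consequently, given a putative tuple $d = (E, \tau, S)$, the conditions that $d$ is a valid member of $\Pb$, namely $S \in \Tb$ with stem $\tau$, are built into the indexing and hence decidable. I would then unfold $d \leq c$ for $c = (F, \sigma, T)$: this asks $E \leq_\sigma F$, which is $\Delta^0_1$ by part (a); $\tau \succeq \sigma$, which is a computable relation on finite strings; and $S \subseteq T$.

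The only nontrivial quantifier lies in $S \subseteq T$, which unfolds to $(\forall \rho \in 2^{<\omega})(\rho \in S \imp \rho \in T)$. Membership in $S$ and $T$ is $\emptyset'$-primitive recursive, hence $\Delta^0_2$, so this universal statement is $\Pi^0_1$ relative to $\emptyset'$, i.e., $\Pi^0_2$ in absolute terms. Combining the $\Delta^0_1$ checks for the first two clauses with this $\Pi^0_2$ check yields a $\Pi^0_2$ predicate defining $\Ext(c)$ uniformly in $c$. I do not expect any real obstacle here; the essential work has already been done by choosing to bundle all the structural requirements on trees into the assumption that $\Tb$ is computable, which keeps the ``$d$ is a condition'' check out of the quantifier budget and leaves only the tree-inclusion predicate contributing to the complexity.
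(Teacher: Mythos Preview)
Your treatment of part (a) matches the paper's.

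For part (b) you take a different route than the paper, and there is a gap. The paper does \emph{not} invoke the earlier remark that $\Tb$ is computable; instead it explicitly unfolds ``$(E,\tau,S)$ is a condition extending $c$'' into the conjunction of $E \in \Apx(c)$, $\sigma \preceq \tau$, ``$S$ is a tree'', ``$S$ is infinite'', ``$S$ has the right stem'', and $S \subseteq T$, and checks that each conjunct is at worst $\Pi^0_2$ (the clause $(\forall n)(\exists \rho \in 2^n)\,\rho \in S$ for infiniteness is the one carrying genuine $\Pi^0_2$ content, the rest being $\Pi^0_1$ over the $\Delta^0_2$ membership predicate for a $\emptyset'$-p.r.\ tree).

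Your shortcut instead declares that ``$d$ is a valid condition'' is decidable because $\Tb$ is computable, leaving only $S \subseteq T$ as the $\Pi^0_2$ contribution. This step is shaky on two counts. First, the computability of $\Tb$ is itself doubtful: deciding whether a $\emptyset'$-p.r.\ index gives an infinite tree with paths inside $\Ccal(\vec R)$ is on its face $\Pi^0_2$, and the paper never actually uses that remark in a proof. Second, even granting $S \in \Tb$ decidable, you still need ``$\tau$ is a stem of $S$'', which is a universal statement over the $\Delta^0_2$ set $S$ and hence $\Pi^0_2$, not something absorbed by the indexing. None of this changes the final bound---the missing checks are each $\Pi^0_2$ and fold into the same complexity class---so your argument is easily repaired; but as written it mislocates where the $\Pi^0_2$ complexity comes from. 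The paper's clause-by-clause verification is more robust because it does not depend on any encoding convention or on the questionable computability of $\Tb$.
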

\begin{proof}\ 
\begin{itemize}
	\item[a)] Fix a precondition $c = (F, \sigma, T)$.
	A set $E \in \Apx(c)$ iff the following $\Delta^0_1$ predicate holds:
	$$
	(F \subseteq E) \wedge (\forall x \in E \setminus F)[x > \max F \wedge x \in R_\sigma]
	$$
	Moreover, $(E, \tau, S) \in \Ext_1(c)$ iff the $\Delta^0_1$ predicate 
	$E \in \Apx(c) \wedge \tau = \sigma \wedge S = T$ holds.
	As already mentioned, the equality~$S = T$ is translated into ``the indices
	of~$S$ and~$T$ coincide'' which is a $\Sigma^0_0$ statement.

	\item[b)] Fix a condition $c = (F, \sigma, T)$.
	By clause 2) of Lemma~\ref{lem:basic-statements},
	$(E, \tau, S) \in \Ext(c)$ iff the following $\Pi^0_2$ formula holds
	$$
	\begin{array}{ll}
	E \in \Apx(c) \wedge \sigma \preceq \tau \\
	\wedge (\forall \rho \in S)(\forall \xi)[\xi \preceq \rho \imp \xi \in S] & \mbox{ ($S$ is a tree)}\\
	\wedge (\forall n)(\exists \rho \in 2^n)\rho \in S) & \mbox{ ($S$ is infinite) }\\
	\wedge (\forall \rho \in S)(\sigma \prec \rho \vee \rho \preceq \sigma) & \mbox{ ($S$ has stem $\sigma$)}\\
	\wedge (\forall \rho \in S)(\rho \in T) & \mbox{ ($S$ is a subset of $T$) }\\
	\end{array}
	$$
	
\end{itemize}
\end{proof}

\begin{lemma}\label{lem:complexity-forcing}
Fix an arithmetic formula $\varphi(G)$.
\begin{itemize}
	\item[a)] Given a precondition $c$, if $\varphi(G)$ is a $\Sigma^0_1$ ($\Pi^0_1$) formula 
	then so is the predicate $c \Vdash \varphi(G)$.
	\item[b)] Given a condition $c$, if $\varphi(G)$ is a $\Sigma^0_{n+2}$ ($\Pi^0_{n+2}$) formula 
	then so is the predicate $c \Vdash \varphi(G)$.
\end{itemize}
\end{lemma}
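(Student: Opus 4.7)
The plan is to prove part (a) directly from Definition~\ref{def:forcing-precondition}, and then prove part (b) by induction on the level $k \geq 2$ of the arithmetic formula $\varphi$, following the three-clause case split of Definition~\ref{def:forcing-condition}.

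For part (a), fix a precondition $c = (F, \sigma, T)$. Clause (i) of Definition~\ref{def:forcing-precondition} gives $c \Vdash (\exists x)\psi(G, x)$ iff $(\exists w)\psi(F, w)$, which is uniformly $\Sigma^0_1$ in $c$ since $\psi \in \Sigma^0_0$ and $F$ is recoverable from $c$. Clause (ii) gives $c \Vdash (\forall x)\psi(G, x)$ iff $(\forall E \in \Apx(c))(\forall w)\psi(E, w)$; by Lemma~\ref{lem:extension-complexity}(a) the predicate $E \in \Apx(c)$ is $\Delta^0_1$ uniformly in $c$, so this formula is uniformly $\Pi^0_1$.

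For part (b), I induct on $k \geq 2$. At each level I first handle the $\Sigma^0_k$ case, which is routine: clause (i) of Definition~\ref{def:forcing-condition} reduces $c \Vdash (\exists x)\psi(G, x)$ to $(\exists w < |\sigma|)[c \Vdash \psi(G, w)]$ with $\psi \in \Pi^0_{k-1}$; the inner predicate is $\Pi^0_{k-1}$ uniformly by part (a) (if $k = 2$) or by the induction hypothesis (if $k \geq 3$), and a bounded existential over $\Pi^0_{k-1}$ remains $\Pi^0_{k-1} \subseteq \Sigma^0_k$. For the $\Pi^0_k$ case with $k \geq 3$, clause (iii) gives $c \Vdash \neg\psi$ iff $(\forall d)[d \in \Ext(c) \to d \not\Vdash \psi]$ with $\psi \in \Sigma^0_k$; by the $\Sigma^0_k$ case just established, $d \not\Vdash \psi$ is $\Pi^0_k$ uniformly in $d$, while $d \in \Ext(c)$ is $\Pi^0_2$ by Lemma~\ref{lem:extension-complexity}(b). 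Since $\Sigma^0_2 \subseteq \Pi^0_k$ for $k \geq 3$, the implication absorbs into $\Pi^0_k$, and the outer $\forall d$ preserves this.

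The delicate case is $\Pi^0_2$, handled by clause (ii): $c \Vdash (\forall x)\psi(G, x)$ with $\psi \in \Sigma^0_1$ unfolds to
\[
(\forall \tau)\bigl[\tau \in T \to (\forall E \in \Apx_{|\tau|}(c))(\forall w < |\tau|)\,(E, \tau, T^{[\tau]}) \not\Vdash \neg\psi(G, w)\bigr].
\]
The innermost $\not\Vdash$ predicate is $\Sigma^0_1$ uniformly by part (a) applied to $\neg\psi \in \Pi^0_1$, and since the quantifiers over $E$ and $w$ are bounded (using that $\Apx_{|\tau|}(c)$ is a uniformly computable finite set), the bracket remains $\Sigma^0_1$. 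The main obstacle is controlling the outer $(\forall \tau)$: the key observation is that because $T$ is $\emptyset'$-primitive recursive, the membership predicate $\tau \in T$ is $\Delta^0_2$, so $\tau \notin T$ is expressible as $\Pi^0_2$. Hence the body of the implication is $\Pi^0_2 \vee \Sigma^0_1 \subseteq \Pi^0_2$, and the outer $\forall \tau$ preserves this, keeping the whole formula $\Pi^0_2$. The author's choice of $\Tb$ as the class of $\emptyset'$-primitive recursive trees (rather than, say, $\emptyset'$-c.e.\ trees) is precisely what keeps this case at the $\Pi^0_2$ level.
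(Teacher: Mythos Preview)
Your proof is correct and follows essentially the same approach as the paper's own proof: a straightforward induction on the complexity of $\varphi$, handling each clause of Definitions~\ref{def:forcing-precondition} and~\ref{def:forcing-condition} in turn and appealing to Lemma~\ref{lem:extension-complexity} for the complexity of $\Apx(c)$ and $\Ext(c)$. You are slightly more explicit than the paper in the $\Pi^0_2$ case about why the quantifier $(\forall \tau \in T)$ stays at the $\Pi^0_2$ level (via $T$ being $\emptyset'$-p.r., hence membership being $\Delta^0_2$), which the paper leaves implicit.
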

\begin{proof}
We prove our lemma by induction over the complexity of the formula $\varphi(G)$.
Fix a (pre)condition $c = (F, \sigma, T)$.
\begin{itemize}
	\item If $\varphi(G) \in \Sigma^0_1$ then it can be expressed as $(\exists x)\psi(G, x)$ where $\psi \in \Sigma^0_0$.
	By clause~(i) of Definition~\ref{def:forcing-precondition}, $c \Vdash \varphi(G)$ if and only if 
	the formula $(\exists w \in \omega)\psi(F, w)$ holds. This is a $\Sigma^0_1$ predicate.
	
	\item If $\varphi(G) \in \Pi^0_1$ then it can be expressed as $(\forall x)\psi(G, x)$ where $\psi \in \Sigma^0_0$.
	By clause~(ii) of Definition~\ref{def:forcing-precondition}, $c \Vdash \varphi(G)$ if and only if 
	the formula $(\forall w \in \omega)(\forall E \in \Apx(c))\psi(E, w)$ holds. 
	By clause~a) of Lemma~\ref{lem:extension-complexity}, this is a $\Pi^0_1$ predicate.

	\item If $\varphi(G) \in \Sigma^0_{n+2}$ then it can be expressed as $(\exists x)\psi(G, x)$ where $\psi \in \Pi^0_{n+1}$.
	By clause~(i) of Definition~\ref{def:forcing-condition}, $c \Vdash \varphi(G)$ if and only if 
	the formula $(\exists w < |\sigma|)c \Vdash \psi(G, w)$ holds. This is a $\Sigma^0_{n+2}$ predicate
	by induction hypothesis.

	\item If $\varphi(G) \in \Pi^0_2$ then it can be expressed as $(\forall x)\psi(G, x)$ where $\psi \in \Sigma^0_1$.
	By clause~(ii) of Definition~\ref{def:forcing-condition}, $c \Vdash \varphi(G)$ if and only if 
	the formula $(\forall \tau \in T)(\forall w < |\tau|)(\forall E \in \Apx_{|\tau|}(c))
	(E, \tau, T^{[\tau]}) \not \Vdash \neg \psi(G, w)$ holds. 
	By induction hypothesis, $(E, \tau, T^{[\tau]}) \not \Vdash \neg \psi(G, w)$ is a $\Sigma^0_1$ predicate,
	hence by clause~a) of Lemma~\ref{lem:extension-complexity}, $c \Vdash \varphi(G)$ is a $\Pi^0_2$ predicate.

	\item If $\varphi(G) \in \Pi^0_{n+3}$ then it can be expressed as $\neg \psi(G)$ where $\psi \in \Sigma^0_{n+3}$. 
	By clause~(iii) of Definition~\ref{def:forcing-condition}, $c \Vdash \varphi(G)$ if and only if 
	the formula $(\forall d)(d \not \in \Ext(c) \vee d \not \Vdash \psi(G))$ holds.
	By induction hypothesis, $d \not \Vdash \psi(G)$ is a $\Pi^0_{n+3}$ predicate.
	Hence by clause~b) of Lemma~\ref{lem:extension-complexity},
	$c \Vdash \varphi(G)$ is a $\Pi^0_{n+3}$ predicate.
\end{itemize}
\end{proof}

\subsection{Preserving the arithmetic hierarchy}

The following lemma asserts that every sufficiently generic real
for this notion of forcing preserves the arithmetic hierarchy.
The argument deeply relies on the fact that this notion of forcing
admits a forcing relation with good definitional properties.

\begin{lemma}\label{lem:diagonalization}
If $A \not \in \Sigma^0_{n+1}$ and $\varphi(G, x)$ is $\Sigma^0_{n+1}$, 
then the set of $c \in \Pb$ satisfying the following property is dense:
$$
[(\exists w \in A)c \Vdash \neg \varphi(G, w)] \vee [(\exists w \not \in A)c \Vdash \varphi(G, w)]
$$
\end{lemma}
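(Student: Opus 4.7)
The plan is to argue by contradiction. Suppose the set is not dense, so there is some $c_0 = (F_0, \sigma_0, T_0) \in \Pb$ below which no extension satisfies the disjunction. The goal is to extract from $c_0$ and the forcing apparatus a $\Sigma^0_{n+1}$ definition of $A$, contradicting the hypothesis $A \not\in \Sigma^0_{n+1}$.

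The first step is to convert the non-density assumption into a forcing characterization of~$A$. The hypothesis on $c_0$ says that for every $c \leq c_0$ we have $c \not \Vdash \neg\varphi(G,w)$ whenever $w \in A$, and $c \not \Vdash \varphi(G,w)$ whenever $w \notin A$. Combined with Lemma~\ref{lem:forcing-dense}, which provides, for each $w$, some $c \leq c_0$ deciding $\varphi(G,w)$, this yields the equivalence
\[
  w \in A \;\Longleftrightarrow\; (\exists c \leq c_0)\, c \Vdash \varphi(G, w).
\]

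The second step is to bound the complexity of the right-hand side. By Lemma~\ref{lem:complexity-forcing}, the predicate $c \Vdash \varphi(G,w)$ is $\Sigma^0_{n+1}$, and by Lemma~\ref{lem:extension-complexity}(b) the predicate $c \in \Ext(c_0)$ is $\Pi^0_2$. When $n+1 \geq 3$, the $\Pi^0_2$ conjunct is absorbed into $\Sigma^0_{n+1}$ and the outer existential quantifier preserves complexity, immediately giving a $\Sigma^0_{n+1}$ definition of~$A$ and the desired contradiction.

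The main obstacle is the low levels $n+1 \in \{1,2\}$, where the $\Pi^0_2$ cost of testing $c \in \Ext(c_0)$ exceeds the target complexity. I would handle these by invoking Lemma~\ref{lem:tree-no-effect-first-level}: since the tree component of a condition is irrelevant to forcing of $\Sigma^0_1$ or $\Pi^0_1$ formulas, the existential over $c$ can be restricted to canonical extensions whose membership in $\Ext(c_0)$ is essentially $\Delta^0_1$. For $\Sigma^0_1$, clause~(i) of Definition~\ref{def:forcing-precondition} shows that forcing depends only on $F$, so extensions of the form $(E, \sigma_0, T_0)$ with $E \in \Apx(c_0)$ suffice, giving a $\Sigma^0_1$ characterization directly. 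For $\Sigma^0_2 = \exists \Pi^0_1$, the bounded witness $u < |\sigma|$ from clause~(i) of Definition~\ref{def:forcing-condition}, together with the tree-insensitivity of $\Pi^0_1$ forcing, recasts the characterization in terms of $F$, $\sigma$ and $u$ alone, yielding a $\Sigma^0_2$ predicate and again contradicting $A \notin \Sigma^0_{n+1}$.
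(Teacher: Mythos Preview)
Your overall strategy---pass from the non-density assumption to the equivalence $w\in A \iff (\exists c\le c_0)\,c\Vdash\varphi(G,w)$ and then bound its complexity---is exactly what the paper does (phrased directly rather than by contradiction), and your treatment of $n\ge 2$ and $n=0$ is fine and matches the paper.

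The gap is at $n=1$. Tree-insensitivity (Lemma~\ref{lem:tree-no-effect-first-level}) tells you that for a fixed pair $(E,\tau)$ the relation $(E,\tau,S)\Vdash\psi(G,w,u)$ does not depend on $S$; it does \emph{not} let you drop the requirement that $(E,\tau,S)$ be a genuine condition below $c_0$. Concretely,
\[
(\exists c\le c_0)\,c\Vdash\varphi(G,w)
\iff
(\exists E\in\Apx(c_0))(\exists \tau\succeq\sigma_0,\tau\in T_0)(\exists u<|\tau|)\bigl[T_0^{[\tau]}\text{ infinite}\ \wedge\ (E,\tau,T_0^{[\tau]})\Vdash\psi(G,w,u)\bigr],
\]
and the clause ``$T_0^{[\tau]}$ infinite'' is $\Pi^0_2$, so the right-hand side is only $\Sigma^0_3$. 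If you simply delete that clause you get a $\Sigma^0_2$ set, but it can properly contain $A$: a witness $\tau$ may lie on a dead branch of $T_0$, and neither lengthening $\tau$ (still dead) nor shortening it to $\sigma_0$ (which enlarges $\Apx$ and can destroy the $\Pi^0_1$ forcing) produces a valid extension. So ``recasting in terms of $F,\sigma,u$ alone'' does not yield a $\Sigma^0_2$ definition of~$A$.

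The paper closes this gap with a compactness trick that your sketch does not mention: instead of asking for \emph{some} $\tau$ with a witness, one asks whether there is a level $s$ such that \emph{every} $\tau\in 2^s\cap T_0$ admits $u<s$ and $E\in\Apx_s(c_0)$ with $(E,\tau,T_0^{[\tau]})\Vdash\psi(G,w,u)$. This is $\Sigma^0_2$ (the inner quantifiers are bounded and the forcing is $\Pi^0_1$), and it works because among the finitely many $\tau$ at level $s$ at least one is extendible in $T_0$, so the universal guarantees a genuine extension; conversely, failure at every level yields an infinite p.r.\ subtree forcing $\neg\varphi(G,w)$ via clause~(ii) of Definition~\ref{def:forcing-condition}. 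This compactness step is the missing idea in your $n=1$ case.
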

\begin{proof}
Fix a condition $c = (F, \sigma, T)$.
\begin{itemize}
	\item In case $n = 0$, $\varphi(G, w)$ can be expressed as $(\exists x)\psi(G, w, x)$ where $\psi \in \Sigma^0_0$.
	Let $U = \{ w \in \omega : (\exists E \in \Apx(c))(\exists u)\psi(E, w, u) \}$.
	By clause~a) of Lemma~\ref{lem:extension-complexity}, $U \in \Sigma^0_1$, thus $U \neq A$.
	Fix $w \in U \Delta A$. If $w \in U \setminus A$ then by definition of~$U$,
	there exist an $E \in \Apx(c)$ and a $u \in \omega$ such that $\psi(E, w, u)$ holds.
	By definition of $\Apx(c)$ and clause~1) of Lemma~\ref{lem:basic-statements}, $d = (E, \sigma, T)$
	is a condition extending $c$. By clause~(i) of Definition~\ref{def:forcing-precondition},
	$d \Vdash \varphi(G, w)$.
	If $w \in A \setminus U$, then for every $E \in \Apx(c)$ and every $u \in \omega$, 
	$\psi(E, w, u)$ does not hold, so by clause~(ii) of Definition~\ref{def:forcing-precondition},
	$c \Vdash (\forall x)\neg \psi(G, w, x)$, hence $c \Vdash \neg \varphi(G, w)$.

	\item In case $n = 1$, $\varphi(G, w)$ can be expressed as $(\exists x)\psi(G, w, x)$ where $\psi \in \Pi^0_1$.
	Let $U = \{ w \in \omega : (\exists s)(\forall \tau \in 2^s \cap T)
	(\exists u < s)(\exists E \in \Apx_s(c))(E, \tau, T^{[\tau]}) \Vdash \psi(G, w, u) \}$.
	By Lemma~\ref{lem:complexity-forcing} and clause~a) of Lemma~\ref{lem:extension-complexity},
	$U \in \Sigma^0_2$, thus $U \neq A$. Fix $w \in U \Delta A$.
	If $w \in U \setminus A$ then by definition of~$U$, there exist an $s \in \omega$,
	a $\tau \in 2^s \cap T$, a $u < s$ and an $E \in \Apx_s(c)$ such that $T^{[\tau]}$ is infinite
	and $(E, \tau, T^{[\tau]}) \Vdash \psi(G, w, u)$. Thus $d = (E, \tau, T^{[\tau]})$ is a condition
	extending $c$ and by clause~(i) of Definition~\ref{def:forcing-condition}, $d \Vdash \varphi(G, w)$.
	If $w \in A \setminus U$, then let 
	$S = \{ \tau \in T : (\forall u < |\tau|)(\forall E \in \Apx_{|\tau|}(c) (E, \tau, T^{[\tau]}) \not \Vdash \psi(G, w, u) \}$.
	As proven in Lemma~\ref{lem:forcing-dense}, $S$ is a $\emptyset'$-p.r. subtree of~$T$
	and by $w \not \in U$, $S$ is infinite. Thus $d = (F, \sigma, S)$ is a condition extending $c$.
	By clause~3) of Lemma~\ref{lem:basic-statements}, $\Apx(d) \subseteq \Apx(c)$, 
	so for every $\tau \in S$, every $u < |\tau|$, and every $E \in \Apx_{|\tau|}(d)$,
	$(E, \tau, T^{[\tau]}) \not \Vdash \psi(G, w, u)$. By Lemma~\ref{lem:tree-no-effect-first-level},
	$(E, \tau, S^{[\tau]}) \not \Vdash \psi(G, w, u)$, so by clause~(ii) of Definition~\ref{def:forcing-condition},
	$d \Vdash (\forall x)\neg \psi(G, w, u)$ hence $d \Vdash \neg \varphi(G, w)$.

	\item In case $n > 1$, let $U = \{ w \in \omega : (\exists d \in \Ext(c)) d \Vdash \varphi(G, w) \}$.
	By clause~b) of Lemma~\ref{lem:extension-complexity} and Lemma~\ref{lem:complexity-forcing},
	$U \in \Sigma^0_n$, thus $U \neq A$.
	Fix $w \in U \Delta A$. If $w \in U \setminus A$ then by definition of~$U$,
	there exists a condition $d$ extending $c$ such that $d \Vdash \varphi(G, w)$.
	If $w \in A \setminus U$, then for every $d \in \Ext(c) d \not \Vdash \varphi(G, w)$
	so by clause~(iii) of Definition~\ref{def:forcing-condition}, $c \Vdash \neg \varphi(G, w)$. 
\end{itemize}
\end{proof}

We are now ready to prove Theorem~\ref{thm:coh-preservation-arithmetic-hierarchy}.

\begin{proof}[Proof of Theorem~\ref{thm:coh-preservation-arithmetic-hierarchy}]
Let~$C$ be a set and~$R_0, R_1, \dots$ be a uniformly $C$-computable sequence of sets.
Let~$T_0$ be a $C'$-primitive recursive tree such that~$[T_0] \subseteq \Ccal(\vec{R})$.
Let~$\Fcal$ be a sufficiently generic filter containing~$c_0 = (\emptyset, \epsilon, T_0)$.
and let $G$ be the corresponding generic real. By genericity, the set~$G$ is
an infinite $\vec{R}$-cohesive set.
By Lemma~\ref{lem:diagonalization} and Lemma~\ref{lem:complexity-forcing},
$G$ preserves non-$\Sigma^0_{n+1}$ definitions relative to~$C$ for every~$n \in \omega$.
Therefore, by Proposition 2.2 of~\cite{Wang2014Definability}, $G$
preserves the arithmetic hierarchy relative to~$C$.
\end{proof}

\section{The Erd\H{o}s Moser theorem preserves the arithmetic hierarchy}

We now extend the previous result to the Erd\H{o}s-Moser theorem.
The Erd\H{o}s-Moser theorem is a statement coming from graph theory.
It can be used with the ascending descending principle~($\ads$) to provide an alternative proof of
Ramsey's theorem for pairs ($\rt^2_2$). Indeed, every coloring~$f : [\omega]^2 \to 2$
can be seen as a tournament~$R$ such that~$R(x,y)$ holds if~$x < y$ and~$f(x,y) = 1$, or~$x > y$ and~$f(y, x) = 0$.
Every infinite transitive subtournament induces a linear order whose infinite ascending or descending
sequences are homogeneous for~$f$.

\begin{definition}[Erd\H{o}s-Moser theorem]
A tournament $T$ on a domain $D \subseteq \omega$ is an irreflexive binary relation on~$D$ such that for all $x,y \in D$ with $x \not= y$, exactly one of $T(x,y)$ or $T(y,x)$ holds. A tournament $T$ is \emph{transitive} if the corresponding relation~$T$ is transitive in the usual sense. A tournament $T$ is \emph{stable} if $(\forall x \in D)(\exists n)[(\forall s > n) T(x,s) \vee (\forall s > n) T(s, x)]$.
$\emo$ is the statement ``Every infinite tournament $T$ has an infinite transitive subtournament.''
$\semo$ is the restriction of $\emo$ to stable tournaments.
\end{definition}

Bovykin and Weiermann proved in \cite{Bovykin2005strength} that $\emo + \ads$
is equivalent to $\rt^2_2$ over $\rca$, and $\semo + \sads$ is equivalent to~$\srt^2_2$ over~$\rca$.
Lerman et al.~\cite{Lerman2013Separating} proceeded to a combinatorial and effective analysis of the Erd\H{o}s-Moser
theorem, and proved in particular that there is an $\omega$-model of $\emo$ which is not a model of~$\srt^2_2$.
The author simplified their proof in~\cite{Patey2015Iterative} and showed in~\cite{Patey2015Somewhere} 
that $\rca \vdash \emo \imp [\sts^2 \vee \coh]$,
where $\sts^2$ stands for the stable thin set theorem for pairs. 
In particular, since Wang~\cite{Wang2014Definability} proved that $\sts^2$
does not admit preservation of the arithmetic hierarchy, 
Theorem~\ref{thm:coh-preservation-arithmetic-hierarchy} follows from Theorem~\ref{thm:em-preserves-arithmetic}.
From a definitional point of view, Wang proved in~\cite{Wang2014Definability}
that $\emo$ admits preservation of $\Delta^0_2$ definitions
and preservation of definitions beyond the $\Delta^0_2$ level. He conjectured that
$\emo$ admits preservation of the arithmetic hierarchy. The balance of this section proves his conjecture.

\begin{theorem}\label{thm:em-preserves-arithmetic}
$\emo$ admits preservation of the arithmetic hierarchy.
\end{theorem}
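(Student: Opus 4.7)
The plan is to mirror Section~2 by designing a forcing notion $\Pb_{EM}$ tailored to the Erd\H{o}s-Moser theorem whose forcing relation has the same definitional complexity as the formulas it forces. Conditions will take the form $(F, \sigma, T)$, where $F$ is a finite transitive subtournament of the given tournament~$R$, $\sigma$ is a string encoding the combinatorial choices made while refining the reservoir, and $T$ is an infinite $\emptyset'$-primitive recursive tree whose paths represent valid infinite extensions of $\sigma$ compatible with building an infinite transitive subtournament.

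The key combinatorial ingredient is an EM-analogue of $\Ccal(\vec{R})$. Given a finite transitive $F$ with induced linear order $x_1 < \cdots < x_k$, every vertex $y$ outside $F$ occupies one of at most $k+1$ ``positions'' in the linear order induced on $F \cup \{y\}$, determined by the threshold at which $R(x_i, y)$ flips, so refining the reservoir corresponds to picking positions stage by stage. The analogue $\Ccal(R)$ will consist of those sequences of position choices admitting arbitrarily large compatible reservoirs; standard EM-forcing analysis (cf.\ \cite{Lerman2013Separating,Patey2015Iterative}) shows that this class is $\Pi^{0,\emptyset'}_1$. Preconditions will be defined by dropping the requirement that $T$ be infinite, exactly as in Section~2. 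From there I would define the forcing relation for $\Sigma^0_1$ and $\Pi^0_1$ formulas at the precondition level and extend it to all arithmetic formulas via the stratified definition of Definition~\ref{def:forcing-condition}; the analogues of Lemmas~\ref{lem:forcing-extension-precondition}--\ref{lem:diagonalization} should then adapt with only cosmetic changes, and the theorem follows exactly as for $\coh$ via the diagonalization lemma and Proposition~2.2 of~\cite{Wang2014Definability}.

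The main obstacle is the density lemma at the $\Pi^0_2$ level (analogue of Lemma~\ref{lem:forcing-dense}). Unlike in the cohesive setting, where the partition $R_0, R_1, \dots$ is fixed in advance and the alphabet is binary, the EM position partitions depend on the current $F$ and their size grows with $|F|$, so one must work with a tree over an unbounded alphabet (or a suitable binary encoding) while preserving both the $\Pi^{0,\emptyset'}_1$ character of $\Ccal(R)$ and compatibility between enlarging $F$ and refining $T$. Concretely, in the $\Pi^0_2$ density step one needs the subtree of ``choices whose reservoir still admits a $\Pi^0_1$-avoiding extension'' to be either infinite (yielding a $\neg\varphi$-forcing extension) or to contain a dead-end (yielding a $\varphi$-forcing extension); establishing this requires combining the tree structure with the combinatorial fact that every EM-condition admits an extension either enlarging $F$ or concentrating its reservoir in a minimal interval, a phenomenon already central to the analysis of~\cite{Lerman2013Separating,Patey2015Iterative}.
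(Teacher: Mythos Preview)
Your plan identifies the right difficulty but does not resolve it, and the paper's actual argument is structurally quite different from the direct analogue of Section~2 that you sketch.

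First, the paper does not attack $\emo$ directly: it uses $\rca \vdash \coh \wedge \semo \rightarrow \emo$ together with Theorem~\ref{thm:coh-preservation-arithmetic-hierarchy}, so it only needs to handle \emph{stable} tournaments. Stability is used concretely in the density argument (Lemma~\ref{lem:em-forcing-dense-level1}) to get the 2-partition $E_0 \sqcup E_1$ of a reservoir into vertices that eventually beat everything versus vertices eventually beaten by everything.

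Second, and more importantly, the forcing notion is not $(F,\sigma,T)$ with a single finite transitive set and a tree of position-choices. Conditions are tuples $(\vec{F}, T, \Ccal)$ where $T$ is a primitive recursive \emph{$k$-partition tree} (a tree of strings in $k^{<\omega}$ coding partitions of $\omega$ into $k$ pieces), $\vec{F} = (F_\nu)_{\nu<k}$ is a tuple of finite transitive sets (one per part), and $\Ccal$ is a $\emptyset'$-p.r.\ \emph{promise}: an upward-closed class of pairs $(\nu,S)$ guaranteeing that every refinement of $T$ retains some allowed non-empty part. The forcing relation for $\Sigma^0_1/\Pi^0_1$ formulas is parameterized by a part $\nu$, and the $\Pi^0_2$ density step (Lemma~\ref{lem:em-forcing-dense}, case $n=0$) works by either finding a refinement all of whose promised parts force the $\Pi^0_1$ matrix, or shrinking the promise $\Ccal$ to record that some part fails. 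This promise mechanism is precisely what replaces the compactness argument from the $\coh$ proof and is absent from your sketch.

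The obstacle you flag --- that the ``alphabet'' of position choices depends on $F$ and grows unboundedly --- is real, and your proposal gives no mechanism to control it. The paper sidesteps it entirely: the partition-tree alphabet $k$ grows, but only when a part is \emph{forked} during a density step, and the refinement relation $S \leq_f T$ via a function $f:\parts(S)\to\parts(T)$ keeps this coherent. There is no linear $\sigma$ recording a sequence of choices; the tree $T$ and the promise $\Ccal$ together carry all the second-level information. Your suggestion that the $\Pi^0_2$ density ``should adapt with only cosmetic changes'' from Lemma~\ref{lem:forcing-dense} is where the plan breaks down: the EM combinatorics force a genuinely two-dimensional bookkeeping (parts $\times$ refinements) that the $(F,\sigma,T)$ format cannot express.
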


Again, the core of the proof consists of finding a good forcing notion
whose generics will preserve the arithmetic hierarchy.
For simplicity, we will restrict ourselves to stable tournaments
even though it is clear that the forcing notion can be adapted to arbitrary tournaments.
The proof of Theorem~\ref{thm:em-preserves-arithmetic} will be obtained by composing the proof that cohesiveness
and the stable Erd\H{o}s-Moser theorem admit preservation of the arithmetic hierarchy.

The following notion of \emph{minimal interval}
plays a fundamental role in the analysis of $\emo$.
See~\cite{Lerman2013Separating} for a background analysis of $\emo$.

\begin{definition}[Minimal interval]
Let $T$ be an infinite tournament and $a, b \in T$
be such that $T(a,b)$ holds. The \emph{interval} $(a,b)$ is the
set of all $x \in T$ such that $T(a,x)$ and $T(x,b)$ hold.
Let $F \subseteq T$ be a finite transitive subtournament of $T$.
For $a, b \in F$ such that $T(a,b)$ holds, we say that $(a,b)$
is a \emph{minimal interval of $F$} if there is no $c \in F \cap (a,b)$,
i.e., no $c \in F$ such that $T(a,c)$ and $T(c,b)$ both hold.
\end{definition}

We must introduce an preliminary variant of Mathias forcing which is more suited to 
the Erd\H{o}s-Moser theorem.

\subsection{Erd\H{o}s Moser forcing}

The following notion of Erd\H{o}s-Moser forcing was implicitly first
used by Lerman, Solomon and Towsner~\cite{Lerman2013Separating} to separate the Erd\H{o}s-Moser
theorem from stable Ramsey's theorem for pairs. The author formalized this notion 
of forcing in~\cite{Patey2015Degrees} to construct a low${}_2$ degree bounding the Erd\H{o}s-Moser theorem.

\begin{definition}
An \emph{Erd\H{o}s Moser condition} (EM condition) for an infinite tournament $R$
is a Mathias condition $(F, X)$ where
\begin{itemize}
	\item[(a)] $F \cup \{x\}$ is $R$-transitive for each $x \in X$
	\item[(b)] $X$ is included in a minimal $R$-interval of $F$.
\end{itemize}
\end{definition}

The Erd\H{o}s-Moser extension is the usual Mathias extension.
EM conditions have good properties for tournaments as shown by the following lemmas.
Given a tournament $R$ and two sets $E$ and $F$,
we denote by $E \to_R F$ the formula $(\forall x \in E)(\forall y \in F) R(x,y) \mbox{ holds}$.

\begin{lemma}[Patey~\cite{Patey2015Degrees}]\label{lem:emo-cond-beats}
Fix an EM condition $(F, X)$ for a tournament $R$.
For every $x \in F$, $\{x\} \to_R X$ or $X \to_R \{x\}$.
\end{lemma}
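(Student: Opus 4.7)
The plan is to exploit the minimal interval structure guaranteed by clause~(b) of the EM condition. Let $(a,b)$ be the minimal $R$-interval of $F$ such that $X \subseteq (a,b)$; in particular $R(a,b)$, $R(a,y)$ and $R(y,b)$ hold for every $y \in X$. I want to show that for each $x \in F$, the orientation of $x$ against~$X$ is determined by the orientation of $x$ against the endpoints $a$ and $b$, together with transitivity of $F \cup \{y\}$.

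First I would handle the boundary cases. If $x = a$, then $R(x,y)$ holds for every $y \in X$ directly by $X \subseteq (a,b)$, so $\{x\} \to_R X$; symmetrically, if $x = b$, then $X \to_R \{x\}$. This disposes of the endpoints without appealing to transitivity.

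For $x \in F \setminus \{a,b\}$, I would proceed by case analysis on the two orientations $R(x,a)$ vs.\ $R(a,x)$ and $R(x,b)$ vs.\ $R(b,x)$. Two of the four combinations must be ruled out. The combination $R(a,x) \wedge R(x,b)$ would put $x$ into $F \cap (a,b)$, contradicting minimality of the interval. The combination $R(x,a) \wedge R(b,x)$ forces $R(b,a)$ by transitivity of $F$ (which is itself $R$-transitive since $F \cup \{y\}$ is for any $y \in X$), contradicting $R(a,b)$. In the remaining case $R(x,a) \wedge R(x,b)$, fix any $y \in X$: using transitivity of $F \cup \{y\}$ together with $R(x,a)$ and $R(a,y)$ yields $R(x,y)$, so $\{x\} \to_R X$. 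Dually, in the case $R(a,x) \wedge R(b,x)$, transitivity of $F \cup \{y\}$ with $R(y,b)$ and $R(b,x)$ yields $R(y,x)$, so $X \to_R \{x\}$.

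There is no serious obstacle here; the only point to be careful about is invoking transitivity of $F$ itself (to eliminate the $R(x,a) \wedge R(b,x)$ case), which is legitimate because clause~(a) of the EM condition, applied to any single $y \in X$, forces $F$ to be $R$-transitive. The rest is routine propagation of edge orientations through transitive triangles.
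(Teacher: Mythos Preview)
The paper does not supply its own proof of this lemma; it is quoted from \cite{Patey2015Degrees}. Your argument is correct and is the standard one: locate $x$ relative to the endpoints $a,b$ of the minimal interval, eliminate the two impossible configurations (one by minimality, one by transitivity of $F$), and in each of the two remaining cases propagate the orientation of $x$ against an endpoint through to every $y \in X$ via transitivity of $F \cup \{y\}$. Your remark that transitivity of $F$ itself follows from clause~(a) applied to any $y \in X$ (legitimate since $X$ is infinite) is exactly the right justification.

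One technicality you leave implicit: as written, the paper's definition of minimal interval requires both endpoints to lie in $F$, but EM conditions with $|F| \le 1$ are used (the initial condition in the proof of Theorem~\ref{thm:em-preserves-arithmetic} has $F_\nu = \emptyset$). In the cited source the notion of minimal interval is understood to include the half-infinite intervals below the $R$-least and above the $R$-greatest element of $F$, and the whole domain when $F = \emptyset$. Those degenerate cases are even easier---e.g.\ if every $y \in X$ satisfies $R(M,y)$ for the $R$-maximum $M$ of $F$, then for $x \in F$ either $x = M$ or $R(x,M)$, and transitivity of $F \cup \{y\}$ gives $R(x,y)$---but a fully self-contained write-up should mention them.
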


\begin{lemma}[Patey~\cite{Patey2015Degrees}]\label{lem:emo-cond-valid}
Fix an EM condition $c = (F, X)$ for a tournament $R$, 
an infinite subset $Y \subseteq X$ and a finite $R$-transitive set $F_1 \subset X$ such that
$F_1 < Y$ and $[F_1 \to_R Y \vee Y \to_R F_1]$.
Then $d = (F \cup F_1, Y)$ is a valid extension of $c$.
\end{lemma}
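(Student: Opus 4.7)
The plan is to verify the three requirements for $d = (F \cup F_1, Y)$ to be an EM extension of $c$: the Mathias extension clauses, clause (a) that $(F \cup F_1) \cup \{y\}$ is $R$-transitive for each $y \in Y$, and clause (b) that $Y$ is included in a minimal $R$-interval of $F \cup F_1$. The Mathias clauses are immediate: $F \subseteq F \cup F_1$, $Y \subseteq X$, $(F \cup F_1) \setminus F \subseteq F_1 \subseteq X$, and $F \cup F_1 < Y$ follows from $F < X$, $Y \subseteq X$, and the hypothesis $F_1 < Y$.

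The heart of the argument is to describe the linear $R$-order on $F \cup F_1$. Applying Lemma~\ref{lem:emo-cond-beats}, I would partition $F = F^+ \sqcup F^-$ with $F^+ = \{x \in F : \{x\} \to_R X\}$ and $F^- = \{x \in F : X \to_R \{x\}\}$. Since $F \cup \{z\}$ is $R$-transitive for every $z \in X$, routing through any such $z$ yields $F^+ \to_R F^-$. Writing $(a, b)$ for the minimal $R$-interval of $F$ containing $X$, one sees $a \in F^+$ and $b \in F^-$, and minimality forces $c \to_R a$ for $c \in F^+ \setminus \{a\}$ and $b \to_R c$ for $c \in F^- \setminus \{b\}$. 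Since $F_1 \subseteq X$, each $x \in F_1$ satisfies $F^+ \to_R \{x\}$ and $\{x\} \to_R F^-$. Ordering $F_1$ by its own transitivity as $x_1 \to_R \cdots \to_R x_k$, this exhibits $F \cup F_1$ as linearly $R$-ordered in the successive blocks $F^+, x_1, \ldots, x_k, F^-$, so $F \cup F_1$ is $R$-transitive.

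For clause (a), fix $y \in Y \subseteq X$. The same dichotomy gives $F^+ \to_R \{y\}$ and $\{y\} \to_R F^-$, and the hypothesis $F_1 \to_R Y$ (resp.\ $Y \to_R F_1$) inserts $y$ directly below (resp.\ above) the block $F_1$ in the order; hence $(F \cup F_1) \cup \{y\}$ is linearly $R$-ordered and therefore $R$-transitive. For clause (b), in the case $F_1 \to_R Y$ the bottom element $x_k$ satisfies $x_k \to_R y$ for every $y \in Y$, and $y \to_R b$ follows from $y \in X \subseteq (a, b)$, so $Y \subseteq (x_k, b)$. Minimality of $(x_k, b)$ in $F \cup F_1$ reduces to the block structure: no element of $F^+ \cup \{x_1, \ldots, x_{k-1}\}$ is $R$-beaten by $x_k$, and no element of $F^- \setminus \{b\}$ $R$-beats $b$. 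The symmetric case $Y \to_R F_1$ yields $Y \subseteq (a, x_1)$. The main bookkeeping obstacle is establishing the linear order description of $F \cup F_1$ cleanly; this reduces to an exhaustive but routine check of how the three blocks $F^+$, $F_1$, $F^-$ interact, with Lemma~\ref{lem:emo-cond-beats} and the transitivity of $F \cup \{z\}$ for $z \in X$ doing all the work.
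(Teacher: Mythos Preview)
The paper does not prove this lemma; it is stated with a citation to \cite{Patey2015Degrees} and no proof is given in the present paper, so there is nothing here to compare your argument against directly.

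Your argument is correct and is in fact the standard one. The decomposition $F = F^+ \sqcup F^-$ via Lemma~\ref{lem:emo-cond-beats}, together with the observation that $F_1 \subseteq X$ sits strictly between the two blocks, yields the linear $R$-ordering on $F \cup F_1$ and on $(F \cup F_1) \cup \{y\}$ for each $y \in Y$; the identification of the new minimal interval as $(x_k, b)$ (respectively $(a, x_1)$) is also right. One small bookkeeping point: you tacitly assume that both endpoints $a, b$ of the original minimal interval lie in $F$, i.e., that $F^+$ and $F^-$ are both nonempty. The degenerate cases where $F = \emptyset$, or where $F^+$ or $F^-$ is empty (so that $X$ lies entirely ``above'' or ``below'' $F$), as well as the trivial case $F_1 = \emptyset$, deserve a one-line remark---either allow formal endpoints $\pm\infty$, or observe that condition~(b) is then vacuous and the remaining verifications are immediate. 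With that caveat the proof is complete.
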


\subsection{Partition trees}

Given a string $\sigma \in k^{<\omega}$,
we denote by $\set_\nu(\sigma)$ the set $\{ x < |\sigma| : \sigma(x) = \nu \}$
where $\nu < k$. The notion can be extended to sequences $P \in k^{\omega}$
where $\set_\nu(P) = \{ x \in \omega : P(x) = \nu \}$.

\begin{definition}[Partition tree]
A \emph{$k$-partition tree of $[t, \infty)$} for some $k, t \in \omega$
is a tuple $(k, t, T)$ such that $T$ is a subtree of $k^{<\omega}$.
A \emph{partition tree} is a $k$-partition tree of $[t, \infty)$ for some $k, t \in \omega$.
\end{definition}

To simplify our notation, we may use the same letter $T$ to denote both a partition tree $(k, t, T)$
and the actual tree $T \subseteq k^{<\omega}$. We then write $\dom(T)$ for $[t, \infty)$
and $\parts(T)$ for~$k$. Given a p.r.\ partition tree~$T$,
we write~$\#T$ for its Turing index, and may refer to it as its \emph{code}.

\begin{definition}[Refinement]
Given a function $f : \ell \to k$, a string $\sigma \in \ell^{<\omega}$ \emph{$f$-refines} a string $\tau \in k^{<\omega}$
if $|\sigma| = |\tau|$ and for every $\nu < \ell$, $\set_\nu(\sigma) \subseteq \set_{f(\nu)}(\tau)$.
A p.r.\ $\ell$-partition tree $S$ of $[u,\infty)$ \emph{$f$-refines} a p.r.\ $k$-partition tree $T$ 
of $[t, \infty)$ (written $S \leq_f T$)
if $\#S \geq \#T$, $\ell \geq k$, $u \geq t$ and for every $\sigma \in S$, $\sigma$ $f$-refines some $\tau \in T$. 
\end{definition}

The partition trees will act as the reservoirs in the forcing conditions defined in the next section.
Consequently, refining a partition tree restricts the reservoir, as desired when extending a condition.
The collection of partition trees is equipped
with a partial order $\leq$ such that $(\ell, u, S) \leq (k, t, T)$ if there exists a function $f : \ell \to k$ such that $S \leq_f T$.
Given a $k$-partition tree of $[t, \infty)$ $T$,
we say that part $\nu$ of $T$ is \emph{acceptable} if there exists a path $P$ through $T$
such that $\set_\nu(P)$ is infinite.
Moreover, we say that part $\nu$ of $T$ is \emph{empty} if 
$(\forall \sigma \in T)[dom(T) \cap \set_\nu(\sigma) = \emptyset]$.
Note that each partition tree has at least one acceptable part
since for every path~$P$ through~$T$, $\set_\nu(P)$ is infinite for some~$\nu < k$.
It can also be the case that part~$\nu$ of~$T$ is non-empty,
while for every path $P$ through~$T$, $\set_\nu(P) \cap \dom(T) = \emptyset$.
However, in this case, we can choose the infinite computable subtree~$S = \{\sigma \in T : \set_\nu(\sigma) \cap \dom(T) = \emptyset\}$
of~$T$ which has the same collection of infinite paths and such that part~$\nu$ of~$S$ is empty.

Given a $k$-partition tree $T$, a finite set $F \subseteq \omega$ and a part $\nu < k$,
define
$$
T^{[\nu, F]} = \{ \sigma \in T :  F \subseteq \set_\nu(\sigma) \vee |\sigma| < \max F \}
$$
The set $T^{[\nu, F]}$ is a (possibly finite) subtree of~$T$ which id-refines $T$
and such that~$F \subseteq \set_\nu(P)$ for every path~$P$ through $T^{[\nu, F]}$.

We denote by $\TPb$ the set of all ordered pairs $(\nu, T)$
such that $T$ is an infinite, primitive recursive $k$-partition tree of $[t, \infty)$
for some $t, k \in \omega$ and $\nu < k$. The set $\TPb$ is equipped with a partial ordering $\leq$
such that $(\mu, S) \leq (\nu, T)$ if $S$ $f$-refines $T$ and $f(\mu) = \nu$ for some~$f$.
In this case we say that \emph{part $\mu$ of $S$ refines part $\nu$ of~$T$}.
Note that the domain of $\TPb$ and the relation $\leq$ are co-c.e.
We denote by $\TPb[T]$ the set of all $(\nu, S) \leq (\mu, T)$ for some $(\mu, T) \in \TPb$.

\begin{definition}[Promise for a partition tree]\label{def:em-promise}
Fix a p.r. $k$-partition tree of $[t,\infty)$ $T$.
A class $\Ccal \subseteq \TPb[T]$ is a \emph{promise for $T$} if
\begin{itemize}
	\item[a)] $\Ccal$ is upward-closed under the $\leq$ relation restricted to $\TPb[T]$
	\item[b)] for every infinite p.r. partition tree $S \leq T$,
	$(\mu, S) \in \Ccal$ for some non-empty part $\mu$ of~$S$.
\end{itemize}
\end{definition}

A promise for $T$ can be seen as a two-dimensional tree
with at first level the acyclic digraph of refinement of partition trees.
Given an infinite path in this digraph, the parts of the members of this path
form an infinite, finitely branching tree.
The following lemma holds for every $\emptyset'$-computable promise.
However, we shall work later with conditions containing $\emptyset'$-primitive recursive
promises in order to lower the definitional complexity of being a valid condition
and to be able to prove Lemma~\ref{lem:em-complexity-forcing}. 
We therefore focus on $\emptyset'$-p.r.\ promises.

\begin{lemma}\label{lem:em-refinement-complexity}
Let $T$ and $S$ be p.r.\  partition trees such that $S \leq_f T$ for some function $f : \parts(S) \to \parts(T)$
and let $\Ccal$ be a $\emptyset'$-p.r.\ promise for $T$.
\begin{itemize}
	\item[a)] The predicate ``$T$ is an infinite $k$-partition tree of $[t, \infty)$'' is $\Pi^0_1$ uniformly in $T$, $k$ and~$t$.
	\item[b)] The relations ``$S$ $f$-refines $T$''
	and ``part $\nu$ of $S$ $f$-refines part $\mu$ of $T$''
	are $\Pi^0_1$ uniformly in $S$, $T$ and $f$.
	\item[c)] The predicate ``$\Ccal$ is a promise for $T$'' is $\Pi^0_2$ uniformly in an index for~$\Ccal$ and~$T$.
\end{itemize}
\end{lemma}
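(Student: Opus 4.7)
My plan is to unfold each defining predicate and track quantifier complexity, relying on three observations: (i) a primitive-recursive tree has decidable membership; (ii) for such a bounded-branching tree, infiniteness is simply $(\forall n)(\exists \sigma \in k^n)[\sigma \in T]$, a $\Pi^0_1$ predicate, since the bounded existential is decidable; and (iii) a $\emptyset'$-primitive-recursive class $\Ccal$ is $\Delta^0_2$, so one may pick whichever of a $\Sigma^0_2$ or $\Pi^0_2$ description of $(\mu,S) \in \Ccal$ is convenient at each occurrence. For part (a), being a subtree of $k^{<\omega}$, i.e.\ $(\forall \sigma \in T)(\forall \tau \preceq \sigma)[\tau \in T]$ together with $(\forall \sigma \in T)(\forall i < |\sigma|)[\sigma(i) < k]$, is $\Pi^0_1$; conjoining with infiniteness yields $\Pi^0_1$. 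For part (b), given $\sigma \in S$ the candidate $\tau \in T$ such that $\sigma$ $f$-refines $\tau$ is unique, namely $\tau = f \circ \sigma$, so ``every $\sigma \in S$ $f$-refines some $\tau \in T$'' reduces to $(\forall \sigma \in S)[f \circ \sigma \in T]$, which is $\Pi^0_1$. The remaining clauses ($\#S \geq \#T$, $\parts(S) \geq \parts(T)$, $\dom(S) \subseteq \dom(T)$, and $f(\nu) = \mu$ for the second relation) are $\Delta^0_0$, so both relations are $\Pi^0_1$, uniformly.

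For part (c), I decompose Definition~\ref{def:em-promise} into its constituent clauses and bound each by $\Pi^0_2$. By parts (a) and (b), the ambient relation $(\nu,S) \in \TPb[T]$ is $\Pi^0_1$, since it is a bounded existential over $(\mu, f)$ applied to a $\Pi^0_1$ body. The inclusion $\Ccal \subseteq \TPb[T]$, written $(\forall x)[x \in \Ccal \imp x \in \TPb[T]]$, becomes ``$\Pi^0_2$ or $\Pi^0_1$'', hence $\Pi^0_2$, after negating the $\Sigma^0_2$ description of $\Ccal$. Upward-closure $(\forall x, y)[x \in \Ccal \wedge y \in \TPb[T] \wedge x \leq y \imp y \in \Ccal]$ is treated likewise: the hypothesis is $\Sigma^0_2$ (using the positive form of $\Ccal$) and the conclusion is $\Pi^0_2$ (using the negative form), so the implication is $\Pi^0_2$ and so is its universal closure.

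The main obstacle is the density clause: every infinite p.r.\ partition tree $S \leq T$ admits a non-empty part $\mu$ with $(\mu, S) \in \Ccal$. The premise is $\Pi^0_1$ by (a) and (b); in the conclusion, ``$\mu$ is a non-empty part of $S$'' is $\Sigma^0_1$ and, using the $\Pi^0_2$ description of $\Ccal$, its conjunction with $(\mu, S) \in \Ccal$ is $\Pi^0_2$; the bounded existential over $\mu < \parts(S)$ preserves $\Pi^0_2$. The resulting implication is ``$\Sigma^0_1$ or $\Pi^0_2$'', which collapses to $\Pi^0_2$ (since a $\Sigma^0_1$ disjunct can be absorbed into the matrix of a $\Pi^0_2$ formula), and the outer universal over indices of $S$ keeps us in $\Pi^0_2$. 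Conjoining the three clauses yields the desired $\Pi^0_2$ bound, uniformly in an index for $\Ccal$ and in $T$. The care required throughout is exactly to use the positive description of $\Ccal$ in positive positions and the negative description in negative positions, so that complexities collapse rather than stack.
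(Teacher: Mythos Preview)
Your proposal is correct and follows essentially the same approach as the paper: unfold each defining predicate and count quantifiers, using that membership in a p.r.\ tree is decidable and that $\Ccal$, being $\emptyset'$-p.r., is $\Delta^0_2$. Your observation in part~(b) that the witness $\tau = f \circ \sigma$ is uniquely determined is a pleasant simplification over the paper's bounded-existential formulation, and your complexity bookkeeping in part~(c) is more explicit (the paper simply writes out the $\Pi^0_2$ formula without tracking the positive/negative uses of $\Ccal$), but the underlying argument is the same.
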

\begin{proof}\ 
\begin{itemize}
	\item[a)] $T$ is an infinite $k$-partition tree of $[t, \infty)$ if and only if the $\Pi^0_1$ formula
$[(\forall \sigma \in T)(\forall \tau \preceq \sigma) \tau \in T \cap k^{<\infty}]
 \wedge [(\forall n)(\exists \tau \in k^n) \tau \in T]$  holds.

	\item[b)] Suppose that $T$ is a $k$-partition tree of $[t,\infty)$ and $S$ is an $\ell$-partition tree of $[u,\infty)$.
$S$ $f$-refines $T$ if and only if the $\Pi^0_1$ formula holds:
$$
u \geq t \wedge \ell \geq k \wedge [(\forall \sigma \in S)(\exists \tau \in k^{|\sigma|} \cap T)
(\forall \nu < u)set_\nu(\sigma) \subseteq \set_{f(\nu)}(\tau)]
$$
Part $\nu$ of $S$ $f$-refines part $\mu$ of $T$ if and only if $\mu = f(\nu)$ and $S$ $f$-refines $T$.

	\item[c)] Given $k, t \in \omega$, let $PartTree(k, t)$ denote 
	the $\Pi^0_1$ set of all the infinite p.r.\ $k$-partition trees of $[t, \infty)$.
	Given a $k$-partition tree $S$ and a part $\nu$ of $S$,
	let $Empty(S, \nu)$ denote the $\Pi^0_1$ formula ``part $\nu$ of $S$ is empty'',
	that is the formula $(\forall \sigma \in S) \set_\nu(\sigma) \cap \dom(S) = \emptyset$.

	$\Ccal$ is a promise for $T$ if and only if the following $\Pi^0_2$ formula holds:
	$$
	\begin{array}{l}
	(\forall \ell, u)(\forall S \in PartTree(\ell,u))[S \leq T \imp (\exists \nu < \ell) \neg Empty(S, \nu) \wedge (\nu, S) \in \Ccal)] \\
	\wedge (\forall \ell', u')(\forall V \in PartTree(\ell', u'))(\forall g : \ell \to \ell')[ S \leq_g V \leq T \imp \\
	(\forall \nu < \ell)((\nu, S) \in \Ccal \imp (g(\nu), V) \in \Ccal)]
	\end{array}
	$$
\end{itemize}
\end{proof}

Given a promise $\Ccal$ for $T$ and some infinite p.r. partition tree $S$
refining $T$, we denote by $\Ccal[S]$ the set of all $(\nu, S') \in \Ccal$
below some $(\mu, S) \in \Ccal$, that is, $\Ccal[S] = \Ccal \cap \TPb[S]$.
Note that by clause b) of Lemma~\ref{lem:em-refinement-complexity},
if $\Ccal$ is a $\emptyset'$-p.r. promise for $T$ then $\Ccal[S]$
is a $\emptyset'$-p.r. promise for~$S$.

Establishing a distinction between the acceptable parts and the non-acceptable ones requires
a lot of definitional power. However, we prove that we can always find an
extension where the distinction is $\Delta^0_2$.
We say that an infinite p.r. partition tree $T$
\emph{witnesses its acceptable parts} if its parts are either acceptable or empty.

\begin{lemma}\label{lem:em-promise-extension-witnessing-acceptable}
For every infinite p.r.\ $k$-partition tree $T$ of $[t, \infty)$,
there exists an infinite p.r.\ $k$-partition tree $S$ of $[u, \infty)$
refining $T$ with the identity function and such that $S$ witnesses its acceptable parts.
\end{lemma}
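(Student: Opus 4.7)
The plan is a single-step compactness construction that uses one infinite path of $T$ to split the parts cleanly into ``acceptable in $S$'' and ``empty in $S$''.

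By K\"onig's lemma, the infinite $k$-branching tree $T$ admits an infinite path $P_0$. Let
$$
A = \{\nu < k : \set_\nu(P_0) \text{ is infinite}\}.
$$
The set $A$ is non-empty, since $P_0$ has infinite domain and takes values in the finite alphabet $\{0,\dots,k-1\}$. The positions at which $P_0$ takes a value outside $A$ form a finite set, so I can pick $u \geq t$ larger than every such position, ensuring $P_0(j) \in A$ for all $j \geq u$. I then define
$$
S = \{\sigma \in T : (\forall j \in [u, |\sigma|))\, \sigma(j) \in A\}.
$$
This $S$ is p.r., being the intersection of the p.r.\ set $T$ with a p.r.\ condition on $\sigma$ (a bounded universal quantification over a decidable predicate). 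It is a subtree of $T$ with the same number $k$ of parts, so it id-refines~$T$; the requirement $\#S \geq \#T$ can be arranged by padding the index of~$S$. Since $P_0 \in [S]$ by the choice of~$u$, the tree $S$ is infinite.

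It remains to verify that every part of $S$ is either acceptable or empty. If $\nu \in A$, then $P_0 \in [S]$ itself witnesses acceptability of part $\nu$ in~$S$, as $\set_\nu(P_0)$ is infinite. If $\nu \notin A$, then for every $\sigma \in S$ and every $j \in [u, |\sigma|)$ we have $\sigma(j) \in A$, hence $\sigma(j) \neq \nu$; therefore $\set_\nu(\sigma) \cap [u, \infty) = \emptyset$, so part~$\nu$ of $S$ with domain $[u, \infty)$ is empty. There is no serious obstacle in the argument; the only conceptual point worth flagging is that acceptability and emptiness are measured relative to the refined tree $S$, not to $T$, which is exactly why a single path $P_0$ is enough to decide the dichotomy in one stroke---parts of $T$ that fail to contribute to $P_0$ can be emptied out in $S$ without contradicting any prior acceptability in~$T$.
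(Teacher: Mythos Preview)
Your proof is correct and takes a genuinely different route from the paper's. The paper proceeds iteratively: at each stage it singles out one non-empty, non-acceptable part~$\nu$, picks any path~$P$ through the current tree (for which $\set_\nu(P)$ is necessarily finite), chooses a threshold $u$ above $\max(t,\set_\nu(P))$, and prunes to $\{\sigma \in T : \set_\nu(\sigma) \cap [u,\infty) = \emptyset\}$; since empty parts stay empty under this operation and the number of empty parts strictly increases, at most $k$ iterations suffice. Your argument instead fixes a single path $P_0$ up front, lets $A$ be the parts that $P_0$ hits infinitely often, and in one stroke prunes every coordinate $\geq u$ to the alphabet~$A$.

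The trade-off is this: the paper's construction is conservative in that it only empties parts that were already non-acceptable in~$T$, so the acceptable parts of~$S$ are exactly those of~$T$. Your construction may kill off parts that were acceptable in~$T$ but happen not to be witnessed by the particular path~$P_0$ you chose. For the lemma as stated this is immaterial, since the requirement is only that $S$ witness \emph{its own} acceptable parts, and your final paragraph rightly flags this point. Your version is shorter and avoids the bookkeeping of the iteration.
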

\begin{proof}
Given a partition tree $T$, we let $I(T)$ be the set of its empty parts.
Let~$T$ be a fixed infinite p.r.\ $k$-partition tree of $[t, \infty)$.
It suffices to prove that if $\nu$ is a non-empty and non-acceptable part of~$T$,
then there exists an infinite p.r.\ $k$-partition tree $S$
refining $T$ with the identity function, such that $\nu \in I(S) \setminus I(T)$.
As $I(T) \subseteq I(S)$ and $|I(S)| \leq k$, it suffices to iterate the process
at most $k$ times to obtain a refinement witnessing its acceptable parts.

So fix a non-empty and non-acceptable part $\nu$ of~$T$.
By definition of being non-acceptable, there exists a path $P$ through $T$
and an integer $u > \max(t, \set_\nu(P))$.
Let $S = \{ \sigma \in T : \set_\nu(\sigma) \cap [u, \infty) = \emptyset \}$.
The set $S$ is a p.r. $k$-partition tree of $[u, \infty)$ refining
$T$ with the identity function and such that part $\nu$ of $S$ is empty.
Moreover, $S$ is infinite since~$P \in [S]$. 
\end{proof}

The following lemma strengthens clause b) of Definition~\ref{def:em-promise}.

\begin{lemma}\label{lem:promise-keeps-acceptable-parts}
Let $T$ be a p.r. partition tree and $\Ccal$ be a promise for $T$.
For every infinite p.r. partition tree $S \leq T$, 
$(\mu, S) \in \Ccal$ for some acceptable part $\mu$ of~$S$.
\end{lemma}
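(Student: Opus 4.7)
The plan is to combine the structural strengthening given by Lemma~\ref{lem:em-promise-extension-witnessing-acceptable} with the two defining clauses of a promise. Given $S \leq T$, the weaker clause b) of Definition~\ref{def:em-promise} only yields a \emph{non-empty} part $\mu$ of $S$ with $(\mu, S) \in \Ccal$, but nothing prevents $\mu$ from being a non-acceptable part that merely survives syntactically in $S$. The idea is to first move to a refinement $S'$ of $S$ in which the distinction between empty and acceptable parts is cleaned up, apply the weak clause there, and then lift the result back up to $S$ using the upward-closure clause a).

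More precisely, first I would apply Lemma~\ref{lem:em-promise-extension-witnessing-acceptable} to $S$ to obtain an infinite p.r.\ partition tree $S'$ that refines $S$ via the identity function $\mathrm{id}$ and witnesses its acceptable parts, i.e.\ every part of $S'$ is either acceptable or empty. Since refinement is transitive, $S' \leq T$, so clause b) of Definition~\ref{def:em-promise} applied to $\Ccal$ at $S'$ furnishes a non-empty part $\mu$ of $S'$ with $(\mu, S') \in \Ccal$. Because $S'$ witnesses its acceptable parts and $\mu$ is non-empty, $\mu$ must be acceptable in $S'$.

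Next I would transfer acceptability back to $S$: acceptability of $\mu$ in $S'$ means there is a path $P \in [S']$ with $\set_\mu(P)$ infinite, and since $[S'] \subseteq [S]$, the same path witnesses that $\mu$ is also an acceptable part of $S$. Finally, since $S'$ id-refines $S$, we have $(\mu, S') \leq (\mu, S)$ in $\TPb[T]$, so clause a) of Definition~\ref{def:em-promise} (upward closure of $\Ccal$) gives $(\mu, S) \in \Ccal$, completing the proof.

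I expect no substantive obstacle here: the main thing to check carefully is that the identity refinement provided by Lemma~\ref{lem:em-promise-extension-witnessing-acceptable} preserves the indexing of parts, so that ``part $\mu$ of $S'$'' and ``part $\mu$ of $S$'' refer to matching coordinates, which is exactly why the refinement was arranged to use $\mathrm{id}$ rather than an arbitrary $f : \parts(S') \to \parts(S)$.
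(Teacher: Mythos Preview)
Your proposal is correct and follows essentially the same approach as the paper: pass to an $\mathrm{id}$-refinement $S'$ of $S$ that witnesses its acceptable parts via Lemma~\ref{lem:em-promise-extension-witnessing-acceptable}, apply clause~b) of the promise at $S'$ to get a non-empty (hence acceptable) part, and lift back to $S$ by upward closure. The paper's proof is terser and leaves the transfer of acceptability from $S'$ to $S$ implicit, whereas you spell it out via $[S'] \subseteq [S]$; this is a harmless elaboration, not a different argument.
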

\begin{proof}
Fix an infinite p.r. $\ell$-partition tree $S \leq T$.
By Lemma~\ref{lem:em-promise-extension-witnessing-acceptable},
there exists an infinite p.r. $\ell$-partition tree $S' \leq_{id} S$
witnessing its acceptable parts. As $\Ccal$ is a promise for $T$
and $S' \leq T$, there exists a non-empty (hence acceptable) part $\nu$ of $S'$ such that $(\nu, S') \in \Ccal$.
As $\Ccal$ is upward-closed, $(\nu, S) \in \Ccal$.
\end{proof}

\subsection{Forcing conditions}

We now describe the forcing notion for the Erd\H{o}s-Moser theorem.
Recall that an EM condition for an infinite tournament~$R$
is a Mathias condition $(F, X)$ where
$F \cup \{x\}$ is $R$-transitive for each $x \in X$
and $X$ is included in a minimal $R$-interval of $F$.

\begin{definition}
We denote by~$\Pb$ the forcing notion whose conditions are tuples $(\vec{F}, T, \Ccal)$ where
\begin{itemize}
	\item[(a)] $T$ is an infinite p.r.\ partition tree
	\item[(b)] $\Ccal$ is a $\emptyset'$-p.r. promise for $T$
	\item[(c)] $(F_\nu, \dom(T))$ is an EM condition for $R$ and each $\nu < \parts(T)$
\end{itemize}
A condition $d = (\vec{E}, S, \Dcal)$ \emph{extends} $c = (\vec{F}, T, \Ccal)$
(written $d \leq c$) if there exists a function $f : \ell \to k$ such that
$\Dcal \subseteq \Ccal$ and the followings hold:
\begin{itemize}
	\item[(i)] $(E_\nu, \dom(S))$ EM extends $(F_{f(\nu)}, \dom(T))$ for each $\nu < \parts(S)$ 
	\item[(ii)] $S$ $f$-refines $\bigcap_{\nu < \parts(S)} T^{[f(\nu), E_\nu]}$
\end{itemize}
\end{definition}

We may think of a condition $c = (\vec{F}, T, \Ccal)$ as a collection of EM conditions
$(F_\nu, H_\nu)$ for~$R$, where $H_\nu = \dom(T) \cap \set_\nu(P)$ for some path $P$ through $T$.
$H_\nu$ must be infinite for at least one of the parts $\nu < \parts(T)$.
At a higher level, $\Dcal$ restricts the possible subtrees $S$
and parts $\mu$ refining some part of $T$ in the condition~$c$.
Given a condition $c = (\vec{F}, T, \Ccal)$, we write $\parts(c)$ for $\parts(T)$.

\begin{lemma}\label{lem:em-forcing-infinite}
For every condition $c = (\vec{F}, T, \Ccal)$ and every $n \in \omega$, there exists an extension $d = (\vec{E}, S, \Dcal)$
such that $|E_\nu| \geq n$ on each acceptable part $\nu$ of~$S$.
\end{lemma}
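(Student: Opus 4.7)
My plan is to prove Lemma~\ref{lem:em-forcing-infinite} by iterating a single-step extension. The single-step claim I would establish is: given any condition $c = (\vec F, T, \Ccal)$ with some acceptable part $\nu$ of $T$ satisfying $|F_\nu| < n$, there is an extension $c' \leq c$ in which one such $F_{\nu^*}$ has grown by exactly one element and every other $F_\mu$ is unchanged. I would then define the potential
$$
\Phi(c_i) \;=\; \sum_{\nu \text{ acceptable in } T_i} \max\bigl(0,\, n - |F^i_\nu|\bigr),
$$
a non-negative integer that strictly decreases with every single-step application: the chosen part $\nu^*$ contributes one less, and refinement can only remove acceptable parts from the sum (never add new ones, since a part acceptable in a refinement is already acceptable in the original tree). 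Iterating the single-step extension therefore terminates after finitely many applications with a condition $d$ whose every acceptable part $\nu$ satisfies $|E_\nu| \geq n$, which is exactly what the lemma asks for.

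For the single step, I would first apply Lemma~\ref{lem:em-promise-extension-witnessing-acceptable} to refine $T$ along the identity into a tree witnessing its acceptable parts, so that ``acceptable'' coincides with ``non-empty''. Then pick any acceptable $\nu^*$ with $|F_{\nu^*}| < n$, a path $P \in [T]$ with $\set_{\nu^*}(P) \cap \dom(T)$ infinite, and an element $x$ in this intersection chosen with $x > \max F_{\nu^*}$ and $x$ beyond the stability threshold of every element already appearing in the various $F_\mu$'s. Set $T' = T^{[\nu^*, \{x\}]}$, the sub-tree of $T$ forcing $x$ onto part $\nu^*$, which is infinite because the chosen $P$ still belongs to $[T']$; take $F'_{\nu^*} = F_{\nu^*} \cup \{x\}$, $F'_\mu = F_\mu$ for $\mu \neq \nu^*$, restrict $\dom(T')$ to a suitable tail $[u, \infty)$ with $u > x$, and set $\Ccal' = \Ccal[T']$.

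The main obstacle will be clause~(c) in the definition of $\Pb$, namely that $(F'_{\nu^*}, \dom(T'))$ is still an EM condition for $R$. Here I would exploit that we are in the $\semo$ setting, so $R$ is stable: every element $x$ admits a threshold past which its $R$-relation to all later elements is uniform. By choosing the tail $[u, \infty) = \dom(T')$ past this threshold and past $x$ itself, I secure $\{x\} \to_R \dom(T') \vee \dom(T') \to_R \{x\}$, and Lemma~\ref{lem:emo-cond-valid} applied with $F_1 = \{x\}$ and $Y = \dom(T')$ then delivers EM validity for part $\nu^*$. For the other parts $\mu \neq \nu^*$, nothing has changed except a shrinking of the reservoir, so their EM conditions are inherited; clauses~(a), (b) and the fact that $\Ccal' = \Ccal[T']$ remains a $\emptyset'$-p.r.\ promise for $T'$ are immediate from the general framework developed earlier in the section.
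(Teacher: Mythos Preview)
Your proof is correct but takes a somewhat different route from the paper. The paper handles one acceptable part $\nu$ at a time and, for that part, finds in one shot an $R$-transitive subset $F' \subseteq \set_\nu(P) \cap \dom(T)$ of size~$n$ (invoking the classical Erd\H{o}s--Moser theorem), sets $E_\nu = F_\nu \cup F'$, and then uses stability to pick $u$ so that $(E_\nu,[u,\infty))$ is an EM condition; iterating over the at most $\parts(T)$ many parts finishes. You instead grow the finite sets one element at a time and terminate via the potential $\Phi$. Your approach is a bit longer but more elementary: it avoids the appeal to the classical Erd\H{o}s--Moser theorem, and the verification that the enlarged pair is an EM condition is immediate from Lemma~\ref{lem:emo-cond-valid} with the singleton $F_1=\{x\}$, whereas in the paper's version one must in addition ensure that the limit behaviours of all elements of $F'$ are compatible with $[u,\infty)$ sitting in a single minimal interval of $F_\nu\cup F'$. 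Two minor remarks: your preliminary pass through Lemma~\ref{lem:em-promise-extension-witnessing-acceptable} is harmless but unnecessary, since the single step only needs an acceptable part and a witnessing path, not the coincidence of ``acceptable'' and ``non-empty''; and the requirement that $x$ lie beyond the stability thresholds of the elements already in the various $F_\mu$ is likewise not needed---what matters is only that $u$ be chosen beyond the stability threshold of~$x$ itself, which you do.
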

\begin{proof}
It suffices to prove that for every condition $c = (\vec{F}, T, \Ccal)$
and every acceptable part $\nu$ of $T$, 
there exists an extension $d = (\vec{E}, S, \Dcal)$ such that $S \leq_{id} T$ 
and $|E_\nu| \geq n$. Iterating the process at most $\parts(T)$ times completes the proof.
Fix an acceptable part $\nu$ of $T$ and a path $P$ trough $T$
such that $\set_\nu(P)$ is infinite. Let $F'$ be an $R$-transitive subset of $\set_\nu(P) \cap \dom(T)$
of size $n$. Such a set exists by the classical Erd\H{o}s-Moser theorem. Let $\vec{E}$ be defined by $E_\mu = F_\mu$
if $\mu \neq \nu$ and $E_\nu = F_\nu \cup F'$ otherwise. As the tournament~$R$ is stable,
there exists some~$u \geq t$ such that $(E_\nu, [u, \infty))$ is an EM condition
and therefore EM extends $(F_\nu, \dom(T))$.
Let $S$ be the p.r.\ partition tree $T^{[\nu, E_\nu]}$ of $[u, \infty)$.
The condition $(\vec{E}, S, \Ccal[S])$ is the desired extension.
\end{proof}

Given a condition~$c \in \Pb$, we denote by $\Ext(c)$ the set of all its extensions. 

\subsection{The forcing relation}

The forcing relation at the first level, namely, for $\Sigma^0_1$ and $\Pi^0_1$ formulas,
is parameterized by some part of the tree of the considered condition. Thanks to the forcing relation
we will define, we can build an infinite decreasing sequence of conditions which decide $\Sigma^0_1$ and~$\Pi^0_1$ formulas
effectively in~$\emptyset'$. This sequence yields a $\emptyset'$-computably bounded $\emptyset'$-computable tree of (possibly empty)
parts. Therefore, any PA degree relative to~$\emptyset'$ is sufficient to control the first jump of an infinite
transitive subtournament of a stable infinite computable tournament.

We cannot do better since Kreuzer proved in~\cite{Kreuzer2012Primitive} the existence
of an infinite, stable, computable tournament with no low infinite transitive subtournament.
If we ignore the promise part of a condition,
the careful reader will recognize the construction of Cholak, Jockusch and Slaman~\cite{Cholak2001strength} of a low${}_2$ infinite
subset of a $\Delta^0_2$ set or its complement by the first jump control. 
The difference, which at first seems only notational,
is in fact one of the key features of this notion of forcing. Indeed, forcing iterated jumps requires
a definitionally weak description of the set of extensions of a condition,
and it requires much less computational power to describe a primitive recursive
tree than an infinite reservoir of a Mathias condition.

\begin{definition}\label{def:em-forcing-precondition}
Fix a condition $c = (\vec{F}, T, \Ccal)$, a $\Sigma^0_0$ formula $\varphi(G, x)$
and a part $\nu < \parts(T)$.
\begin{itemize}
	\item[1.] $c \Vdash_\nu (\exists x)\varphi(G, x)$ iff there exists a $w \in \omega$ such that $\varphi(F_\nu, w)$ holds.
	\item[2.] $c \Vdash_\nu (\forall x)\varphi(G, x)$ iff 
	for every $\sigma \in T$, every $w < |\sigma|$ and every $R$-transitive set $F' \subseteq \dom(T) \cap \set_\nu(\sigma)$,
	$\varphi(F_\nu \cup F', w)$ holds.
\end{itemize}
\end{definition}

We start by proving some basic properties of the forcing relation over~$\Sigma^0_1$
and $\Pi^0_1$ formulas. As one may expect, the forcing relation at first level
is closed under the refinement relation.

\begin{lemma}\label{lem:em-forcing-extension-level1}
Fix a condition $c = (\vec{F}, T, \Ccal)$ and a $\Sigma^0_1$ ($\Pi^0_1$) formula $\varphi(G)$.
If $c \Vdash_\nu \varphi(G)$ for some $\nu < \parts(T)$, then for every $d = (\vec{E}, S, \Dcal) \leq c$
and every part $\mu$ of $S$ refining part $\nu$ of $T$, $d \Vdash_{\mu} \varphi(G)$.
\end{lemma}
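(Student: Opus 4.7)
The plan is to split on whether $\varphi$ is $\Sigma^0_1$ or $\Pi^0_1$ and, in each case, to transfer the hypothesis at part $\nu$ of $c$ to a statement at part $\mu$ of $d$ using the refinement map $f$ with $f(\mu)=\nu$ provided by $d\leq c$. Two structural consequences of the extension relation drive everything: clause~(i) gives that $(E_\mu,\dom(S))$ EM extends $(F_\nu,\dom(T))$, hence $F_\nu\subseteq E_\mu$, $E_\mu\setminus F_\nu\subseteq\dom(T)$ and $\max E_\mu<\min\dom(S)$; clause~(ii) gives that $S$ $f$-refines $T^{[\nu,E_\mu]}$, so every $\sigma\in S$ $f$-refines some $\tau\in T^{[\nu,E_\mu]}$ with $|\tau|=|\sigma|$ and $\set_\mu(\sigma)\subseteq\set_\nu(\tau)$.

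For $\varphi(G)=(\exists x)\psi(G,x)$ with $\psi\in\Sigma^0_0$, clause~1 of Definition~\ref{def:em-forcing-precondition} yields a witness $w$ with $\psi(F_\nu,w)$. Since $F_\nu\subseteq E_\mu$ and the added elements lie above $\max F_\nu$, the same continuity argument used in the $\Sigma^0_1$ case of Lemma~\ref{lem:forcing-extension-precondition} transfers this to $\psi(E_\mu,w)$, whence $d\Vdash_\mu\varphi(G)$.

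For $\varphi(G)=(\forall x)\psi(G,x)$ with $\psi\in\Sigma^0_0$, I must verify that $\psi(E_\mu\cup F',w)$ holds for every $\sigma\in S$, every $w<|\sigma|$ and every $R$-transitive $F'\subseteq\dom(S)\cap\set_\mu(\sigma)$. Given such $(\sigma,w,F')$, I split on the length of $\sigma$. If $|\sigma|>\max E_\mu$, the corresponding $\tau\in T^{[\nu,E_\mu]}$ with $|\tau|=|\sigma|$ must satisfy $E_\mu\subseteq\set_\nu(\tau)$ by the definition of $T^{[\nu,E_\mu]}$, and I apply the hypothesis at $(\tau,w,(E_\mu\setminus F_\nu)\cup F')$. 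If instead $|\sigma|\leq\max E_\mu$, then $\dom(S)\cap\set_\mu(\sigma)=\emptyset$ because $\min\dom(S)>\max E_\mu\geq|\sigma|$, forcing $F'=\emptyset$; I then pick any $\tau'\in T^{[\nu,E_\mu]}$ with $|\tau'|>\max E_\mu$, which exists because $S$ is infinite and $f$-refines $T^{[\nu,E_\mu]}$, and apply the hypothesis at $(\tau',w,E_\mu\setminus F_\nu)$. In both subcases the auxiliary set $(E_\mu\setminus F_\nu)\cup F'$ is $R$-transitive: $E_\mu$ is $R$-transitive as the finite part of the EM condition $(E_\mu,\dom(S))$, $F'$ is $R$-transitive by assumption, and Lemma~\ref{lem:emo-cond-beats} applied to $(E_\mu,\dom(S))$ says that each $e\in E_\mu$ consistently beats or is beaten by all of $\dom(S)\supseteq F'$, so no mixed triple violates transitivity. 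The containment $(E_\mu\setminus F_\nu)\cup F'\subseteq\dom(T)\cap\set_\nu(\tau)$ is then immediate, and clause~2 of Definition~\ref{def:em-forcing-precondition} delivers $\psi(F_\nu\cup((E_\mu\setminus F_\nu)\cup F'),w)=\psi(E_\mu\cup F',w)$.

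The main subtlety, and the reason one must split on the length of $\sigma$, is the definition of $T^{[\nu,E_\mu]}$: it keeps every string of length $<\max E_\mu$ unconditionally, so the $\tau$ that $\sigma$ refines need not place $E_\mu$ inside $\set_\nu(\tau)$. The length split is exactly what lets one either exploit $|\sigma|>\max E_\mu$ directly or pass to a longer witness in $T$ using the infinity of $S$; once that bookkeeping is in place, the $R$-transitivity check via Lemma~\ref{lem:emo-cond-beats} is routine.
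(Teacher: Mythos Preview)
Your argument is correct and matches the paper's strategy; in fact you are more careful, since the paper's proof tacitly assumes both $E_\mu\subseteq\set_\nu(\sigma)$ for every $\sigma\in T^{[\nu,E_\mu]}$ and $E_\mu\subseteq\dom(T)$, neither of which holds in general (the first fails for short $\sigma$, the second for the $F_\nu$-part of $E_\mu$), and your length split together with the systematic use of $E_\mu\setminus F_\nu$ repairs this cleanly. One small addition: to conclude that $(E_\mu\setminus F_\nu)\cup F'$ is $R$-transitive, Lemma~\ref{lem:emo-cond-beats} by itself does not settle triples $a,b,c$ with $a,c\in E_\mu$ and $b\in F'$; you also need clause~(a) of the EM condition $(E_\mu,\dom(S))$, namely that $E_\mu\cup\{x\}$ is $R$-transitive for each $x\in\dom(S)$, which is of course available here.
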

\begin{proof}We have two cases.
\begin{itemize}
	\item If $\varphi \in \Sigma^0_1$ then it can be expressed as $(\exists x)\psi(G, x)$
	where $\psi \in \Sigma^0_0$. By clause~1 of Definition~\ref{def:em-forcing-precondition},
	there exists a $w \in \omega$ such that
	$\psi(F_\nu, w)$ holds. By property (i) of the definition of an extension, $E_\mu \supseteq F_\nu$
	and $(E_\mu \setminus F_\nu) \subset \dom(T)$, therefore $\psi(E_\mu, w)$ holds by continuity,
	so by clause~1 of Definition~\ref{def:em-forcing-precondition}, $d \Vdash_\mu (\exists x)\psi(G, x)$.

	\item If $\varphi \in \Pi^0_1$ then it can be expressed as $(\forall x)\psi(G, x)$
	where $\psi \in \Sigma^0_0$. Fix a $\tau \in S$, a $w < |\tau|$ and an $R$-transitive set
	$F' \subseteq \dom(S) \cap \set_\mu(\tau)$.
	It suffices to prove that $\varphi(E_\mu \cup F')$ holds to conclude
	that $d \Vdash_\mu (\forall x)\psi(G, x)$ by clause~2 of Definition~\ref{def:em-forcing-precondition}.
 	By property~(ii) of the definition of an extension,
	there exists a $\sigma \in T^{[\nu, E_\mu]}$
	such that $|\sigma| = |\tau|$ and $\set_\mu(\tau) \subseteq \set_\nu(\sigma)$. 
	As $\dom(S) \subseteq \dom(T)$, $F' \subseteq \dom(T) \cap \set_\nu(\sigma)$.
	As $\sigma \in T^{[\nu, E_\mu]}$, $E_\mu \subseteq \set_\nu(\sigma)$
	and by property~(i) of the definition of an extension, $E_\mu \subseteq \dom(T)$.
	So $E_\mu \cup F' \subseteq \dom(T) \cap \set_\nu(\sigma)$.
	As $w < |\tau| = |\sigma|$ and~$E_\mu \cup F'$ is an $R$-transitive subset of~$\dom(T) \cap \set_\nu(\sigma)$,
	then by clause~2 of Definition~\ref{def:em-forcing-precondition} applied to $c \Vdash_\nu (\forall x)\psi(G, x)$, 
	$\varphi(F_\nu \cup (E_\mu \setminus F_\nu) \cup F', w)$ holds, hence $\varphi(E_\mu \cup F')$ holds.
\end{itemize}
\end{proof}

Before defining the forcing relation at higher levels, we prove a density lemma
for $\Sigma^0_1$ and $\Pi^0_1$ formulas. It enables us
in particular to reprove that every degree PA relative to $\emptyset'$
computes the jump of an infinite $R$-transitive set.

\begin{lemma}\label{lem:em-forcing-dense-level1}
For every $\Sigma^0_1$ ($\Pi^0_1$) formula $\varphi$, the following set is dense
$$
\{ c = (\vec{F}, T, \Ccal) \in \Pb : (\forall \nu < \parts(T))[ c \Vdash_\nu \varphi(G) \vee c \Vdash_\nu \neg \varphi(G)] \}
$$
\end{lemma}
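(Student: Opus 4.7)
The plan is to handle the $\Sigma^0_1$ case; the $\Pi^0_1$ case is symmetric after swapping clauses~1 and 2 of Definition~\ref{def:em-forcing-precondition} and the roles of $\varphi$ and $\neg\varphi$. Write $\varphi(G) = (\exists x)\psi(G, x)$ with $\psi \in \Sigma^0_0$. The parts of $T$ can be treated one at a time along an extension chain, since Lemma~\ref{lem:em-forcing-extension-level1} preserves already-made decisions on earlier parts (taking $f = \mathrm{id}$, each part maps to itself). So it suffices to show: for every condition $c = (\vec{F}, T, \Ccal)$ and every fixed $\nu^* < \parts(T)$, there exists an extension $d \leq c$ with $d \Vdash_{\nu^*} \varphi(G)$ or $d \Vdash_{\nu^*} \neg\varphi(G)$.

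In the spirit of the cohesiveness density argument (Lemma~\ref{lem:forcing-dense}, case $n = 2$), I would consider the $\emptyset'$-primitive recursive subtree $S \subseteq T$ consisting of those $\tau \in T$ such that for every $\rho \preceq \tau$, every $w < |\rho|$, and every $R$-transitive $F' \subseteq \dom(T) \cap \set_{\nu^*}(\rho)$, $\neg\psi(F_{\nu^*} \cup F', w)$ holds. The key dichotomy is whether $S$ is infinite or not.

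If $S$ is infinite, then $d = (\vec{F}, S, \Ccal[S])$ is a valid extension of $c$ (identity refinement, finite traces unchanged, reservoir shrunk from $\dom(T)$ to $\dom(S)$), and by construction $d \Vdash_{\nu^*} \neg\varphi(G)$ via clause~2 of Definition~\ref{def:em-forcing-precondition}. If $S$ is finite, a K\"onig-type argument on the extendible nodes of $T$ produces an extendible $\tau \in T \setminus S$: if every extendible node of $T$ were in $S$, then by K\"onig's lemma applied to the finitely branching infinite tree of extendible nodes, some infinite path $P$ through $T$ would have all its prefixes in $S$, contradicting the finiteness of $S$. Unfolding the failure of the defining property of $S$ at $\tau$ yields $\rho \preceq \tau$, some $w < |\rho|$, and an $R$-transitive $F' \subseteq \dom(T) \cap \set_{\nu^*}(\rho)$ with $\psi(F_{\nu^*} \cup F', w)$ holding. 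Setting $E_{\nu^*} = F_{\nu^*} \cup F'$, $E_\mu = F_\mu$ for $\mu \neq \nu^*$, and taking $S'$ to be an infinite $\emptyset'$-p.r.\ subtree of $T^{[\nu^*, F']}$ through $\tau$ with $\dom(S')$ pushed past $\max F'$, the condition $d = (\vec{E}, S', \Ccal[S'])$ then forces $\varphi(G)$ on part $\nu^*$ by clause~1 of Definition~\ref{def:em-forcing-precondition} at the witness $w$.

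The main obstacle will be verifying that $d$ is a valid condition in this second case: the promise $\Ccal[S']$ is handled by clause~c) of Lemma~\ref{lem:em-refinement-complexity} together with upward-closure, but the EM-condition clauses on part $\nu^*$ require more care. $R$-transitivity of $E_{\nu^*} = F_{\nu^*} \cup F'$ combines the $R$-transitivity of $F'$ with Lemma~\ref{lem:emo-cond-beats} applied to $(F_{\nu^*}, \dom(T))$, which places each $x \in F_{\nu^*}$ on a definite side of the reservoir and hence of $F' \subseteq \dom(T)$; the minimal $R$-interval property on $\dom(S')$ is then secured via Lemma~\ref{lem:emo-cond-valid} after pushing $\dom(S')$ past a stability threshold for each element of the finite set $F'$.
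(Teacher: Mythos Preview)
Your argument has a genuine gap in the second case (when your tree $S$ is finite). The witness $F'$ you extract is $R$-transitive, but nothing prevents it from containing both elements that eventually beat all large integers and elements that eventually lose to all large integers, with the ``wrong'' $R$-orientation between them. Concretely, take $F' = \{c,d\}$ with $R(c,d)$, where $d$ is an eventual beater and $c$ an eventual loser. Then for all large $x$ we have $R(d,x)$ and $R(x,c)$, so $\{c,d,x\}$ is never $R$-transitive (it would force $R(d,c)$), and $[u,\infty)$ lies in no single minimal $R$-interval of $F'$. Hence $(F_{\nu^*}\cup F', [u,\infty))$ fails both clauses of the EM-condition definition for every $u$, and Lemma~\ref{lem:emo-cond-valid} does not apply since its hypothesis $F_1 \to_R Y \vee Y \to_R F_1$ is exactly what fails here. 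Pushing the domain past a stability threshold for each element of $F'$ does not help: stability tells you which way each element eventually points, but not that they all point the same way.

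This is precisely why the paper's proof forks part $\nu$ into two parts via a $(k{+}1)$-partition tree rather than staying inside $T$. When that larger tree is finite, one picks an extendible $\tau \in T$ at a level beyond it and considers the stable $2$-partition $E_0 \sqcup E_1$ of $\set_\nu(\tau)\cap\dom(T)$ into eventual beaters and eventual losers. The string over $k{+}1$ that places $E_0$ in part $\nu$ and $E_1$ in the new part $k$ $f$-refines $\tau$ but cannot lie in $S$, so its failure produces a witness $F'$ lying entirely inside $E_0$ or entirely inside $E_1$. That single-sidedness is what makes Lemma~\ref{lem:emo-cond-valid} applicable. Your reduction to one part at a time is fine, and the infinite-$S$ branch is essentially correct (note, though, that your $S$ and $S'$ are p.r., not merely $\emptyset'$-p.r., which matters since conditions require p.r.\ trees); the missing ingredient is this fork.
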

\begin{proof}
It suffices to prove the statement for the case where $\varphi$ is a $\Sigma^0_1$ formula,
as the case where $\varphi$ is a $\Pi^0_1$ formula is symmetric. Fix a condition $c = (\vec{F}, T, \Ccal)$
and let $I(c)$ be the set of the parts $\nu < \parts(T)$ such that $c \not \Vdash_\nu \varphi(G)$
and $c \not \Vdash_\nu \neg \varphi(G)$. If $I(c) = \emptyset$ then we are done, so suppose $I(c) \neq \emptyset$
and fix some $\nu \in I(c)$. We will construct an extension $d$ of~$c$ such that $I(d) \subseteq I(c) \setminus \{\nu\}$.
Iterating the operation completes the proof.

The formula $\varphi$ is of the form $(\exists x)\psi(G, x)$ where $\psi \in \Sigma^0_0$.
Define $f : k+1 \to k$ as $f(\mu) = \mu$ if $\mu < k$ and $f(k) = \nu$ otherwise.
Let $S$ be the set of all $\sigma \in (k+1)^{<\omega}$ which $f$-refine some $\tau \in T \cap k^{|\sigma|}$
and such that for every $w < |\sigma|$, every part $\mu \in \{\nu, k\}$
and every finite $R$-transitive set $F' \subseteq \dom(T) \cap \set_\mu(\sigma)$, 
$\varphi(F_\nu \cup F', w)$ does not hold.

Note that $S$ is a p.r. partition tree of $[t, \infty)$ refining $T$ with witness function~$f$. 
Suppose that $S$ is infinite. 
Let $\vec{E}$ be defined by $E_\mu = F_\mu$ if $\mu < k$ and $E_k = F_\nu$
and consider the extension $d = (\vec{E}, S, \Ccal[S])$.
We claim that~$\nu, k \not \in I(d)$. Fix a part $\mu \in \{\nu, k\}$ of $S$. By definition of $S$,
for every $\sigma \in S$, every $w < |\sigma|$
and every $R$-transitive set $F' \subseteq \dom(S) \cap \set_\mu(\sigma)$,
$\varphi(E_\mu \cup F', w)$ does not hold.
Therefore, by clause~2 of Definition~\ref{def:em-forcing-precondition},
$d \Vdash_\mu (\forall x)\neg \psi(G, x)$, hence $d \Vdash_\mu \neg \varphi(G)$.
Note that $I(d) \subseteq I(c) \setminus \{\nu\}$.

Suppose now that $S$ is finite. Fix a threshold $\ell \in \omega$ such that $(\forall \sigma \in S)|\sigma| < \ell$
and a $\tau \in T \cap k^\ell$ such that $T^{[\tau]}$ is infinite.
Consider the 2-partition $E_0 \sqcup E_1$ of $\set_\nu(\tau) \cap \dom(T)$ defined by
$E_0 = \{ i \geq t : \tau(i) = \nu \wedge (\exists n)(\forall s > n) R(i, s) \mbox{ holds}\}$
and $E_1 = \{ i \geq t : \tau(i) = \nu \wedge (\exists n)(\forall s > n) R(s, i) \mbox{ holds}\}$.
This is a 2-partition since the tournament~$R$ is stable.
As there exists no $\sigma \in S$ which $f$-refines $\tau$,
there exists a $w < \ell$ and an $R$-transitive set $F' \subseteq E_0$ or $F' \subseteq E_1$
such that $\varphi(F_\nu \cup F', w)$ holds. By choice of the partition,
there exists a $t' > t$ such that $F' \to_R [t', \infty)$ or $[t', \infty) \to_R F'$.
By Lemma~\ref{lem:emo-cond-valid}, $(F_\nu \cup F', [t', \infty))$ is a valid EM extension for $(F_\nu, [t, \infty))$.
As $T^{[\tau]}$ is infinite, $T^{[\nu, F']}$ is also infinite.
Let $\vec{E}$ be defined by $E_\mu = F_\mu$ if $\mu \neq \nu$ and $E_\mu = F_\nu \cup F'$ otherwise.
Let $S$ be the $k$-partition tree $(k, t', T^{[\nu, F']})$.
The condition $d = (\vec{E}, S, \Ccal[S])$ is a valid extension of $c$.
By clause~1 of Definition~\ref{def:em-forcing-precondition}, $d \Vdash_\mu \varphi(G)$.
Therefore $I(d) \subseteq I(c) \setminus \{\nu\}$.
\end{proof}

As in the previous notion of forcing, the following trivial lemma expresses the fact that the promise part of a condition
has no effect in the forcing relation for a $\Sigma^0_1$ or $\Pi^0_1$ formula.

\begin{lemma}\label{lem:em-promise-no-effect-first-level}
Fix two conditions $c = (\vec{F}, T, \Ccal)$ and $d = (\vec{F}, T, \Dcal)$, and a $\Sigma^0_1$ ($\Pi^0_1$)
formula. For every part $\nu$ of $T$,
$c \Vdash_\nu \varphi(G)$ if and only if $d \Vdash_\nu \varphi(G)$.
\end{lemma}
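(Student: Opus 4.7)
The plan is simply to unfold Definition~\ref{def:em-forcing-precondition} and observe that the promise~$\Ccal$ does not appear anywhere in it. In the $\Sigma^0_1$ clause, $c \Vdash_\nu (\exists x)\psi(G,x)$ is defined solely in terms of the existence of a witness~$w$ with~$\psi(F_\nu, w)$, which depends only on~$\vec{F}$ and~$\nu$, not on the tree or the promise. In the $\Pi^0_1$ clause, $c \Vdash_\nu (\forall x)\psi(G,x)$ is defined by universally quantifying over~$\sigma \in T$, integers~$w < |\sigma|$, and $R$-transitive subsets~$F' \subseteq \dom(T) \cap \set_\nu(\sigma)$, asking that $\psi(F_\nu \cup F', w)$ holds; this depends only on~$\vec{F}$, $T$, and~$\nu$, with no reference to~$\Ccal$.

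Since the hypotheses give $c = (\vec{F}, T, \Ccal)$ and $d = (\vec{F}, T, \Dcal)$ with identical~$\vec{F}$ and~$T$, and since we consider the same part~$\nu$ on both sides, the two formulas $c \Vdash_\nu \varphi(G)$ and $d \Vdash_\nu \varphi(G)$ unfold to syntactically the same statement. This yields both directions of the biconditional without any further work.

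No obstacle is expected here: this is the analogue for the Erd\H{o}s--Moser forcing of Lemma~\ref{lem:tree-no-effect-first-level} in the cohesiveness forcing, and the argument is a one-line inspection of the definition. The lemma exists merely so that the promise component can be freely modified when invoking the first-level forcing relation inside later proofs (in particular, when passing between a condition and its restrictions~$\Ccal[S]$ during density and extension arguments for higher-level formulas).
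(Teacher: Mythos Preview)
Your proposal is correct and follows essentially the same approach as the paper: both unfold Definition~\ref{def:em-forcing-precondition} and observe that neither clause mentions the promise~$\Ccal$, so the forcing relation at the first level depends only on~$\vec{F}$, $T$, and~$\nu$. The paper's proof is a direct case analysis on $\varphi \in \Sigma^0_1$ versus $\varphi \in \Pi^0_1$ that matches yours almost verbatim.
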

\begin{proof}
If $\varphi \in \Sigma^0_1$ then $\varphi(G)$ can be expressed as $(\exists x)\psi(G, x)$ 
where $\psi \in \Sigma^0_0$.
By clause~1 of Definition~\ref{def:em-forcing-precondition},
$c \Vdash_\nu \varphi(G)$ iff
there exists a $w \in \omega$ such that $\psi(F_\nu, w)$ holds,
iff $d \Vdash_\nu \varphi(G)$.
Similarily, if $\varphi \in \Pi^0_1$ then $\varphi(G)$ can be expressed as $(\forall x)\psi(G, x)$
where $\psi \in \Sigma^0_0$.
By clause~2 of Definition~\ref{def:em-forcing-precondition},
$c \Vdash_\nu \varphi(G)$ iff
for every $\sigma \in T$, every $w < |\sigma|$ and 
every $R$-transitive set $F' \subseteq \dom(T) \cap \set_\nu(\sigma)$, $\varphi(F_\nu \cup F', w)$ holds,
iff $d \Vdash_\nu \varphi(G)$.
\end{proof}

We are now ready to define the forcing relation for an arbitrary arithmetic formula.
Again, the natural forcing relation induced by the forcing of~$\Sigma^0_0$ formulas
is too complex, so we design a more effective relation which still
enjoys the main properties of a forcing relation.

\begin{definition}\label{def:em-forcing-condition}
Fix a condition $c = (\vec{F}, T, \Ccal)$ and an arithmetic formula $\varphi(G)$.
\begin{itemize}
	\item[1.] If $\varphi(G) = (\exists x)\psi(G, x)$ where $\psi \in \Pi^0_1$ then
	$c \Vdash \varphi(G)$ iff for every part $\nu < \parts(T)$ such that
	$(\nu, T) \in \Ccal$ there exists a $w < \dom(T)$ such that $c \Vdash_\nu \psi(G, w)$
	\item[2.] If $\varphi(G) = (\forall x)\psi(G, x)$ where $\psi \in \Sigma^0_1$ then
	$c \Vdash \varphi(G)$ iff for every infinite p.r.\ $k'$-partition tree $S$,
	every function~$f : \parts(S) \to \parts(T)$,
	every $w$ and $\vec{E}$ smaller than $\#S$ such that the followings hold
	\begin{itemize}
		\item[i)] $(E_\nu, \dom(S))$ EM extends $(F_{f(\nu)}, \dom(T))$ for each $\nu < \parts(S)$
		\item[ii)] $S$ $f$-refines $\bigcap_{\nu < \parts(S)} T^{[f(\nu), E_\nu]}$
	\end{itemize}
	for every $(\mu, S) \in \Ccal$, $(\vec{E}, S, \Ccal[S]) \not \Vdash_\mu \neg \psi(G, w)$
	\item[3.] If $\varphi(G) = (\exists x)\psi(G, x)$ where $\psi \in \Pi^0_{n+2}$ then
	$c \Vdash \varphi(G)$ iff there exists a $w \in \omega$ such that $c \Vdash \psi(G, w)$
	\item[4.] If $\varphi(G) = \neg \psi(G,x)$ where $\psi \in \Sigma^0_{n+3}$
	then $c \Vdash \varphi(G)$ iff $d \not \Vdash \psi(G)$ for every $d \in \Ext(c)$.
\end{itemize}
\end{definition}

Notice that, unlike the forcing relation for $\Sigma^0_1$ and $\Pi^0_1$ formulas,
the relation over higher formuals does not depend on the part of the relation. 
The careful reader will have recognized the combinatorics of the second jump control 
introduced by Cholak, Jockusch and Slaman in~\cite{Cholak2001strength}.
We now prove the main properties of this forcing relation.

\begin{lemma}\label{lem:em-forcing-extension}
Fix a condition $c$ and a $\Sigma^0_{n+2}$ ($\Pi^0_{n+2}$) formula $\varphi(G)$.
If $c \Vdash \varphi(G)$ then for every $d \leq c$, $d \Vdash \varphi(G)$.
\end{lemma}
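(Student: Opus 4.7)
The plan is to prove Lemma~\ref{lem:em-forcing-extension} by induction on the complexity of $\varphi(G)$, using the four clauses of Definition~\ref{def:em-forcing-condition}. Fix a condition $c = (\vec{F}, T, \Ccal)$ such that $c \Vdash \varphi(G)$ and an extension $d = (\vec{E}, S, \Dcal) \leq c$, with $f_0 : \parts(S) \to \parts(T)$ the witnessing refinement function. The first case ($\varphi \in \Sigma^0_1 \cup \Pi^0_1$) does not appear in the statement, so the base level is the $\Sigma^0_2$ case. For $\varphi(G) = (\exists x)\psi(G,x)$ with $\psi \in \Pi^0_1$, take any part $\mu$ of $S$ with $(\mu, S) \in \Dcal$. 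Since $\Dcal \subseteq \Ccal$ and $(\mu, S) \leq (f_0(\mu), T)$ in $\TPb[T]$, upward closure of $\Ccal$ (clause~a of Definition~\ref{def:em-promise}) gives $(f_0(\mu), T) \in \Ccal$. By hypothesis there is some $w < \dom(T)$ with $c \Vdash_{f_0(\mu)} \psi(G, w)$; then Lemma~\ref{lem:em-forcing-extension-level1} yields $d \Vdash_\mu \psi(G, w)$, and $w < \dom(T) \leq \dom(S)$, so clause~1 of Definition~\ref{def:em-forcing-condition} produces $d \Vdash \varphi(G)$.

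For the $\Pi^0_2$ case, where $\varphi(G) = (\forall x)\psi(G, x)$ with $\psi \in \Sigma^0_1$, I would proceed by contrapositive: suppose $d$ admits a candidate tuple $(S', f_1, \vec{E}')$ with $(\mu', S') \in \Dcal$, $w < \#S'$, and $(\vec{E}', S', \Dcal[S']) \Vdash_{\mu'} \neg \psi(G, w)$. Compose with $f_0$: set $f = f_0 \circ f_1$ so that $S'$ $f$-refines $T$. Transitivity of the refinement relation, together with the observation that $(E'_\nu, \dom(S'))$ EM-extends $(F_{f(\nu)}, \dom(T))$ because EM extension is transitive (Lemma~\ref{lem:emo-cond-valid} applied twice), show that $(S', f, \vec{E}')$ satisfies clauses (i) and (ii) of Definition~\ref{def:em-forcing-condition}.2 with respect to $c$. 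Lemma~\ref{lem:em-promise-no-effect-first-level} further tells us that replacing $\Dcal[S']$ by $\Ccal[S']$ leaves the forcing relation at the first level unchanged, so $(\vec{E}', S', \Ccal[S']) \Vdash_{\mu'} \neg \psi(G, w)$. Finally $(\mu', S') \in \Dcal \subseteq \Ccal$, contradicting $c \Vdash \varphi(G)$.

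The remaining two cases are routine inductive applications. For $\varphi \in \Sigma^0_{n+2}$ with $n \geq 1$, write $\varphi(G) = (\exists x)\psi(G, x)$ with $\psi \in \Pi^0_{n+2}$; pick the witness $w$ with $c \Vdash \psi(G, w)$ supplied by clause~3, invoke the induction hypothesis to get $d \Vdash \psi(G, w)$, and reapply clause~3. For $\varphi \in \Pi^0_{n+3}$, write $\varphi(G) = \neg \psi(G)$ with $\psi \in \Sigma^0_{n+3}$; the inclusion $\Ext(d) \subseteq \Ext(c)$ (immediate from transitivity of the condition order) together with clause~4 closes this case.

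The main obstacle will be the $\Pi^0_2$ case, because one must simultaneously (a) verify that a threatening tuple against $d$ composes with the refinement data witnessing $d \leq c$ to yield a threatening tuple against $c$, (b) track that the EM extension relation and the tree refinement relation behave well under this composition, and (c) reconcile the promise restrictions $\Dcal[S']$ and $\Ccal[S']$ by appealing to Lemma~\ref{lem:em-promise-no-effect-first-level}. Everything else is essentially bookkeeping against the definitions.
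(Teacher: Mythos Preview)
Your proposal is correct and follows essentially the same route as the paper: induction on complexity, with the $\Sigma^0_2$ case handled via upward closure of $\Ccal$ and Lemma~\ref{lem:em-forcing-extension-level1}, the $\Pi^0_2$ case by contraposition and composition of refinement data together with Lemma~\ref{lem:em-promise-no-effect-first-level}, and the higher cases by straightforward induction. Two small corrections: transitivity of EM extension is immediate from the definition of Mathias extension and is not what Lemma~\ref{lem:emo-cond-valid} states (that lemma produces a new EM condition from transitivity-and-side data, not transitivity of the extension relation); and in your third case the inner formula should be $\psi \in \Pi^0_{n+1}$, not $\Pi^0_{n+2}$.
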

\begin{proof}
We prove the statement by induction over the complexity of the formula $\varphi(G)$.
Fix a condition $c = (\vec{F}, T, \Ccal)$ such that $c \Vdash \varphi(G)$ and an extension $d = (\vec{E}, S, \Dcal)$ of~$c$.
\begin{itemize}
	\item If $\varphi \in \Sigma^0_2$ then $\varphi(G)$ can be expressed as $(\exists x)\psi(G, x)$ 
  where $\psi \in \Pi^0_1$. By clause~1 of Definition~\ref{def:em-forcing-condition},
	for every part $\nu$ of $T$ such that $(\nu, T) \in \Ccal$, there exists a $w < \dom(T)$
	such that $c \Vdash_\nu \psi(G, w)$. Fix a part $\mu$ of $S$ such that $(\mu, S) \in \Dcal$.
	As $\Dcal \subseteq \Ccal$, $(\mu, S) \in \Ccal$. By upward-closure of $\Ccal$,
	part $\mu$ of $S$ refines some part $\nu$ of $\Ccal$ such that $(\nu, T) \in \Ccal$.
	Therefore by Lemma~\ref{lem:em-forcing-extension-level1}, $d \Vdash_\mu \psi(G, w)$,
	with $w < \dom(T) \leq \dom(S)$. Applying again clause~1 of Definition~\ref{def:em-forcing-condition},
	we deduce that $d \Vdash (\forall x)\psi(G, x)$, hence $d \Vdash \varphi(G)$.

  \item If $\varphi \in \Pi^0_2$ then $\varphi(G)$ can be expressed as $(\forall x)\psi(G, x)$ where $\psi \in \Sigma^0_1$.
	Suppose by way of contradiction that $d \not \Vdash (\forall x)\psi(G, x)$.
	Let $f : \parts(S) \to \parts(T)$ witness the refinement $S \leq T$.
	By clause~2 of Definition~\ref{def:em-forcing-condition}, there exists an infinite p.r.\ 
	$k'$-partition tree $S'$, a function~$g : \parts(S') \to \parts(S)$, a $w \in \omega$, and $\vec{H}$
	smaller than the code of $S'$ such that
	\begin{itemize}
		\item[i)] $(H_\nu, \dom(S'))$ EM extends $(E_{g(\nu)}, \dom(S))$ for each $\nu < \parts(S')$
		\item[ii)] $S'$ $g$-refines $\bigcap_{\nu < \parts(S')} S^{[g(\nu), H_\nu]}$
		\item[iii)] there exists a $(\mu, S') \in \Dcal$ such that $(\vec{H}, S', \Dcal[S']) \Vdash_\mu \neg \psi(G, w)$.
	\end{itemize}
	To deduce by clause~2 of Definition~\ref{def:em-forcing-condition} that
	$c \not \Vdash (\forall x)\psi(G, x)$ and derive a contradiction, it suffices to prove
	that the same properties hold with respect to $T$.
	\begin{itemize}
		\item[i)] By property (i) of the definition of an extension,
		$(E_{g(\nu)}, \dom(S))$ EM extends $(F_{f(g(\nu))}, \dom(T))$
		and $(H_\nu, \dom(S')$ EM extends $(E_{g(\nu)}, \dom(S))$,
		then $(H_\nu, \dom(S'))$ EM extends $(F_{f(g(\nu))}, \dom(T))$.
		\item[ii)] By property (ii) of the definition of an extension,
		$S$ $f$-refines $\bigcap_{\nu < \parts(S')} T^{[f(g(\nu)), E_{g(\nu)}]}$
		and $S'$ $g$-refines $\bigcap_{\nu < \parts(S')} S^{[g(\nu), H_\nu]}$,
		then $S'$ $(g \circ f)$-refines $\bigcap_{\nu < \parts(S')} T^{[(g \circ f)(\nu), H_\nu]}$.
		\item[iii)] As $\Dcal \subseteq \Ccal$, there exists a part $(\mu, S') \in \Ccal$
		such that $(\vec{H}, S', \Dcal[S']) \Vdash_\mu \neg \psi(G, w)$. 
		By Lemma~\ref{lem:em-promise-no-effect-first-level}, $(\vec{H}, S', \Ccal[S']) \Vdash_\mu \neg \psi(G, w)$.
	\end{itemize}

	\item If $\varphi \in \Sigma^0_{n+3}$ then $\varphi(G)$ can be expressed as $(\exists x)\psi(G, x)$ 
  where $\psi \in \Pi^0_{n+2}$.
  By clause~3 of Definition~\ref{def:em-forcing-condition}, there exists a $w \in \omega$
  such that $c \Vdash \psi(G, w)$. By induction hypothesis, $d \Vdash \psi(G, w)$
  so by clause~3 of Definition~\ref{def:em-forcing-condition}, $d \Vdash \varphi(G)$.

  \item If $\varphi \in \Pi^0_{n+3}$ then $\varphi(G)$ can be expressed as $\neg \psi(G)$ where $\psi \in \Sigma^0_{n+3}$.
  By clause~4 of Definition~\ref{def:em-forcing-condition}, for every $e \in \Ext(c)$, $e \not \Vdash \psi(G)$.
  As $\Ext(d) \subseteq \Ext(c)$, for every $e \in \Ext(d)$, $e \not \Vdash \psi(G)$,
  so by clause~4 of Definition~\ref{def:em-forcing-condition}, $d \Vdash \varphi(G)$.
\end{itemize}
\end{proof}

\begin{lemma}\label{lem:em-forcing-dense}
For every $\Sigma^0_{n+2}$ ($\Pi^0_{n+2}$) formula $\varphi$, the following set is dense
$$
\{c \in \Pb : c \Vdash \varphi(G) \mbox{ or } c \Vdash \neg \varphi(G) \}
$$
\end{lemma}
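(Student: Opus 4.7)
The plan is to induct on $n \geq 0$. As in Lemma~\ref{lem:forcing-dense}, the $\Pi^0_{n+2}$ case is symmetric to the $\Sigma^0_{n+2}$ case, so I focus on the latter. The inductive step $n \geq 1$ is immediate: given $\varphi = (\exists x)\psi(G,x)$ with $\psi \in \Pi^0_{n+1}$ (so $\psi \in \Pi^0_m$ for some $m \geq 2$, whence clause~3 of Definition~\ref{def:em-forcing-condition} governs $\varphi$) and a condition $c$, either some $d \leq c$ satisfies $d \Vdash \psi(G,w)$ for some $w$, in which case clause~3 yields $d \Vdash \varphi(G)$; or no extension of $c$ forces any such $\psi(G,w)$, meaning no extension forces $\varphi$, and clause~4 directly yields $c \Vdash \neg\varphi(G)$.

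The substantive case is $n = 0$, where $\varphi = (\exists x)\psi(G,x)$ with $\psi \in \Pi^0_1$. Given $c = (\vec{F}, T, \Ccal)$, my strategy mirrors the $n = 2$ subcase of Lemma~\ref{lem:forcing-dense}, lifted from binary trees to partition trees and promises. I plan to define a $\emptyset'$-primitive recursive subtree $S \subseteq T$ whose nodes $\sigma$ witness that no valid refinement-extension data $(\vec{E}, V, g, w)$ with $V$ refining $T^{[\sigma]}$ can produce $(\vec{E}, V, \Ccal[V]) \Vdash_{\mu'} \psi(G,w)$ on any part $\mu'$, together with a sub-promise $\Dcal \subseteq \Ccal$ retaining only those $(\mu, V)$ from which no such witness is reachable. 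Since $\psi \in \Pi^0_1$, the relation $\Vdash_{\mu'}\psi$ is $\Pi^0_1$ (Lemma~\ref{lem:em-complexity-forcing}), so the defining conditions are $\Pi^0_2$, and the $\emptyset'$-compactness reformulation used in the cohesiveness section (via the analogue of the class $\Ccal(\vec R)$) converts them to $\emptyset'$-p.r.\ form.

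Two cases then arise. If $S$ is infinite and $\Dcal[S]$ is still a promise for $S$, then $d = (\vec{F}, S, \Dcal[S])$ is a valid extension of $c$, and by construction no refinement of $d$ can force $\psi(G,w)$ via $\Vdash_\mu$ for any $(\mu, V) \in \Dcal[S]$; clause~2 of Definition~\ref{def:em-forcing-condition}, applied to $\neg\varphi = (\forall x)\neg\psi$, then delivers $d \Vdash \neg\varphi(G)$. Otherwise the pruning collapses: every sufficiently refined extension of $c$ exhibits a concrete witness $(\vec{E}, V, g, w)$ with $(\vec{E}, V, \Ccal[V]) \Vdash_{\mu'} \psi(G,w)$ on some part $\mu'$; iterating this extraction across each of the finitely many acceptable parts, using Lemma~\ref{lem:em-forcing-dense-level1} to coordinate $\Pi^0_1$-forcing decisions and Lemma~\ref{lem:promise-keeps-acceptable-parts} to retain acceptable parts through each refinement, eventually yields $d \leq c$ such that every $(\mu, V)$ in $d$'s promise admits some $w < \dom(V)$ with $d \Vdash_\mu \psi(G,w)$, whereupon clause~1 delivers $d \Vdash \varphi(G)$. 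The main obstacle is precisely this second case: coordinating the witness choices across all acceptable parts so as to realise the uniform bound $w < \dom(V)$ required by clause~1, while simultaneously keeping the promise valid in the sense of Definition~\ref{def:em-promise} and preserving the EM-condition structure on every part during each refinement.
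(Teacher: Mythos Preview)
Your inductive step $n \geq 1$ is correct and matches the paper. The $n=0$ case, however, has a genuine gap.

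You propose to prune $T$ to a $\emptyset'$-primitive recursive subtree $S$ (analogously to the $n=2$ case of Lemma~\ref{lem:forcing-dense}). But in the Erd\H{o}s--Moser forcing the tree component of a condition must be \emph{primitive recursive}, not $\emptyset'$-p.r.; only the promise is allowed to be $\emptyset'$-p.r.\ (see clauses (a) and (b) in the definition of $\Pb$). Your pruning predicate is $\Pi^0_2$, so the resulting $S$ cannot serve as the tree of a condition. The analogy with cohesiveness breaks precisely here: in that forcing the tree \emph{is} $\emptyset'$-p.r., whereas here the $\emptyset'$-p.r.\ complexity is deliberately quarantined in the promise.

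The paper's argument exploits exactly this. It asks a single $\Sigma^0_2$ question: does there exist an infinite p.r.\ refinement $S \leq_f T$ together with $\vec{E}$ such that \emph{every} non-empty part $\nu$ of $S$ with $(\nu,S) \in \Ccal$ already satisfies $(\vec{E},S,\Ccal[S]) \Vdash_\nu \psi(G,w)$ for some $w < \#S$? If yes, then after stripping empty parts from the promise one obtains $d = (\vec{E}, S, \Dcal)$ with $d \Vdash \varphi(G)$ directly by clause~1---no iteration across parts is needed, because the existential witness $S$ already handles all parts simultaneously, dissolving the coordination obstacle you flagged. If no, the paper leaves $T$ and $\vec{F}$ untouched and defines $\Dcal$ to be the set of $(\nu,S)$ that fail to force any $\psi(G,w)$; this $\Dcal$ is $\emptyset'$-p.r., upward-closed by Lemma~\ref{lem:em-forcing-extension-level1}, and a promise by the failure hypothesis, so $d = (\vec{F}, T, \Dcal)$ forces $\neg\varphi(G)$ by clause~2. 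The promise, not the tree, is where the $\Pi^0_2$ pruning lives.
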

\begin{proof}
We prove the statement by induction over~$n$.
It suffices to treat the case where $\varphi$ is a $\Sigma^0_{n+2}$ formula,
as the case where $\varphi$ is a $\Pi^0_{n+2}$ formula is symmetric. Fix a condition $c = (\vec{F}, T, \Ccal)$.
\begin{itemize}
	\item In case $n = 0$, the formula $\varphi$ is of the form $(\exists x)\psi(G, x)$ where
	$\psi \in \Pi^0_1$.
	Suppose there exist an infinite p.r. $k'$-partition tree $S$
	for some $k' \in \omega$, a function~$f : \parts(S) \to \parts(T)$ and a $k'$-tuple of finite sets $\vec{E}$ such that
	\begin{itemize}
		\item[i)] $(E_\nu, [\ell, \infty))$ EM extends $(F_{f(\nu)}, \dom(T))$ for each $\nu < \parts(S)$.
		\item[ii)] $S$ $f$-refines $\bigcap_{\nu < \parts(S)} T^{[f(\nu), E_\nu]}$
		\item[iii)] for each non-empty part $\nu$ of $S$ such that $(\nu, S) \in \Ccal$, 
		$(\vec{E}, S, \Ccal[S]) \Vdash_\nu \psi(G, w)$ for some $w < \#S$
	\end{itemize}
	We can choose $\dom(S)$  so that $(E_\nu, \dom(S))$ EM extends $(F_{f(\nu)}, \dom(T))$ for each $\nu < \parts(S)$.
	Properties i-ii) remain trivially true.
	By Lemma~\ref{lem:em-forcing-extension-level1} and Lemma~\ref{lem:em-promise-no-effect-first-level}, property iii) remains true too.
	Let $\Dcal = \Ccal[S] \setminus \{(\nu, S') \in \Ccal : \mbox{ part } \nu \mbox{ of } S' \mbox{ is empty} \}$. 
	As $\Ccal$ is an $\emptyset'$-p.r. promise for $T$,
	$\Ccal[S]$ is an $\emptyset'$-p.r. promise for $S$.
	As $\Dcal$ is obtained from $\Ccal[S]$ by removing only empty parts, $\Dcal$ is also an $\emptyset'$-p.r. promise for $S$.
	By clause~1 of Definition~\ref{def:em-forcing-condition},
	$d = (\vec{E}, S, \Dcal) \Vdash (\exists x)\psi(G, x)$ hence $d \Vdash \varphi(G)$.

	We may choose a coding of the p.r. trees such that
	the code of $S$ is sufficiently large to witness $\ell$ and $\vec{E}$.
	So suppose now that for every infinite p.r. $k'$-partition tree $S$,
	every function~$f : \parts(S) \to \parts(T)$ and $\vec{E}$ smaller than the code of $S$ such that properties i-ii) hold,
	there exists a non-empty part $\nu$ of $S$ such that $(\nu, S) \in \Ccal$
	and $(\vec{E}, S, \Ccal) \not \Vdash_\nu \psi(G, w)$ for every $w < \ell$.
	Let $\Dcal$ be the collection of all such $(\nu, S)$. The set $\Dcal$ is $\emptyset'$-p.r.\ 
	since by Lemma~\ref{lem:em-complexity-forcing}, both $(\vec{E}, S, \Ccal) \not \Vdash_\nu \psi(G, w)$
	and ``part $\nu$ of~$S$ is non-empty'' are $\Sigma^0_1$.
	By Lemma~\ref{lem:em-forcing-extension-level1} and since we require that~$\#S \geq \#T$
	in the definition of~$S \leq T$, $\Dcal$ is upward-closed under the refinement relation,
	hence is a promise for~$T$. By clause~2
	of Definition~\ref{def:em-forcing-condition}, $d = (\vec{F}, T, \Dcal) \Vdash (\forall x) \neg \psi(G, x)$,
	hence $d \Vdash \neg \varphi(G)$.

	\item In case $n > 0$, density follows from clause~4 of Definition~\ref{def:em-forcing-condition}.
\end{itemize}
\end{proof}

By Lemma~\ref{lem:promise-keeps-acceptable-parts}, 
given any filter~$\Fcal = \{c_0, c_1, \dots \}$ with $c_s = (\vec{F}_s, T_s, \Ccal_s)$, 
the set of the acceptable parts~$\nu$ of~$T_s$ such that~$(\nu, T_s) \in \Ccal_s$ forms
an infinite, directed acyclic graph~$\Gcal(\Fcal)$. Whenever~$\Fcal$ is sufficiently generic,
the graph~$\Gcal(\Fcal)$ has a unique infinite path~$P$.
The path~$P$ induces an infinite set~$G = \bigcup_s F_{P(s), s}$. 
We call~$P$ the \emph{generic path} and $G$ the \emph{generic real}.

\begin{lemma}\label{lem:em-generic-level1}
Suppose that $\Fcal$ is sufficiently generic and let~$P$ and~$G$ be the generic path and the generic real, respectively.
For any $\Sigma^0_1$ ($\Pi^0_1$) formula $\varphi(G)$, 
$\varphi(G)$ holds iff $c_s \Vdash_{P(s)} \varphi(G)$ for some $c_s \in \Fcal$.
\end{lemma}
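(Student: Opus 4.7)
The plan is to mimic the soundness--completeness pattern of Lemma~\ref{lem:coh-holds-filter}, now parameterized by the generic path $P$. The completeness direction, namely, if $\varphi(G)$ holds then $c_s \Vdash_{P(s)} \varphi(G)$ for some $c_s \in \Fcal$, will follow immediately from Lemma~\ref{lem:em-forcing-dense-level1} together with the soundness direction applied to $\neg \varphi$. Indeed, by genericity and density, there is some $c_s \in \Fcal$ deciding $\varphi$ on every part of its partition tree; in particular on $P(s)$. If $c_s \Vdash_{P(s)} \neg \varphi(G)$, then by the soundness direction $\neg \varphi(G)$ holds, contradicting the hypothesis.

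For soundness in the $\Sigma^0_1$ case, write $\varphi(G) = (\exists x) \psi(G, x)$ with $\psi \in \Sigma^0_0$. By clause~1 of Definition~\ref{def:em-forcing-precondition}, $c_s \Vdash_{P(s)} \varphi(G)$ gives a witness $w$ with $\psi(F_{P(s), s}, w)$. Since $F_{P(s), s} \subseteq G$ and every element of $G \setminus F_{P(s), s}$ lies above $\max F_{P(s), s}$ (by the EM condition structure and the extension relation), continuity of $\Sigma^0_0$ formulas gives $\psi(G, w)$, hence $\varphi(G)$.

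The main obstacle is the $\Pi^0_1$ case: $\varphi(G) = (\forall x) \psi(G, x)$ with $\psi \in \Sigma^0_0$. Fix $w \in \omega$ and let $M$ be large enough that $\psi(G, w)$ depends only on $G \cap [0, M]$. Choose $t \geq s$ with $c_t = (\vec{F}_t, T_t, \Ccal_t) \in \Fcal$ so large that $G \cap [0, M] \subseteq F_{P(t), t}$ and so that $\dom(T_t) > \max(w, M)$. Set $F' := F_{P(t), t} \setminus F_{P(s), s}$. I need to verify $F'$ satisfies the three requirements of clause~2 of Definition~\ref{def:em-forcing-precondition} with respect to $c_s$ and the part $P(s)$. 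First, $F'$ is $R$-transitive because $F_{P(t), t}$ is, by property (a) of an EM condition. Second, $F' \subseteq \dom(T_s)$ because $c_t \leq c_s$ forces $\dom(T_t) \subseteq \dom(T_s)$. The delicate point is to locate a single $\sigma \in T_s$ with $|\sigma| > w$ and $F' \subseteq \set_{P(s)}(\sigma)$. For this I use the chain of refinement functions $f_{t,s} : \parts(T_t) \to \parts(T_s)$ satisfying $f_{t,s}(P(t)) = P(s)$, together with property (ii) of the extension relation: $T_t$ $f_{t,s}$-refines $\bigcap_\nu T_s^{[f_{t,s}(\nu), F_{\nu, t}]}$, in particular $T_s^{[P(s), F_{P(t), t}]}$. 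Picking any $\tau \in T_t$ of length exceeding $\max(w, M)$ and letting $\sigma \in T_s$ be the string it $f_{t,s}$-refines, one gets $F_{P(t),t} \subseteq \set_{P(s)}(\sigma)$ (from the decoration $[P(s), F_{P(t), t}]$) and therefore $F' \subseteq \dom(T_s) \cap \set_{P(s)}(\sigma)$ with $|\sigma| > w$. Then clause~2 applied to $c_s \Vdash_{P(s)} \varphi(G)$ yields $\psi(F_{P(s), s} \cup F', w) = \psi(F_{P(t), t}, w)$, which agrees with $\psi(G, w)$ on $[0, M]$ by continuity. Since $w$ was arbitrary, $\varphi(G)$ holds, completing the soundness direction and the lemma.
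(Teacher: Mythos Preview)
Your proof is correct and follows essentially the same approach as the paper: soundness via clauses~1 and~2 of Definition~\ref{def:em-forcing-precondition} plus continuity, and completeness via Lemma~\ref{lem:em-forcing-dense-level1}. The paper's $\Pi^0_1$ case simply asserts that for every finite $F' \subseteq G \setminus F_{P(s),s}$ and every $w$ there is a $\sigma \in T_s$ with $|\sigma| > w$ and $F' \subseteq \dom(T_s) \cap \set_{P(s)}(\sigma)$, whereas you unpack this explicitly via a later condition $c_t$ and property~(ii) of the extension relation; one small correction is that $F' \subseteq \dom(T_s)$ follows from property~(i) of the extension (the Mathias-extension clause $E_\mu \setminus F_{f(\mu)} \subseteq \dom(T)$), not merely from $\dom(T_t) \subseteq \dom(T_s)$, and you should take $|\tau| > \max F_{P(t),t}$ so that membership in $T_s^{[P(s),F_{P(t),t}]}$ actually yields $F_{P(t),t} \subseteq \set_{P(s)}(\sigma)$.
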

\begin{proof}
Fix a condition $c_s = (\vec{F}, T, \Ccal) \in \Fcal$ such that $c \Vdash_{P(s)} \varphi(G)$,
and let~$\nu = P(s)$. 
\begin{itemize}
	\item If $\varphi \in \Sigma^0_1$ then $\varphi(G)$ can be expressed as $(\exists x)\psi(G, x)$ where $\psi \in \Sigma^0_0$.
	By clause~1 of Definition~\ref{def:em-forcing-precondition}, there exists a $w \in \omega$ such that
	$\psi(F_\nu, w)$ holds. As $\nu = P(s)$, $F_\nu = F_{P(s)} \subseteq G$
	and~$G \setminus F_\nu \subseteq (\max F_\nu, \infty)$, so $\psi(G, w)$ holds by continuity, hence $\varphi(G)$ holds.
	
	\item If $\varphi \in \Pi^0_1$ then $\varphi(G)$ can be expressed as $(\forall x)\psi(G, x)$ where $\psi \in \Sigma^0_0$.
	By clause~2 of Definition~\ref{def:em-forcing-precondition}, for every $\sigma \in T$, every $w < |\sigma|$
	and every $R$-transitive set $F' \subseteq \dom(T) \cap \set_\nu(\sigma)$, $\psi(F_\nu \cup F', w)$ holds. 
	For every $F' \subseteq G \setminus F_\nu$, and $w \in \omega$ there exists a $\sigma \in T$
	such that $w < |\sigma|$ and $F' \subseteq \dom(T) \cap \set_\nu(\sigma)$. Hence $\psi(F_\nu \cup F', w)$ holds.
	Therefore, for every $w \in \omega$, $\psi(G, w)$ holds, so $\varphi(G)$ holds.
\end{itemize}
The other direction holds by Lemma~\ref{lem:em-forcing-dense-level1}.
\end{proof}

\begin{lemma}\label{lem:em-generic-higher-levels}
Suppose that $\Fcal$ is sufficiently generic and let~$P$ and~$G$ be the generic path and the generic real, respectively.
For any $\Sigma^0_{n+2}$ ($\Pi^0_{n+2}$) formula $\varphi(G)$, 
$\varphi(G)$ holds iff $c_s \Vdash \varphi(G)$ for some $c_s \in \Fcal$.
\end{lemma}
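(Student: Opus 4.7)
The plan is to mirror the structure of Lemma~\ref{lem:coh-holds-filter}, proving the claim by induction on the complexity of $\varphi(G)$, with Lemma~\ref{lem:em-generic-level1} supplying the base ($\Sigma^0_1$ and $\Pi^0_1$) case and a case analysis following the four clauses of Definition~\ref{def:em-forcing-condition} handling the inductive step. By Lemma~\ref{lem:em-forcing-dense}, at each level it suffices to prove the forward direction ``if some $c_s \in \Fcal$ forces $\varphi(G)$ then $\varphi(G)$ holds,'' since the converse then follows by applying the forward direction to $\neg\varphi(G)$. The only subtle point is the same one as in the cohesiveness proof: the converse of the $\Sigma^0_{n+3}$ case is needed to handle $\Pi^0_{n+3}$, so that converse must be proven directly from the induction hypothesis without invoking $\Pi^0_{n+3}$-density.

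For clause~1 ($\Sigma^0_2$, $\varphi=(\exists x)\psi$ with $\psi \in \Pi^0_1$), I would take $c_s=(\vec{F},T,\Ccal) \in \Fcal$ forcing $\varphi$ and set $\nu=P(s)$. Since $P$ is the generic path through the acceptable-part graph, $(\nu,T)\in\Ccal$; clause~1 then produces $w<\dom(T)$ with $c_s \Vdash_\nu \psi(G,w)$, whence Lemma~\ref{lem:em-generic-level1} gives $\psi(G,w)$ and thus $\varphi(G)$. For clauses~3 and~4 ($\Sigma^0_{n+3}$ and $\Pi^0_{n+3}$) the arguments are entirely parallel to those in Lemma~\ref{lem:coh-holds-filter}: clause~3 folds directly into the induction hypothesis, while clause~4 combines Lemma~\ref{lem:em-forcing-extension} (no condition in $\Fcal$ can force $\psi$) with the just-established converse of the $\Sigma^0_{n+3}$ case to conclude that $\psi(G)$ fails.

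The main obstacle is the $\Pi^0_2$ case (clause~2), where $c_s \Vdash (\forall x)\psi$ with $\psi \in \Sigma^0_1$. Here I would argue by contradiction: assume $\neg\psi(G,w_0)$ for some $w_0$. By Lemma~\ref{lem:em-forcing-dense-level1} and Lemma~\ref{lem:em-generic-level1}, some $c_t \leq c_s$ in $\Fcal$, say $c_t=(\vec{E},S,\Dcal)$ with generic part $\mu=P(t)$, satisfies $c_t \Vdash_\mu \neg\psi(G,w_0)$. Using genericity one more time, extend further if necessary so that the tree part has code $\#S$ large enough to witness both $w_0$ and $\vec{E}$ (a standard density argument, since one can always replace $S$ by a refinement with a larger index computing the same tree). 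The tuple $(S,f,w_0,\vec{E})$ then meets conditions~(i)--(ii) of clause~2 and, together with $(\mu,S)\in\Ccal$ (inherited from $\Dcal \subseteq \Ccal$) and Lemma~\ref{lem:em-promise-no-effect-first-level} to swap $\Dcal[S]$ for $\Ccal[S]$, directly contradicts $c_s \Vdash \varphi$.

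The delicate point throughout is that the higher-level forcing relation does not depend on a part, while the base-level relation does; the bridge is that the generic path $P$ is guaranteed by the definition of $\Gcal(\Fcal)$ to pass through an acceptable part inside $\Ccal_s$ for every $c_s \in \Fcal$, and Lemma~\ref{lem:em-forcing-extension-level1} lets me transfer forcing facts along refinements of parts. Once these two ingredients are combined with the coding-size genericity step in the $\Pi^0_2$ case, each of the four clauses translates into a matching semantic statement, completing the induction.
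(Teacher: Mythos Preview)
Your proposal is correct and follows essentially the same approach as the paper's own proof: induction on complexity, with Lemma~\ref{lem:em-generic-level1} for the base case, the four clauses of Definition~\ref{def:em-forcing-condition} for the inductive step, and the same observation that the converse of the $\Sigma^0_{n+3}$ case must be proved directly to support the $\Pi^0_{n+3}$ case. The only noteworthy difference is that in the $\Pi^0_2$ case you explicitly handle the requirement that $w$ and $\vec{E}$ be smaller than $\#S$ (via a further genericity step to enlarge the tree code), whereas the paper simply takes a common extension $e_r \in \Fcal$ of $c_s$ and $d_t$ and applies clause~2 without commenting on this bound; your treatment is slightly more careful on this point but otherwise the arguments coincide.
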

\begin{proof}
Assuming the reversal, we first show that if $\varphi(G)$ holds, then $c_s \Vdash \varphi(G)$ for some
$c_s \in \Fcal$. Indeed, by Lemma~\ref{lem:em-forcing-dense} and by genericity of $\Fcal$ either $c_s \Vdash \varphi(G)$ or $c_s \Vdash \neg \varphi(G)$, but if $c \Vdash \neg \varphi(G)$ then $\neg \varphi(G)$ holds, contradicting 
the hypothesis. So $c_s \Vdash \varphi(G)$.
We now prove the forward implication by induction over the complexity of the formula $\varphi(G)$.
Fix a condition $c_s = (\vec{F}, T, \Ccal) \in \Fcal$ such that $c_s \Vdash \varphi(G)$. 
We proceed by case analysis on $\varphi$. 
\begin{itemize}
	\item If $\varphi \in \Sigma^0_2$ then $\varphi(G)$ can be expressed as $(\exists x)\psi(G, x)$ 
  where $\psi \in \Pi^0_1$.
  By clause~1 of Definition~\ref{def:em-forcing-condition}, for every part $\nu$ of $T$
	such that $(\nu, T) \in \Ccal$, there exists a $w < \dom(T)$
  such that $c_s \Vdash_\nu \psi(G, w)$. In particular $(P(s), T) \in \Ccal$, so $c_s \Vdash_{P(s)} \psi(G, w)$.
	By Lemma~\ref{lem:em-generic-level1}, $\psi(G, w)$ holds, hence $\varphi(G)$ holds.

	\item If $\varphi \in \Pi^0_2$ then $\varphi(G)$ can be expressed as $(\forall x)\psi(G, x)$ where $\psi \in \Sigma^0_1$.
	By clause~2 of Definition~\ref{def:em-forcing-condition}, for every infinite $k'$-partition
	tree $S$, every function~$f : \parts(S) \to \parts(T)$, 
	every $w$ and $\vec{E}$ smaller than the code of $S$ such that the followings hold
	\begin{itemize}
		\item[i)] $(E_\nu, \dom(S))$ EM extends $(F_{f(\nu)}, \dom(T))$ for each $\nu < \parts(S)$
		\item[ii)] $S$ $f$-refines $\bigcap_{\nu < \parts(S)} T^{[f(\nu), E_\nu]}$
	\end{itemize}
	for every $(\mu, S) \in \Ccal$, $(\vec{E}, S, \Ccal[S]) \not \Vdash_\mu \neg \psi(G, w)$.
	Suppose by way of contradiction that $\psi(G, w)$ does not hold for some $w \in \omega$.
  Then by Lemma~\ref{lem:em-generic-level1}, there exists a $d_t \in \Fcal$
	such that $d_t \Vdash_{P(t)} \neg \psi(G, w)$.
	Since~$\Fcal$ is a filter, there is a condition $e_r = (\vec{E}, S, \Dcal) \in \Fcal$
	extending both~$c_s$ and~$d_t$.
	Let~$\mu = P(r)$. By choice of~$P$, $(\mu, S) \in \Ccal$, so
	by clause ii), $(\vec{E}, S, \Ccal[S]) \not \Vdash_\mu \psi(G, w)$,
	hence by Lemma~\ref{lem:em-promise-no-effect-first-level}, $e_r \not \Vdash_\mu \neg \psi(G, w)$.
	However, since part~$\mu$ of~$S$ refines part~$P(t)$ of~$d_t$,
	then by Lemma~\ref{lem:em-forcing-extension-level1}, $e_r \Vdash_\mu \neg \psi(G, w)$. Contradiction.
	Hence for every $w \in \omega$, $\psi(G, w)$ holds, so $\varphi(G)$ holds.

	\item If $\varphi \in \Sigma^0_{n+3}$ then $\varphi(G)$ can be expressed as $(\exists x)\psi(G, x)$ 
  where $\psi \in \Pi^0_{n+2}$.
  By clause~3 of Definition~\ref{def:em-forcing-condition}, there exists a $w \in \omega$
  such that $c_s \Vdash \psi(G, w)$. By induction hypothesis, $\psi(G, w)$ holds, hence $\varphi(G)$ holds.

  Conversely, if $\varphi(G)$ holds, then there exists a $w \in \omega$ such that $\psi(G, w)$ holds,
  so by induction hypothesis $c_s \Vdash \psi(G, w)$ for some $c_s \in \Fcal$,
  so by clause~3 of Definition~\ref{def:em-forcing-condition}, $c_s \Vdash \varphi(G)$.
	The proof of the reversal is not redundant with the first paragraph of the proof since
	it is used in the next case at the same rank.

	\item If $\varphi \in \Pi^0_{n+3}$ then $\varphi(G)$ can be expressed as $\neg \psi(G)$ where $\psi \in \Sigma^0_{n+3}$.
  By clause~4 of Definition~\ref{def:em-forcing-condition}, for every $d \in \Ext(c_s)$, $d \not \Vdash \psi(G)$.
	By Lemma~\ref{lem:em-forcing-extension}, $d \not \Vdash \psi(G)$ for every~$d \in \Fcal$
 	and by the previous case, $\psi(G)$ does not hold, so $\varphi(G)$ holds.
\end{itemize}
\end{proof}

We now prove that the forcing relation has good definitional properties
as we did with the notion of forcing for cohesiveness.

\begin{lemma}\label{lem:em-extension-complexity}
For every condition $c$, $\Ext(c)$ is $\Pi^0_2$ uniformly in~$c$.
\end{lemma}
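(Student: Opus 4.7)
The plan is to unfold the definition of $(\vec{E}, S, \Dcal) \in \Ext(c)$ for $c = (\vec{F}, T, \Ccal)$ into its constituent clauses and observe that each is at most $\Pi^0_2$, so that the whole conjunction (together with an existential over a finite set of functions $f : \parts(S) \to \parts(T)$) remains $\Pi^0_2$.

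First, I would split the predicate ``$d \in \Ext(c)$'' into two parts: (I) $d$ is itself a valid condition of $\Pb$, and (II) $d$ extends $c$ in the sense of the order. For (I), being a condition requires three facts: that $S$ is an infinite p.r.\ partition tree, that $\Dcal$ is a $\emptyset'$-p.r.\ promise for $S$, and that each $(E_\nu, \dom(S))$ is an EM condition for $R$. The first is $\Pi^0_1$ by clause~a) of Lemma~\ref{lem:em-refinement-complexity}; the second is $\Pi^0_2$ by clause~c) of the same lemma; the third is a $\Pi^0_1$ predicate uniform in $E_\nu$ and $S$, expressing that $E_\nu \cup \{x\}$ is $R$-transitive for every $x \in \dom(S)$ and that $\dom(S)$ is included in a minimal $R$-interval of $E_\nu$ (this last containment unfolds to a $\Pi^0_1$ statement about the absence of an intermediate point in $\dom(S)$).

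For (II), I would existentially witness the refinement function $f : \parts(S) \to \parts(T)$. Since $\parts(S)$ and $\parts(T)$ are coded into the indices of $S$ and $T$, the set of candidate $f$'s is finite and decidably enumerable uniformly in $c, d$; so the existential over $f$ is a bounded quantifier and does not raise the complexity. Given $f$, the remaining conditions are: $\Dcal \subseteq \Ccal$, which is a universal quantification over pairs $(\nu, S')$ of a predicate decidable in $\emptyset'$ (since both $\Dcal$ and $\Ccal$ are $\emptyset'$-p.r.), hence $\Pi^0_2$; the Mathias-type extension $(E_\nu, \dom(S)) \leq (F_{f(\nu)}, \dom(T))$ for each $\nu < \parts(S)$, which is a finite $\Sigma^0_0$ check since $E_\nu, F_{f(\nu)}$ are finite and $\dom(S) \subseteq \dom(T)$ is an inequality between integers; and ``$S$ $f$-refines $\bigcap_{\nu < \parts(S)} T^{[f(\nu), E_\nu]}$'', which is $\Pi^0_1$ by clause~b) of Lemma~\ref{lem:em-refinement-complexity} (the intersection is p.r.\ uniformly in its parameters).

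Conjoining these and disjoining over the finitely many $f$, the overall predicate lies in $\Pi^0_2$ uniformly in $c$ and $d$, so $\Ext(c) = \{ d : d \in \Ext(c)\}$ is $\Pi^0_2$ uniformly in $c$, as required. The only delicate point is the promise condition $\Dcal \subseteq \Ccal$; but because both promises are $\emptyset'$-p.r.\ by fiat in the definition of $\Pb$, membership in either is $\Delta^0_2$ uniformly, which is precisely why this notion of condition was tailored to the $\emptyset'$-p.r.\ level of the promise in the first place. No other step poses a real obstacle.
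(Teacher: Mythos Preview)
Your proposal is correct and follows essentially the same approach as the paper: unfold the definition of $d \in \Ext(c)$ into its clauses, invoke Lemma~\ref{lem:em-refinement-complexity} for the partition-tree and refinement predicates, and note that the promise clauses are $\Pi^0_2$ because $\Ccal$ and $\Dcal$ are $\emptyset'$-p.r. The only difference is organizational: you explicitly separate ``$d$ is a valid condition'' from ``$d$ extends $c$'' and spell out the EM-condition check as a $\Pi^0_1$ predicate, whereas the paper folds everything into a single displayed formula and labels each conjunct with its complexity.
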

\begin{proof}
Recall from Lemma~\ref{lem:em-refinement-complexity} that
given $k, t \in \omega$, $PartTree(k, t)$ denotes 
the $\Pi^0_1$ set of all the infinite p.r.\ $k$-partition trees of $[t, \infty)$,
and given a $k$-partition tree $S$ and a part $\nu$ of $S$,
the predicate $Empty(S, \nu)$ denotes the $\Pi^0_1$ formula ``part $\nu$ of $S$ is empty'',
that is, the formula $(\forall \sigma \in S)[\set_\nu(\sigma) \cap \dom(S) = \emptyset]$.
If $T$ is p.r. then so is $T^{[\nu, H]}$ for some finite set $H$.

Fix a condition $c = (\vec{F}, (k, t, T), \Ccal)$.
By definition, $(\vec{H}, (k', t', S), \Dcal) \in \Ext(c)$ iff the following formula holds:
$$
\begin{array}{l@{\hskip 0.5in}r}
(\exists f : k' \to k)\\
(\forall \nu < k')(H_\nu, [t', \infty)) \mbox{ EM extends } (F_{f(\nu)}, [t, \infty)) & (\Pi^0_1)\\
\wedge S \in PartTree(k', t') \wedge S \leq_f \bigwedge_{\nu < k'} T^{[f(\nu), H_{\nu}]} & (\Pi^0_1)\\
\wedge \Dcal \mbox{ is a promise for } S \wedge \Dcal \subseteq \Ccal & (\Pi^0_2)\\
\end{array}
$$
By Lemma~\ref{lem:em-refinement-complexity}
and the fact that $\bigwedge_{\nu < k'} T^{[f(\nu), H_{\nu}]}$
is p.r. uniformly in $T$, $f$, $\vec{H}$ and $k'$,
the above formula is $\Pi^0_2$.
\end{proof}

\begin{lemma}\label{lem:em-complexity-forcing}
Fix an arithmetic formula $\varphi(G)$, a condition $c = (\vec{F}, T, \Ccal)$
and a part $\nu$ of $T$.
\begin{itemize}
	\item[a)] If $\varphi(G)$ is a $\Sigma^0_1$ ($\Pi^0_1$) formula 
	then so is the predicate $c \Vdash_\nu \varphi(G)$.
	\item[b)] If $\varphi(G)$ is a $\Sigma^0_{n+2}$ ($\Pi^0_{n+2}$) formula 
	then so is the predicate $c \Vdash \varphi(G)$.
\end{itemize}
\end{lemma}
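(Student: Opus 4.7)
The plan is to prove both parts by a simultaneous induction on the complexity of $\varphi(G)$, mirroring the structure of Lemma~\ref{lem:complexity-forcing} for the cohesiveness notion of forcing. Part~a) handles the base cases by direct unfolding of Definition~\ref{def:em-forcing-precondition}. For $\varphi(G) = (\exists x)\psi(G,x)$ with $\psi \in \Sigma^0_0$, the relation $c \Vdash_\nu \varphi(G)$ is literally $(\exists w)\psi(F_\nu, w)$, which is $\Sigma^0_1$ uniformly in $c$ and $\nu$. For $\varphi(G) = (\forall x)\psi(G,x)$ with $\psi \in \Sigma^0_0$, unfolding gives a universal over $\sigma \in T$ (decidable since $T$ is primitive recursive), $w < |\sigma|$, and $R$-transitive $F' \subseteq \dom(T) \cap \set_\nu(\sigma)$ (bounded by $|\sigma|$ and decidable since $R$ is computable), with a $\Sigma^0_0$ matrix, giving $\Pi^0_1$.

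For part~b), I proceed inductively through the four clauses of Definition~\ref{def:em-forcing-condition}. In the $\Sigma^0_2$ case ($\varphi = (\exists x)\psi$, $\psi \in \Pi^0_1$), the relation reads $(\forall \nu < \parts(T))[(\nu,T)\in\Ccal \imp (\exists w)\, c\Vdash_\nu\psi(G,w)]$; the promise $\Ccal$ is $\emptyset'$-primitive recursive (hence $\Delta^0_2$) so its negation can be written in $\Pi^0_1$-in-$\emptyset'$ form, and by part~a), $c \Vdash_\nu \psi(G,w)$ is $\Pi^0_1$, hence the whole is $\Sigma^0_2$. The $\Sigma^0_{n+3}$ and $\Pi^0_{n+3}$ cases are routine: clause~3 preserves complexity by induction under an existential, and clause~4 uses Lemma~\ref{lem:em-extension-complexity} (so $d \in \Ext(c)$ is $\Pi^0_2$) together with the induction hypothesis ($d \not\Vdash \psi(G)$ is $\Pi^0_{n+3}$) to yield $\Pi^0_{n+3}$ overall, since $n+3 \geq 3$ absorbs the $\Pi^0_2$ hypothesis.

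The main obstacle is the $\Pi^0_2$ case ($\varphi = (\forall x)\psi$, $\psi \in \Sigma^0_1$). Here the relation is a universal quantification over the code of a partition tree $S$, a function $f$, and bounded $w, \vec{E} < \#S$, guarded by conditions i) and ii) (each $\Pi^0_1$ by Lemma~\ref{lem:em-refinement-complexity} and because EM-extension is $\Pi^0_1$), with inner body $(\forall \mu)[(\mu,S)\in\Ccal \imp (\vec{E},S,\Ccal[S])\not\Vdash_\mu \neg\psi(G,w)]$. By part~a), the non-forcing predicate is $\Sigma^0_1$, and by Lemma~\ref{lem:em-promise-no-effect-first-level} it does not genuinely depend on the promise in the subscript, so its complexity is unaffected by $\Ccal[S]$. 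The key bookkeeping is to choose the right presentation of the $\Delta^0_2$ membership $(\mu,S) \in \Ccal$: expressed in $\Pi^0_2$ form, its negation is $\Sigma^0_1$, making the inner body $\Pi^0_1 \vee \Sigma^0_1 \subseteq \Pi^0_2$, preserved under $\forall \mu$; combined with the $\Pi^0_1$ constraint and the outer universal over $S, f, w, \vec{E}$ (with the bounded quantifications folded into the single universal over $\#S$), the whole remains $\Pi^0_2$.

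The hard part is exactly this accounting: one must verify that the effective bounds $w, \vec{E} < \#S$ actually collapse the otherwise unbounded existential into the tree code, and that both the uses of $\Ccal$ membership and the hypothesis $\Pi^0_1$ constraints interact with the inductive $\Sigma^0_1$ body in the right direction so as not to push the complexity up to $\Pi^0_3$. Once this is checked, the remaining cases follow the template of the cohesiveness proof essentially verbatim.
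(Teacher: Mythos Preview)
Your overall plan matches the paper's proof exactly: induction on the complexity of $\varphi$, unfolding Definitions~\ref{def:em-forcing-precondition} and~\ref{def:em-forcing-condition} case by case, using Lemma~\ref{lem:em-refinement-complexity} for the partition-tree predicates and Lemma~\ref{lem:em-extension-complexity} for the $\Pi^0_{n+3}$ clause. The base cases and the $\Sigma^0_{n+3}$, $\Pi^0_{n+3}$ cases are handled correctly.

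However, your quantifier bookkeeping in the $\Pi^0_2$ case contains an actual error. You write that the $\Delta^0_2$ predicate $(\mu,S)\in\Ccal$, ``expressed in $\Pi^0_2$ form, [has] negation $\Sigma^0_1$''. This is false: the negation of a $\Pi^0_2$ formula is $\Sigma^0_2$, not $\Sigma^0_1$. With that slip, your displayed disjunction ``$\Pi^0_1 \vee \Sigma^0_1$'' does not arise, and the argument as written does not close. The clean way (and the way the paper does it) is to pass to the negation of $c\Vdash\varphi(G)$: it becomes an existential over $S,f,w,\vec{E},\mu$ of the conjunction ``$S$ infinite'' $\wedge$ i) $\wedge$ ii) $\wedge$ $(\mu,S)\in\Ccal$ $\wedge$ $(\vec{E},S,\Ccal[S])\Vdash_\mu\neg\psi(G,w)$. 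Each conjunct other than $(\mu,S)\in\Ccal$ is $\Pi^0_1$; the membership is $\Delta^0_1(\emptyset')$. Working relative to $\emptyset'$, the whole conjunction is $\Pi^0_1(\emptyset')$, so the existential is $\Sigma^0_1(\emptyset')=\Sigma^0_2$, and $c\Vdash\varphi(G)$ is $\Pi^0_2$ as required. (A similar remark applies to your $\Sigma^0_2$ case: writing $\neg[(\nu,T)\in\Ccal]$ in $\Pi^0_1$-in-$\emptyset'$ form is true but gives $\Pi^0_2$, which does not directly yield $\Sigma^0_2$; you want the $\Sigma^0_1$-in-$\emptyset'$ presentation there.)
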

\begin{proof}
We prove our lemma by induction over the complexity of the formula $\varphi(G)$.
\begin{itemize}
	\item If $\varphi(G) \in \Sigma^0_1$ then it can be expressed as $(\exists x)\psi(G, x)$ where $\psi \in \Sigma^0_0$.
	By clause~1 of Definition~\ref{def:em-forcing-precondition}, $c \Vdash_\nu \varphi(G)$ if and only if 
	the formula $(\exists w \in \omega)\psi(F_\nu, w)$ holds. This is a $\Sigma^0_1$ predicate.
	
	\item If $\varphi(G) \in \Pi^0_1$ then it can be expressed as $(\forall x)\psi(G, x)$ where $\psi \in \Sigma^0_0$.
	By clause~2 of Definition~\ref{def:em-forcing-precondition}, $c \Vdash_\nu \varphi(G)$ if and only if 
	the formula $(\forall \sigma \in T)(\forall w < |\sigma|)(\forall F' \subseteq \dom(T) \cap \set_\nu(\sigma))
	[F'\ R\mbox{-transitive} \imp \psi(F_\nu \cup F', w)]$ holds. This is a $\Pi^0_1$ predicate.

	\item If $\varphi(G) \in \Sigma^0_2$ then it can be expressed as $(\exists x)\psi(G, x)$ where $\psi \in \Pi^0_1$.
	By clause~1 of Definition~\ref{def:em-forcing-condition}, $c \Vdash \varphi(G)$ if and only if 
	the formula $(\forall \nu < \parts(T))(\exists w < \dom(T))[(\nu, T) \in \Ccal \imp c \Vdash_\nu \psi(G, w)]$ holds.
	This is a $\Sigma^0_2$ predicate by induction hypothesis and the fact that $\Ccal$ is $\emptyset'$-computable.

	\item If $\varphi(G) \in \Pi^0_2$ then it can be expressed as $(\forall x)\psi(G, x)$ where $\psi \in \Sigma^0_1$.
	By clause~2 of Definition~\ref{def:em-forcing-condition}, $c \Vdash \varphi(G)$ if and only if 
	for every infinite $k'$-partition tree $S$, every function~$f : \parts(S) \to \parts(T)$,
	every $w$ and $\vec{E}$ smaller than the code of $S$ such that the followings hold
	\begin{itemize}
		\item[i)] $(E_\nu, \dom(S))$ EM extends $(F_{f(\nu)}, \dom(T))$ for each $\nu < \parts(S)$
		\item[ii)] $S$ $f$-refines $\bigcap_{\nu < \parts(S)} T^{[f(\nu), E_\nu]}$
	\end{itemize}
	for every $(\mu, S) \in \Ccal$, $(\vec{E}, S, \Ccal[S]) \not \Vdash_\mu \neg \psi(G, w)$.
	By Lemma~\ref{lem:em-refinement-complexity}, Properties i-ii) are $\Delta^0_2$.
	Moreover, the predicate $(\mu, S) \in \Ccal$ is $\Delta^0_2$.
	By induction hypothesis, $(\vec{E}, S, \Ccal) \not \Vdash_\mu \neg \psi(G, w)$ is $\Sigma^0_1$.
	Therefore $c \Vdash \varphi(G)$ is a $\Pi^0_2$ predicate.

	\item If $\varphi(G) \in \Sigma^0_{n+3}$ then it can be expressed as $(\exists x)\psi(G, x)$ where $\psi \in \Pi^0_{n+2}$.
	By clause~3 of Definition~\ref{def:em-forcing-condition}, $c \Vdash \varphi(G)$ if and only if 
	the formula $(\exists w \in \omega)c \Vdash \psi(G, w)$ holds. This is a $\Sigma^0_{n+3}$ predicate
	by induction hypothesis.

	\item If $\varphi(G) \in \Pi^0_{n+3}$ then it can be expressed as $\neg \psi(G)$ where $\psi \in \Sigma^0_{n+3}$. 
	By clause~4 of Definition~\ref{def:em-forcing-condition}, $c \Vdash \varphi(G)$ if and only if 
	the formula $(\forall d)(d \not \in \Ext(c) \vee d \not \Vdash \psi(G))$ holds.
	By induction hypothesis, $d \not \Vdash \psi(G)$ is a $\Pi^0_{n+3}$ predicate.
	By Lemma~\ref{lem:em-extension-complexity}, the set $\Ext(c)$ is $\Pi^0_2$-computable uniformly in $c$,
	thus $c \Vdash \varphi(G)$ is a $\Pi^0_{n+3}$ predicate.
\end{itemize}
\end{proof}

\subsection{Preserving the arithmetic hierarchy}

We now prove the core lemmas showing that every sufficiently generic real
preserves the arithmetic hierarchy. The proof is split into two lemmas
since the forcing relation for $\Sigma^0_1$ and~$\Pi^0_1$ formulas
depends on the part of the condition, and therefore has to be treated separately.

\begin{lemma}\label{lem:em-diagonalization-level1}
If $A \not \in \Sigma^0_1$ and $\varphi(G, x)$ is $\Sigma^0_1$,
then the set of $c = (\vec{F}, T, \Ccal) \in \Pb$ satisfying the following property is dense:
$$
(\forall \nu < \parts(T))[(\exists w \in A)c_s \Vdash_\nu \neg \varphi(G, w)] \vee [(\exists w \not \in A)c_s \Vdash_\nu \varphi(G, w)]
$$
\end{lemma}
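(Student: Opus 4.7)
The statement is the level-$1$ analog of Lemma~\ref{lem:diagonalization}, adapted to the Erd\H{o}s--Moser forcing, with the added complication that the forcing relation at level~$1$ is indexed by a part~$\nu$. I would prove density by iterating over parts, in the style of Lemma~\ref{lem:em-forcing-dense-level1}: given $c = (\vec{F}, T, \Ccal) \in \Pb$, let $I(c) \subseteq \{\nu : \nu < \parts(T)\}$ collect the parts on which the displayed disjunction fails, and show that whenever $I(c)$ is non-empty there is an extension $d \leq c$ with $|I(d)| < |I(c)|$. After finitely many steps the property then holds on every part of the resulting condition. Using Lemma~\ref{lem:em-promise-extension-witnessing-acceptable} I may additionally assume that $T$ witnesses its acceptable parts; an empty part~$\nu$ satisfies $\set_\nu(\sigma) \cap \dom(T) = \emptyset$ for every $\sigma \in T$, so $c \Vdash_\nu \neg\varphi(G, w)$ holds vacuously for any $w \in A$ (which exists since $A \neq \emptyset$), and such parts automatically lie outside $I(c)$.

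Fix $\nu \in I(c)$ and write $\varphi(G, y) = (\exists x)\psi(G, y, x)$ with $\psi \in \Sigma^0_0$. The key set is
\[
V_\nu \;=\; \bigl\{\, w : (\exists \sigma \in T)(\exists\ R\text{-transitive } F' \subseteq \dom(T) \cap \set_\nu(\sigma))(\exists u)\ \psi(F_\nu \cup F', w, u)\,\bigr\}.
\]
Because $T$ is primitive recursive, $R$ is computable, and $\psi \in \Sigma^0_0$, the set $V_\nu$ is $\Sigma^0_1$ uniformly in $c$ and~$\nu$. Since $A \notin \Sigma^0_1$, we have $V_\nu \neq A$, and I pick $w \in V_\nu \Delta A$. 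If $w \in A \setminus V_\nu$, the non-existence of a witness is, by clause~2 of Definition~\ref{def:em-forcing-precondition} applied to $(\forall x)\neg\psi(G, w, x)$, exactly the statement $c \Vdash_\nu \neg\varphi(G, w)$, so $d := c$ already removes~$\nu$ from~$I$.

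In the substantive case $w \in V_\nu \setminus A$, I would promote the witness $(\sigma_0, F', u)$ to a genuine extension by replaying the ``$S$ finite'' manoeuvre of Lemma~\ref{lem:em-forcing-dense-level1}. Lift $\sigma_0$ to some longer $\tau \in T$ with $T^{[\tau]}$ infinite, split $\set_\nu(\tau) \cap \dom(T)$ into the two stability-direction classes, and use Lemma~\ref{lem:emo-cond-valid} to produce a threshold $t'$ for which $(F_\nu \cup F'', [t', \infty))$ is a valid EM extension of $(F_\nu, \dom(T))$, where $F''$ is an $R$-transitive set sitting in a single direction class and still carrying the witness $\psi(F_\nu \cup F'', w, u')$ for some $u'$. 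Taking the partition tree $S = T^{[\nu, F'']}$ of $[t', \infty)$, which is infinite since $T^{[\tau]}$ is and $\tau$ places $F''$ in part~$\nu$, and $\vec{E}$ given by $E_\mu = F_\mu$ for $\mu \neq \nu$ and $E_\nu = F_\nu \cup F''$, the triple $d = (\vec{E}, S, \Ccal[S])$ is a valid condition extending~$c$. Clause~1 of Definition~\ref{def:em-forcing-precondition} then gives $d \Vdash_\nu \varphi(G, w)$ with $w \notin A$, as required.

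The main obstacle is precisely the Case A construction: a $\Sigma^0_1$-detected witness $(\sigma_0, F')$ is purely combinatorial and records neither extendability of $\sigma_0$ in $T$ nor a uniform stability direction for $F'$, yet both are needed to invoke Lemma~\ref{lem:emo-cond-valid}. The bridging step---lifting $\sigma_0$ to an extendable $\tau \in T$ and producing a companion $F''$ confined to one stability class while preserving the $\psi$-witness---is the technical heart of the argument and is inherited from the ``$S$ finite'' branch of Lemma~\ref{lem:em-forcing-dense-level1}. Keeping the tree $T$ at the primitive recursive level rather than $\emptyset'$-primitive recursive is what pins $V_\nu$ at $\Sigma^0_1$ and matches the hypothesis $A \notin \Sigma^0_1$; once the bridging is settled, the rest of the proof is the same iterative $|I|$-reduction as in Lemma~\ref{lem:em-forcing-dense-level1} and Lemma~\ref{lem:diagonalization}.
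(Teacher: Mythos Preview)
Your Case~B ($w \in A \setminus V_\nu$) is correct, but Case~A ($w \in V_\nu \setminus A$) has a genuine gap. The witness $(\sigma_0, F', u)$ that membership in $V_\nu$ provides may sit on a dead node of $T$ (so there is no longer $\tau \succeq \sigma_0$ with $T^{[\tau]}$ infinite), and even when $\sigma_0$ is extendable the $R$-transitive set $F'$ need not lie in a single stability-direction class. You assert that one can pass to an extendable $\tau$ and to a new $F''$ inside one direction class of $\set_\nu(\tau)$ that ``still carries the witness'', but nothing forces this: splitting $F'$ by direction may destroy the $\psi$-witness (take for instance $\psi$ depending on the cardinality of its first argument), and an unrelated extendable $\tau$ has no reason to host any witness at all. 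Restricting $V_\nu$ to extendable $\sigma$ is not an option either, since extendability is $\Pi^0_1$ and would push $V_\nu$ to $\Sigma^0_2$, defeating the comparison with $A$.

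The ``$S$ finite'' manoeuvre you invoke from Lemma~\ref{lem:em-forcing-dense-level1} works there only because $S$ is built after \emph{forking} part~$\nu$ into two parts via $f:k+1\to k$; finiteness of $S$ then says that for every extendable $\tau \in T$ of sufficient length, \emph{every} $2$-partition of $\set_\nu(\tau)\cap\dom(T)$ has a witness on one side---in particular the stability-direction partition does. The paper's proof of the present lemma uses exactly this device: for each $w$ it defines a $(k+1)$-partition tree $S_w$ (the fork of part~$\nu$ in which neither half carries a $w$-witness) and takes $U=\{w:S_w\text{ is finite}\}$ as the $\Sigma^0_1$ set to compare with $A$. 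The fork is not incidental; it is precisely what upgrades a single local witness to the universal-over-$2$-partitions statement needed to invoke Lemma~\ref{lem:emo-cond-valid} and to ensure that $T^{[\nu,F']}$ is infinite.
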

\begin{proof}
The formula $\varphi(G, w)$ can be expressed as $(\exists x)\psi(G, w, x)$ where $\psi \in \Sigma^0_0$.
Given a condition $c = (\vec{F}, T, \Ccal)$, let $I(c)$ be the set of the parts $\nu$ of $T$
such that for every $w \in A$, $c \not \Vdash_\nu \neg \varphi(G, w)$
and for every $w \in \overline{A}$, $c \not \Vdash_\nu \varphi(G, w)$.
If $I(c) = \emptyset$ then we are done, so suppose $I(c) \neq \emptyset$ and
fix some $\nu \in I(c)$. We will construct an extension $d$ such that 
$I(d) \subseteq I(c) \setminus \{\nu\}$. Iterating the operation completes the proof.

Say that $T$ is a $k$-partition tree of $[t, \infty)$ for some $k, t \in \omega$.
Define $f : k+1 \to k$ as $f(\mu) = \mu$ if $\mu < k$ and $f(k) = \nu$ otherwise.
Given an integer $w \in \omega$, let $S_w$ be the set of all $\sigma \in (k+1)^{<\omega}$
which $f$-refine some $\tau \in T \cap k^{|\sigma|}$ and such that for every $u < |\sigma|$,
every part $\mu \in \{\nu, k\}$ and every finite $R$-transitive set $F' \subseteq \dom(T) \cap \set_\mu(\sigma)$,
$\varphi(F_\nu \cup F', w, u)$ does not hold.

The set $S_w$ is a p.r.\ (uniformly in $w$) partition tree of $[t, \infty)$ refining $T$ with witness function~$f$.
Let $U = \{ w \in \omega : S_w \mbox{ is finite } \}$. $U \in \Sigma^0_1$, thus $U \neq A$.
Fix some $w \in U \Delta A$. Suppose first that $w \in A \setminus U$. By definition of $U$, $S_w$ is infinite. 
Let $\vec{E}$ be defined by $E_\mu = F_\mu$ if $\mu < k$ and $E_k = F_\nu$,
and consider the extension $d = (\vec{E}, S_w, \Ccal[S_w])$. We claim that $I(d) \subseteq I(c) \setminus \{\nu\}$. 
Fix a part $\mu \in \{\nu, k\}$ of $S_w$. By definition of $S_w$,
for every $\sigma \in S_w$, every $u < |\sigma|$ and every $R$-transitive set $F' \subseteq \dom(S_w) \cap \set_\mu(\sigma)$,
$\varphi(E_\mu \cup F', w, u)$ does not hold. Therefore, by clause~2 of Definition~\ref{def:em-forcing-precondition},
$d \Vdash_\mu (\forall x)\neg \psi(G, w, x)$, hence $d \Vdash_\mu \neg \varphi(G, w)$, and this for some $w \in A$.
Thus $I(d) \subseteq I(c) \setminus \{\nu\}$. 

Suppose now that $w \in U \setminus A$, so $S_w$ is finite.
Fix an $\ell \in \omega$ such that $(\forall \sigma \in S)|\sigma| < \ell$
and a $\tau \in T \cap k^\ell$ such that $T^{[\tau]}$ is infinite.
Consider the 2-partition $E_0 \cup E_1$ of $\set_\nu(\tau) \cap \dom(T)$
defined by $E_0 = \{i \geq t : \tau(i) = \nu \wedge (\exists n)(\forall s > n) R(i, s) \mbox{ holds}\}$
and $E_0 = \{i \geq t : \tau(i) = \nu \wedge (\exists n)(\forall s > n) R(s, i) \mbox{ holds}\}$.
As there exists no $\sigma \in S_w$ which $f$-refines $\tau$, there exists a $u < \ell$
and an $R$-transitive set $F' \subseteq E_0$ or $F' \subseteq E_1$ such that $\varphi(F_\nu \cup F', w, u)$ holds.
By choice of the partition, there exists a $t' > t$ such that $F' \to_R [t', \infty)$
or $[t', \infty) \to_R F'$. 
By Lemma~\ref{lem:emo-cond-valid}, $(F_\nu \cup F', [t', \infty))$ is a valid EM extension of $(F_\nu, [t, \infty))$.
As $T^{[\tau]}$ is infinite, $T^{[\nu, F']}$ is also infinite.
Let $\vec{E}$ be defined by $E_\mu = F_\mu$ if $\mu \neq \nu$ and $E_\mu = F_\nu \cup F'$ otherwise.
Let $S$ be the $k$-partition tree $(k, t', T^{[\nu, F']})$.
The condition $d = (\vec{E}, S, \Ccal[S])$ is a valid extension of $c$.
By clause~1 of Definition~\ref{def:em-forcing-precondition}, $d \Vdash_\mu \varphi(G, w)$ with $w \not \in A$. .
Therefore $I(d) \subseteq I(c) \setminus \{\nu\}$.
\end{proof}

\begin{lemma}\label{lem:em-diagonalization}
If $A \not \in \Sigma^0_{n+2}$ and $\varphi(G, x)$ is $\Sigma^0_{n+2}$,
then the set of $c \in \Pb$ satisfying the following property is dense:
$$
[(\exists w \in A)c \Vdash \neg \varphi(G, w)] \vee [(\exists w \not \in A)c \Vdash \varphi(G, w)]
$$
\end{lemma}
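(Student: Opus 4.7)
The plan is to follow the recipe of Lemma~\ref{lem:diagonalization} adapted to Erd\H{o}s--Moser forcing: in each case define a $\Sigma^0_{n+2}$ set $U$ describing ``there exists some witness that an extension forces $\varphi(G, w)$'', use $A \notin \Sigma^0_{n+2}$ to pick $w \in U \Delta A$, and in each subcase produce the required extension. The case split follows the structure of Definition~\ref{def:em-forcing-condition}: $n = 0$ uses clauses~1 and~2 (where the promise still plays a role), while $n \geq 1$ reduces to clause~4. Lemma~\ref{lem:em-complexity-forcing} and Lemma~\ref{lem:em-extension-complexity} will ensure that $U$ has the required complexity.

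For $n = 0$, write $\varphi(G, x) = (\exists y)\psi(G, x, y)$ with $\psi \in \Pi^0_1$, and fix $c = (\vec{F}, T, \Ccal)$. Let $U$ be the set of $w$ for which there exist an infinite p.r.\ partition tree $S$, a function $f: \parts(S) \to \parts(T)$, a tuple $\vec{E}$ and an integer $u$ all smaller than $\#S$, satisfying clauses i) and ii) of Definition~\ref{def:em-forcing-condition}(2), and such that $(\vec{E}, S, \Ccal[S]) \Vdash_\mu \psi(G, w, u)$ for every non-empty part $\mu$ of $S$ with $(\mu, S) \in \Ccal$. By Lemma~\ref{lem:em-refinement-complexity} and Lemma~\ref{lem:em-complexity-forcing}, $U \in \Sigma^0_2$, so $U \neq A$; pick $w \in U \Delta A$. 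If $w \in U \setminus A$, the witnessing data yields, exactly as in the first half of the proof of Lemma~\ref{lem:em-forcing-dense} at $n = 0$, an extension $d = (\vec{E}, S, \Dcal) \leq c$ with $d \Vdash \varphi(G, w)$ and $w \notin A$. If $w \in A \setminus U$, define $\Dcal$ as in the second half of that proof: the set of $(\mu, S) \in \Ccal$ such that for all $f, \vec{E}$ witnessing $S \leq T$ (with $\vec{E}$ smaller than $\#S$) and all $u < \#S$, $(\vec{E}, S, \Ccal[S]) \not \Vdash_\mu \psi(G, w, u)$. This $\Dcal$ is $\emptyset'$-p.r.\ (since the relevant forcing relation is $\Pi^0_1$ by Lemma~\ref{lem:em-complexity-forcing}) and a promise for $T$, so $d = (\vec{F}, T, \Dcal)$ extends $c$ and forces $\neg \varphi(G, w)$ by clause~2 of Definition~\ref{def:em-forcing-condition}, with $w \in A$.

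For $n \geq 1$, set $U = \{w : (\exists d \in \Ext(c))\, d \Vdash \varphi(G, w)\}$. By Lemma~\ref{lem:em-extension-complexity} the set $\Ext(c)$ is $\Pi^0_2$ uniformly in $c$, and by Lemma~\ref{lem:em-complexity-forcing} the predicate $d \Vdash \varphi(G, w)$ is $\Sigma^0_{n+2}$; since $n + 2 \geq 3$, $U \in \Sigma^0_{n+2}$, whence $U \neq A$. Pick $w \in U \Delta A$: if $w \in U \setminus A$, the witness $d \in \Ext(c)$ already forces $\varphi(G, w)$; if $w \in A \setminus U$, then $d \not\Vdash \varphi(G, w)$ for every $d \in \Ext(c)$, so clause~4 of Definition~\ref{def:em-forcing-condition} gives $c \Vdash \neg \varphi(G, w)$.

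The main obstacle is the $n = 0$ case: one has to verify carefully that $\Dcal$ remains a bona fide $\emptyset'$-p.r.\ promise after fixing the particular $w \in A \setminus U$ rather than quantifying over $w < \ell$ as in Lemma~\ref{lem:em-forcing-dense}, and that upward-closure of $\Dcal$ under refinement follows from Lemma~\ref{lem:em-forcing-extension-level1} together with the $\#S \geq \#T$ constraint in the refinement relation. The higher-level cases are routine once the complexity bookkeeping from Lemmas~\ref{lem:em-extension-complexity} and~\ref{lem:em-complexity-forcing} is in hand.
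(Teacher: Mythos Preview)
Your proposal follows the paper's proof essentially verbatim: the same case split on $n$, the same $\Sigma^0_{n+2}$ test set $U$, the same appeal to the $n=0$ case of Lemma~\ref{lem:em-forcing-dense} for both sub-cases, and the same use of clause~4 of Definition~\ref{def:em-forcing-condition} together with Lemmas~\ref{lem:em-extension-complexity} and~\ref{lem:em-complexity-forcing} for $n>0$.

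One small slip in the $n=0$ case: in your definition of $U$ you place the existential on $u$ \emph{outside} the universal over non-empty parts $\mu$, i.e.\ you ask for a single $u$ that works for every such $\mu$. The paper instead allows a different $u<\#S$ for each part (the $\exists u$ sits inside the $\forall\nu$), matching exactly clause~1 of Definition~\ref{def:em-forcing-condition}. With your quantifier order, $w\notin U$ only says that for each fixed $u$ some part fails to force $\psi(G,w,u)$, and this is not enough to exhibit a single part that fails for \emph{all} $u$---which is what you need for your $\Dcal$ to meet clause~b) of Definition~\ref{def:em-promise}. Swap the quantifiers as the paper does and the rest of your argument (upward closure via Lemma~\ref{lem:em-forcing-extension-level1} and the $\#S\geq\#T$ convention, $\emptyset'$-p.r.\ status from the $\Pi^0_1$ complexity of the level-one relation) goes through unchanged.
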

\begin{proof}
Fix a condition $c = (\vec{F}, T, \Ccal)$.
\begin{itemize}
	\item In case $n = 0$, $\varphi(G, w)$ can be expressed as $(\exists x)\psi(G, w, x)$ where $\psi \in \Pi^0_1$.
	Let $U$ be the set of integers $w$ such that there exists an infinite p.r. $k'$-partition tree $S$
	for some $k' \in \omega$, a function~$f : \parts(S) \to \parts(T)$ and a $k'$-tuple of finite sets $\vec{E}$ such that
	\begin{itemize}
		\item[i)] $(E_\nu, [\ell, \infty))$ EM extends $(F_{f(\nu)}, \dom(T))$ for each $\nu < \parts(S)$.
		\item[ii)] $S$ $f$-refines $\bigcap_{\nu < \parts(S)} T^{[f(\nu), E_\nu]}$
		\item[iii)] for each non-empty part $\nu$ of $S$ such that $(\nu, S) \in \Ccal$, 
		$(\vec{E}, S, \Ccal[S]) \Vdash_\nu \psi(G, w, u)$ for some $u < \#S$
	\end{itemize}
	By Lemma~\ref{lem:em-complexity-forcing} and Lemma~\ref{lem:em-refinement-complexity},
	$U \in \Sigma^0_2$, thus $U \neq A$. Let $w \in U \Delta A$.
	Suppose that $w \in U \setminus A$.
	We can choose $\dom(S)$  so that $(E_\nu, \dom(S))$ EM extends $(F_{f(\nu)}, \dom(T))$ for each $\nu < \parts(S)$.
	By Lemma~\ref{lem:em-forcing-extension-level1} and Lemma~\ref{lem:em-promise-no-effect-first-level}, properties i-ii) remain true.
	Let $\Dcal = \Ccal[S] \setminus \{(\nu, S') \in \Ccal : \mbox{ part } \nu \mbox{ of } S' \mbox{ is empty} \}$. 
	As $\Ccal$ is an $\emptyset'$-p.r. promise for $T$,
	$\Ccal[S]$ is an $\emptyset'$-p.r. promise for $S$.
	As $\Dcal$ is obtained from $\Ccal[S]$ by removing only empty parts, $\Dcal$ is also an $\emptyset'$-p.r. promise for $S$.
	By clause~1 of Definition~\ref{def:em-forcing-condition},
	$d = (\vec{E}, S, \Dcal) \Vdash (\exists x)\psi(G, w, x)$ hence $d \Vdash \varphi(G, w)$ for some $w \not \in A$.

	We may choose a coding of the p.r. trees such that
	the code of $S$ is sufficiently large to witness $u$ and $\vec{E}$.
	So suppose now that $w \in A \setminus U$. Then for every infinite p.r. $k'$-partition tree $S$,
	every $\ell$ and $\vec{E}$ smaller than the code of $S$ such that properties i-ii) hold,
	there exists a non-empty part $\nu$ of $S$ such that $(\nu, S) \in \Ccal$
	and $(\vec{E}, S, \Ccal) \not \Vdash_\nu \psi(G, w, u)$ for every $u < \ell$.
	Let $\Dcal$ be the collection of all such $(\nu, S)$. The set $\Dcal$ is $\emptyset'$-p.r.
	By Lemma~\ref{lem:em-forcing-extension-level1} and since~$\#S \geq \#T$
	whenever~$S \leq_f T$, $\Dcal$ is upward-closed under the refinement relation,
	hence it is a promise for~$T$. By clause~2.
	of Definition~\ref{def:em-forcing-condition}, $d = (\vec{F}, T, \Dcal) \Vdash (\forall x) \neg \psi(G, w, x)$,
	hence $d \Vdash \neg \varphi(G, w)$ for some $w \in A$.

	\item In case $n > 0$, let $U = \{ w \in \omega : (\exists d \in \Ext(c)) d \Vdash \varphi(G, w) \}$.
	By Lemma~\ref{lem:em-extension-complexity} and Lemma~\ref{lem:em-complexity-forcing},
	$U \in \Sigma^0_{n+2}$, thus $U \neq A$.
	Fix some $w \in U \Delta A$. If $w \in U \setminus A$ then by definition of~$U$,
	there exists a condition $d$ extending $c$ such that $d \Vdash \varphi(G, w)$.
	If $w \in A \setminus U$, then for every $d \in \Ext(c)$, $d \not \Vdash \varphi(G, w)$
	so by clause~4 of Definition~\ref{def:em-forcing-condition}, $c \Vdash \neg \varphi(G, w)$. 
\end{itemize}
\end{proof}

We are now ready to prove Theorem~\ref{thm:em-preserves-arithmetic}.
It follows from the preservation of the arithmetic hierarchy
for cohesiveness and the stable Erd\H{o}s-Moser theorem.

\begin{proof}[Proof of Theorem~\ref{thm:em-preserves-arithmetic}]
Since $\rca \vdash \coh \wedge \semo \imp \emo$, then by Theorem~\ref{thm:coh-preservation-arithmetic-hierarchy}
it suffices to prove that $\semo$ admits preservation of the arithmetic hierarchy.
Fix some set~$C$ and a $C$-computable stable infinite tournament~$R$.
Let $\Ccal_0$ be the $C'$-p.r. set of all $(\nu, T) \in \TPb$ such that $(\nu, T) \leq (0, 1^{<\omega})$.
Let~$\Fcal$ be a sufficiently generic filter containing~$c_0 = (\{\emptyset\}, 1^{<\omega}, \Ccal_0)$.
Let~$P$ and $G$ be the corresponding generic path and generic real, respectively.
By definition of a condition, the set~$G$ is $R$-transitive.
By Lemma~\ref{lem:em-forcing-infinite}, $G$ is infinite. By Lemma~\ref{lem:em-diagonalization-level1}
and Lemma~\ref{lem:em-complexity-forcing}, $G$ preserves non-$\Sigma^0_1$ definitions relative to~$C$.
By Lemma~\ref{lem:em-diagonalization} and Lemma~\ref{lem:em-complexity-forcing},
$G$ preserves non-$\Sigma^0_{n+2}$ definitions relative to~$C$ for every~$n \in \omega$.
Therefore, by Proposition 2.2 of~\cite{Wang2014Definability}, $G$
preserves the arithmetic hierarchy relative to~$C$.
\end{proof}

\section{$\mathsf{D}^2_2$ preserves higher definitions}

Among the Ramsey-type hierarchies, the $\mathsf{D}$ hierarchy
is conceptually the simplest one. It is therefore natural
to study it in order to understand better the control of iterated jumps
and focus on the core combinatorics without the technicalities specific to
another hierarchy.

\begin{definition}
For every~$n, k \geq 1$, 
$\mathsf{D}^n_k$ is the statement ``Every~$\Delta^0_n$ $k$-partition of the integers has an infinite subset in one of its parts''.
\end{definition}

In particular, $\mathsf{D}^1_k$ is nothing but $\rt^1_k$ for computable colorings.
Cholak et al.~\cite{Cholak2001strength} proved that~$\mathsf{D}^2_k$ and stable Ramsey's theorem
for pairs and~$k$ colors ($\srt^2_k$) are computably equivalent and that the proof is formalizable over~$\rca+\bst$.
Later, Chong et al.~\cite{Chong2010role} proved that~$\mathsf{D}^2_2$ implies~$\bst$ over~$\rca$,
showing therefore that~$\rca \vdash \mathsf{D}^2_k \biimp \srt^2_\ell$ for every~$k, \ell \geq 2$.
Wang~\cite{Wang2014Definability} studied Ramsey's theorem within his framework of preservation of definitions
and proved that $\mathsf{D}^2_2$ admits preservation of $\Xi$ definitions 
simultaneously for all $\Xi$ in $\{\Sigma^0_{n+2}, \allowbreak \Pi^0_{n+2}, \allowbreak \Delta^0_{n+2} : n \in \omega \}$, 
but not~$\Delta^0_2$ definitions. More precisely, he prove that $\sads$, which is a consequence of $\mathsf{D}^2_2$, does not admit preservation of $\Delta^0_2$ definitions. He used for this a combination of the first jump control
of Cholak, Jockusch and Slaman~\cite{Cholak2001strength} and a relativization
of the preservation of the arithmetic hierarchy by~$\wkl$.

In this section, we design a notion of forcing for~$\mathsf{D}^2_2$
with a forcing relation which has the same definitional complexity
as the formula it forces. It enables us to reprove that $\mathsf{D}^2_2$
admits preservation of $\Xi$ definitions 
simultaneously for all $\Xi$ in $\{\Sigma^0_{n+2}, \allowbreak \Pi^0_{n+2}, \allowbreak \Delta^0_{n+3} : n \in \omega \}$.
The proof is significantly more involved than the previous proofs
of preservation of the arithmetic hierarchy. 

\subsection{Sides of a sequence of sets}

A main feature in the construction of a solution to an instance $R_0, R_1$ of $\mathsf{D}^2_2$
is the parallel construction of a subset of $R_0$ and a subset of $R_1$.
The intrinsic disjunction in the forcing argument prevents us from applying
the same strategy as for the Erd\H{o}s-Moser theorem and obtain a preservation of the arithmetic hierarchy.
Given some~$\alpha < 2$, we shall refer to $R_\alpha$ or simply $\alpha$ as a \emph{side} of $\vec{R}$.
We also need to define a relative notion of acceptation and emptiness of a part.

\begin{definition}Fix a $k$-partition tree $T$ of $[t, \infty)$ and a set $X$.
We say that part $\nu$ of $T$ is \emph{$X$-acceptable} if there exists a path $P$ through $T$
such that $\set_\nu(P) \cap X$ is infinite.
We say that part $\nu$ of $T$ is \emph{$X$-empty} if 
$(\forall \sigma \in T)[\dom(T) \cap \set_\nu(\sigma) \cap X = \emptyset]$.
\end{definition}

The intended uses of those notions will be $R_\alpha$-acceptation
and $R_\alpha$-emptiness. Every partition tree has an $R_\alpha$-acceptable part for some $\alpha < 2$.
The notion of $X$-emptiness is $\Pi^{0,X}_1$, and therefore $\Pi^0_2$ if $X$ is $\Delta^0_2$,
which raises new problems for obtaining a forcing relation of weak definitional complexity.
We would like to define a stronger notion of ``witnessing its acceptable parts'' and prove
that for every infinite p.r.\ partition tree~$T$, there is a p.r.\ refined tree~$S$
such that for each side $\alpha$ and each part~$\nu$ of~$S$, either~$\nu$ is $R_\alpha$-empty
in~$S$, or~$\nu$ is $R_\alpha$-acceptable. However, the resulting tree~$S$ would be $\emptyset'$-p.r.\
since~$R_\alpha$ is $\emptyset'$-computable. Thankfully, we will be able to circumvent this problem
in Lemma~\ref{lem:cohzp-validity-exists}.

\subsection{Forcing conditions}

Fix a $\Delta^0_2$ 2-partition~$R_0 \cup R_1 = \omega$.
We now describe the notion of forcing to build an infinite subset
of~$R_0$ or of~$R_1$.

\begin{definition}
We denote by~$\Pb$ the forcing notion whose conditions are tuples
 $((F_\nu^\alpha : \alpha < 2, \nu < k), T, \Ccal)$ where
\begin{itemize}
	\item[(a)] $T$ is an infinite, p.r.\ $k$-partition tree
	\item[(b)] $\Ccal$ is a $\emptyset'$-p.r.\ promise for $T$
	\item[(c)] $(F^\alpha_\nu, \dom(T))$ is a Mathias condition for each $\nu < k$ and $\alpha < 2$
\end{itemize}
A condition $d = (\vec{E}, S, \Dcal)$ \emph{extends} $c = (\vec{F}, T, \Ccal)$
(written $d \leq c$) if there exists a function $f : \parts(S) \to \parts(T)$ such that
$\Dcal \subseteq \Ccal$ and the followings hold
\begin{itemize}
	\item[(i)] $(E^\alpha_\nu, \dom(S) \cap R_\alpha)$ Mathias extends $(F^\alpha_{f(\nu)}, \dom(T) \cap R_\alpha)$ 
	for each $\nu < \parts(S)$ and $\alpha < 2$ 
	\item[(ii)] $S$ $f$-refines $\bigcap_{\nu < \parts(S), \alpha < 2} T^{[f(\nu), E^\alpha_\nu]}$
\end{itemize}
\end{definition}

In the whole construction, the index $\alpha$ indicates that we are constructing a set which is almost included in $R_\alpha$.
Given a condition $c = (\vec{F}, T, \Ccal)$, we write again $\parts(c)$ for $\parts(T)$.
The following lemma shows that we can force our constructed set to be infinite
if we choose it among the acceptable parts.

\begin{lemma}\label{lem:cohzp-forcing-infinite}
For every condition $c = (\vec{F}, T, \Ccal)$ and every $n \in \omega$, there exists an extension $d = (\vec{E}, S, \Dcal)$
such that $|E^\alpha_\nu| \geq n$ on each $R_\alpha$-acceptable part $\nu$ of~$S$ for each $\alpha < 2$.
\end{lemma}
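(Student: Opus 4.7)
The plan is to imitate the proof of Lemma \ref{lem:em-forcing-infinite}, handling the two sides separately. The key is a one-step sub-claim: for every condition $c = (\vec{F}, T, \Ccal)$, every side $\alpha < 2$, and every $R_\alpha$-acceptable part $\nu$ of $T$, there is an extension $d = (\vec{E}, S, \Dcal) \leq c$ with $S \leq_{id} T$, $|E^\alpha_\nu| \geq n$, and $E^\beta_\mu = F^\beta_\mu$ for $(\mu, \beta) \neq (\nu, \alpha)$. Iterating this over all $2 \cdot \parts(T)$ pairs $(\nu, \alpha)$ (skipping any pair that is no longer acceptable in the current tree) will produce an extension meeting the lemma's conclusion. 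The crucial observation is that $R_\alpha$-acceptability of a fixed part is preserved upward along $id$-refinements: if part $\nu$ is $R_\alpha$-acceptable in a refinement, it was already $R_\alpha$-acceptable in the coarser tree. Hence every pair that is $R_\alpha$-acceptable in the final $S$ was acceptable when its iteration was processed and received the required extension, and later iterations leave its $E^\alpha_\nu$ untouched.

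To prove the sub-claim, fix a path $P \in [T]$ witnessing $R_\alpha$-acceptability, so $\set_\nu(P) \cap R_\alpha$ is infinite. Choose $F' \subseteq \set_\nu(P) \cap R_\alpha \cap \dom(T)$ of size $n$ with $\min F' > \max F^\alpha_\nu$. Set $E^\alpha_\nu = F^\alpha_\nu \cup F'$, keep all other $E^\beta_\mu = F^\beta_\mu$, let $S$ be the p.r.\ $\parts(T)$-partition tree $T^{[\nu, F']}$ of $[\max F' + 1, \infty)$, and let $\Dcal = \Ccal[S]$. Then $S$ is infinite because $P \in [S]$ (since $F' \subseteq \set_\nu(P)$), and I claim $d = (\vec{E}, S, \Dcal)$ is a valid extension. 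Each Mathias condition $(E^\beta_\mu, \dom(S) \cap R_\beta)$ extends $(F^\beta_\mu, \dom(T) \cap R_\beta)$: for $(\mu, \beta) \neq (\nu, \alpha)$ this is simply shrinking the reservoir, and for $(\nu, \alpha)$ it uses $F' \subseteq \dom(T) \cap R_\alpha$ with $\min F' > \max F^\alpha_\nu$. The tree $S$ $id$-refines $\bigcap_{\mu, \beta} T^{[\mu, E^\beta_\mu]}$, which collapses to $T^{[\nu, F^\alpha_\nu \cup F']}$ and is met by $S$ thanks to the invariant, maintained throughout the construction of conditions, that every element of $F^\beta_\mu$ already lies in $\set_\mu(\sigma)$ for each sufficiently long $\sigma \in T$. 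Finally, $\Ccal[S]$ is a $\emptyset'$-p.r.\ promise for $S$ by the same argument used in the Erd\H{o}s--Moser setting (the remark following Lemma \ref{lem:em-refinement-complexity}).

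The main thing to watch out for is essentially bookkeeping around clause (ii) of the extension relation: one must check that the new constraint imposed by $F'$, combined with constraints already encoded in $T$, is consistent with the full intersection $\bigcap_{\mu, \beta} T^{[\mu, E^\beta_\mu]}$. This reduces to the invariant mentioned above, which is preserved under condition extensions because $F^\beta_\mu$ is always enlarged by elements drawn from $\set_\mu(P) \cap R_\beta$ for a path $P$ through the current tree, and subsequent refinements further restrict the tree to strings agreeing with $P$ on those positions. Given this, the verification is routine, and the two-sidedness of $\mathsf{D}^2_2$ poses no essential difficulty beyond doubling the iteration count, since the extension on the $\alpha$-side never interferes with the $\beta$-side for $\beta \neq \alpha$.
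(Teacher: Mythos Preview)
Your proposal is correct and follows essentially the same approach as the paper: reduce to a one-step sub-claim (fix a side $\alpha$ and an $R_\alpha$-acceptable part $\nu$, pick $F' \subseteq \set_\nu(P) \cap R_\alpha \cap \dom(T)$ of size $n$ from a witnessing path, enlarge $F^\alpha_\nu$ by $F'$, and pass to the subtree $T^{[\nu, E^\alpha_\nu]}$ with suitably restricted domain), then iterate at most $2\cdot\parts(T)$ times. Your explicit remark about the invariant needed for clause~(ii) and about upward preservation of $R_\alpha$-acceptability under $id$-refinement makes precise what the paper leaves implicit, but the argument is the same.
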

\begin{proof}
It suffices to prove that for every condition $c = (\vec{F}, T, \Ccal)$, every side $\alpha < 2$
and every $R_\alpha$-acceptable part $\nu$ of $T$, 
there exists an extension $d = (\vec{E}, S, \Dcal)$ such that $S \leq_{id} T$ 
and $|E^\alpha_\nu| \geq n$. Iterating the process at most $\parts(T) \times 2$ times completes the proof.
Fix an $R_\alpha$-acceptable part $\nu$ of $T$ and a path $P$ through $T$
such that $\set_\nu(P) \cap R_\alpha$ is infinite. Let $F'$ be a subset of $\set_\nu(P) \cap \dom(T) \cap R_\alpha$
of size $n$. Let $\vec{E}$ be defined by $E^\beta_\mu = F^\beta_\mu$
if $\mu \neq \nu \vee \beta \neq \alpha$ and $E^\alpha_\nu = F^\alpha_\nu \cup F'$ otherwise.
Let $S$ be the p.r.\ partition tree obtained from $T^{[\nu, E^\alpha_\nu]}$ by restricting its domain so that
$(E^\alpha_\nu, \dom(S) \cap R_\alpha)$ Mathias extends $(F^\alpha_\nu, \dom(T) \cap R_\alpha)$.
The condition $(\vec{E}, S, \Ccal[S])$ is the desired extension.
\end{proof}

Given a condition~$c$, we denote by~$\Ext(c)$ the set of all its extensions.

\subsection{Forcing relation}

We need to define two forcing relations at the first level:
the ``true'' forcing relation, i.e., the one having the good density properties
but whose decision requires too much computational power, and a ``weak'' forcing relation
having better computational properties, but which does not behave well with respect to the forcing.
We start with the definition of the true forcing relation.

\begin{definition}[True forcing relation]\label{def:cohzp-true-forcing-precondition}
Fix a condition $c = (\vec{F}, T, \Ccal)$, a $\Sigma^0_0$ formula $\varphi(G, x)$,
a part $\nu < \parts(T)$, and a side $\alpha < 2$.
\begin{itemize}
	\item[1.] $c \Vvdash^\alpha_\nu (\exists x)\varphi(G, x)$ iff there exists a $w \in \omega$ such that $\varphi(F^\alpha_\nu, w)$ holds.
	\item[2.] $c \Vvdash^\alpha_\nu (\forall x)\varphi(G, x)$ iff 
	for every $\sigma \in T$ such that $T^{[\sigma]}$ is infinite, 
	every $w < |\sigma|$ and every set $F' \subseteq \dom(T) \cap \set_\nu(\sigma) \cap R_\alpha$,
	$\varphi(F^\alpha_\nu \cup F', w)$ holds.
\end{itemize}
\end{definition}

Given a condition $c$, a side $\alpha < 2$, a part $\nu$ of $c$ and a $\Pi^0_1$ formula $\varphi$,
the relation $c \Vvdash^\alpha_\nu \varphi(G)$ is $\Pi^{0, \emptyset' \oplus R_\alpha}_1$,
hence $\Pi^0_2$ as $R_\alpha$ is $\Delta^0_2$. This relation
enjoys the good properties of a forcing relation, that is, it is downward-closed
under the refinement relation (Lemma~\ref{lem:cohzp-true-forcing-extension-level1}), 
and the set of the conditions forcing either a $\Sigma^0_1$
formula or its negation is dense (Lemma~\ref{lem:cohzp-true-forcing-dense-level1}).

\begin{lemma}\label{lem:cohzp-true-forcing-extension-level1}
Fix a condition $c = (\vec{F}, T, \Ccal)$ and a $\Sigma^0_1$ ($\Pi^0_1$) formula $\varphi(G)$.
If $c \Vvdash^\alpha_\nu \varphi(G)$ for some $\nu < \parts(T)$ and $\alpha < 2$,
then for every $d = (\vec{E}, S, \Dcal) \leq c$ and
every part $\mu$ of $S$ refining part $\nu$ of $T$, $d \Vvdash^\alpha_\mu \varphi(G)$.
\end{lemma}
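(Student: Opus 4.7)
The plan is to split on whether $\varphi$ is $\Sigma^0_1$ or $\Pi^0_1$, mirroring the structure of Lemma~\ref{lem:em-forcing-extension-level1} from the Erd\H{o}s-Moser section. Let $f : \parts(S) \to \parts(T)$ be the refinement witness for $d \leq c$, so by the hypothesis on parts, $f(\mu) = \nu$.

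In the $\Sigma^0_1$ case, write $\varphi(G) = (\exists x)\psi(G, x)$ with $\psi \in \Sigma^0_0$. Clause~1 of Definition~\ref{def:cohzp-true-forcing-precondition} yields some $w$ with $\psi(F^\alpha_\nu, w)$ true. Property~(i) of the extension relation gives $F^\alpha_\nu = F^\alpha_{f(\mu)} \subseteq E^\alpha_\mu$ with the new elements lying in $\dom(T) \cap R_\alpha$, so $\Sigma^0_0$-continuity propagates $\psi$ from $F^\alpha_\nu$ to $E^\alpha_\mu$, and clause~1 applied to $d$ gives $d \Vvdash^\alpha_\mu \varphi(G)$.

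In the $\Pi^0_1$ case, write $\varphi(G) = (\forall x)\psi(G, x)$, and fix any $\tau \in S$ with $S^{[\tau]}$ infinite, any $w < |\tau|$, and any $F' \subseteq \dom(S) \cap \set_\mu(\tau) \cap R_\alpha$; the goal is to show $\psi(E^\alpha_\mu \cup F', w)$. Property~(ii) of the extension produces a $\sigma \in \bigcap_{\mu' < \parts(S), \beta < 2} T^{[f(\mu'), E^\beta_{\mu'}]}$ with $|\sigma| = |\tau|$ and $\set_\mu(\tau) \subseteq \set_\nu(\sigma)$; in particular $\sigma \in T^{[\nu, E^\alpha_\mu]}$. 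Mapping any infinite branch of $S^{[\tau]}$ coordinate-wise through $f$ produces an infinite branch of $T$ extending $\sigma$, so $T^{[\sigma]}$ is infinite. After replacing $\tau$ by a sufficiently long extension $\tau' \succeq \tau$ in $S$ with $S^{[\tau']}$ still infinite (which exists because $S^{[\tau]}$ is infinite), I may further assume $|\sigma| > \max E^\alpha_\mu$, whence $\sigma \in T^{[\nu, E^\alpha_\mu]}$ forces $E^\alpha_\mu \subseteq \set_\nu(\sigma)$. Combined with $\dom(S) \subseteq \dom(T)$, this gives $(E^\alpha_\mu \setminus F^\alpha_\nu) \cup F' \subseteq \dom(T) \cap \set_\nu(\sigma) \cap R_\alpha$. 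Applying clause~2 of Definition~\ref{def:cohzp-true-forcing-precondition} to $c \Vvdash^\alpha_\nu \varphi(G)$ with witness $\sigma$ and test set $(E^\alpha_\mu \setminus F^\alpha_\nu) \cup F'$ then yields $\psi(F^\alpha_\nu \cup (E^\alpha_\mu \setminus F^\alpha_\nu) \cup F', w) = \psi(E^\alpha_\mu \cup F', w)$, as required.

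The only subtlety beyond the direct transcription of the Erd\H{o}s-Moser analogue is the additional requirement in Definition~\ref{def:cohzp-true-forcing-precondition}(2) that $T^{[\sigma]}$ be infinite: this is the one place where the cohesiveness variant and the Erd\H{o}s-Moser variant of the $\Pi^0_1$ forcing relation diverge, and I expect it to be the main (minor) obstacle. It is handled by the observation above that the $f$-refinement lifts infinite branches of $S^{[\tau]}$ to infinite branches of $T$ through the image $\sigma$ of $\tau$, so that the $\sigma$ produced by clause~(ii) of the extension relation automatically meets the hypothesis of clause~2 of the forcing relation at $c$.
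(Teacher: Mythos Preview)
Your proof is correct and follows essentially the same route as the paper's. In particular, the ``subtlety'' you flag---that clause~2 of Definition~\ref{def:cohzp-true-forcing-precondition} additionally requires $T^{[\sigma]}$ to be infinite---is exactly the point the paper singles out, and both you and the paper resolve it by pushing a path of $S^{[\tau]}$ through $f$ to obtain a path of $T$ through $\sigma$ (the paper phrases this via an auxiliary tree $U$ of strings $f$-refined by initial segments of a fixed path $P$, but since $f$-refinement forces $\sigma = f \circ \tau$ this is literally the same branch). Your explicit step of lengthening $\tau$ so that $|\sigma| > \max E^\alpha_\mu$ before invoking $\sigma \in T^{[\nu, E^\alpha_\mu]}$ is a detail the paper elides.
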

\begin{proof}\ 
\begin{itemize}
	\item If $\varphi \in \Sigma^0_1$ then it can be expressed as $(\exists x)\psi(G, x)$
	where $\psi \in \Sigma^0_0$. By clause~1 of Definition~\ref{def:cohzp-true-forcing-precondition},
	there exists a $w \in \omega$ such that
	$\psi(F^\alpha_\nu, w)$ holds. By property (i) of the definition of an extension, $E^\alpha_\mu \supseteq F^\alpha_\nu$
	and $(E^\alpha_\mu \setminus F^\alpha_\nu) \subset \dom(T) \cap R_\alpha$, 
	therefore by continuity $\psi(E^\alpha_\mu, w)$ holds,
	so by clause~1 of Definition~\ref{def:cohzp-true-forcing-precondition}, 
	$d \Vvdash^\alpha_\mu (\exists x)\psi(G, x)$.

	\item If $\varphi \in \Pi^0_1$ then it can be expressed as $(\forall x)\psi(G, x)$
	where $\psi \in \Sigma^0_0$.
	Fix a $\tau \in S$ such that $S^{[\tau]}$ is infinite, a $w < |\tau|$ and a set
	$F' \subseteq \dom(S) \cap \set_\mu(\tau) \cap R_\alpha$. Let~$f$ be the function witnesing~$d \leq c$. 
	By property~(ii) of the definition of an extension, $\tau$ $f$-refines a $\sigma \in T^{[\nu, E^\alpha_\mu]}$.
	We claim that we can even choose~$\sigma$ to be extendible in $T^{[\nu, E^\alpha_\mu]}$.
	Indeed, since~$\tau$ is extendible in~$S$, let~$P$ be a path through~$S$ extending~$\tau$
	and let~$U$ be the set of~$\sigma$'s in~$T$ such that~$P \uh s$ $f$-refines~$\sigma$ for some~$s$.
	The set~$U$ is an infinite subtree of~$T$. Let~$\sigma$ be a string of length~$|\tau|$ and extendible in~$U$, hence in~$T$.
	By definition of~$U$, $\tau$ $f$-refines $\sigma$. By definition of a refinement, 
	such that $|\sigma| = |\tau|$ and $\set_\mu(\tau) \subseteq \set_\nu(\sigma)$. As $w < |\tau|$
	and $\dom(S) \subseteq \dom(T)$, $F' \subseteq \dom(T) \cap \set_\nu(\sigma) \cap R_\alpha$.
	As $\sigma \in T^{[\nu, E^\alpha_\mu]}$, $E^\alpha_\mu \subseteq \set_\nu(\sigma)$
	and by property~(i) of the definition of an extension, $E^\alpha_\mu \subseteq \dom(T) \cap R_\alpha$
	so $E^\alpha_\mu \subseteq \dom(T) \cap R_\alpha$.
	Therefore $E^\alpha_\mu \cup F' \subseteq \dom(T) \cap \set_\nu(\sigma) \cap R_\alpha$.
	By clause~2 of Definition~\ref{def:cohzp-true-forcing-precondition} applied to $c \Vvdash^\alpha_\nu (\forall x)\psi(G, x)$, 
	$\psi(F^\alpha_\nu \cup (E^\alpha_\mu \setminus F^\alpha_\nu) \cup F', w)$ holds, hence $\psi(E^\alpha_\mu \cup F', w)$ holds
	and still by clause~2 of Definition~\ref{def:cohzp-true-forcing-precondition}, $d \Vvdash_\mu (\forall x)\psi(G, x)$.
\end{itemize}
\end{proof}

\begin{lemma}\label{lem:cohzp-true-forcing-dense-level1}
For every $\Sigma^0_1$ ($\Pi^0_1$) formula $\varphi$, the following set is dense in $\Pb$:
$$
\{c \in \Pb : (\forall \nu < \parts(c))(\forall \alpha < 2)
	[c \Vvdash^\alpha_\nu \varphi(G) \mbox{ or } c \Vvdash^\alpha_\nu \neg \varphi(G)] \}
$$
\end{lemma}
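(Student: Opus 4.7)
The plan is to adapt Lemma~\ref{lem:em-forcing-dense-level1} in a simplified form, made possible by the fact that clause~2 of Definition~\ref{def:cohzp-true-forcing-precondition} quantifies only over \emph{extendible} $\sigma \in T$. The $\Pi^0_1$ case being symmetric, I would treat only $\Sigma^0_1$, writing $\varphi(G) = (\exists x)\psi(G,x)$ with $\psi \in \Sigma^0_0$. Fix a condition $c = (\vec F, T, \Ccal)$, let $k = \parts(T)$, and let $I(c)$ be the set of pairs $(\alpha, \nu) \in 2 \times k$ for which $c$ forces neither $\varphi(G)$ nor $\neg\varphi(G)$ at $(\alpha, \nu)$. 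It suffices to show that whenever $(\alpha, \nu) \in I(c)$ there exists an extension $d \leq c$ through $f = \mathrm{id}$ with $I(d) \subseteq I(c) \setminus \{(\alpha, \nu)\}$; iterating at most $2k$ times then produces a condition in the desired dense set.

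Since $c \not\Vvdash^\alpha_\nu \neg\varphi(G)$, negating clause~2 of Definition~\ref{def:cohzp-true-forcing-precondition} yields an extendible $\sigma \in T$, some $w < |\sigma|$, and a finite set $F' \subseteq \dom(T) \cap \set_\nu(\sigma) \cap R_\alpha$ such that $\psi(F^\alpha_\nu \cup F', w)$ holds. I would set $E^\alpha_\nu = F^\alpha_\nu \cup F'$, $E^\beta_\mu = F^\beta_\mu$ for every other $(\beta,\mu)$, take $S$ to be the partition tree $T^{[\nu, F']}$ with domain restricted to $[t', \infty)$ for some $t' > \max E^\alpha_\nu$, and put $\Dcal = \Ccal[S]$. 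The tree $S$ is infinite because every $\tau \succeq \sigma$ in $T$ satisfies $F' \subseteq \set_\nu(\sigma) \subseteq \set_\nu(\tau)$, so the infinitely many extensions of $\sigma$ inside $T$ lie in $T^{[\nu, F']}$; the Mathias extension clauses on both sides hold by the choice of $t'$; and the invariant $T \subseteq \bigcap_{\beta,\mu} T^{[\mu, F^\beta_\mu]}$ (which every valid condition maintains, since clause~(ii) of the extension definition collapses to a subset relation when $f = \mathrm{id}$) ensures $S \subseteq \bigcap_{\beta,\mu} T^{[\mu, E^\beta_\mu]}$. Hence $d = (\vec E, S, \Dcal)$ is a valid extension of $c$.

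Clause~1 of Definition~\ref{def:cohzp-true-forcing-precondition} then gives $d \Vvdash^\alpha_\nu \varphi(G)$, so $(\alpha,\nu) \notin I(d)$; and by Lemma~\ref{lem:cohzp-true-forcing-extension-level1}, every pair already decided at $c$ remains decided at the corresponding part of $d$, so $I(d) \subseteq I(c) \setminus \{(\alpha,\nu)\}$. I do not anticipate a serious obstacle: the $\Delta^0_2$ complexity of $R_\alpha$, which forces Lemma~\ref{lem:em-forcing-dense-level1} to bifurcate through an auxiliary $\emptyset'$-primitive recursive tree, is here absorbed into the purely semantic existence of a witness $F'$, while the construction of the extension itself remains primitive recursive in $T$ and $F'$.
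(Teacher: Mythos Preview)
Your proof is correct and follows essentially the same approach as the paper: fix $(\alpha,\nu)\in I(c)$, use the failure of $c\Vvdash^\alpha_\nu\neg\varphi(G)$ to extract an extendible $\sigma$, a $w$, and an $F'\subseteq\dom(T)\cap\set_\nu(\sigma)\cap R_\alpha$ witnessing $\psi$, and pass to a subtree containing the extensions of~$\sigma$. The only cosmetic difference is that you take $S=T^{[\nu,F']}$ while the paper takes $S=T^{[\sigma]}$; since $T^{[\sigma]}\subseteq T^{[\nu,F']}$ and both are infinite by the extendibility of~$\sigma$, either choice works.
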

\begin{proof}
It suffices to prove the statement for the case where $\varphi$ is a $\Sigma^0_1$ formula,
as the case where $\varphi$ is a $\Pi^0_1$ formula is symmetric. Fix a condition $c = (\vec{F}, T, \Ccal)$
and let $I(c)$ be the set of pairs $(\nu, \alpha) \in \parts(T) \times 2$ such that $c \not \Vvdash^\alpha_\nu \varphi(G)$
and $c \not \Vvdash^\alpha_\nu \neg \varphi(G)$. If $I(c) = \emptyset$ we are done, so suppose $I(c) \neq \emptyset$.
Fix some $(\alpha, \nu) \in I(c)$. We will construct an extension $d$ such that $I(d) \subseteq I(c) \setminus \{(\alpha, \nu)\}$.
Iterating the operation completes the proof.

The formula $\varphi$ is of the form $(\exists x)\psi(G, x)$ where $\psi \in \Sigma^0_0$.
Suppose there exists a $\sigma \in T$ such that $T^{[\sigma]}$ is infinite,
a $w < |\sigma|$ and a set $F' \subseteq \dom(T) \cap \set_\nu(\sigma) \cap R_\alpha$
such that $\psi(F_\nu^\alpha \cup F', w)$ holds.
In this case, letting $\vec{E}$ be defined by $E_\mu^\beta = F_\mu^\beta$ if $\mu \neq \nu \vee \beta \neq \alpha$
and $E_\nu^\alpha = F_\nu^\alpha \cup F'$, and letting~$S$ be the tree~$T^{[\sigma]}$
where the domain is restricted so that $(E_\nu^\alpha, \dom(S))$ Mathias extends $(F_\nu^\alpha, \dom(T))$,
by clause 1 of Definition~\ref{def:cohzp-true-forcing-precondition}, 
the condition $d = (\vec{E}, S, \Ccal[S])$ is a valid extension of $c$
such that $d^\alpha_\nu \Vvdash \varphi(G)$.

Suppose now that for every $\sigma \in T$ such that $T^{[\sigma]}$
is infinite, every $w < |\sigma|$ and every set $F' \subseteq \dom(T) \cap \set_\nu(\sigma) \cap R_\alpha$,
$\psi(F_\nu^\alpha \cup F', w)$ does not hold.
In this case, by clause 2 of Definition~\ref{def:cohzp-true-forcing-precondition},
$c \Vvdash^\alpha_\nu \neg \varphi(G)$.
\end{proof}

We now define the weak forcing relation which is almost the same 
as the true one, except that the set~$F'$ is not required to be a subset of~$R_\alpha$
and that $T^{[\sigma]}$ might be finite. Because of this, whenever a condition 
forces a $\Pi^0_1$ formula by the weak forcing relation, so does it by the strong forcing relation.

\begin{definition}[Weak forcing relation]\label{def:cohzp-forcing-precondition}
Fix a condition $c = (\vec{F}, T, \Ccal)$, a $\Sigma^0_0$ formula $\varphi(G, x)$,
a part $\nu < \parts(T)$ and a side $\alpha < 2$.
\begin{itemize}
	\item[1.] $c \Vdash^\alpha_\nu (\exists x)\varphi(G, x)$ iff there exists a $w \in \omega$ such that $\varphi(F^\alpha_\nu, w)$ holds.
	\item[2.] $c \Vdash^\alpha_\nu (\forall x)\varphi(G, x)$ iff 
	for every $\sigma \in T$, every $w < |\sigma|$ and every set $F' \subseteq \dom(T) \cap \set_\nu(\sigma)$,
	$\varphi(F^\alpha_\nu \cup F', w)$ holds.
\end{itemize}
\end{definition}

As one may expect, the weak forcing relation at the first level
is also closed under the refinement relation.

\begin{lemma}\label{lem:cohzp-forcing-extension-level1}
Fix a condition $c = (\vec{F}, T, \Ccal)$ and a $\Sigma^0_1$ ($\Pi^0_1$) formula $\varphi(G)$.
If $c \Vdash^\alpha_\nu \varphi(G)$ for some $\nu < \parts(T)$ and $\alpha < 2$,
then for every $d = (\vec{E}, S, \Dcal) \leq c$ and
every part $\mu$ of $S$ refining part $\nu$ of $T$, $d \Vdash^\alpha_\mu \varphi(G)$.
\end{lemma}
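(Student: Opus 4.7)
The plan is to mirror the two-case argument of Lemma~\ref{lem:cohzp-true-forcing-extension-level1}. It will actually be simpler, since the weak forcing relation strips away the two side conditions present in the true relation (namely, the requirement that $F' \subseteq R_\alpha$ and that $T^{[\sigma]}$ be infinite). Fix $c = (\vec{F}, T, \Ccal)$ with $c \Vdash^\alpha_\nu \varphi(G)$, an extension $d = (\vec{E}, S, \Dcal) \leq c$ witnessed by some $f : \parts(S) \to \parts(T)$, and a part $\mu$ of $S$ with $f(\mu) = \nu$.

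In the $\Sigma^0_1$ case, writing $\varphi(G) = (\exists x)\psi(G,x)$, clause~1 of Definition~\ref{def:cohzp-forcing-precondition} supplies some witness $w$ with $\psi(F^\alpha_\nu, w)$. Extension clause~(i) forces $F^\alpha_\nu \subseteq E^\alpha_\mu$ with $E^\alpha_\mu \setminus F^\alpha_\nu \subseteq \dom(T) \cap R_\alpha$, so continuity of $\psi$ yields $\psi(E^\alpha_\mu, w)$, hence $d \Vdash^\alpha_\mu \varphi(G)$.

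The $\Pi^0_1$ case is the substance of the argument: writing $\varphi(G) = (\forall x)\psi(G,x)$, I fix $\tau \in S$, $w < |\tau|$, and $F' \subseteq \dom(S) \cap \set_\mu(\tau)$, and aim to show $\psi(E^\alpha_\mu \cup F', w)$. Extension clause~(ii) supplies a $\sigma \in T^{[\nu, E^\alpha_\mu]}$ of length $|\tau|$ that $\tau$ $f$-refines, so $\set_\mu(\tau) \subseteq \set_\nu(\sigma)$ and (since $\dom(S) \subseteq \dom(T)$) $F' \subseteq \dom(T) \cap \set_\nu(\sigma)$. When $|\sigma| > \max E^\alpha_\mu$, the definition of $T^{[\nu, E^\alpha_\mu]}$ forces $E^\alpha_\mu \subseteq \set_\nu(\sigma)$, while clause~(i) gives $E^\alpha_\mu \subseteq \dom(T)$; hence $(E^\alpha_\mu \setminus F^\alpha_\nu) \cup F' \subseteq \dom(T) \cap \set_\nu(\sigma)$, and the hypothesis $c \Vdash^\alpha_\nu \varphi(G)$ applied to this data produces $\psi(E^\alpha_\mu \cup F', w)$, which is exactly what clause~2 of Definition~\ref{def:cohzp-forcing-precondition} demands for $d \Vdash^\alpha_\mu \varphi(G)$.

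The only mild obstacle is the degenerate regime $|\tau| \leq \max E^\alpha_\mu$, in which the $\sigma$ furnished by clause~(ii) may be too short for the definition of $T^{[\nu, E^\alpha_\mu]}$ to pin $E^\alpha_\mu$ inside $\set_\nu(\sigma)$. I plan to dispatch this by the observation that, since $(E^\alpha_\mu, \dom(S))$ is required to be a Mathias condition, $\max E^\alpha_\mu < \min \dom(S)$, so $\dom(S) \cap [0, |\tau|) = \emptyset$ and necessarily $F' = \emptyset$. One then replaces $\sigma$ by any sufficiently long string $\sigma'' \in T^{[\nu, E^\alpha_\mu]}$, which exists because clause~(ii) together with the infiniteness of $S$ implies that $T^{[\nu, E^\alpha_\mu]}$ is infinite, and applies the hypothesis at $c$ with $F'' = E^\alpha_\mu \setminus F^\alpha_\nu$ to obtain $\psi(E^\alpha_\mu, w)$.
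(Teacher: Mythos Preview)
Your proof is correct and follows essentially the same approach as the paper's: split into $\Sigma^0_1$ and $\Pi^0_1$ cases, and in the latter pull $\tau \in S$ back to some $\sigma \in T^{[\nu, E^\alpha_\mu]}$ via clause~(ii), then feed $(E^\alpha_\mu \setminus F^\alpha_\nu) \cup F'$ into the hypothesis at $c$. You are in fact more careful than the paper: the paper simply asserts ``As $\sigma \in T^{[\nu, E^\alpha_\mu]}$, $E^\alpha_\mu \subseteq \set_\nu(\sigma)$'' without checking that $|\sigma|$ is large enough for this disjunct of the definition of $T^{[\nu, E^\alpha_\mu]}$ to apply, whereas you explicitly isolate and dispatch the short-$\tau$ regime via the Mathias condition $E^\alpha_\mu < \dom(S)$ forcing $F' = \emptyset$, and then pass to a longer $\sigma''$.
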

\begin{proof}\ 
\begin{itemize}
	\item If $\varphi \in \Sigma^0_1$ then this is exactly clause 1 of Lemma~\ref{lem:cohzp-true-forcing-extension-level1}
	since the definition of the weak and the true forcing relations coincide for~$\Sigma^0_1$ formulas.

	\item If $\varphi \in \Pi^0_1$ then it can be expressed as $(\forall x)\psi(G, x)$
	where $\psi \in \Sigma^0_0$. Fix a $\tau \in S$, a $w < |\tau|$ and a set
	$F' \subseteq \dom(S) \cap \set_\mu(\tau)$. By property~(ii) of the definition of an extension,
	there exists a $\sigma \in T^{[\nu, E^\alpha_\mu]}$
	such that $|\sigma| = |\tau|$ and $\set_\mu(\tau) \subseteq \set_\nu(\sigma)$. As $w < |\tau|$
	and $\dom(S) \subseteq \dom(T)$, $F' \subseteq \dom(T) \cap \set_\nu(\sigma)$.
	As $\sigma \in T^{[\nu, E^\alpha_\mu]}$, $E^\alpha_\mu \subseteq \set_\nu(\sigma)$
	and by property~(i) of the definition of an extension, $E^\alpha_\mu \subseteq \dom(T)$.
	Therefore $E^\alpha_\mu \cup F' \subseteq \dom(T) \cap \set_\nu(\sigma)$.
	By clause~2 of Definition~\ref{def:cohzp-forcing-precondition} applied to $c \Vdash^\alpha_\nu (\forall x)\psi(G, x)$, 
	$\psi(F^\alpha_\nu \cup (E^\alpha_\mu \setminus F^\alpha_\nu) \cup F', w)$ holds, hence $\psi(E^\alpha_\mu \cup F', w)$ holds
	and still by clause~2 of Definition~\ref{def:cohzp-forcing-precondition}, $d \Vdash^\alpha_\mu (\forall x)\psi(G, x)$.
\end{itemize}
\end{proof}

The following trivial lemma simply reflects the fact that the promise $\Ccal$ is not part 
of the definition of the weak forcing relation
for $\Sigma^0_1$ or~$\Pi^0_1$ formulas, and therefore has no effect on it.

\begin{lemma}\label{lem:cohzp-promise-no-effect-first-level}
Fix two conditions $c = (\vec{F}, T, \Ccal)$ and $d = (\vec{E}, T, \Dcal)$
and a $\Sigma^0_1$ ($\Pi^0_1$) formula. For every part $\nu$ of $T$ such that $F^\alpha_\nu = E^\alpha_\nu$,
$c \Vdash^\alpha_\nu \varphi(G)$ if and only if $d \Vdash^\alpha_\nu \varphi(G)$.
\end{lemma}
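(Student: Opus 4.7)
The plan is to unfold Definition~\ref{def:cohzp-forcing-precondition} in each of the two cases ($\Sigma^0_1$ and $\Pi^0_1$) and observe that the promise component $\Ccal$ simply does not appear on the right-hand side of the definition, so it cannot influence the relation. This is analogous to Lemma~\ref{lem:em-promise-no-effect-first-level} in the Erd\H{o}s--Moser setting, and the proof should be essentially a transcription of the clauses.

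First I would write $\varphi(G)$ as $(\exists x)\psi(G,x)$ with $\psi \in \Sigma^0_0$ when $\varphi \in \Sigma^0_1$. By clause~1 of Definition~\ref{def:cohzp-forcing-precondition}, $c \Vdash^\alpha_\nu \varphi(G)$ iff there exists $w \in \omega$ such that $\psi(F^\alpha_\nu, w)$ holds. Since the hypothesis gives $F^\alpha_\nu = E^\alpha_\nu$, this is equivalent to the existence of $w \in \omega$ such that $\psi(E^\alpha_\nu, w)$ holds, which by the same clause is exactly $d \Vdash^\alpha_\nu \varphi(G)$.

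Next, if $\varphi \in \Pi^0_1$, write $\varphi(G) = (\forall x)\psi(G,x)$ with $\psi \in \Sigma^0_0$. By clause~2 of Definition~\ref{def:cohzp-forcing-precondition}, $c \Vdash^\alpha_\nu \varphi(G)$ iff for every $\sigma \in T$, every $w < |\sigma|$ and every $F' \subseteq \dom(T) \cap \set_\nu(\sigma)$, $\psi(F^\alpha_\nu \cup F', w)$ holds. Because $c$ and $d$ share the same tree $T$ (so $\dom(T)$ and $\set_\nu(\sigma)$ are the same objects) and $F^\alpha_\nu = E^\alpha_\nu$, this condition is exactly the one obtained for $d$, yielding $d \Vdash^\alpha_\nu \varphi(G)$.

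There is essentially no obstacle here: the lemma is purely definitional, serving as a bookkeeping device to be invoked when one wishes to replace the promise component while preserving forcing of first-level formulas. The only thing to be careful about is to note that neither clause of Definition~\ref{def:cohzp-forcing-precondition} mentions $\Ccal$, which justifies that the two conditions behave identically at the $\Sigma^0_1$/$\Pi^0_1$ level whenever their tree and finite approximations on side $\alpha$, part $\nu$ agree.
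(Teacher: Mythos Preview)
Your proposal is correct and follows essentially the same approach as the paper: in both the $\Sigma^0_1$ and $\Pi^0_1$ cases you unfold Definition~\ref{def:cohzp-forcing-precondition}, observe that the promise $\Ccal$ does not appear, and use $F^\alpha_\nu = E^\alpha_\nu$ together with the shared tree $T$ to conclude. The paper's proof is a near-verbatim match of what you wrote.
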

\begin{proof}
If $\varphi \in \Sigma^0_1$ then $\varphi(G)$ can be expressed as $(\exists x)\psi(G, x)$ 
where $\psi \in \Sigma^0_0$.
By clause~1 of Definition~\ref{def:cohzp-forcing-precondition},
$c \Vdash^\alpha_\nu \varphi(G)$ iff
there exists a $w \in \omega$ such that $\psi(F^\alpha_\nu, w)$ holds.
As $F^\alpha_\nu = E^\alpha_\nu$, $c \Vdash^\alpha_\nu \varphi(G)$
iff $d \Vdash^\alpha_\nu \varphi(G)$.
Similarily, if $\varphi \in \Pi^0_1$ then $\varphi(G)$ can be expressed as $(\forall x)\psi(G, x)$
where $\psi \in \Sigma^0_0$.
By clause~2 of Definition~\ref{def:cohzp-forcing-precondition},
$c \Vdash^\alpha_\nu \varphi(G)$ iff
for every $\sigma \in T$, every $w < |\sigma|$ and 
every set $F' \subseteq \dom(T) \cap \set_\nu(\sigma)$, $\psi(F^\alpha_\nu \cup F', w)$ holds.
As $F^\alpha_\nu = E^\alpha_\nu$, $c \Vdash^\alpha_\nu \varphi(G)$
iff $d \Vdash^\alpha_\nu \varphi(G)$.
\end{proof}

We can now define the forcing relation over higher formulas.
It is defined inductively, starting with $\Sigma^0_1$ and~$\Pi^0_1$ formulas.
We extend the weak forcing relation instead of the true one for effectiveness purposes.
We shall see later that the weak forcing relation behaves like the true one
for some parts and some sides of a condition, and therefore that 
it tells us something about the truth of the formula over some carefully defined generic real~$G$.
Note that the forcing relation over higher formulas is still parameterized by the side~$\alpha$ of
the condition.

\begin{definition}\label{def:cohzp-forcing-condition}
Fix a condition $c = (\vec{F}, T, \Ccal)$, a side $\alpha < 2$ and an arithmetic formula $\varphi(G)$.
\begin{itemize}
	\item[1.] If $\varphi(G) = (\exists x)\psi(G, x)$ where $\psi \in \Pi^0_1$ then
	$c \Vdash^\alpha \varphi(G)$ iff for every part $\nu$ of $T$ such that
	$(\nu, T) \in \Ccal$ there exists a $w < \dom(T)$ such that $c \Vdash^\alpha_\nu \psi(G, w)$
	\item[2.] If $\varphi(G) = (\forall x)\psi(G, x)$ where $\psi \in \Sigma^0_1$ then
	$c \Vdash^\alpha \varphi(G)$ iff for every infinite p.r.\ $k'$-partition tree $S$,
	every function $f : \parts(S) \to \parts(T)$,
	every $w$ and $\vec{E}$ smaller than $\#S$ such that the followings hold
	\begin{itemize}
		\item[i)] $E^\beta_\nu = F^\beta_{f(\nu)}$ for each $\nu < \parts(S)$ and $\beta \neq \alpha$
		\item[ii)] $(E^\alpha_\nu, \dom(S) \cap R_\alpha)$ Mathias extends 
			$(F^\alpha_{f(\nu)}, \dom(T) \cap R_\alpha)$ for each $\nu < \parts(S)$
		\item[iii)] $S$ $f$-refines $\bigcap_{\nu < \parts(S)} T^{[f(\nu), E^\alpha_\nu]}$
	\end{itemize}
	for every $(\mu, S) \in \Ccal$, $(\vec{E}, S, \Ccal[S]) \not \Vdash^\alpha_\mu \neg \psi(G, w)$
	\item[3.] If $\varphi(G) = (\exists x)\psi(G, x)$ where $\psi \in \Pi^0_{n+2}$ then
	$c \Vdash^\alpha \varphi(G)$ iff there exists a $w \in \omega$ such that $c \Vdash^\alpha \psi(G, w)$
	\item[4.] If $\varphi(G) = \neg \psi(G)$ where $\psi \in \Sigma^0_{n+3}$
	then $c \Vdash^\alpha \varphi(G)$ iff $d \not \Vdash^\alpha \psi(G)$ for every $d \in \Ext(c)$.
\end{itemize}
\end{definition}

Note that clause 2.ii) of Definition~\ref{def:cohzp-forcing-condition} seems 
to be~$\Pi^0_2$ since~$R_\alpha$ is $\Delta^0_2$. However, in fact, one just needs to ensure
that~$\dom(S) \subseteq \dom(T)$ and~$E^\alpha_\nu \setminus F^\alpha_{f(\nu)} \subseteq \dom(T) \cap R_\alpha$.
This is a $\Delta^0_2$ predicate, and so is its negation, so one can already easily check that
the forcing relation over a $\Pi^0_2$ formula will be also $\Pi^0_2$.
Before proving the usual properties about the forcing relation, we need to discuss the role of the sides
in the forcing relation.
We are now ready to prove that the forcing relation is closed under extension.

\begin{lemma}\label{lem:cohzp-forcing-extension}
Fix a condition $c$, a side $\alpha < 2$ and a $\Sigma^0_{n+2}$ ($\Pi^0_{n+2}$) formula $\varphi(G)$.
If $c \Vdash^\alpha \varphi(G)$ then for every $d \leq c$, $d \Vdash^\alpha \varphi(G)$.
\end{lemma}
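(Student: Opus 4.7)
The plan is to proceed by induction on the complexity of $\varphi(G)$, following exactly the case analysis of Definition~\ref{def:cohzp-forcing-condition} and mirroring the argument of Lemma~\ref{lem:em-forcing-extension}. Fix $c = (\vec{F}, T, \Ccal)$ with $c \Vdash^\alpha \varphi(G)$ and an extension $d = (\vec{E}, S, \Dcal) \leq c$, witnessed by some $f : \parts(S) \to \parts(T)$.

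For the base case $\varphi \in \Sigma^0_2$, write $\varphi(G) = (\exists x)\psi(G,x)$ with $\psi \in \Pi^0_1$. Clause~1 of Definition~\ref{def:cohzp-forcing-condition} gives, for every part $\nu$ of $T$ with $(\nu, T) \in \Ccal$, some $w < \dom(T)$ with $c \Vdash^\alpha_\nu \psi(G,w)$. Given any $(\mu, S) \in \Dcal \subseteq \Ccal$, upward closure of $\Ccal$ implies that $\mu$ refines some such $\nu$, so by Lemma~\ref{lem:cohzp-forcing-extension-level1} we get $d \Vdash^\alpha_\mu \psi(G,w)$, where $w < \dom(T) \leq \dom(S)$. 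Re-apply clause~1 to conclude $d \Vdash^\alpha \varphi(G)$. For $\varphi \in \Sigma^0_{n+3}$ and $\varphi \in \Pi^0_{n+3}$ the arguments are routine: for the former use clause~3 together with the induction hypothesis applied to the witness $w$; for the latter use clause~4 together with $\Ext(d) \subseteq \Ext(c)$, which holds because the extension relation on $\Pb$ is transitive.

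The hard case is $\varphi \in \Pi^0_2$, i.e. $\varphi(G) = (\forall x)\psi(G,x)$ with $\psi \in \Sigma^0_1$. Argue by contradiction: suppose $d \not\Vdash^\alpha \varphi(G)$. Then clause~2 of Definition~\ref{def:cohzp-forcing-condition} produces an infinite p.r.\ partition tree $S'$, a function $g : \parts(S') \to \parts(S)$, a $w$ and an $\vec{H}$ coded below $\#S'$, satisfying the three refinement properties relative to $S$, and some $(\mu, S') \in \Dcal$ with $(\vec{H}, S', \Dcal[S']) \Vdash^\alpha_\mu \neg \psi(G,w)$. The goal is to show these same data witness $c \not\Vdash^\alpha \varphi(G)$ with respect to $T$ via the composed function $f \circ g$. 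Checking property~i) reduces to $H^\beta_\nu = F^\beta_{(f\circ g)(\nu)}$ for $\beta \neq \alpha$, which follows by composing the two identities from $d \leq c$ and the putative failure of $d$. Property~ii) follows because Mathias extensions on $\dom(\cdot) \cap R_\alpha$ compose, using that $\dom(S') \subseteq \dom(S) \subseteq \dom(T)$ and $R_\alpha$ is fixed. Property~iii) uses that $S$ $f$-refines the intersection $\bigcap_\nu T^{[f(\nu), E^\alpha_\nu]}$ and $S'$ $g$-refines $\bigcap_\nu S^{[g(\nu), H^\alpha_\nu]}$, so $S'$ $(f\circ g)$-refines $\bigcap_\nu T^{[(f\circ g)(\nu), H^\alpha_\nu]}$; here one uses that $H^\alpha_\nu \supseteq E^\alpha_{g(\nu)} \supseteq F^\alpha_{(f\circ g)(\nu)}$ so the relevant subtree constraints are preserved under composition. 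Finally, $(\mu, S') \in \Dcal \subseteq \Ccal$, and by Lemma~\ref{lem:cohzp-promise-no-effect-first-level} the promise component does not affect the weak forcing relation at level~1, so $(\vec{H}, S', \Ccal[S']) \Vdash^\alpha_\mu \neg \psi(G,w)$ as required. This contradicts $c \Vdash^\alpha \varphi(G)$ via clause~2, completing the proof.

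The main obstacle is therefore the $\Pi^0_2$ case, specifically verifying that the witness data against $d$ composes cleanly into witness data against $c$; the only subtlety is ensuring that condition~ii) of clause~2 transfers (Mathias composition on the $R_\alpha$-restricted reservoir) and that condition~iii) survives when $\bigcap_\nu S^{[g(\nu), H^\alpha_\nu]}$ is replaced by $\bigcap_\nu T^{[(f\circ g)(\nu), H^\alpha_\nu]}$. Both are straightforward once one unpacks the definitions, but they are the computationally delicate steps where the composition of refinements has to be tracked.
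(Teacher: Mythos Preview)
Your overall strategy matches the paper's proof exactly, and the $\Sigma^0_2$, $\Sigma^0_{n+3}$, and $\Pi^0_{n+3}$ cases are correct. There is, however, a genuine gap in your $\Pi^0_2$ case at property~i).

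You claim that $H^\beta_\nu = F^\beta_{(f\circ g)(\nu)}$ for $\beta \neq \alpha$ ``follows by composing the two identities from $d \leq c$ and the putative failure of $d$.'' The failure of $d$ does give the identity $H^\beta_\nu = E^\beta_{g(\nu)}$, but $d \leq c$ gives \emph{no} identity on the $\beta \neq \alpha$ components: the extension relation only says that $(E^\beta_\mu, \dom(S)\cap R_\beta)$ Mathias extends $(F^\beta_{f(\mu)}, \dom(T)\cap R_\beta)$, so in general $E^\beta_{g(\nu)} \supsetneq F^\beta_{f(g(\nu))}$. Thus your tuple $\vec{H}$ need not satisfy clause~2.i) relative to $c$, and the witness does not transfer as written.

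The paper repairs this by replacing $\vec{H}$ with a modified tuple $\vec{H}'$ defined by $H'^{\beta}_\nu = F^\beta_{f(g(\nu))}$ for $\beta \neq \alpha$ and $H'^{\alpha}_\nu = H^\alpha_\nu$. Property~i) then holds trivially; properties~ii) and~iii) are unaffected since they involve only the $\alpha$-components; and the forcing statement $(\vec{H}', S', \Ccal[S']) \Vdash^\alpha_\mu \neg\psi(G,w)$ follows from $(\vec{H}, S', \Dcal[S']) \Vdash^\alpha_\mu \neg\psi(G,w)$ via Lemma~\ref{lem:cohzp-promise-no-effect-first-level}, which says the level-1 weak forcing relation depends only on the $(\alpha,\mu)$-component of the finite-set tuple and not on the promise. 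Once you make this substitution, the rest of your argument goes through.
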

\begin{proof}
We prove the statement by induction over the complexity of the formula $\varphi(G)$.
Fix a condition $c = (\vec{F}, T, \Ccal)$ and a side $\alpha < 2$ such that $c \Vdash^\alpha \varphi(G)$.
Fix an extension $d = (\vec{E}, S, \Dcal)$ of $c$.
\begin{itemize}
	\item If $\varphi \in \Sigma^0_2$ then $\varphi(G)$ can be expressed as $(\exists x)\psi(G, x)$ 
  where $\psi \in \Pi^0_1$. By clause~1 of Definition~\ref{def:cohzp-forcing-condition},
	for every part $\nu$ of $T$ such that $(\nu, T) \in \Ccal$, there exists a $w < \dom(T)$
	such that $c \Vdash^\alpha_\nu \psi(G, w)$. Fix a part $\mu$ of $S$ such that $(\mu, S) \in \Dcal$.
	As $\Dcal \subseteq \Ccal$, $(\mu, S) \in \Ccal$. By upward-closure of $\Ccal$,
	part $\mu$ of $S$ refines some part $\nu$ of $\Ccal$ such that $(\nu, T) \in \Ccal$.
	Therefore by Lemma~\ref{lem:cohzp-forcing-extension-level1}, $d \Vdash^\alpha_\mu \psi(G, w)$,
	with $w < \dom(T) \leq \dom(S)$. Applying again clause~1 of Definition~\ref{def:cohzp-forcing-condition},
	we deduce that $d \Vdash (\forall x)\psi(G, x)$, hence $d \Vdash^\alpha \varphi(G)$.

  \item If $\varphi \in \Pi^0_2$ then $\varphi(G)$ can be expressed as $(\forall x)\psi(G, x)$ where $\psi \in \Sigma^0_1$.
	Suppose by way of contradiction that $d \not \Vdash^\alpha (\forall x)\psi(G, x)$.
	Let $f : \parts(S) \to \parts(T)$ witness the refinement $S \leq T$.
	By clause~2 of Definition~\ref{def:cohzp-forcing-condition},
	there exist an infinite p.r.\ $k'$-partition tree $S'$, a function~$g : \parts(S') \to \parts(S)$, 
	a $w \in \omega$, and a $2k'$-tuple of finite sets $\vec{H}$ smaller than the code of $S'$ such that
	\begin{itemize}
		\item[i)] $H^\beta_\nu = E^\beta_{g(\nu)}$ for each $\nu < \parts(S')$ and $\beta \neq \alpha$
		\item[ii)] $(H^\alpha_\nu, \dom(S') \cap R_\alpha)$ Mathias extends $(E^\alpha_{g(\nu)}, \dom(S) \cap R_\alpha)$ 
		for each $\nu < \parts(S')$
		\item[iii)] $S'$ $g$-refines $\bigcap_{\nu < \parts(S')} S^{[g(\nu), H^\alpha_\nu]}$
		\item[iv)] there exists a $(\mu, S') \in \Dcal$ such that 
		$(\vec{H}, S', \Dcal[S']) \Vdash^\alpha_\mu \neg \psi(G, w)$.
	\end{itemize}
	To deduce by clause~2 of Definition~\ref{def:cohzp-forcing-condition} that
	$c \not \Vdash^\alpha (\forall x)\psi(G, x)$ and derive a contradiction, it suffices to prove
	that the same properties hold with respect to~$T$. Let $\vec{H}'$ be defined by
	$H^{'\beta}_\nu = F^\beta_{f(g(\nu))}$ for each $\nu < \parts(S')$ and $\beta \neq \alpha$
	and $H^{'\alpha}_\nu = H^\alpha_\nu$.
	\begin{itemize}
		\item[i)] It trivially holds by choice of $\vec{H}'$.
		\item[ii)] By property (i) of the definition of an extension,
		$(E^\alpha_{g(\nu)}, \dom(S))$ Mathias extends $(F^\alpha_{f(g(\nu))}, \dom(T))$.
		Moreover $(H^\alpha_\nu, \dom(S')$ Mathias extends $(E^\alpha_{g(\nu)}, \dom(S))$,
		so $(H^{'\alpha}_\nu, \dom(S')) = (H^\alpha_\nu, \dom(S'))$ Mathias extends $(F_{f(g(\nu))}, \dom(T))$.
		\item[iii)] As by property (ii) of the definition of an extension,\\
		$S$ $f$-refines $\bigcap_{\nu < \parts(S')} T^{[f(g(\nu)), E^\alpha_{g(\nu)}]}$, and \\
		$S'$ $g$-refines $\bigcap_{\nu < \parts(S')} S^{[g(\nu), H^\alpha_\nu]}$ then \\
		$S'$ $(g \circ f)$-refines $\bigcap_{\nu < \parts(S')} T^{[g(\nu), H^{'\alpha}_\nu]}$.
		\item[iv)] As $\Dcal \subseteq \Ccal$, there exists a part $(\mu, S') \in \Ccal$
		such that $(\vec{H}, S', \Dcal[S']) \Vdash^\alpha_\mu \neg \psi(G, w)$. 
		By Lemma~\ref{lem:cohzp-promise-no-effect-first-level}, $(\vec{H}', S', \Ccal[S']) \Vdash^\alpha_\mu \neg \psi(G, w)$.
	\end{itemize}

	\item If $\varphi \in \Sigma^0_{n+3}$ then $\varphi(G)$ can be expressed as $(\exists x)\psi(G, x)$ 
  where $\psi \in \Pi^0_{n+2}$.
  By clause~3 of Definition~\ref{def:cohzp-forcing-condition}, there exists a $w \in \omega$
  such that $c \Vdash^\alpha \psi(G, w)$. By induction hypothesis, $d \Vdash^\alpha \psi(G, w)$
  so by clause~3 of Definition~\ref{def:cohzp-forcing-condition}, $d \Vdash^\alpha \varphi(G)$.

  \item If $\varphi \in \Pi^0_{n+3}$ then $\varphi(G)$ can be expressed as $\neg \psi(G)$ where $\psi \in \Sigma^0_{n+3}$.
	Suppose by way of contradiction that $d \not \Vdash^\alpha \varphi(G)$.
	By clause~4 of Definition~\ref{def:cohzp-forcing-condition}, 
	there exists an $e \in \Ext(d)$ such that $e \Vdash^\alpha \psi(G)$.
	In particular, $e \in \Ext(c) $, so  by clause~4 of Definition~\ref{def:cohzp-forcing-condition}, $e \not \Vdash^\alpha \psi(G)$
	since $c \Vdash^\alpha \varphi(G)$.
	Contradiction.
\end{itemize}
\end{proof}

Although the weak forcing relation does not satisfy the density property,
the forcing relation over higher formulas does. The reason is that
the extended forcing relation does not involve the weak forcing relation
over~$\Sigma^0_1$ formulas in the clause 2 of Definition~\ref{def:cohzp-forcing-condition}, 
but uses instead the weaker statement ``$c$ does not force the negation of the~$\Sigma^0_1$ formula''.
The link between this statement and the statement ``$c$ has an extension which forces the $\Sigma^0_1$ formula''
is used when proving that $\varphi(G)$ holds iff $c \Vdash \varphi(G)$ for some condition
belonging to a sufficiently generic filter. We now prove the density of the forcing relation for higher formulas.

\begin{lemma}\label{lem:cohzp-forcing-dense}
For every $\Sigma^0_{n+2}$ ($\Pi^0_{n+2}$) formula $\varphi$, 
the following set is dense in $\Pb$:
$$
\{c \in \Pb : (\forall \alpha < 2)[c \Vdash^\alpha \varphi(G) \mbox{ or } c \Vdash^\alpha \neg \varphi(G)] \}
$$
\end{lemma}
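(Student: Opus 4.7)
The plan is to prove this density lemma by induction on $n$, modelled on the analogous Lemma~\ref{lem:em-forcing-dense} for the Erd\H{o}s-Moser forcing. Since the forcing relation and the extension relation are indexed by a side $\alpha<2$, I will first reduce to a one-sided statement: it suffices to show that for every condition $c$, every side $\alpha < 2$, and every $\Sigma^0_{n+2}$ formula $\varphi$, there is an extension $d \leq c$ with $d \Vdash^\alpha \varphi(G)$ or $d \Vdash^\alpha \neg \varphi(G)$. Applying this twice, once for each $\alpha$, and invoking Lemma~\ref{lem:cohzp-forcing-extension} to preserve what has already been decided, yields the full statement. By symmetry of Definition~\ref{def:cohzp-forcing-condition} (swapping existential and universal outermost quantifier), it further suffices to treat $\Sigma^0_{n+2}$ formulas.

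For the base case $n = 0$, so $\varphi(G) = (\exists x)\psi(G,x)$ with $\psi \in \Pi^0_1$, the argument closely parallels the base case of Lemma~\ref{lem:em-forcing-dense}. Fix $c = (\vec{F}, T, \Ccal)$. I will ask whether there exist an infinite p.r.\ partition tree $S$, a function $f : \parts(S) \to \parts(T)$, and tuples $\vec{E}$ coded below $\#S$ satisfying properties i)--iii) of clause~2 of Definition~\ref{def:cohzp-forcing-condition}, such that for every non-empty part $\nu$ of $S$ with $(\nu, S) \in \Ccal$ there is some $w < \#S$ with $(\vec{E}, S, \Ccal[S]) \Vdash^\alpha_\nu \psi(G,w)$. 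If such data exist, after adjusting $\dom(S)$ (so that $(E^\alpha_\nu, \dom(S) \cap R_\alpha)$ is a genuine Mathias extension) and restricting $\Ccal[S]$ to its non-empty parts, I obtain a valid extension $d = (\vec{E}, S, \Dcal)$ forcing $\varphi$ by clause~1, using Lemma~\ref{lem:cohzp-forcing-extension-level1} and Lemma~\ref{lem:cohzp-promise-no-effect-first-level} to see that the witnessing of $\psi$ survives these adjustments.

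Otherwise, I let $\Dcal$ be the $\emptyset'$-p.r.\ class of all $(\nu, S)$ with $S \leq T$ a p.r.\ refinement, $\nu$ a non-empty part of $S$, and such that no tuple $\vec{E}$ coded below $\#S$ witnesses $(\vec{E}, S, \Ccal[S]) \Vdash^\alpha_\nu \psi(G,w)$ for any $w<\#S$. By the analogue of Lemma~\ref{lem:em-complexity-forcing} for the weak forcing relation and the fact that ``$\nu$ is non-empty'' is $\Sigma^0_1$, $\Dcal$ is $\emptyset'$-p.r.; Lemma~\ref{lem:cohzp-forcing-extension-level1} together with the convention that $\#S \geq \#T$ whenever $S \leq T$ gives upward-closure; and the assumption that the first case failed, combined with Lemma~\ref{lem:em-promise-extension-witnessing-acceptable} to discard empty parts of any p.r.\ refinement, ensures that $\Dcal$ is a promise for $T$. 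Then $d = (\vec{F}, T, \Dcal)$ is a valid extension of $c$, and by clause~2 of Definition~\ref{def:cohzp-forcing-condition}, $d \Vdash^\alpha (\forall x)\neg \psi(G,x)$, hence $d \Vdash^\alpha \neg \varphi(G)$.

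For the inductive step $n > 0$ the density is immediate from clause~4 of Definition~\ref{def:cohzp-forcing-condition}: either there is some $d \in \Ext(c)$ with $d \Vdash^\alpha \varphi(G)$, in which case $d$ itself decides $\varphi$, or no such $d$ exists, in which case $c$ itself satisfies $c \Vdash^\alpha \neg \varphi(G)$. The main obstacle will be the base case, and in particular the verification that the class $\Dcal$ constructed in the negative branch is genuinely a promise for $T$ with the expected $\emptyset'$-p.r.\ complexity; the delicate point is that $\vec{E}$ ranges only over tuples coded below $\#S$, so the two branches of the dichotomy are genuinely complementary and compatible with the refinement convention built into the partial order.
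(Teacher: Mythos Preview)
Your proposal is correct and follows essentially the same approach as the paper's proof: the same one-sided reduction, the same dichotomy in the base case $n=0$ (either witnessing data exist and you pass to $(\vec{E},S,\Dcal)$ forcing $\varphi$, or they do not and you shrink the promise to force $\neg\varphi$), and the same appeal to clause~4 of Definition~\ref{def:cohzp-forcing-condition} for $n>0$. The only cosmetic differences are that the paper does not need to adjust $\dom(S)$ in the positive branch (property~ii) already guarantees the Mathias extension) and does not invoke Lemma~\ref{lem:em-promise-extension-witnessing-acceptable} for the promise verification, but these are harmless redundancies rather than gaps.
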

\begin{proof}
We prove the statement by induction over~$n$.
It suffices to treat the case where $\varphi$ is a $\Sigma^0_{n+2}$ formula,
as the case where $\varphi$ is a $\Pi^0_{n+2}$ formula is symmetric. 
Moreover, it is enough to prove that for every condition $c$ and every $\alpha < 2$,
there exists an extension $d \leq c$ such that
$d \Vdash^\alpha \varphi(G) \mbox{ or } d \Vdash^\alpha \neg \varphi(G)$.
Iterating the process at most twice completes the proof.
Fix a condition $c = (\vec{F}, T, \Ccal)$ and a part $\alpha < 2$.
\begin{itemize}
	\item In case $n = 0$, the formula $\varphi$ is of the form $(\exists x)\psi(G, x)$ where
	$\psi \in \Pi^0_1$.
	Suppose there exists an infinite p.r.\ $k'$-partition tree $S$
	for some $k' \in \omega$, a function~$f : \parts(S) \to \parts(T)$, 
	and a $2k'$-tuple of finite sets $\vec{E}$ such that
	\begin{itemize}
		\item[i)] $E^\beta_\nu = F^\beta_{f(\nu)}$ for each $\nu < \parts(S)$ and $\beta \neq \alpha$
		\item[ii)] $(E^\alpha_\nu, \dom(S) \cap R_\alpha)$ Mathias extends 
		$(F^\alpha_{f(\nu)}, \dom(T) \cap R_\alpha)$ for each $\nu < \parts(S)$.
		\item[iii)] $S$ $f$-refines $\bigcap_{\nu < \parts(S)} T^{[f(\nu), E^\alpha_\nu]}$
		\item[iv)] for each non-empty part $\nu$ of $S$ such that $(\nu, S) \in \Ccal$, 
		$(\vec{E}, S, \Ccal[S]) \Vdash^\alpha_\nu \psi(G, w)$ for some~$w < \#S$
	\end{itemize}
	Let $\Dcal = \Ccal[S] \setminus \{(\nu, S') \in \Ccal : \mbox{ part } \nu \mbox{ of } S' \mbox{ is empty} \}$. 
	As $\Ccal$ is an $\emptyset'$-p.r.\ promise for $T$,
	$\Ccal[S]$ is an $\emptyset'$-p.r.\ promise for $S$.
	As $\Dcal$ is obtained from $\Ccal[S]$ by removing only empty parts, $\Dcal$ is also an $\emptyset'$-p.r.\ promise for $S$.
	By clause~1 of Definition~\ref{def:cohzp-forcing-condition},
	$d = (\vec{E}, S, \Dcal) \Vdash^\alpha (\exists x)\psi(G, x)$ hence $d \Vdash^\alpha \varphi(G)$.

	We may choose a coding of the p.r.\ trees such that
	the code of $S$ is sufficiently large to witness $f$, $\ell$ and $\vec{E}$.
	So suppose now that for every infinite p.r.\ $k'$-partition tree $S$,
	every function~$f : \parts(S) \to \parts(T)$, $\ell \in \omega$ and $\vec{E}$ 
	smaller than the code of $S$ such that properties i-iii) hold,
	there exists a non-empty part $\nu$ of $S$ such that $(\nu, S) \in \Ccal$
	and $(\vec{E}, S, \Ccal[S]) \not \Vdash^\alpha_\nu \psi(G, w)$ for every $w < \ell$.
	Let $\Dcal$ be the collection of all such $(\nu, S)$. $\Dcal$ is $\emptyset'$-p.r.
	By Lemma~\ref{lem:cohzp-forcing-extension-level1} and since we require that~$\#S \geq \#T$
	in the definition of~$S \leq T$, $\Dcal$ is upward-closed, hence is a promise for~$T$. By clause~2
	of Definition~\ref{def:cohzp-forcing-condition}, $d = (\vec{F}, T, \Dcal) \Vdash^\alpha (\forall x) \neg \psi(G, x)$,
	hence $d \Vdash^\alpha \neg \varphi(G)$.

	\item In case $n > 0$, density follows from clause~4 of Definition~\ref{def:cohzp-forcing-condition}.
\end{itemize}
\end{proof}

We now prove that the weak forcing relation extended to any arithmetic formula
enjoys the desired definability properties. For this, we start with a lemma
showing that the extension relation is $\Pi^0_2$. Therefore, only
the first two levels have to be treated independently, since 
the extension relation does not add some extra complexity to the forcing relation for higher formulas.

\begin{lemma}\label{lem:cohzp-extension-complexity}
For every condition $c$, $\Ext(c)$ is $\Pi^0_2$ uniformly in~$c$.
\end{lemma}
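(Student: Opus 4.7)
The plan is to mimic the proof of Lemma~\ref{lem:em-extension-complexity} but track the one genuine new subtlety: clause~(i) of the extension relation now mentions $R_\alpha$, which is only $\Delta^0_2$ rather than computable. I will write out membership in $\Ext(c)$ as an explicit conjunction and verify that each conjunct, and finally the bounded existential quantifier over the refinement function $f$, keeps the total complexity at $\Pi^0_2$.

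First I would unfold the definition. A tuple $(\vec{H}, (k',t',S), \Dcal)$ lies in $\Ext(\vec{F}, (k,t,T), \Ccal)$ iff there is a function $f : k' \to k$ such that: (1) $S$ is an infinite p.r.\ $k'$-partition tree of $[t',\infty)$; (2) $\Dcal$ is a $\emptyset'$-p.r.\ promise for $S$; (3) $\Dcal \subseteq \Ccal$; (4) $S \leq_f \bigwedge_{\nu<k', \alpha<2} T^{[f(\nu), H^\alpha_\nu]}$; and (5) for each $\nu < k'$ and each $\alpha < 2$, $(H^\alpha_\nu, \dom(S) \cap R_\alpha)$ Mathias extends $(F^\alpha_{f(\nu)}, \dom(T) \cap R_\alpha)$. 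Since $f$ ranges over a finite set once $k, k'$ are fixed, the existential quantifier over $f$ is harmless.

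Next I would bound the complexity of each conjunct. Clauses (1) and (4) are $\Pi^0_1$ by clauses~a) and~b) of Lemma~\ref{lem:em-refinement-complexity}, using that $\bigwedge_{\nu,\alpha} T^{[f(\nu), H^\alpha_\nu]}$ is primitive recursive uniformly in $T$, $f$ and $\vec{H}$. Clause (2) is $\Pi^0_2$ by clause~c) of Lemma~\ref{lem:em-refinement-complexity}. Clause (3) unpacks as $(\forall (\nu, U))[(\nu, U) \in \Dcal \imp (\nu, U) \in \Ccal]$, and since both $\Dcal$ and $\Ccal$ are $\emptyset'$-p.r., hence $\Delta^0_2$, this is $\Pi^0_2$.

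The only clause needing care is~(5). Here I use the observation already made after Definition~\ref{def:cohzp-forcing-condition}: ``$(H^\alpha_\nu, \dom(S) \cap R_\alpha)$ Mathias extends $(F^\alpha_{f(\nu)}, \dom(T) \cap R_\alpha)$'' reduces to the conjunction of the finite check $F^\alpha_{f(\nu)} \subseteq H^\alpha_\nu$, the containment $\dom(S) \subseteq \dom(T)$, and $H^\alpha_\nu \setminus F^\alpha_{f(\nu)} \subseteq \dom(T) \cap R_\alpha$. The first two are $\Sigma^0_0$ in the data, and the third is a bounded universal statement saying that each element $x \in H^\alpha_\nu \setminus F^\alpha_{f(\nu)}$ lies in $\dom(T)$ and in $R_\alpha$; since $R_\alpha$ is $\Delta^0_2$, this conjunct is $\Delta^0_2$. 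Taking the finite conjunction over $\nu < k'$ and $\alpha < 2$ preserves $\Delta^0_2$.

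Combining, $\Ext(c)$ is the bounded existential in $f$ of a conjunction of $\Pi^0_1$, $\Pi^0_2$ and $\Delta^0_2$ predicates, hence $\Pi^0_2$ uniformly in $c$. The ``hard part'' (such as it is) is really the bookkeeping for clause~(5); once one notices that the Mathias extension requirement does not force one to \emph{enumerate} $\dom(S) \cap R_\alpha$ but only to check containment of a \emph{finite} set $H^\alpha_\nu \setminus F^\alpha_{f(\nu)}$ into the $\Delta^0_2$ set $R_\alpha$, the extra logical quantifier coming from $R_\alpha$ collapses and the bound matches Lemma~\ref{lem:em-extension-complexity}.
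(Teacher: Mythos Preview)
Your proof is correct and follows essentially the same approach as the paper: both decompose membership in $\Ext(c)$ into the same conjuncts, invoke Lemma~\ref{lem:em-refinement-complexity} for the partition-tree clauses, and observe that the Mathias-extension clause involving $R_\alpha$ reduces to checking that the \emph{finite} set $H^\alpha_\nu \setminus F^\alpha_{f(\nu)}$ lies in the $\Delta^0_2$ set $R_\alpha$. The only cosmetic difference is that you record clause~(5) as $\Delta^0_2$ while the paper records it as $\Pi^0_2$; either bound suffices for the overall $\Pi^0_2$ conclusion.
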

\begin{proof}
Recall from Lemma~\ref{lem:em-refinement-complexity} that
given $k, t \in \omega$, the set $PartTree(k, t)$ denotes 
the $\Pi^0_1$ set of all the infinite p.r.\ $k$-partition trees of $[t, \infty)$,
and given a $k$-partition tree $S$ and a part $\nu$ of $S$,
the predicate $Empty(S, \nu)$ denotes the $\Pi^0_1$ formula ``part $\nu$ of $S$ is empty'',
that is, the formula $(\forall \sigma \in S)[\set_\nu(\sigma) \cap \dom(S) = \emptyset$].
If $T$ is p.r.\ then so is $T^{[\nu, H]}$ for some finite set $H$.

Fix a condition $c = (\vec{F}, (k, t, T), \Ccal)$.
$(\vec{H}, (k', t', S), \Dcal) \in \Ext(c)$ iff the following formula holds:
$$
\begin{array}{l@{\hskip 0.5in}r}
(\exists f : k' \to k)\\
(\forall \nu < k')(\forall \alpha < 2)
(H^\alpha_\nu, [t', \infty) \cap R_\alpha) \mbox{ Mathias extends } (F^\alpha_{f(\nu)}, [t, \infty) \cap R_\alpha) & (\Pi^0_2)\\
\wedge S \in PartTree(k', t') \wedge S \leq_f \bigwedge_{\nu < k', \alpha < 2} T^{[f(\nu), H^\alpha_{\nu}]} & (\Pi^0_1)\\
\wedge \Dcal \mbox{ is a promise for } S \wedge \Dcal \subseteq \Ccal & (\Pi^0_2)\\
\end{array}
$$
The formula $(H^\alpha_\nu, [t', \infty) \cap R_\alpha) \mbox{ Mathias extends } 
(F^\alpha_{f(\nu)}, [t, \infty) \cap R_\alpha)$ can be written
$(\forall x < t)[x \in H^\alpha_\nu \biimp x \in F^\alpha_{f(\nu)}]
\wedge t' \geq t \wedge (\forall x \in H^\alpha_\nu \setminus F^\alpha_{f(\nu)}) 
x \in R_\alpha$ and therefore is $\Pi^0_2$.
By Lemma~\ref{lem:em-refinement-complexity}
and the fact that $\bigwedge_{\nu < k', \alpha < 2} T^{[f(\nu), H^\alpha_{\nu}]}$
is p.r.\ uniformly in $T$, $f$, $\vec{H}$ and $k'$,
the above formula is~$\Pi^0_2$.
\end{proof}

\begin{lemma}\label{lem:cohzp-complexity-forcing}
Fix an arithmetic formula $\varphi(G)$, a condition $c = (\vec{F}, T, \Ccal)$,
a side $\alpha < 2$ and a part $\nu$ of $T$.
\begin{itemize}
	\item[a)] If $\varphi(G)$ is a $\Sigma^0_1$ ($\Pi^0_1$) formula 
	then so is the predicate $c \Vdash^\alpha_\nu \varphi(G)$.
	\item[b)] If $\varphi(G)$ is a $\Sigma^0_{n+2}$ ($\Pi^0_{n+2}$) formula 
	then so is the predicate $c \Vdash^\alpha \varphi(G)$.
\end{itemize}
\end{lemma}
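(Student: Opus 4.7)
The plan is to prove both parts by induction on the complexity of $\varphi(G)$, exactly mirroring the structure of Lemma~\ref{lem:em-complexity-forcing}. For each syntactic form that $\varphi(G)$ can take according to Definitions~\ref{def:cohzp-forcing-precondition} and~\ref{def:cohzp-forcing-condition}, I simply unfold the definition and count quantifiers, invoking the inductive hypothesis on subformulas and the complexity bounds on the supporting objects ($\Ccal$, $\Ext(c)$, partition tree refinement).

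For part (a), the $\Sigma^0_1$ case is immediate from clause~1 of Definition~\ref{def:cohzp-forcing-precondition}, which unfolds to $(\exists w)\psi(F^\alpha_\nu, w)$. For the $\Pi^0_1$ case, clause~2 gives $(\forall \sigma \in T)(\forall w<|\sigma|)(\forall F'\subseteq \dom(T)\cap \set_\nu(\sigma))\,\psi(F^\alpha_\nu\cup F', w)$, where the quantifier over $F'$ is bounded (there are at most $2^{|\sigma|}$ such sets), yielding $\Pi^0_1$. Here it is important that we are working with the weak relation $\Vdash^\alpha_\nu$, which does not restrict $F'$ to $R_\alpha$; the true relation $\Vvdash^\alpha_\nu$ would jump to $\Pi^0_2$ via $R_\alpha$, and for exactly this reason the definition of the higher-level forcing relation was built over the weak relation.

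For part (b), I dispatch the easy cases first. The $\Sigma^0_2$ case via clause~1 of Definition~\ref{def:cohzp-forcing-condition} is $(\forall\nu<\parts(T))[(\nu,T)\in\Ccal \Rightarrow (\exists w<\dom(T))\,c\Vdash^\alpha_\nu\psi(G,w)]$, where $\nu$ ranges over a finite set, $(\nu,T)\in\Ccal$ is $\Delta^0_2$ (since $\Ccal$ is $\emptyset'$-p.r.), and the inner predicate is $\Pi^0_1$ by (a); the bounded outer conjunction leaves us in $\Sigma^0_2$. The $\Sigma^0_{n+3}$ case via clause~3 is a simple $\exists w$ above a $\Pi^0_{n+2}$ IH predicate. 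The $\Pi^0_{n+3}$ case via clause~4 reads $(\forall d)[d\notin\Ext(c)\vee d\not\Vdash^\alpha\psi(G)]$; Lemma~\ref{lem:cohzp-extension-complexity} gives $\Ext(c)$ as $\Pi^0_2$ and the IH gives the inner predicate as $\Pi^0_{n+3}$, so the whole is $\Pi^0_{n+3}$.

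The main obstacle is the $\Pi^0_2$ case handled by clause~2 of Definition~\ref{def:cohzp-forcing-condition}. The definition is a universal quantifier over tuples $(S,f,w,\vec E)$ satisfying conditions (i)--(iii), followed by a universal quantifier over $(\mu,S)\in\Ccal$, with inner body $(\vec E,S,\Ccal[S])\not\Vdash^\alpha_\mu\neg\psi(G,w)$. The danger is that condition (ii) --- Mathias extension with respect to $R_\alpha$ --- naively reads as $\Pi^0_2$ (since $R_\alpha$ is $\Delta^0_2$) and would compound with the outer universal to give $\Pi^0_3$. The key trick, explicitly anticipated in the remark following Definition~\ref{def:cohzp-forcing-condition}, is that (ii) is equivalent to $\dom(S)\subseteq \dom(T) \wedge E^\alpha_\nu\setminus F^\alpha_{f(\nu)}\subseteq \dom(T)\cap R_\alpha$, which is $\Delta^0_2$. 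Combined with (i) ($\Sigma^0_0$), (iii) ($\Pi^0_1$ by Lemma~\ref{lem:em-refinement-complexity}), and the $\Delta^0_2$ predicate $(\mu,S)\in\Ccal$, the body of the outer universal becomes an implication from a $\Delta^0_2$ antecedent to the inner consequent, which by the IH of part~(a) applied to $\neg\psi\in\Pi^0_1$ is $\Sigma^0_1$. A $\Delta^0_2 \to \Sigma^0_1$ implication is $\Pi^0_2 \vee \Sigma^0_1 \subseteq \Pi^0_2$, and the outer $\forall$ preserves $\Pi^0_2$, giving the desired bound.
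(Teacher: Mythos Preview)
Your proposal is correct and follows essentially the same inductive case analysis as the paper's proof. If anything, your treatment of the $\Pi^0_2$ case is more careful: you explicitly invoke the remark after Definition~\ref{def:cohzp-forcing-condition} to reduce clause~(ii) to a $\Delta^0_2$ predicate, whereas the paper somewhat loosely attributes this to Lemma~\ref{lem:em-refinement-complexity} (which in fact only handles the refinement clause).
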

\begin{proof}
We prove our lemma by induction over the complexity of the formula $\varphi(G)$.
\begin{itemize}
	\item If $\varphi(G) \in \Sigma^0_1$ then it can be expressed as $(\exists x)\psi(G, x)$ where $\psi \in \Sigma^0_0$.
	By clause~1 of Definition~\ref{def:cohzp-forcing-precondition}, $c \Vdash^\alpha_\nu \varphi(G)$ if and only if 
	the formula $(\exists w \in \omega)\psi(F^\alpha_\nu, w)$ holds. This is a $\Sigma^0_1$ predicate.
	
	\item If $\varphi(G) \in \Pi^0_1$ then it can be expressed as $(\forall x)\psi(G, x)$ where $\psi \in \Sigma^0_0$.
	By clause~2 of Definition~\ref{def:cohzp-forcing-precondition}, $c \Vdash^\alpha_\nu \varphi(G)$ if and only if 
	the formula $(\forall \sigma \in T)(\forall w < |\sigma|)(\forall F' \subseteq \dom(T) \cap \set_\nu(\sigma))
	\psi(F^\alpha_\nu \cup F', w)$ holds. This is a $\Pi^0_1$ predicate.

	\item If $\varphi(G) \in \Sigma^0_2$ then it can be expressed as $(\exists x)\psi(G, x)$ where $\psi \in \Pi^0_1$.
	By clause~1 of Definition~\ref{def:cohzp-forcing-condition}, $c \Vdash^\alpha \varphi(G)$ if and only if 
	the formula $(\forall \nu < \parts(T)(\exists w < \dom(T))[(\nu, T) \in \Ccal \imp c \Vdash^\alpha_\nu \psi(G, w)]$ holds.
	This is a $\Sigma^0_2$ predicate by induction hypothesis and the fact that $\Ccal$ is $\emptyset'$-computable.

	\item If $\varphi(G) \in \Pi^0_2$ then it can be expressed as $(\forall x)\psi(G, x)$ where $\psi \in \Sigma^0_1$.
	By clause~2 of Definition~\ref{def:cohzp-forcing-condition}, $c \Vdash \varphi(G)$ if and only if 
	for every infinite $k'$-partition tree $S$, every function $f : \parts(S) \to \parts(T)$, 
	every $w$ and $\vec{E}$ smaller than the code of $S$ such that the followings hold
	\begin{itemize}
		\item[i)] $(E_\nu, \dom(S) \cap R_\alpha)$ Mathias extends $(F_{f(\nu)}, \dom(T) \cap R_\alpha)$ for each $\nu < \parts(S)$
		\item[ii)] $S$ $f$-refines $\bigcap_{\nu < \parts(S)} T^{[f(\nu), E_\nu]}$
	\end{itemize}
	for every $(\mu, S) \in \Ccal$, $(\vec{E}, S, \Ccal[S]) \not \Vdash^\alpha_\mu \neg \psi(G, w)$.
	By Lemma~\ref{lem:em-refinement-complexity}, Properties i-ii) are $\Delta^0_2$.
	Moreover the predicate $(\mu, S) \in \Ccal$ is $\Delta^0_2$ since~$\Ccal$ is $\emptyset'$-p.r.
	By induction hypothesis, $(\vec{E}, S, \Ccal[S]) \not \Vdash^\alpha_\mu \neg \psi(G, w)$ is $\Sigma^0_1$.
	Therefore $c \Vdash^\alpha \varphi(G)$ is a $\Pi^0_2$ predicate.

	\item If $\varphi(G) \in \Sigma^0_{n+3}$ then it can be expressed as $(\exists x)\psi(G, x)$ where $\psi \in \Pi^0_{n+2}$.
	By clause~3 of Definition~\ref{def:cohzp-forcing-condition}, $c \Vdash^\alpha \varphi(G)$ if and only if 
	the formula $(\exists w \in \omega)c \Vdash^\alpha \psi(G, w)$ holds. This is a $\Sigma^0_{n+3}$ predicate
	by induction hypothesis.

	\item If $\varphi(G) \in \Pi^0_{n+3}$ then it can be expressed as $\neg \psi(G)$ where $\psi \in \Sigma^0_{n+3}$. 
	By clause~4 of Definition~\ref{def:cohzp-forcing-condition}, $c \Vdash^\alpha \varphi(G)$ if and only if 
	the formula $(\forall d)(d \not \in \Ext(c) \vee d \not \Vdash^\alpha \psi(G))$ holds.
	By induction hypothesis, $d \not \Vdash^\alpha \psi(G)$ is a $\Pi^0_{n+3}$ predicate.
	By Lemma~\ref{lem:cohzp-extension-complexity}, the set $\Ext(c)$ is $\Pi^0_2$-computable uniformly in $c$,
	thus $c \Vdash^\alpha \varphi(G)$ is a $\Pi^0_{n+3}$ predicate.
\end{itemize}
\end{proof}

\subsection{Validity}

As we already saw, we have two candidate forcing relations for $\Sigma^0_1$ and $\Pi^0_1$ formulas:
\begin{itemize}
	\item[1.] The ``true'' forcing relation $c \Vvdash^\alpha \varphi(G)$.
	This relation has been shown to have the expected density properties through Lemma~\ref{lem:cohzp-true-forcing-dense-level1}.
	However deciding such a relation requires too much computational power.
	\item[2.] The ``weak'' forcing relation $c \Vdash^\alpha \varphi(G)$.
	Deciding such a relation requires the same definitional power as the formula it forces.
	It provides a sufficient condition for forcing the formula $\varphi(G)$
	as $c \Vdash^\alpha \varphi(G)$ implies $c \Vvdash^\alpha \varphi(G)$,
	but the converse does not hold and we cannot prove the density property in the general case.
\end{itemize}

Thankfully, there exist some sides and parts of any condition
on which those two forcing relations coincide. This leads to the notion of validity.

\begin{definition}[Validity]
Fix an enumeration $\varphi_0(G), \varphi_1(G), \dots$ of all $\Pi^0_1$ formulas.
Fix a condition $c = (\vec{F}, T, \Ccal)$, a side $\alpha < 2$, and a part $\nu$ of $T$.
We say that \emph{side $\alpha$ is $n$-valid in part $\nu$ of $T$} for some~$n \in \omega$
if part $\nu$ of $T$ is $R_\alpha$-acceptable and for every~$i < n$,
$c \Vvdash^\alpha_\nu \varphi_i(G)$ iff $c \Vdash^\alpha_\nu \varphi_i(G)$.
\end{definition}

The following lemma shows that given some~$n \in \omega$,
we can restrict $\Ccal$ so that it ``witnesses its $n$-valid parts''.

\begin{lemma}\label{lem:cohzp-validity-exists}
For every $n \in \omega$, the following set is dense in~$\Pb$:
$$
\{ (\vec{F}, T, \Ccal) \in \Pb : (\forall \nu)(\exists \alpha < 2)
	[(\nu, T) \in \Ccal \imp \mbox{side } \alpha \mbox{ is } n\mbox{-valid in part } \nu \mbox{ of } T] \} 
$$
\end{lemma}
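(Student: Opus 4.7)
My plan is to first apply Lemma~\ref{lem:cohzp-true-forcing-dense-level1} iteratively to decide each~$\varphi_i$ for~$i < n$ in the true-forcing sense, and then to shrink the promise so that only the pairs admitting a side with the required equivalence between true- and weak-forcings remain.

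Given~$c = (\vec{F}, T, \Ccal)$, I would iteratively apply Lemma~\ref{lem:cohzp-true-forcing-dense-level1} to~$\varphi_0, \ldots, \varphi_{n-1}$, using Lemma~\ref{lem:cohzp-true-forcing-extension-level1} to carry forward the earlier $\Vvdash$-decisions through each application. This yields an extension~$c' = (\vec{E}, T', \Ccal') \leq c$ such that for every~$i < n$, every part~$\mu$ of~$T'$ and every side~$\alpha < 2$, either~$c' \Vvdash^\alpha_\mu \varphi_i$ or~$c' \Vvdash^\alpha_\mu \neg\varphi_i$ holds. The key observation is that whenever~$c' \Vvdash^\alpha_\mu \neg\varphi_i$, the equivalence~$c' \Vvdash^\alpha_\mu \varphi_i \Leftrightarrow c' \Vdash^\alpha_\mu \varphi_i$ is vacuously satisfied: since~$\neg\varphi_i$ is~$\Sigma^0_1$, its true- and weak-forcings coincide, and~$c' \Vvdash^\alpha_\mu \varphi_i$ cannot simultaneously hold since choosing an extendible~$\sigma \in T'$ of length greater than the existence witness~$w$ of~$\neg\varphi_i$ together with~$F' = \emptyset$ would force~$\psi_i(E^\alpha_\mu, w)$, contradicting~$\neg\psi_i(E^\alpha_\mu, w)$. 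Hence both~$c' \Vvdash^\alpha_\mu \varphi_i$ and~$c' \Vdash^\alpha_\mu \varphi_i$ are false, and the only genuine obstruction to~$n$-validity on~$(\mu, \alpha)$ arises from an index~$i < n$ with~$c' \Vvdash^\alpha_\mu \varphi_i$ but~$c' \not\Vdash^\alpha_\mu \varphi_i$.

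I would then define~$\Dcal \subseteq \Ccal'$ as the set of pairs~$(\mu, S)$ for which some side~$\alpha$ is~$n$-valid in part~$\mu$ of~$S$ with respect to the sub-condition induced by refinement along~$S \leq T'$. By Lemma~\ref{lem:cohzp-complexity-forcing} this membership predicate is~$\emptyset'$-arithmetic and can be coded as~$\emptyset'$-primitive recursive. Upward-closure of~$\Dcal$ under the refinement ordering follows from the preservation of~$\Vvdash$- and~$\Vdash$-decisions granted by Lemmas~\ref{lem:cohzp-true-forcing-extension-level1} and~\ref{lem:cohzp-forcing-extension-level1}: if the biconditional holds at~$(\mu, S)$, both its sides transfer to any refinement~$(\mu', S') \leq (\mu, S)$.

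The main obstacle is verifying the second clause of the promise definition for~$\Dcal$: for every infinite p.r.\ refinement~$S \leq T'$, some non-empty part of~$S$ must lie in~$\Dcal$. The plan is to argue by contradiction. Suppose no such part exists for some~$S$. By Lemma~\ref{lem:promise-keeps-acceptable-parts}, some acceptable part~$\mu_0$ satisfies~$(\mu_0, S) \in \Ccal'$; by assumption both sides~$\alpha$ fail~$n$-validity on~$\mu_0$, so for each~$\alpha$ there is some~$i_\alpha < n$ such that~$\Vvdash^\alpha_{\mu_0} \varphi_{i_\alpha}$ holds while~$\Vdash^\alpha_{\mu_0} \varphi_{i_\alpha}$ fails through a concrete bad finite set~$F' \subseteq \dom(S) \cap \set_{\mu_0}(\sigma)$ which, since true-forcing holds, must contain an element of~$R_{1-\alpha}$. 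Using these concrete~$R_0$- and~$R_1$-side witnesses together with a Lemma~\ref{lem:em-promise-extension-witnessing-acceptable}-style~$\emptyset'$-p.r.\ pruning, one would build a further infinite p.r.\ refinement~$S' \leq S$ whose non-empty parts are all outside~$\Ccal'$, contradicting the promise property of~$\Ccal'$. The delicate point of this construction is coordinating the two sides and all~$n$ formulas simultaneously while remaining within~$\emptyset'$-primitive recursion, which forms the technical heart of the argument.
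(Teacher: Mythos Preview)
Your proposed promise $\Dcal=\{(\mu,S)\in\Ccal':\text{some side is }n\text{-valid at }(\mu,S)\}$ fails on two counts. First, it is not $\emptyset'$-p.r.: $n$-validity requires $R_\alpha$-acceptability of part $\mu$ in $S$ (a quantification over paths asking that $\set_\mu(P)\cap R_\alpha$ be infinite) together with the true-forcing relation $\Vvdash^\alpha_\mu\varphi_i$, which for $\Pi^0_1$ formulas is $\Pi^0_2$ since it mentions $R_\alpha$ and ``$T^{[\sigma]}$ is infinite''. Lemma~\ref{lem:cohzp-complexity-forcing} bounds only the weak relation $\Vdash$, so your appeal to it does not make $\Dcal$ a $\emptyset'$-p.r.\ object. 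Second, $\Dcal$ is not upward-closed, and your argument goes in the wrong direction: you show that the biconditional $\Vvdash\Leftrightarrow\Vdash$ transfers to refinements $(\mu',S')\le(\mu,S)$, whereas upward-closure demands transfer to $(\mu',S')\ge(\mu,S)$. In fact, if $\Vvdash^\alpha_{\mu'}\varphi_i$ holds while $\Vdash^\alpha_{\mu'}\varphi_i$ fails at a coarser $(\mu',S')$, refining to a smaller tree can make $\Vdash$ hold (there are fewer $\sigma$'s and $F'$'s to test), so validity may hold strictly below and fail above.

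The paper sidesteps both issues by removing one bad part $\nu$ at a time and taking $\Dcal=\{(\mu,S)\in\Ccal:\text{part }\mu\text{ of }S\text{ does not refine part }\nu\text{ of }T\}$, which is trivially $\emptyset'$-p.r.\ and trivially upward-closed. The preparatory step is also different and essential: rather than deciding $\Vvdash$ at level~$1$ via Lemma~\ref{lem:cohzp-true-forcing-dense-level1}, the paper first applies Lemma~\ref{lem:cohzp-forcing-dense} with a dummy existential to force, for each $i<n$ and each $\alpha<2$, either $c\Vdash^\alpha(\exists x)\varphi_i$ or $c\Vdash^\alpha(\forall x)\neg\varphi_i$ at the $\Sigma^0_2/\Pi^0_2$ level. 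It is the $\Pi^0_2$ alternative $c\Vdash^\alpha(\forall x)\neg\varphi_i$ that drives the contradiction in the promise verification (via the forked refinement $S'$); level-$1$ true-forcing decisions do not supply that leverage.
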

\begin{proof}
Given a condition $c = (\vec{F}, T, \Ccal)$,
let $I(c)$ be the set of the parts $\nu$ of $T$ such that
$(\nu, T) \in \Ccal$ and no $\alpha < 2$ is valid for $\varphi$ in part $\nu$ of $T$.
Fix a condition $c = (\vec{F}, T, \Ccal) \in \Pb$.
By iterating Lemma~\ref{lem:cohzp-forcing-dense}, we can assume without loss of generality that
for each~$i < n$,
$$
(\forall \alpha < 2)[c \Vdash^\alpha (\exists x)\varphi_i(G) \mbox{ or } c \Vdash^\alpha (\forall x)\neg \varphi_i(G)]
$$
The dummy variable~$x$ ensures that the forcing relation for~$\Sigma^0_2$ and~$\Pi^0_2$ is applied.
It suffices to prove that for every $\nu \in I(c)$,
there exists an extension $d = (\vec{E}, S, \Dcal)$
such that $I(d) \subseteq I(c) \setminus \{\nu\}$. Iterating the process at most $|\parts(T)|$
times completes the proof.

Fix a part $\nu \in I(c)$ and let $\Dcal$ be the set of $(\mu, S) \in \Ccal$
such that part $\mu$ of $S$ does not refine part $\nu$ of $T$.
The set $\Dcal$ is a $\emptyset'$-p.r.\ upward-closed subset of $\Ccal$.
It suffices to prove that for every infinite p.r.\ partition tree $S \leq T$,
there exists a non-empty part $\mu$ of $S$ such that $(\mu, S) \in \Dcal$
to deduce that $\Dcal$ is a promise for $T$ and obtain an extension
$d = (\vec{E}, T, \Dcal)$ of $c$ such that $I(d) \subseteq I(c) \setminus \{\nu\}$.

Fix an infinite p.r.\ partition tree $S \leq_g T$ for some~$g$ and let~$\mu$ be a part of~$S$
$g$-refining part $\nu$ of $T$.
By choice of $\nu$, for every $\alpha < 2$, either $\mu$ is not $R_\alpha$-acceptable in $S$,
or $c \Vvdash^\alpha_\nu \varphi_{i_\alpha}(G)$ but $c \not \Vdash^\alpha_\nu \varphi_{i_\alpha}(G)$
for some~$i_\alpha < n$.
In the latter case, by choice of $c$, $c \Vdash^\alpha_\nu (\forall x)\neg \varphi_{i_\alpha}(G)$.

We now assume that $S$ has $k$ parts, among which $m$ parts $g$-refine~$\nu$.
Let~$f : k+m \to k$ be the function
such that~$f(\mu) = \mu$ for each part~$\mu$ of~$S$ not $g$-refining part~$\nu$ of~$T$,
and such that $f(\mu_\alpha) = \mu$ for each part~$\mu$ of~$S$ $g$-refining part~$\nu$ of~$T$
and each $\alpha < 2$. In other words, $f$ forks each part~$\mu$ of~$S$ $g$-refining the part~$\mu$ of~$T$
into $2$ parts~$\mu_0$ and~$\mu_1$.
Let~$P$ be a path through~$S$, and let~$t \in \omega$
be large enough to ``witness the non~$R_\alpha$-acceptable sides''. More formally,
let~$t$ be such that for every~$\alpha < 2$, either~$\set_\mu(P) \cap R_\alpha \cap [t, \infty) = \emptyset$
for each part~$\mu$ of~$S$ $g$-refining part~$\nu$ of~$T$,
or $c \Vvdash^\alpha_\nu \varphi_{i_\alpha}(G)$  but $c \not \Vdash^\alpha_\nu \varphi_{i_\alpha}(G)$.

Let~$S'$ be the p.r.\ tree of all the $\tau$'s $f$-refining some~$\sigma \in S$
and such that for each~$\alpha < 2$ and each part $\mu$ of $S$ $g$-refining part~$\nu$ of~$T$, 
either~$\set_{\mu_\alpha}(\tau) \cap [t, \infty) = \emptyset$,
or $\varphi_{i_\alpha}(F^\alpha_\nu \cup F')$ holds for each~$F' \subseteq \dom(S) \cap \set_{\mu_\alpha}(\tau)$.
The tree~$S'$ is a $(k+m)$-partition tree of~$[t, \infty)$ $f$-refining~$S$. We claim that~$S'$ is infinite. 
Fix some~$s \in \omega$, we will prove that~$\tau \in S'$
for some string~$\tau$ of length~$s$. Let~$\sigma = P \uh s$.
In particular, $S^{[\sigma]}$ is infinite, so for every~$\alpha < 2$
and every part $\mu$ of~$S$ $g$-refining part~$\nu$ of~$T$,
either~$\set_\mu(P) \cap R_\alpha \cap [t, \infty) = \emptyset$ by definition of~$t$,
or, unfolding clause 2 of Definition~\ref{def:cohzp-true-forcing-precondition}
for $c \Vvdash^\alpha_\nu \varphi_{i_\alpha}(G)$ and since $\sigma$ $g$-refines some extendible node in~$T$, 
for every set~$F' \subseteq \dom(S) \cap \set_\mu(\sigma) \cap R_\alpha$,
$\varphi(F^\alpha_\nu \cup F')$ holds. Let~$\tau$ be the string refining~$\sigma$
such that $\set_{\mu_\alpha}(\tau) = \set_\mu(\sigma) \cap R_\alpha$ for each~$\alpha < 2$
and each part~$\mu$ of $S$ $g$-refining part~$\nu$ of~$T$.
By definition of~$S'$, $\tau \in S'$. Therefore~$S'$ is infinite.
Moreover, by definition of~$S$', for each~$\alpha < 2$,
either $\mu_\alpha$ is empty in $S'$ or~$(\vec{E}, S', \Ccal[S']) \Vdash^\alpha_{\mu_\alpha} \varphi_{i_\alpha}(G)$,
where $\vec{E}$ is obtained by duplicating the sets in~$\vec{F}$ according the forks of~$g \circ f$.

By definition of $c \Vdash^\alpha_\nu (\forall x)\neg \varphi_{i_\alpha}(G)$,
$(\vec{E}, S', \Ccal[S']) \not \Vdash^\alpha_{\mu_\alpha} \varphi_{i_\alpha}(G)$ for each~$\alpha < 2$ 
and each part $\mu$ of $S$ $g$-refining part $\nu$ of~$T$ such that $(\mu_\alpha, S') \in \Ccal$.
Then, for each~$\alpha < 2$, either~$\mu_\alpha$ is empty in~$S'$, or $(\mu_\alpha, S') \not \in \Ccal$, 
as otherwise it would contradict $(\vec{E}, S', \Ccal[S']) \Vdash^\alpha_{\mu_\alpha} \varphi(G)$.
So there must exists a non-empty part $\mu$ of $S'$ not refining part $\nu$ of $T$
such that $(\mu, S') \in \Ccal$, and by upward closure of a promise, there exists a non-empty part $\mu$
of $S$ not refining part $\nu$ of $T$ such that $(\mu, S) \in \Ccal$. By definition of $\Dcal$,
$(\mu, S) \in \Dcal$. Therefore $\Dcal$ is a promise for $T$ and we conclude.
\end{proof}

Given any filter~$\Fcal = \{c_0, c_1, \dots \}$ with $c_s = (\vec{F}_s, T_s, \Ccal_s)$
the set of pairs $(\alpha, \nu_s)$ such that $(\nu_s, T_s) \in \Ccal_s$ forms again
an infinite, directed acyclic graph~$\Gcal(\Fcal)$. By Lemma~\ref{lem:cohzp-validity-exists},
whenever~$\Fcal$ is sufficiently generic,
the graph~$\Gcal(\Fcal)$ yields a sequence of parts $P$ such that for every~$s$ if~$c_s$ refines $c_t$, then part~$P(s)$ of~$c_s$
refines part~$P(t)$ of~$c_t$, and such that for every $n$, there is some~$s$ and some side~$\alpha < 2$ such that the side~$\alpha$
is $n$-valid in part $P(s)$ of~$c_s$. The path~$P$ induces an infinite set~$G = \bigcup \{ F^\alpha_{P(s), s} : s \in \omega \}$. 
Since whenever $\alpha$ is $n$-valid in part~$P(s)$ of~$c_s$, then it is $m$-valid in part~$P(s)$ of~$c_s$ for every~$m < n$,
we can fix an $\alpha < 2$ such that for every $n$, there is some~$s$ such that the side~$\alpha$
is $n$-valid in part $P(s)$ of~$c_s$.
We call $\alpha$ the \emph{generic side}, $P$ the \emph{generic path} and $G$ the \emph{generic real}.

By choosing a generic path that goes through valid sides and parts of the conditions, 
we recovered the density property for the weak forcing relation
and can therefore prove that a property holds over the generic real
if and only if it can be forced by some condition belonging to the generic filter.

\begin{lemma}\label{lem:cohzp-generic-level1}
Suppose that $\Fcal$ is sufficiently generic and let $\alpha$,~$P$ and~$G$ be the generic side, the generic path 
and the generic real, respectively.
For every $\Sigma^0_1$ ($\Pi^0_1$) formula $\varphi(G)$,
$\varphi(G)$ holds iff $c_s \Vdash^\alpha_{P(s)} \varphi(G)$ for some $c_s \in \Fcal$.
\end{lemma}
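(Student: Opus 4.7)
The plan is to prove the two directions separately; the direction ``forcing implies truth'' is the substantive one, and the reverse direction follows by combining density with a contradiction argument. Write $c_s = (\vec{F}_s, T_s, \Ccal_s)$ for a typical condition in $\Fcal$ and set $\nu_s = P(s)$.

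Suppose first that $c_s \Vdash^\alpha_{\nu_s} \varphi(G)$ for some $c_s \in \Fcal$. If $\varphi = (\exists x)\psi(G,x)$ is $\Sigma^0_1$, clause~1 of Definition~\ref{def:cohzp-forcing-precondition} gives a witness $w$ with $\psi(F^\alpha_{\nu_s,s}, w)$; since $F^\alpha_{\nu_s,s} \subseteq G$ and every later increment of this set lies above $\max F^\alpha_{\nu_s,s}$ by clause~(i) of the extension definition, continuity of $\psi$ gives $\psi(G,w)$, hence $\varphi(G)$. If $\varphi = (\forall x)\psi(G,x)$ is $\Pi^0_1$ the weak relation is too weak to act directly, since it quantifies over $F' \subseteq \dom(T)\cap \set_\nu(\sigma)$ without restricting to $R_\alpha$. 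To bridge this I use \emph{validity}: let $n$ be the index of $\varphi$ in the fixed enumeration of $\Pi^0_1$ formulas; by the choice of the generic side $\alpha$, genericity of $\Fcal$, and Lemma~\ref{lem:cohzp-validity-exists}, I can pass to an extension $c_t \in \Fcal$ of $c_s$ for which side $\alpha$ is $(n+1)$-valid in part $\nu_t$. By Lemma~\ref{lem:cohzp-forcing-extension-level1} we still have $c_t \Vdash^\alpha_{\nu_t} \varphi(G)$, and validity upgrades this to $c_t \Vvdash^\alpha_{\nu_t} \varphi(G)$. For any $w \in \omega$ and any finite $F' \subseteq G \setminus F^\alpha_{\nu_t,t}$, genericity yields some $c_r \in \Fcal$ whose approximation contains $F'$, and the associated string $\sigma \in T_t$ satisfies $|\sigma| > w$, $T_t^{[\sigma]}$ infinite, and $F' \subseteq \dom(T_t) \cap \set_{\nu_t}(\sigma) \cap R_\alpha$; clause~2 of Definition~\ref{def:cohzp-true-forcing-precondition} then gives $\psi(F^\alpha_{\nu_t,t} \cup F', w)$, and continuity gives $\psi(G,w)$.

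For the converse, suppose $\varphi(G)$ holds. By Lemma~\ref{lem:cohzp-true-forcing-dense-level1} and genericity some $c_s \in \Fcal$ satisfies $c_s \Vvdash^\alpha_{\nu_s} \varphi(G)$ or $c_s \Vvdash^\alpha_{\nu_s} \neg\varphi(G)$. For $\Sigma^0_1$ formulas the weak and true relations coincide by definition; for $\Pi^0_1$ formulas I invoke Lemma~\ref{lem:cohzp-validity-exists} and pass to a further condition in $\Fcal$ making $\alpha$ valid at an index large enough to cover $\varphi$. Either way, $c_s \Vdash^\alpha_{\nu_s} \varphi(G)$ or $c_s \Vdash^\alpha_{\nu_s} \neg\varphi(G)$. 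The latter, by the direction already proved applied to $\neg\varphi$, would yield $\neg\varphi(G)$, contradicting the assumption. Hence $c_s \Vdash^\alpha_{\nu_s} \varphi(G)$.

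The main obstacle is the asymmetry in the $\Pi^0_1$ case of the first direction: the weak relation was engineered so that $c \Vdash^\alpha_\nu \neg\psi(G,w)$ is only $\Sigma^0_1$ (crucial for the definability bookkeeping in Lemma~\ref{lem:cohzp-complexity-forcing}), at the price of quantifying over sets not restricted to $R_\alpha$. Validity through Lemma~\ref{lem:cohzp-validity-exists} is the precise mechanism that recovers equivalence with the true relation on the generic side and part, and is the key tool making the argument go through.
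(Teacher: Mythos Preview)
Your proof reaches the correct conclusion, but the forward direction in the $\Pi^0_1$ case rests on a conceptual misreading that sends you on an unnecessary detour through validity. You write that the weak relation ``is too weak to act directly, since it quantifies over $F' \subseteq \dom(T)\cap \set_\nu(\sigma)$ without restricting to $R_\alpha$.'' This is backwards: because clause~2 of Definition~\ref{def:cohzp-forcing-precondition} is a \emph{universal} quantifier over $F'$, dropping the restriction to $R_\alpha$ makes the weak relation \emph{stronger} than the true one for $\Pi^0_1$ formulas, not weaker. If $c_s \Vdash^\alpha_{\nu_s} (\forall x)\psi(G,x)$, then $\psi(F^\alpha_{\nu_s} \cup F', w)$ already holds for \emph{every} $F' \subseteq \dom(T)\cap \set_{\nu_s}(\sigma)$, and in particular for every finite piece of $G \setminus F^\alpha_{\nu_s}$ (which, by clause~(ii) of the extension relation, is witnessed by some such $\sigma \in T$). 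Continuity then gives $\psi(G,w)$ directly. This is exactly the paper's argument, with no appeal to validity in this half.

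Validity is genuinely needed only in the converse direction, where density (Lemma~\ref{lem:cohzp-true-forcing-dense-level1}) produces a $\Vvdash$-disjunction that must be converted to a $\Vdash$-disjunction before the forward direction can be applied for a contradiction. Your treatment of the converse is essentially the paper's, though note that regardless of whether $\varphi$ is $\Sigma^0_1$ or $\Pi^0_1$, exactly one of $\varphi,\neg\varphi$ is $\Pi^0_1$, so validity is always invoked there; your remark that ``for $\Sigma^0_1$ formulas the weak and true relations coincide'' handles $\varphi$ but not $\neg\varphi$. Your closing paragraph correctly identifies the obstacle but locates it in the wrong direction.
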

\begin{proof}
Thanks to validity, it suffices to prove that if $c_s \Vdash^\alpha_\nu \varphi(G)$ for some~$c_s \in \Fcal$,
then $\varphi(G)$ holds. Indeed, if $\varphi(G)$ holds, then by genericity of $\Fcal$, $c_s \Vvdash^\alpha_{P(s)} \varphi(G)$
or $c_s \Vvdash^\alpha_{P(s)} \neg \varphi(G)$ for some $c_s \in \Fcal$.
By validity of side $\alpha$ in part $P(s)$ of $c_s$, $c_s \Vdash^\alpha_{P(s)} \varphi(G)$ or $c_s \Vdash^\alpha_{P(s)} \neg \varphi(G)$.
If $c_s \Vdash^\alpha_{P(s)} \neg \varphi(G)$ then $\neg \varphi(G)$ holds, contradicting 
the hypothesis. So $c_s \Vdash^\alpha_{P(s)} \varphi(G)$.
Fix a condition $c_s = (\vec{F}, T, \Ccal) \in \Fcal$ such that $c_s \Vdash^\alpha_\nu \varphi(G)$, where~$\nu = P(s)$.
\begin{itemize}
	\item If $\varphi \in \Sigma^0_1$ then $\varphi(G)$ can be expressed as $(\exists x)\psi(G, x)$ where $\psi \in \Sigma^0_0$.
	By clause~1 of Definition~\ref{def:cohzp-forcing-precondition}, there exists a $w \in \omega$ such that
	$\psi(F^\alpha_\nu, w)$ holds. As $\nu = P(s)$, $F^\alpha_\nu = F^\alpha_{P(s)} \subseteq G$
	and~$G \setminus F^\alpha_\nu \subseteq (\max F^\alpha_\nu, \infty)$, so $\psi(G, w)$ holds by continuity, hence $\varphi(G)$ holds.
	
	\item If $\varphi \in \Pi^0_1$ then $\varphi(G)$ can be expressed as $(\forall x)\psi(G, x)$ where $\psi \in \Sigma^0_0$.
	By clause~2 of Definition~\ref{def:cohzp-forcing-precondition}, for every $\sigma \in T$, every $w < |\sigma|$
	and every set $F' \subseteq \dom(T) \cap \set_\nu(\sigma)$, $\psi(F^\alpha_\nu \cup F', w)$ holds. 
	For every $F' \subseteq G \setminus F^\alpha_\nu$, and $w \in \omega$ there exists a $\sigma \in T$
	such that $w < |\sigma|$ and $F' \subseteq \dom(T) \cap \set_\nu(\sigma)$. Hence $\psi(F^\alpha_\nu \cup F', w)$ holds.
	Therefore, for every $w \in \omega$, $\psi(G, w)$ holds, so $\varphi(G)$ holds.
\end{itemize}
\end{proof}

\begin{lemma}
Suppose that $\Fcal$ is sufficiently generic and let $\alpha$ and~$G$ be 
the generic side and the generic real, respectively.
For every $\Sigma^0_{n+2}$ ($\Pi^0_{n+2}$) formula $\varphi(G)$,
$\varphi(G)$ holds iff $c_s \Vdash^\alpha \varphi(G)$ for some $c_s \in \Fcal$.
\end{lemma}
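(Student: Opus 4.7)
The plan is to mimic, step by step, the structure of Lemma~\ref{lem:em-generic-higher-levels}, proceeding by induction on the complexity of $\varphi(G)$, and establishing at each rank both directions of the equivalence. The forward implication (if some $c_s \Vdash^\alpha \varphi(G)$ then $\varphi(G)$ holds) is the main inductive claim. The reverse direction at rank $n+2$ will follow at once from that forward implication together with Lemma~\ref{lem:cohzp-forcing-dense} and the genericity of $\Fcal$: if $\varphi(G)$ held but no condition of $\Fcal$ forced it, then genericity would deliver some $c_s \in \Fcal$ with $c_s \Vdash^\alpha \neg \varphi(G)$, contradicting the forward direction applied to $\neg\varphi$.

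For the forward direction I would case-split on $\varphi$ following Definition~\ref{def:cohzp-forcing-condition}. If $\varphi(G) = (\exists x)\psi(G,x)$ with $\psi \in \Pi^0_1$, then clause~1 and the fact that $(P(s), T_s) \in \Ccal_s$ along the generic path give a witness $w < \dom(T_s)$ with $c_s \Vdash^\alpha_{P(s)} \psi(G,w)$; Lemma~\ref{lem:cohzp-generic-level1} then yields $\psi(G,w)$, hence $\varphi(G)$. If $\varphi(G) = (\exists x)\psi(G,x)$ with $\psi \in \Pi^0_{n+2}$, clause~3 gives a $w$ with $c_s \Vdash^\alpha \psi(G,w)$, and the induction hypothesis yields $\psi(G,w)$. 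If $\varphi(G) = \neg\psi(G)$ with $\psi \in \Sigma^0_{n+3}$, clause~4 tells us no extension of $c_s$ forces $\psi(G)$; together with Lemma~\ref{lem:cohzp-forcing-extension} this rules out any $d \in \Fcal$ forcing $\psi(G)$, and the already-established reverse direction for $\Sigma^0_{n+3}$ (which holds at the same rank by the density argument) gives that $\psi(G)$ fails.

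The delicate step will be the $\Pi^0_2$ case, which I expect to be the main obstacle because it is where the interplay between the weak and the true forcing relations, the role of validity, and the combinatorics of promises all converge. Suppose $c_s \Vdash^\alpha (\forall x)\psi(G,x)$ with $\psi \in \Sigma^0_1$, and assume toward a contradiction that $\psi(G,w)$ fails for some $w \in \omega$. Since $\neg\psi \in \Pi^0_1$, Lemma~\ref{lem:cohzp-generic-level1} produces some $d_t \in \Fcal$ with $d_t \Vdash^\alpha_{P(t)} \neg\psi(G,w)$. Take a common extension $e_r = (\vec{E}, S, \Dcal) \in \Fcal$ of $c_s$ and $d_t$ with $\mu := P(r)$. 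Part~$\mu$ of $S$ refines part~$P(t)$ of the tree of $d_t$, so by Lemma~\ref{lem:cohzp-forcing-extension-level1} we obtain $e_r \Vdash^\alpha_\mu \neg\psi(G,w)$, hence by Lemma~\ref{lem:cohzp-promise-no-effect-first-level} also $(\vec{E}, S, \Ccal_s[S]) \Vdash^\alpha_\mu \neg\psi(G,w)$. But $(\mu, S) \in \Dcal \subseteq \Ccal_s$ by choice of the generic path, and $(\vec{E}, S)$ together with the witness function satisfy clauses (i)--(iii) of Definition~\ref{def:cohzp-forcing-condition} (ii) with respect to $c_s$ (by the very definition of an extension), so clause~2 applied to $c_s \Vdash^\alpha (\forall x)\psi(G,x)$ forces $(\vec{E}, S, \Ccal_s[S]) \not\Vdash^\alpha_\mu \neg\psi(G,w)$, contradiction. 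Hence $\psi(G,w)$ holds for every $w$, i.e.\ $\varphi(G)$ holds.

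The principal care required is (i) to check that the indices $w$ and $\vec{E}$ that appear in clause~2 of Definition~\ref{def:cohzp-forcing-condition} can be chosen small enough in the coding of $S$ (as was noted in the proof of Lemma~\ref{lem:cohzp-forcing-dense}), so that the extension $e_r$ furnished by $\Fcal$ actually witnesses the clause applied to $c_s$; and (ii) to make sure that when invoking the $\Sigma^0_{n+3}$ case inside the $\Pi^0_{n+3}$ case, the reverse direction used is the one already proved at the same rank via density and not one that would circularly depend on the $\Pi^0_{n+3}$ case. Apart from these bookkeeping points, each subcase is a direct transcription of the corresponding clause of the definition combined with Lemma~\ref{lem:cohzp-generic-level1}, Lemma~\ref{lem:cohzp-forcing-extension}, and Lemma~\ref{lem:cohzp-forcing-extension-level1}.
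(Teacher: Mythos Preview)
Your proposal is essentially correct and follows the paper's proof closely; the case structure, the lemmas invoked, and the $\Pi^0_2$ contradiction argument all match. One point deserves sharpening: the circularity you flag in caveat (ii) is real as you have organized things. Deriving the reverse direction for $\Sigma^0_{n+3}$ via Lemma~\ref{lem:cohzp-forcing-dense} requires the forward direction for $\Pi^0_{n+3}$ (since density hands you $c_s \Vdash^\alpha \neg\varphi$ with $\neg\varphi \in \Pi^0_{n+3}$), which is exactly the case that in turn calls the reverse direction for $\Sigma^0_{n+3}$. The paper breaks this loop by proving the $\Sigma^0_{n+3}$ reverse direction \emph{directly}: if $(\exists x)\psi(G,x)$ holds, pick a witness $w$, apply the induction hypothesis (reverse direction for $\Pi^0_{n+2}$) to get $c_s \Vdash^\alpha \psi(G,w)$, and conclude via clause~3. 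With that one-line replacement your argument goes through verbatim. A minor secondary remark: in the $\Pi^0_2$ case, clause~(i) of Definition~\ref{def:cohzp-forcing-condition}.2 demands $E^\beta_\nu = F^\beta_{f(\nu)}$ for $\beta \neq \alpha$, which an arbitrary extension $e_r \in \Fcal$ need not satisfy; this is harmless since Lemma~\ref{lem:cohzp-promise-no-effect-first-level} lets you replace the $\beta$-components of $\vec{E}$ without affecting $\Vdash^\alpha_\mu$, but it is worth saying explicitly (the paper glosses over this too).
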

\begin{proof}
This lemma uses validity implicitly by calling Lemma~\ref{lem:cohzp-generic-level1}, where it was used explicitly.
Emulating the proof of Lemma~\ref{lem:em-generic-higher-levels}, it suffices to prove that if $c_s \Vdash^\alpha \varphi(G)$ for some
$c_s \in \Fcal$ then $\varphi(G)$ holds. Let~$P$ be the generic path induced by the generic filter~$\Fcal$.
Fix a condition $c_s = (\vec{F}, T, \Ccal) \in \Fcal$ such that $c_s \Vdash^\alpha \varphi(G)$. 
We proceed by case analysis on $\varphi$. 
\begin{itemize}
	\item If $\varphi \in \Sigma^0_2$ then $\varphi(G)$ can be expressed as $(\exists x)\psi(G, x)$ 
  where $\psi \in \Pi^0_1$.
  By clause~1 of Definition~\ref{def:cohzp-forcing-condition}, for every part $\nu$ of $T$
	such that $(\nu, T) \in \Ccal$, there exists a $w < \dom(T)$
  such that $c_s \Vdash^\alpha_\nu \psi(G, w)$. Since $(P(s), T) \in \Ccal$,
	$c_s \Vdash^\alpha_{P(s)} \psi(G, w)$. By Lemma~\ref{lem:cohzp-generic-level1}, $\psi(G, w)$ holds, hence $\varphi(G)$ holds.

	\item If $\varphi \in \Pi^0_2$ then $\varphi(G)$ can be expressed as $(\forall x)\psi(G, x)$ where $\psi \in \Sigma^0_1$.
	By clause~2 of Definition~\ref{def:cohzp-forcing-condition}, for every infinite $k'$-partition
	tree $S$, every function~$f : \parts(S) \to \parts(T)$, 
	every $w$ and $\vec{E}$ smaller than the code of $S$ such that the followings hold
	\begin{itemize}
		\item[i)] $(E_\nu, \dom(S) \cap R_\alpha)$ Mathias extends $(F_{f(\nu)}, \dom(T) \cap R_\alpha)$ for each $\nu < \parts(S)$
		\item[ii)] $S$ $f$-refines $\bigcap_{\nu < \parts(S)} T^{[f(\nu), E_\nu]}$
	\end{itemize}
	for every $(\mu, S) \in \Ccal$, $(\vec{E}, S, \Ccal[S]) \not \Vdash^\alpha_\mu \neg \psi(G, w)$.
	Suppose by way of contradiction that $\psi(G, w)$ does not hold for some $w \in \omega$.
  Then by Lemma~\ref{lem:cohzp-generic-level1}, there exists a $c_t \in \Fcal$
	such that $c_t \Vdash^\alpha_{P(t)} \neg \psi(G, w)$. Since~$\Fcal$ is a filter,
	there is a condition~$c_e = (\vec{E}, S, \Dcal) \in \Fcal$ extending~$c_s$ and~$c_t$.
	By choice of~$P$, $(P(e), S) \in \Ccal$, so by clause ii), $(\vec{E}, S, \Ccal[S]) \not \Vdash^\alpha_{P(e)} \psi(G, w)$,
	hence by Lemma~\ref{lem:cohzp-promise-no-effect-first-level}, $c_e \not \Vdash^\alpha_{P(e)} \psi(G, w)$.
	However, since part~$P(e)$ of~$c_e$ refines part~$P(t)$ of~$c_t$,
	then by Lemma~\ref{lem:cohzp-forcing-extension-level1}, $c_e \Vdash^\alpha_{P(e)} \psi(G, w)$.
	Contradiction. Hence, for every~$w \in \omega$, $\psi(G, w)$ holds, so~$\varphi(G)$ holds.

	\item If $\varphi \in \Sigma^0_{n+3}$ then $\varphi(G)$ can be expressed as $(\exists x)\psi(G, x)$ 
  where $\psi \in \Pi^0_{n+2}$.
  By clause~3 of Definition~\ref{def:cohzp-forcing-condition}, there exists a $w \in \omega$
  such that $c_s \Vdash^\alpha \psi(G, w)$. By induction hypothesis, $\psi(G, w)$ holds, hence $\varphi(G)$ holds.

  Conversely, if $\varphi(G)$ holds, then there exists a $w \in \omega$ such that $\psi(G, w)$ holds,
  so by induction hypothesis $c_s \Vdash^\alpha \psi(G, w)$ for some $c_s \in \Fcal$,
  so by clause~3 of Definition~\ref{def:cohzp-forcing-condition}, $c_s \Vdash^\alpha \varphi(G)$.

	\item If $\varphi \in \Pi^0_{n+3}$ then $\varphi(G)$ can be expressed as $\neg \psi(G)$ where $\psi \in \Sigma^0_{n+3}$.
  By clause~4 of Definition~\ref{def:cohzp-forcing-condition}, for every $d \in \Ext(c_s)$, $d \not \Vdash^\alpha \psi(G)$.
	By Lemma~\ref{lem:cohzp-forcing-extension}, $d \not \Vdash^\alpha \psi(G)$ for every~$d \in \Fcal$,
  and by a previous case, $\psi(G)$ does not hold, so $\varphi(G)$ holds.
\end{itemize}
\end{proof}

\subsection{Preserving definitions}

The following (and last) lemma shows that every sufficiently generic real
preserves higher definitions. This preservation property cannot be proved
in the case of non-$\Sigma^0_1$ sets since the weak forcing relation
does not have the good density property in general.

\begin{lemma}\label{lem:cohzp-diagonalization}
If $A \not \in \Sigma^0_{n+2}$ and $\varphi(G, x)$ is $\Sigma^0_{n+2}$,
then the set of $c \in \Pb$ satisfying the following property is dense:
$$
(\forall \alpha < 2)[(\exists w \in A)c \Vdash^\alpha \neg \varphi(G, w)] 
\vee [(\exists w \not \in A)c \Vdash^\alpha \varphi(G, w)]
$$
\end{lemma}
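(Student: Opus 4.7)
The plan is to mirror the structure of Lemma~\ref{lem:em-diagonalization}, but accounting for the parameterization by the side $\alpha<2$. Since the statement quantifies universally over $\alpha<2$, it suffices to show that for every condition $c\in\Pb$ and every fixed $\alpha<2$ there is an extension $d\leq c$ with
$$[(\exists w\in A)\,d\Vdash^\alpha\neg\varphi(G,w)]\ \vee\ [(\exists w\notin A)\,d\Vdash^\alpha\varphi(G,w)];$$
iterating this twice (first for $\alpha=0$, then for $\alpha=1$, and using Lemma~\ref{lem:cohzp-forcing-extension} to preserve the previously forced formula) produces the required dense set. Fix $c=(\vec{F},T,\Ccal)$ and $\alpha<2$. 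I split on $n$.

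For $n\geq 1$, define $U_\alpha=\{w\in\omega:(\exists d\in\Ext(c))\,d\Vdash^\alpha\varphi(G,w)\}$. By Lemma~\ref{lem:cohzp-extension-complexity} together with Lemma~\ref{lem:cohzp-complexity-forcing}, $U_\alpha\in\Sigma^0_{n+2}$, so $U_\alpha\neq A$. Pick $w\in U_\alpha\Delta A$. If $w\in U_\alpha\setminus A$ the witnessing $d$ does the job directly. If $w\in A\setminus U_\alpha$ then no extension of $c$ forces $\varphi(G,w)$, so by clause~4 of Definition~\ref{def:cohzp-forcing-condition}, $c\Vdash^\alpha\neg\varphi(G,w)$ for this $w\in A$.

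For $n=0$, the formula $\varphi$ has the form $(\exists x)\psi(G,w,x)$ with $\psi\in\Pi^0_1$. Following the template of Lemma~\ref{lem:em-diagonalization} and the proof of Lemma~\ref{lem:cohzp-forcing-dense}, let $U_\alpha$ be the set of $w$ such that there exist an infinite p.r.\ $k'$-partition tree $S$, a function $f:\parts(S)\to\parts(T)$, and a tuple $\vec{E}$ (all coded below $\#S$) satisfying conditions (i)--(iii) of clause~2 of Definition~\ref{def:cohzp-forcing-condition} and such that for every non-empty part $\nu$ of $S$ with $(\nu,S)\in\Ccal$, $(\vec{E},S,\Ccal[S])\Vdash^\alpha_\nu\psi(G,w,u)$ for some $u<\#S$. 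By Lemma~\ref{lem:em-refinement-complexity} and Lemma~\ref{lem:cohzp-complexity-forcing}, $U_\alpha\in\Sigma^0_2$, so $U_\alpha\neq A$; pick $w\in U_\alpha\Delta A$. If $w\in U_\alpha\setminus A$, take the witnessing $S,f,\vec{E}$ and set $\Dcal=\Ccal[S]$ minus the empty parts; as in the proof of Lemma~\ref{lem:cohzp-forcing-dense}, $\Dcal$ remains an $\emptyset'$-p.r.\ promise for $S$, and clause~1 of Definition~\ref{def:cohzp-forcing-condition} yields $d=(\vec{E},S,\Dcal)\Vdash^\alpha\varphi(G,w)$ with $w\notin A$. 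If $w\in A\setminus U_\alpha$, let $\Dcal$ be the set of pairs $(\nu,S)\in\Ccal$ with $\nu$ non-empty in $S$ and $(\vec{E},S,\Ccal[S])\not\Vdash^\alpha_\nu\psi(G,w,u)$ for every admissible $(f,\vec{E})$ and every $u<\#S$; exactly as in the $\neg\varphi$ branch of Lemma~\ref{lem:cohzp-forcing-dense}, $\Dcal$ is $\emptyset'$-p.r., upward-closed under refinement (using $\#S\geq\#T$ and Lemma~\ref{lem:cohzp-forcing-extension-level1}), and the failure $w\notin U_\alpha$ guarantees clause~(b) of Definition~\ref{def:em-promise}, so $\Dcal$ is a promise for $T$. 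By clause~2 of Definition~\ref{def:cohzp-forcing-condition}, $d=(\vec{F},T,\Dcal)\Vdash^\alpha\neg\varphi(G,w)$ for $w\in A$.

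The main obstacle is the $n=0$ case, specifically verifying that the set $\Dcal$ built from $w\in A\setminus U_\alpha$ really is a promise for $T$: one must recycle the combinatorial machinery already used in Lemma~\ref{lem:cohzp-forcing-dense}, check that the emptiness filtering does not destroy upward-closedness, and confirm that every infinite p.r.\ refinement $S\leq T$ contains some non-empty part $\nu$ with $(\nu,S)\in\Dcal$ (this is precisely where $w\notin U_\alpha$ is used). Everything else amounts to bookkeeping about the complexity bounds already established in Lemmas~\ref{lem:cohzp-extension-complexity} and~\ref{lem:cohzp-complexity-forcing}.
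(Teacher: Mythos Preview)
Your proposal is correct and follows essentially the same route as the paper's proof: reduce to a single side~$\alpha$, define the $\Sigma^0_{n+2}$ set~$U_\alpha$ of witnesses, pick $w\in U_\alpha\,\Delta\,A$, and in the base case $n=0$ build the promise~$\Dcal$ exactly as in the density lemma (Lemma~\ref{lem:cohzp-forcing-dense}). The only cosmetic difference is that you explicitly invoke Lemma~\ref{lem:cohzp-forcing-extension} to justify iterating over the two sides, whereas the paper leaves this implicit.
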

\begin{proof}
It is sufficient to find, given a condition $c$ and a side $\alpha < 2$,
an extension $d$ of~$c$ such that the following holds:
$$
[(\exists w \in A)c \Vdash^\alpha \neg \varphi(G, w)] 
\vee [(\exists w \not \in A)c \Vdash^\alpha \varphi(G, w)]
$$
Fix a condition $c = (\vec{F}, T, \Ccal)$ and a side $\alpha < 2$.
\begin{itemize}
	\item In case $n = 0$, $\varphi(G, w)$ can be expressed as $(\exists x)\psi(G, w, x)$ where $\psi \in \Pi^0_1$.
	Let $U$ be the set of integers $w$ such that there exists an infinite p.r.\ $k'$-partition tree $S$
	for some $k' \in \omega$, a function~$f : \parts(S) \to \parts(T)$ and a $2k'$-tuple of finite sets $\vec{E}$ such that
	\begin{itemize}
		\item[i)] $E^\beta_\nu = F^\beta_{f(\nu)}$ for each $\nu < \parts(S)$ and $\beta \neq \alpha$
		\item[ii)] $(E^\alpha_\nu, \dom(S) \cap R_\alpha)$ Mathias extends $(F^\alpha_{f(\nu)}, \dom(T))$ for each $\nu < \parts(S)$.
		\item[iii)] $S$ $f$-refines $\bigcap_{\nu < \parts(S)} T^{[f(\nu), E^\alpha_\nu]}$
		\item[iv)] for each non-empty part $\nu$ of $S$ such that $(\nu, S) \in \Ccal$, 
		$(\vec{E}, S, \Ccal[S]) \Vdash^\alpha_\nu \psi(G, w, u)$ for some~$u < \#S$
	\end{itemize}
	By Lemma~\ref{lem:cohzp-complexity-forcing} and Lemma~\ref{lem:em-refinement-complexity},
	$U \in \Sigma^0_2$, thus $U \neq A$. Let $w \in U \Delta A$.

	Suppose that $w \in U \setminus A$.
	Let $\Dcal = \Ccal[S] \setminus \{(\nu, S') \in \Ccal : \mbox{ part } \nu \mbox{ of } S' \mbox{ is empty} \}$. 
	As $\Ccal$ is an $\emptyset'$-p.r.\ promise for $T$,
	$\Ccal[S]$ is an $\emptyset'$-p.r.\ promise for $S$.
	As $\Dcal$ is obtained from $\Ccal[S]$ by removing only empty parts, $\Dcal$ is also an $\emptyset'$-p.r.\ promise for $S$.
	By Lemma~\ref{lem:cohzp-promise-no-effect-first-level}, for every part~$\nu$ of~$S$
	such that~$(\nu, S) \in \Dcal \subseteq \Ccal$, $(\vec{E}, S, \Dcal) \Vdash^\alpha_\nu \psi(G, w, u)$ for some~$u < \dom(S)$,
	hence by clause~1 of Definition~\ref{def:cohzp-forcing-condition},
	$d = (\vec{E}, S, \Dcal) \Vdash^\alpha (\exists x)\psi(G, w, x)$.
	In other words, $d \Vdash^\alpha \varphi(G)$ for some $w \not \in A$.

	We may choose a coding of the p.r.\ trees such that
	the code of $S$ is sufficiently large to witness $w$ and $\vec{E}$.
	So suppose now that $w \in A \setminus U$. Then for every infinite p.r.\ $k'$-partition tree $S$,
	every function~$f : \parts(S) \to \parts(T)$
	and every $\vec{E}$ smaller than the code of $S$ such that properties i-iii) hold,
	there exists a non-empty part $\nu$ of $S$ such that $(\nu, S) \in \Ccal$
	and $(\vec{E}, S, \Ccal[S]) \not \Vdash^\alpha_\nu \psi(G, w, u)$ for every $u < \#S$.
	Let $\Dcal$ be the collection of all such $(\nu, S)$. $\Dcal$ is $\emptyset'$-p.r.
	By Lemma~\ref{lem:cohzp-forcing-extension-level1} and since~$\#S \geq \#T$
	whenever~$S \leq T$, $\Dcal$ is upward-closed under the refinement relation, hence is a promise for~$T$. By clause~2
	of Definition~\ref{def:cohzp-forcing-condition}, $d = (\vec{F}, T, \Dcal) \Vdash^\alpha (\forall x) \neg \psi(G, w, x)$,
	hence $d \Vdash^\alpha \neg \varphi(G, w)$ for some $w \in A$.

	\item 
	In case $n > 0$, let $U = \{ w \in \omega : (\exists d \in \Ext(c)) d \Vdash^\alpha \varphi(G, w) \}$.
	By Lemma~\ref{lem:em-extension-complexity} and Lemma~\ref{lem:em-complexity-forcing},
	$U \in \Sigma^0_{n+2}$, thus $U \neq A$.
	Fix $w \in U \Delta A$. If $w \in U \setminus A$ then by definition of~$U$,
	there exists a condition $d$ extending $c$ such that $d \Vdash^\alpha \varphi(G, w)$.
	If $w \in A \setminus U$, then for every $d \in \Ext(c)$, $d \not \Vdash^\alpha \varphi(G, w)$
	so by clause~4 of Definition~\ref{def:cohzp-forcing-condition}, $c \Vdash^\alpha \neg \varphi(G, w)$. 
\end{itemize}
\end{proof}

We are now ready to reprove Corollary~3.29 from Wang~\cite{Wang2014Definability}.

\begin{theorem}[Wang~\cite{Wang2014Definability}]
$\rt^2_2$ admits preservation of $\Xi$ definitions 
simultaneously for all $\Xi$ in $\{\Sigma^0_{n+2}, \allowbreak \Pi^0_{n+2}, \allowbreak \Delta^0_{n+3} : n \in \omega \}$.
\end{theorem}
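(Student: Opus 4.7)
The plan is to reduce $\rt^2_2$ to the combination of $\coh$ and $\mathsf{D}^2_2$, and then compose the preservation results proved in the previous sections. Recall that $\rca \vdash \rt^2_2 \leftrightarrow \coh \wedge \srt^2_2$ and $\rca + \bst \vdash \srt^2_2 \leftrightarrow \mathsf{D}^2_2$. Given $Z$ and a coloring $f : [\omega]^2 \to 2$ with $f \leq_T Z$, one forms the uniformly $f$-computable sequence $\vec{R}$ with $R_x = \{y > x : f(x,y) = 1\}$. An infinite $\vec{R}$-cohesive set $C$ makes $f$ stable on $C$, i.e. the set $A = \{x : (\forall^\infty y \in C)\, f(x,y) = 1\}$ is $\Delta^0_2$ relative to $C \oplus f$, and any infinite subset of $A$ or of $\overline{A}$ lying in $C$ is $f$-homogeneous.

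First I would apply Theorem~\ref{thm:coh-preservation-arithmetic-hierarchy} relative to $Z$ to obtain an infinite $\vec{R}$-cohesive set $C$ preserving the arithmetic hierarchy relative to $Z$. Since preservation of the arithmetic hierarchy implies preservation of $\Xi$-definitions for every $\Xi$ of the form $\Sigma^0_{n+2}, \Pi^0_{n+2}, \Delta^0_{n+3}$, the set $C$ is in particular a witness for all these levels relative to $Z$. Next, working relative to $Z \oplus C$, I would run the forcing notion $\Pb$ developed in this section on the instance $A \cup \overline{A} = \omega$ of $\mathsf{D}^2_2$ (viewed as a $\Delta^0_2(Z \oplus C)$ 2-partition). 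By Lemma~\ref{lem:cohzp-forcing-infinite}, a sufficiently generic filter yields an infinite generic real $G \subseteq A$ or $G \subseteq \overline{A}$; by Lemma~\ref{lem:cohzp-complexity-forcing} and Lemma~\ref{lem:cohzp-diagonalization}, this $G$ preserves non-$\Sigma^0_{n+2}$ definitions relative to $Z \oplus C$ for every $n \in \omega$. Applying Proposition~2.2 of~\cite{Wang2014Definability}, $G$ preserves $\Xi$-definitions relative to $Z \oplus C$ for all $\Xi$ in $\{\Sigma^0_{n+2}, \Pi^0_{n+2}, \Delta^0_{n+3} : n \in \omega\}$.

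Finally, $H = G \cap C$ is an infinite $f$-homogeneous set, and one composes preservations: if $B$ is properly $\Xi$ relative to $Z$, then by the cohesive step $B$ remains properly $\Xi$ relative to $Z \oplus C$, and by the $\mathsf{D}^2_2$ step $B$ remains properly $\Xi$ relative to $Z \oplus C \oplus G$, which in particular contains $Z \oplus H$. The main technical obstacle is ensuring that the hierarchy levels match up correctly: the $\mathsf{D}^2_2$ forcing deliberately does not preserve $\Sigma^0_1$ or $\Delta^0_2$ definitions (since one is diagonalizing against a $\Delta^0_2$ partition and reading off limits), so one must check that the weak forcing relation and the validity argument of Lemma~\ref{lem:cohzp-validity-exists} produce the density needed for each $\Sigma^0_{n+2}$ formula while the complexity bound of Lemma~\ref{lem:cohzp-complexity-forcing} on the forcing relation stays at the same level, which is precisely the point of introducing the promises $\Ccal$ and the two-tiered forcing relation in the preceding subsections.
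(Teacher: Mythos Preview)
Your plan is exactly the paper's: reduce $\rt^2_2$ to $\coh \wedge \mathsf{D}^2_2$, invoke Theorem~\ref{thm:coh-preservation-arithmetic-hierarchy} for the cohesive step, then run the forcing $\Pb$ of this section and cite Lemmas~\ref{lem:cohzp-forcing-infinite}, \ref{lem:cohzp-complexity-forcing}, \ref{lem:cohzp-diagonalization} together with Wang's Proposition~2.2. The paper's own proof is terser but identical in content.

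There is one slip in your composition, however. You define $A = \{x : (\forall^\infty y \in C)\, f(x,y) = 1\}$ as a partition of $\omega$, run the $\mathsf{D}^2_2$ forcing on $A \cup \overline{A}$, and then set $H = G \cap C$. Nothing in the forcing constrains the generic real $G$ to meet $C$ infinitely often: $G$ is built as an infinite subset of $R_\alpha \in \{A, \overline{A}\}$, with reservoirs of the form $\dom(T)$, not $\dom(T) \cap C$. So $G \cap C$ may well be finite, and your $H$ need not be a solution to the $\rt^2_2$ instance. The standard fix (implicit in the paper's one-line citation of $\rca \vdash \coh \wedge \mathsf{D}^2_2 \to \rt^2_2$) is to pull back along the principal function $p_C : \omega \to C$: the partition $p_C^{-1}(A \cap C) \cup p_C^{-1}(\overline{A} \cap C) = \omega$ is $\Delta^0_2(Z \oplus C)$, the forcing yields an infinite $G$ inside one of its parts, and $H = p_C[G] \subseteq C$ is then genuinely infinite and $f$-homogeneous. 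Since $p_C \leq_T Z \oplus C$, the preservation conclusion for $G$ transfers to $H$ unchanged.
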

\begin{proof}
Since $\rca \vdash \coh \wedge \mathsf{D}^2_2 \imp \rt^2_2$,
and $\coh$ admits preservation of the arithmetic hierarchy, it suffices to prove
that~$\mathsf{D}^2_2$ admits preservation of $\Xi$ definitions 
simultaneously for all $\Xi$ in $\{\Sigma^0_{n+2}, \allowbreak \Pi^0_{n+2}, \allowbreak \Delta^0_{n+3} : n \in \omega \}$.
Fix some set~$C$ and a $\Delta^{0,C}_2$ 2-partition $R_0 \cup R_1 = \omega$.
Let $\Ccal_0$ be the $C'$-p.r.\ set of all $(\nu, T) \in \TPb$ such that $(\nu, T) \leq (0, 1^{<\omega})$.
Let~$\Fcal$ be a sufficiently generic filter containing~$c_0 = (\{\emptyset, \emptyset\}, 1^{<\omega}, \Ccal_0)$.
Let $G$ be the corresponding generic real.
By definition of a condition, the set~$G$ is $\vec{R}$-cohesive.
By Lemma~\ref{lem:cohzp-diagonalization} and Lemma~\ref{lem:cohzp-complexity-forcing},
$G$ preserves non-$\Sigma^0_{n+2}$ definitions relative to~$C$ for every~$n \in \omega$.
Therefore, by Proposition 2.2 of~\cite{Wang2014Definability}, $G$
preserves $\Xi$ definitions relative to~$C$
simultaneously for all $\Xi$ in $\{\Sigma^0_{n+2}, \Pi^0_{n+2}, \Delta^0_{n+3} : n \in \omega \}$.
\end{proof}

\vspace{0.5cm}

\noindent \textbf{Acknowledgements}. The author is thankful to Wei Wang for
interesting comments and discussions.
The author is funded by the John Templeton Foundation (`Structure and Randomness in the Theory of Computation' project). The opinions expressed in this publication are those of the author(s) and do not necessarily reflect the views of the John Templeton Foundation.

\vspace{0.5cm}

\end{document}